\newcommand{\cB}{\mathcal{B}}
\newcommand{\cD}{\mathcal{D}}
\newcommand{\cF}{\mathcal{F}}
\newcommand{\cG}{\mathcal{G}}
\newcommand{\cI}{\mathcal{I}}
\newcommand{\cL}{\mathcal{L}}
\newcommand{\cM}{\mathcal{M}}
\newcommand{\cT}{\mathcal{T}}
\newcommand{\bA}{\mathbb{A}}
\newcommand{\bC}{\mathbb{C}}
\newcommand{\bR}{\mathbb{R}}
\numberwithin{equation}{section}
\numberwithin{figure}{section}
\numberwithin{table}{section}
	\def\MR#1{}}
\theoremstyle{plain}
\newtheorem{theorem}{Theorem}[section]
\newtheorem{lemma}[theorem]{Lemma}
\newtheorem{proposition}[theorem]{Proposition}
\newtheorem{corollary}[theorem]{Corollary}
\theoremstyle{definition}
\newtheorem{definition}[theorem]{Definition}
\newtheorem{example}[theorem]{Example}
\theoremstyle{remark}
\newtheorem{remark}[theorem]{Remark}
\newtheorem{notation}[theorem]{Notation}
\tikzset{dot/.style={circle,fill=black,thick,inner sep=0pt,minimum size=1mm,draw}}
\tikzset{arrow/.style={semithick,>=stealth',shorten >=1pt,shorten <=1pt}}
\tikzset{equal/.style={kant,double distance=2pt}}
\DeclareMathAlphabet{\mathpzc}{OT1}{pzc}{m}{it}
\newcommand{\Z}{\mathbb Z}
\newcommand{\R}{\mathbb R}
\newcommand{\C}{\mathbb C}
\newcommand{\Acat}{\mathscr{A}}
\newcommand{\cof}{\hookrightarrow}
\newcommand{\adj}{\leftrightarrows}
\newcommand{\Sg}{S_{g,n+m}}
\newcommand{\Modgpq}{\mathrm{Mod}(S_{g,n+m})}
\newcommand{\Diff}{\mathrm{Diff}}
\newcommand{\Mark}{\mathrm{Mark}}
\newcommand{\Fatad}{\mathpzc{Fat}^{\mathpzc{ad}}}
\newcommand{\Fat}{\mathpzc{Fat}}
\newcommand{\Ead}{\mathpzc{EFat}^{\mathpzc{ad}}}
\newcommand{\Fatadg}{\mathpzc{Fat}^{\mathpzc{ad}}_{g,n+m}}
\newcommand{\Eadg}{\mathpzc{EFat}^{\mathpzc{ad}}_{g,n+m}}
\newcommand{\SD}{\mathcal{SD}}
\newcommand{\MFatad}{\mathpzc{MFat}^{\mathpzc{ad}}}
\newcommand{\MFat}{\mathpzc{MFat}}
\newcommand{\EMad}{\mathpzc{EMFat}^{\mathpzc{ad}}}
\newcommand{\MFatadg}{\mathpzc{MFat}^{\mathpzc{ad}}_{g,n+m}}
\newcommand{\EMadg}{\mathpzc{EMFat}^{\mathpzc{ad}}_{g,n+m}}
\newcommand{\Conf}{\mathrm{Conf}}
\newcommand{\Rad}{\textfrak{Rad}}
\newcommand{\RAD}{\mathbf{RAD}}
\newcommand{\bRAD}{\overline{\mathbf{RAD}}}
\newcommand{\UbRad}{\overline{\textfrak{URad}}}
\newcommand{\QRad}{\textfrak{QRad}}
\newcommand{\bQRad}{\overline{\textfrak{QRad}}}
\newcommand{\Radt}{\textfrak{Rad}^{\sim}}
\newcommand{\bPRad}{\overline{\textfrak{PRad}}}
\newcommand{\PRad}{\textfrak{PRad}}
\newcommand{\bRad}{\overline{\textfrak{Rad}}}
\newcommand{\ERad}{E\textfrak{Rad}}
\newcommand{\bprad}{\overline{\mathrm{PRad}}}
\newcommand{\prad}{\mathrm{PRad}}
\newcommand{\bqrad}{\overline{\mathrm{QRad}}}
\newcommand{\qrad}{\mathrm{QRad}}
\newcommand{\brad}{\overline{\mathrm{Rad}}}
\newcommand{\rad}{\mathrm{Rad}}
\newcommand{\mr}[1]{{\rm #1}}
\newcommand{\lra}{\longrightarrow}
\title[Comparing models of moduli space and their compactifications]{Comparing combinatorial models of moduli space and their compactifications}
\author{Daniela Egas Santander}
\author{Alexander Kupers}
\date{\today}
\begin{document}

\begin{abstract}
We compare two combinatorial models for the moduli space of two-dimensional cobordisms: B\"odigheimer's radial slit configurations and Godin's admissible fat graphs, producing an explicit homotopy equivalence using a ``critical graph'' map. We also discuss natural compactifications of these two models, the unilevel harmonic compactification and Sullivan diagrams respectively, and prove that the homotopy equivalence induces a cellular homeomorphism between these compactifications.
\end{abstract}

\maketitle

\tableofcontents

\section{Introduction} 
In this paper we compare two combinatorial models of the moduli space of cobordisms. We start with an introduction to moduli space, giving a conformal description of it. After that we describe various combinatorial models and how they relate to each other, which includes our main result, Theorem \ref{thm_main}. Finally we describe two applications.

\subsection{The moduli space of cobordisms} \label{subsec_confmodel} The study of families of surfaces, known as ``moduli theory'', goes back to the nineteenth century. One of the main points of this theory is the construction of a \textit{moduli space}; informally, this is a space of all surfaces isomorphic to a given one, characterized by the property that equivalence classes of maps into it correspond to equivalence classes of families of surfaces. For applications to field theories, the surfaces of interest are two-dimensional oriented cobordisms; an oriented surface $\smash{S}$ with parametrized boundary divided into an incoming and outgoing part. More precisely, there is a pair of maps $\iota_\mr{in} \colon \sqcup_{i=1}^n S^1 \to \partial S$ and $\iota_\mr{out} \colon \sqcup_{j=1}^m S^1 \to \partial S$ such that $\iota_\mr{in} \sqcup \iota_\mr{out}$ is a diffeomorphism onto $\partial S$. 

We will now give a conformal definition of the moduli space of these cobordisms, following \cite[Section 2]{bodigheimer} and \cite{hamenstadt}. Let $S$ be an isomorphism class of connected two-dimensional oriented cobordisms with non-empty incoming and outgoing boundary. As we will later endow $S$ with a metric, a parametrization of its boundary is given by a point in each boundary component. So $S = S_{g,n+m}$ is a connected oriented surface of genus $g$ with $n+m$ boundary components, each containing a single point $p_i$ for $1 \leq i \leq n+m$.  The marked points are ordered and divided into an incoming set (which contains the first $n \geq 1$ marked points) and an outgoing set (which contains the last $m \geq 1$ marked points).

To define the moduli space we start by considering the set of metrics $g$ on $S$. Two metrics are said to be conformally equivalent if they are equal up to a pointwise rescaling by a continuous function. This is equivalent to having the same notion of angle. A diffeomorphism $f \colon S_1 \to S_2$ between two-dimensional manifolds $(S_1,[g]_1)$, $(S_2,[g]_2)$ with conformal classes of metrics such that $f^*[g]_2 = [g]_1$, is said to be a conformal diffeomorphism. This is equivalent to each of its differentials $D_p f$ for $p \in S_1$ being a linear map that preserves angles.

We will restrict our attention to those conformal classes of metrics on $S$ so that each incoming  boundary component has a neighborhood that is conformally diffeomorphic to a neighborhood of the boundary of $\{z \in \bC\,|\,||z|| \geq 1\}$ and each outgoing boundary component has a neighborhood that is conformally diffeomorphic to a neighborhood of the boundary of $\{z \in \bC\,|\,||z|| \leq 1\}$. We say that these conformal classes have good boundary.

The moduli space $\cM_g(n,m)$ will have as underlying set the conformal classes of metrics on $S$ with good boundary modulo the equivalence relation of conformal diffeomorphism fixing the points $p_i$. To topologize it, we introduce the Teichm\"uller metric. With respect to this metric, two equivalence classes of metrics on $S$ are close if they are related by a homeomorphism that---away from a finite set---is not only differentiable, but also conformal up to a small error. To make this precise, note that a linear map $D \colon \bR^2 \to \bR^2$ is conformal if and only if $\max \frac{||Dv||}{||v||} = \min \frac{||Dv||}{||v||}$, both the maximum and minimum taken over non-zero vectors. Hence we can quantify the deviation of a linear map from being conformal by its eccentricity:
\[\mr{Ecc}(D) \coloneqq \frac{\max ||Dv||/||v||}{\min ||Dv||/||v||}.\]

If $f \colon (S,[g]_1) \to (S,[g]_2)$ is a homeomorphism that is continuously differentiable outside a finite set of points $\Sigma  \subset S$, then its quasi-conformal constant $K_f$ is defined to be
\[K_f \coloneqq \sup_{p \in S \backslash \Sigma}\,\mr{Ecc}(D_p f),\]
and $f$ is said to be quasi-conformal if $K_f$ is finite. If $QC([g]_1,[g]_2)$ denotes the set of all quasiconformal homeomorphisms between $(S,[g]_1)$ and $(S,[g]_2)$ fixing the points $p_i$, then we can define the Teichm\"uller distance between $[g]_1$ and $[g]_2$ as follows:
\[d_\cT((S,[g]_1),(S,[g]_2)) \coloneqq \log \inf\{K_f\,|\,f \in QC([g]_1,[g]_2)\}.\]
The moduli space of two-dimensional oriented cobordisms isomorphic to $S$ is then defined to be the following metric space:
\[\cM_g(n,m) \coloneqq \left(\frac{\text{conformal classes of metrics on $S$ with good boundary}}{\text{conformal diffeomorphisms fixing the points $p_i$}},d_\cT\right).\]

For $S$ that are not connected, we take the product of these spaces over all components. An alternative definition of these spaces is as the quotient of Teichm\"uller space (the space of quasiconformal maps modulo conformal equivalence) by the action of the mapping class group $\mr{Mod}(S,\partial S)$, i.e., the group of components of the diffeomorphism group $\mr{Diff}(S,\partial S)$. This is a free proper action on a contractible space and hence $\cM_g(n,m) \simeq B\mr{Mod}(S,\partial S)$. All connected components of $\mr{Diff}(S,\partial S)$ are contractible and we can thus conclude that
\[\cM_g(n,m) \simeq B\mr{Mod}(S,\partial S) \simeq B\mr{Diff}(S,\partial S).\] 
This explains why $\cM_g(n,m)$ is a model for the moduli space of two-dimensional oriented cobordisms; any bundle of cobordisms over a paracompact space $B$ with transition functions given by diffeomorphisms, can be obtained up to isomorphism by pulling back a universal bundle over $\cM_g(n,m)$ along a map $B \to \cM_g(n,m)$. This universal bundle is the quotient of the space consisting of pairs $([g],x)$ of a conformal class of metrics and a point $x \in S$, by conformal diffeomorphisms acting diagonally.

\subsection{Combinatorial models of moduli space} In this paper we discuss several combinatorial models of the moduli spaces $\cM_g(n,m)$, as well as certain compactifications. The following diagram spells out the relations between them (we fix $g$, $n$ and $m$ and drop them from the notation):
\[\begin{tikzcd} \cM   & & \\[-5pt]
	\RAD \uar[swap]{\cong} \dar[two heads]{\simeq} & &  \vert\Fatad\vert \dar{\simeq} \rar{\simeq} & \vert\Fat\vert \dar{\simeq} \\[-5pt]
	\Rad \arrow{dd}[description]{\text{compactification}} & \Radt \lar[two heads]{(\ref{subsec_blowup})}[swap]{\simeq} \rar{\simeq}[swap]{(\ref{subsec_critgraph})} & \MFatad \ar[two heads]{ddd}[description]{\text{quotient by slides}} \rar{\simeq} & \MFat &\\[-5pt]
	& & & \\[-5pt]
	\bRad \dar[two heads]{\simeq} &  &  & \\[-5pt]
	\UbRad \arrow{rr}{\cong}[swap]{(\ref{sec_sdharmonic})} & & \SD. & \end{tikzcd}\]
Each arrow is a continuous map; if decorated by $\simeq$ it is homotopy equivalence, if it is double-headed it is a surjection, and if decorated by $\cong$ it is a homeomorphism. The objects that appear in this diagram are summarized below:
\begin{description}
\item[Moduli space $\cM$] This is the archetypical ``space of cobordisms,'' a conformal model of which was discussed in Section \ref{subsec_confmodel}. It consists of conformal classes of metrics modulo conformal diffeomorphisms, with the Teichm\"uller metric.
\item[The radial slit configurations $\RAD$ and $\Rad$] This model is due to B\"odigheimer, consisting of glueing data to construct a conformal class of metric by glueing together annuli in $\bC$. The main theorem of \cite{bodigheimer} is that there is a homeomorphism $\cM \cong \RAD$. There is a deformation retraction of $\RAD$ onto $\Rad$ by fixing the radii of the annuli. This and related models will be discussed in Section \ref{sec_radialslits}, and $\Rad$ will be defined in Definition \ref{def_radcw}.
\item[The fat graphs $\Fat$] Fat graphs are graphs with the additional structure of a cyclic ordering of the edges going into each vertex and data encoding the parametrization of its ``boundary components.'' Taking as morphisms maps of fat graphs that collapse a disjoint union of trees defines a category of fat graphs $\Fat$. The space $\vert \Fat\vert$ is the geometric realization of this category. This and related models will be discussed in Section \ref{sec_fatgraphs}, and $\Fat$ will be defined in Definition \ref{def_fatfatad}.
\item[The admissible fat graphs $\Fatad$] A fat graph is said to be admissible if its incoming boundary graph embeds in it. The space $\vert \Fatad \vert$ is the geometric realization of the full subcategory on the admissible fat graphs. It is defined in Definition \ref{def_fatfatad}.
\item[The metric fat graphs $\MFat$] Closely related to $\Fat$ is the space of metric fat graphs $\MFat$. This is the space of fat graphs with the additional data of lengths of their edges. The topology is described in terms of these lengths and it contains the realization of $\Fat$ as a deformation retract.
\item[The admissible metric fat graphs $\MFatad$] Just like $\Fatad$ is the subcategory of $\Fat$ consisting of fat graphs that are admissible, $\MFatad$ is the subspace of $\MFat$ consisting of metric fat graphs that are admissible. It is defined in Definition \ref{def_mfatad}.
\item[The fattening of the radial slit configurations $\Radt$] To discuss the relation between $\Rad$ and $\MFat$, in this paper we introduce $\Radt$ as a thicker version of $\Rad$ by including resolutions of the critical graph for non-generic radial slit configurations. This is done in Subsection \ref{subsec_blowup}.
\item[The harmonic compactification $\bRad$] Naturally $\Rad$ arises as an open subspace of a compact space $\bRad$. In this compactification we allow identifications of points on the outgoing boundary and allow handles to degenerate to intervals. It is defined in Definition \ref{def_radcw}.
\item[The unilevel harmonic compactification $\UbRad$] The space $\UbRad$ is a deformation retract of $\bRad$ obtained by making all slits equal length. It is defined in Definition \ref{def_ubrad}.
\item[The Sullivan diagrams $\SD$] The space of Sullivan diagrams are the quotient of $\MFatad$ by the equivalence relation of slides away from the admissible boundary. It is defined in Definition \ref{def_sd}.
\end{description}

We will focus on the bottom square; that is, the relations between radial slit configurations, admissible metric fat graphs and their compactifications. Our main result is:

\begin{theorem}\label{thm_main} We define a space $\Radt$ and maps \eqref{cor_pi1homeq}, \eqref{cor_pi2homeq} and \eqref{prop_sdubradhomeo}
such that there is a commutative square
\[\begin{tikzcd} \Rad \arrow{d} & \Radt \lar{\simeq}[swap]{(\ref{cor_pi1homeq})} \rar{(\ref{cor_pi2homeq})}[swap]{\simeq} & \MFatad \arrow[two heads]{dd} \\[-5pt]
	\bRad \dar[two heads]{\simeq}[swap]{(\ref{lem_bradubrad})} &  &  \\[-5pt]
	\UbRad \arrow{rr}{\cong}[swap]{(\ref{prop_sdubradhomeo})} & & \SD.\end{tikzcd}\]
Furthermore, all maps that are decorated by $\simeq$ are homotopy equivalences and the map decorated by $\cong$ is a cellular homeomorphism.\end{theorem}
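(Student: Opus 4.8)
The plan is to break the statement of Theorem~\ref{thm_main} into its constituent maps and prove each comparison separately, then check that the diagram commutes on the nose. The overall strategy is to route everything through the combinatorics of the \emph{critical graph} of a radial slit configuration: a point of $\Rad$ determines, via the trajectory structure of the associated quadratic differential (equivalently, the harmonic function glueing the annuli), an embedded graph on the surface whose complement is a disjoint union of annuli around the incoming boundary. This graph is naturally a (metric, admissible) fat graph, and this assignment is what will give the right-pointing map. The issue is that for non-generic radial slit configurations the critical graph has higher-valence vertices and the assignment is not continuous; this is exactly what $\Radt$ is built to repair.

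First I would construct $\Radt$ in Subsection~\ref{subsec_blowup} as the space obtained from $\Rad$ by ``blowing up'' the non-generic locus: over a configuration whose critical graph has a vertex of valence $2k$, one inserts the space of planar resolutions of that vertex (a contractible polytope, essentially an associahedron-type cell), glued compatibly along faces. One then proves (this is the content of \eqref{cor_pi1homeq}) that the forgetful map $\Radt \to \Rad$ is a surjection and a homotopy equivalence; surjectivity is by construction, and the homotopy equivalence follows because the fibers are contractible and the map is a quotient by a cell-wise contraction, so one can either exhibit an explicit deformation retraction undoing the blow-up or invoke that it is a proper map with contractible point-inverses over a CW base. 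Next, in Subsection~\ref{subsec_critgraph}, I would make the critical-graph assignment $\Radt \to \MFatad$ precise: on the blown-up space the resolved critical graph is honestly trivalent (or at least has a well-defined fat-graph structure varying continuously), its edge lengths are read off from the slit data, admissibility holds because the incoming boundary curves embed as a subgraph by construction of ``good boundary,'' and one checks continuity and then constructs an explicit homotopy inverse (or checks it induces isomorphisms on homotopy groups, using that both sides are known to model $B\Mod(S,\partial S)$ and the map is compatible with the identifications $\Rad \cong \cM \simeq B\Mod$ and $\vert\Fatad\vert \simeq B\Mod$). This gives \eqref{cor_pi2homeq}.

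For the bottom edge, the homotopy equivalence $\bRad \to \UbRad$ of \eqref{lem_bradubrad} is the ``make all slits equal length'' deformation retraction, which restricts from an analogous retraction on $\RAD \to \Rad$ and is a straightforward linear rescaling in the slit-length coordinates; the only care needed is that it descends to the compactifications, i.e.\ is compatible with the degenerations (point identifications on the outgoing boundary, handles collapsing to intervals) — but since rescaling does not create or destroy such degeneracies, this is routine. The cellular homeomorphism $\UbRad \cong \SD$ of \eqref{prop_sdubradhomeo} I would prove by matching cell structures explicitly: a cell of $\UbRad$ is indexed by the combinatorial type of a unilevel harmonic configuration (which handles are glued where, modulo the equal-length normalization), and a cell of $\SD$ is indexed by an admissible fat graph modulo slides; the critical-graph map sets up the bijection on cells, and one checks it is a homeomorphism on each closed cell and compatible with face maps. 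Finally, commutativity of the square is checked cell by cell: going right-then-down sends a configuration to its critical fat graph and then quotients by slides, while going down-then-right compactifies and then applies the $\UbRad \cong \SD$ identification, and both composites are the ``critical graph modulo slides'' operation; one verifies the quotient map $\MFatad \to \SD$ (quotient by slides, Definition~\ref{def_sd}) intertwines these.

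The main obstacle is the middle column: proving that the critical-graph map $\Radt \to \MFatad$ is well-defined and continuous, and genuinely a homotopy equivalence rather than merely a weak equivalence. The subtlety is entirely in the non-generic strata — understanding precisely how the critical graph degenerates as slits collide or as a configuration approaches a higher-valence type, and verifying that the blow-up $\Radt$ has been set up with exactly the right cells to absorb all of these degenerations continuously. I expect this to require a careful local analysis of the quadratic differential's trajectory structure near a non-generic configuration, of the sort carried out by B\"odigheimer, together with a combinatorial bookkeeping lemma matching resolutions of a $2k$-valent vertex to the relevant cells of $\MFatad$. Once the middle column is under control, the compactification square is comparatively formal.
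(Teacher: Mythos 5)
Your overall blueprint --- build $\Radt$ as a fattening of $\Rad$ over the non-generic locus, use the critical graph for $\pi_2\colon \Radt \to \MFatad$, retract $\bRad$ to $\UbRad$ by normalizing slit lengths, and match cell structures to get $\UbRad \cong \SD$ --- agrees with the paper's plan. But the two homotopy-equivalence claims in the middle column, as you have sketched them, rest on reasoning that is either incomplete or not actually a theorem, and the paper has to do noticeably different technical work in both places.

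For $\pi_1\colon \Radt \to \Rad$, the phrase ``proper map with contractible point-inverses over a CW base'' is not a usable theorem as stated. What the paper invokes is Lacher's theorem (Theorem~\ref{thm_lacher}): a proper \emph{cell-like} map between locally compact \emph{ANR's} is a homotopy equivalence. Two things are missing in your account. First, cell-like is stronger than contractible in general; for compact ANR's they coincide, so one must first show the fibers $\cG([L])$ are compact ANR's, which the paper does by proving they are compact polyhedra (Lemma~\ref{lemcglpolyhedron}). Second, and more importantly, one must verify that $\Radt$ itself is an ANR. This is the real technical work of Section~\ref{subsec_projrad}: the paper embeds $\Radt$ as an open subspace of a compact ANR $(\bRad)^\sim$ assembled by gluing the pieces $\bRad_{[\cL]} \times \cG([\cL])$ cell by cell (Lemma~\ref{lem_radtar}). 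Your alternative route via an explicit deformation retraction is unlikely to go through, since $\pi_1$ is not a fibration and its fibers change homeomorphism type as $[L]$ varies, so no continuous section presents itself.

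For $\pi_2\colon \Radt \to \MFatad$, the suggestion to ``check it induces isomorphisms on homotopy groups, using that both sides are known to model $B\Mod(S,\partial S)$'' has a genuine gap: two spaces can be $K(\pi,1)$'s for the same $\pi$ without a given map between them being an equivalence (a constant map being the degenerate example). Saying ``the map is compatible with the identifications'' is exactly the claim that needs proving, and this is where the actual content of Section~\ref{subsec_critgraph} lies. The paper proceeds differently: it constructs a universal $\Mod(S,\partial S)$-bundle $\ERad \to \Rad$ from markings of critical graphs, pulls it back to $\ERad^\sim \to \Radt$, and then shows that $\ERad^\sim$ is also the pullback of the universal bundle $\EMad \to \MFatad$ along $\pi_2$. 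A general lemma --- if $f\colon B \to B'$ pulls back a universal principal $G$-bundle to a universal one, then $f$ is a homotopy equivalence --- then finishes the argument. Verifying that pullback square is itself nontrivial: it uses the contractibility of $\cG([L])$ and the fact that markings are transported uniquely along forest collapses (Lemma~\ref{markings}). Without this universal-bundle comparison, or something playing the same role, the claim that $\pi_2$ is a homotopy equivalence is unsupported.
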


There exist other combinatorial models related to the moduli space of cobordisms which are not discussed in this paper. We will describe six such models in the following remarks.

\begin{remark} To describe an action of the chains of the moduli space of surfaces on the Hochschild homology of $\Acat_\infty$-Frobenius algebras, Costello constructed a chain complex that models the homology of the moduli space (\cite{costellorg, tcft}). In \cite{wahlwesterland}, Wahl and Westerland described this chain complex in terms of fat graphs with two types of vertices, which they called \emph{black and white fat graphs}. There is an equivalence relation of black and white graphs given by slides away from the white vertices. The quotient chain complex is the cellular chain complex of $\SD$.  Furthermore, in \cite{egas} it was shown that $\MFatad$ has a quasi-cell structure of which \emph{black and white fat graphs} is its cellular complex and the quotient map to $\SD$ respects this cell structure.
\end{remark}

\begin{remark}In \cite{cohengodin} Cohen and Godin defined \emph{Sullivan chord diagrams} of genus $g$ with $p$ incoming and $q$ outgoing boundary components, which were also used in \cite{felixthomasgorenstein}. These are fat graphs obtained from glueing trees to circles. These fit together into a space $\mathcal{CF}(g;p,q)$ which is a subspace of $\MFatad$.  They are \emph{not} the same as Sullivan diagrams as in Definition \ref{def_sd}, though they do admit a map to $\SD$. The space of metric chord diagrams is not homotopy equivalent to moduli space, see Remark 3 of \cite{godin}.\end{remark}

\begin{remark} In \cite{poirier}, Poirier defined a space $\overline{SD}(g,k,l)/{\sim}$ of \emph{string diagrams modulo slide equivalence} of genus $g$ with $k$ incoming and $l$ outgoing boundary components and more generally she defined \emph{string diagrams with many levels modulo slide equivalence} $\overline{LD}(g,k,l)/{\sim}$. Proposition 2.3 of \cite{poirier} says that $\overline{SD}(g,k,l)/{\sim} \simeq \overline{LD}(g,k,l)/{\sim}$. She also defined a subspace $SD(g,k,l)$ of $\overline{SD}(g,k,l)$. Both $\overline{SD}(g,k,l)$ and $SD(g,k,l)$ are subspaces of $\MFatad$ and by counting components one can see that these inclusions can not be homotopy equivalences.  However, there is an induced map $\overline{SD}(g,k,l)/{\sim} \to \SD$ which is a homeomorphism.
\end{remark}

\begin{remark}
In \cite{drummond_poirier_rounds} Drummond-Cole, Poirier, and Rounds defined a space of \emph{string diagrams} $SD$ which generalized the spaces of chord diagrams constructed in \cite{poirier}. They conjectured that this space is homotopy equivalent to the moduli space of Riemann surfaces. There is an embedding $SD \cof \MFatad$ but it is not clear this is a homotopy equivalence. Furthermore, there is an equivalence relation $\sim$ on $SD$, which is not discussed in their paper, and they conjectured that $SD/{\sim}$ is homotopy equivalent to the harmonic compactification.
\end{remark}

\begin{remark}
Following the ideas of Wahl, Klamt constructed a chain complex of \emph{looped diagrams} denoted $l\cD$  in \cite{Klamt_comfrob}.  This complex gives operations on the Hochschild homology of commutative Frobenius algebras. Moreover, she gave a chain map from cellular complex of the space of Sullivan diagrams to looped diagrams. However, a geometric interpretation of a space underlying the complex $l\cD$ and its possible relation to moduli space is still unknown.
\end{remark}

\begin{remark}
In \cite{kaufmann_sullivan}, Kaufman described a space of open-closed Sullivan diagrams $\mathrm{Sull^{c/o}_1}$ in terms of arcs embedded in a surface. The closed part, $\mathrm{Sull^c_1}$, is a space whose points correspond to weighted families of embedded arcs in the surface that flow from the incoming boundary to the outgoing boundary. This space has a natural cell structure and there is a cellular homeomorphism $\mathrm{Sull}^c_1 \stackrel{\cong}{\longrightarrow} \SD$ \cite[Remark 2.12]{wahlwesterland} .
\end{remark}

\subsection{Applications of these models} 
We will next explain two of the applications of combinatorial models for moduli spaces. 

\subsubsection{Explicit computations of the homology of moduli spaces} Combinatorial models provide cell decompositions for moduli spaces, allowing for explicit computations of the (co)homology groups of moduli spaces using cellular (co)homology. Instead of studying $\cM_g(n,m)$, it is more convenient to study the closely related moduli space $\cM^{1,n}_g$ of surfaces of genus $g$ with one parametrized boundary component and $n$ permutable punctures. There are variations of $\Rad$ and $\MFatad$ that are models for $\cM^{1,n}_g$.

Much is known about the homology of $\cM_g^{1,n}$ and much is unknown about it. Harer stability tells us $H_*(\cM_g^{1,n})$ stabilizes as $g \to \infty$ \cite{Harer_stable, wahlmcg}; as a consequence of homological stability for configuration spaces it also stabilizes as $n \to \infty$. The Madsen-Weiss theorem gives the stable homology \cite{MadsenWeiss,galatius_modp} (see \cite{BodigheimerTillmann} for the increasing the number of punctures). Less is known outside of the stable range; explicit computations of $H_*(\cM_g^{1,n})$ for low $g$ and $n$ can help inform and test conjectures about the homology of moduli spaces.
 
The computation of the homology of moduli spaces using radial slit configurations, or the closely related parallel slit configurations, is a long-term project of B\"odigheimer and his students. The first example of this is Ehrenfried's thesis \cite{ehrenfried} where he computes $\cM_2^{1,0}$. See \cite{abhau} for computations of the integral homology of $\cM_g^{1,n}$ for $2g+n \leq 5$ using parallel slits. An example of an explicit computation using fat graphs is \cite{godinunstable}, in which Godin computes the integral homology of $\cM_g^{1,0}$ for $g=1,2$ and $\cM_{g}^{2,0}$ for $g=1$. 

\subsubsection{Two-dimensional field theories, in particular string topology} 
Combinatorial models of moduli spaces have been an important tool in the study of two-dimensional field theories. Two applications are Kontsevich's proof of the Witten conjecture \cite{kontsevich}, and Costello's classification of topological conformal field theories \cite{tcft}. More concretely, combinatorial models for the moduli space of cobordisms play a role in the construction of string operations; these are operations $H_*(\cM_g(n,m);\cL^{\otimes d}) \otimes H_*(LM)^{\otimes n} \to H_*(LM)^{\otimes m}$ for compact oriented manifolds $M$. Chas and Sullivan thought of the pair of pants cobordism as a figure-eight graph \cite{chassullivan}, and many of the constructions of string operations since have used graphs. An important example is Godin's work \cite{godin}, which uses $\Fatad$. Using Costello's model for moduli space together with a Hochschild homology model for $H^*(LM)$, Wahl and Westerland \cite{wahlwesterland, wahluniversal} not only constructed string operations, but showed that these factor through $\SD$. One can also use radial slit configurations to construct string operations.

A problem in string topology is that there are many constructions but few comparisons between them. The critical graph equivalence of Section \ref{sec_critmap} may help to compare constructions involving fat graphs and Sullivan diagrams to those involving radial slit configurations and the harmonic compactification.

\subsection{Outline of paper} In Sections \ref{sec_radialslits} and \ref{sec_fatgraphs} we define radial slit configurations, fat graphs and their compactifications in detail. In Section \ref{sec_critmap} we use the critical graph of a radial slit configuration to construct a zigzag of homotopy equivalences between $\Rad$ and $\MFatad$. In Section \ref{sec_sdharmonic} we show this descends to a homeomorphism between $\UbRad$ and $\SD$.

\subsection{Acknowledgments} This paper grew out of discussions at the String Topology and Related Topics at the Center for Symmetry and Deformation at the University of Copenhagen and was finished during the Hausdorff Trimester Program on Homotopy Theory, Manifolds, and Field Theories. The authors would like to thank Carl-Friedrich B\"{o}digheimer and Nathalie Wahl for helpful conversations and comments. The authors would also like to thank the anonymous referees for helpful comments.  DES was supported by the Danish National Research Foundation through the Centre for Symmetry and Deformation (DNRF92). AK was supported by a William R. Hewlett Stanford Graduate Fellowship, Department of Mathematics, Stanford University.

\section{Radial slit configurations and the harmonic compactification}
\label{sec_radialslits}
\subsection{The definition} In this subsection we introduce B\"odigheimer's radial slit configuration model for the moduli space of two-dimensional cobordisms with non-empty incoming and outgoing boundary. All material in this subsection is due to B\"odigheimer, and references include \cite{bodigheimerold}, \cite{bodigheimer}, \cite{abhau}, \cite{ebertradial} and \cite{bodighpreprint}. The last one is of particular interest, as it describes in a related setting an elegant alternative to the construction below, using subspaces of bar complexes associated to symmetric groups. It however leads to a different compactification of moduli space than the harmonic compactification, so we use \cite{bodigheimer}.

\subsubsection{Spaces of radial slit configurations} Before giving a definition of the radial slit configuration space $\Rad$, we explain how to arrive at it from the perspective of building cobordisms by glueing annuli along cuts. The reader may prefer to skip this motivation and go directly to Definition \ref{defpreconf}.

The simplest cobordism with non-empty incoming and outgoing boundary is the cylinder, with one incoming and one outgoing boundary component. Using the theory of harmonic functions, one sees each annulus is conformally equivalent to one of the following annuli for $R \in (\frac{1}{2\pi},\infty)$ \cite[Corollary 2.13]{hamenstadt} (the reason for the choice of $\frac{1}{2\pi}$ is to facilitate comparison with fat graphs later on):
\[\bA_R \coloneqq \left\{z \in \bC\,\middle|\,\frac{1}{2\pi} \leq |z| \leq R\right\}.\]
We take these as our basic building blocks. Each of them has an inner boundary $\partial_\mr{in} \bA_R = \{z \in \bC\,|\,|z| = \frac{1}{2\pi}\}$ and an outer boundary $\partial_\mr{out} \bA_R = \{z \in \bC\,|\,|z| = R\}$. They come with a canonical metric, as subsets of the complex plane.

To construct a cobordism with $n$ incoming boundary components, we start with an ordered disjoint union of $n$ annuli $\bA^{(i)}_{R_i}$, whose inner boundaries will be the incoming boundary of our cobordism. Next we make cuts radially inward from the outer boundaries of the annuli. Such cuts are uniquely specified by points $\zeta \in \sqcup_{i=1}^n \bA^{(i)}_{R_i}$, which we will call slits. They need not be distinct. As will become clear, the number of slits must always be an even number $2h$ and we thus number them $\zeta_1,\ldots,\zeta_{2h}$. For a total genus $g$ cobordism with $n$ incoming and $m$ outgoing boundary components we need $2h = 2(2g-2+n+m)$ slits.

We want to glue the different sides of the cuts back together. To get a metric on the surface from the metric on the cut annuli, the two cuts that we glue together must be of the same length. To get an orientation on the surface from the orientations on the cut annuli, we must glue a side clockwise from a cut to a side counterclockwise from a cut. To avoid singularities, if one side of the cut corresponding to $\zeta_i$ is glued to a side of the cut corresponding to $\zeta_j$, the same must be true for the other two sides. Thus our gluing procedure is described by a pairing on $\{1,\ldots,2h\}$, encoded by a permutation 
\[\lambda \colon  \{1,\ldots,2h\} \to \{1,\ldots,2h\}\]
consisting of $h$ cycles of length 2. We should demand that if $\zeta_i$ lies on the annulus $\bA^{(j)}_{R_j}$ and $\zeta_{\lambda(i)}$ lies on the annulus $\smash{\bA^{(j')}_{R_{j'}}}$, then $R_j - |\zeta_i| = R_{j'} - |\zeta_{\lambda(i)}|$. See Figure \ref{figradialglue} for an example.

\begin{figure}[h!]
  \centering
    \begin{tikzpicture}[scale=.9]
    	\node at (0,0) {$1$};
    	\draw (90:1.5cm) -- (90:1.6cm);
    	\node at (90:1.6cm) [above] {$1$};
    	\draw (-90:1.5cm) -- (-90:1.6cm);
    	\node at (-90:1.6cm) [below] {$2$};
    	
    	\draw [black!50!white,densely dotted,thick] (0:0.5cm) -- (0:0.7cm);
    	\draw [blue,thick] (0:0.7cm) -- (0:1.5cm);
    	
    	\draw [black!50!white,densely dotted,thick] (180:0.5cm) -- (180:0.7cm);
    	\draw [blue,thick] (180:0.7cm) -- (180:1.5cm);
    	
    	\draw[decoration={markings, mark=at position 0 with {\arrow{<}}}, 	postaction={decorate}] (0,0) circle (1.5cm);
    	\draw[decoration={markings, mark=at position 0 with {\arrow{<}}},	postaction={decorate}]  (0,0) circle (.5cm);
    	
		\node at (1.5,-2.4) [left] {outgoing boundary};
		\node at (1.5,2.4) [left] {incoming boundary};
		\draw [->] (1.5,2.4) to[out=0,in=45] (45:0.6);
		\draw [->] (1.5,-2.4) to[out=0,in=-45] (-45:1.6);
		
		\begin{scope}[xshift=4cm]		
		\draw [black!50!white,densely dotted,thick] (0:0.5cm) -- (0:0.7cm);
		\draw [black!50!white,densely dotted,thick] (180:0.5cm) -- (180:0.7cm);
		\draw [black!10!white,line width=2mm,yshift=1mm] (0:0.7cm) -- (5:1.5);
		\draw [black!10!white,line width=2mm,yshift=1mm] (180:0.7cm) -- (175:1.5);
		\draw [blue,thick] (0:0.7cm) -- (5:1.5);
		\draw [blue,thick] (0:0.7cm) -- (-5:1.5);
		\draw [blue,thick] (180:0.7cm) -- (185:1.5);
		\draw [blue,thick] (180:0.7cm) -- (175:1.5);

    	\draw (0,0) circle (.5cm);
		\draw (0,0) ++(5:1.5) arc (5:175:1.5);
		\draw (0,0) ++(-5:1.5)  arc (-5:-175:1.5);
		
		\node at (0,0) {$1$};
		\draw (90:1.5cm) -- (90:1.6cm);
		\node at (90:1.6cm) [above] {$1$};
		\draw (-90:1.5cm) -- (-90:1.6cm);
		\node at (-90:1.6cm) [below] {$2$};
		\end{scope}
		
		\begin{scope}[xshift=7cm]
		\draw [xscale=.7,decoration={markings, mark=at position 0.5 with {\arrow{<}}},	postaction={decorate}] (0,0.5) arc (90:270:.5);
		\draw [xscale=.7,densely dotted] (0,-0.5) arc (-90:90:.5);
		\node at (0,0) {$1$};
		
		\begin{scope}[xshift=1.5cm,yshift=1.5cm]
		\draw [xscale=.7,decoration={markings, mark=at position 0.5 with {\arrow{<}}},	postaction={decorate}] (0,0) circle (.8cm);
		\draw (90:0.8cm) -- (90:0.9cm);
		\node at (90:0.9cm) [above] {$1$};
		\end{scope}
		
		\begin{scope}[xshift=1.7cm,yshift=-1cm]
		\draw [xscale=.7,decoration={markings, mark=at position 0.5 with {\arrow{<}}},	postaction={decorate}] (0,0) circle (.8cm);
		\draw (-90:0.8cm) -- (-90:0.9cm);
		\node at (-90:0.9cm) [below] {$2$};
		\end{scope}
		
		\draw (1.5,{1.5+.8}) to[out=180,in=0] (0,.5);
		\draw (1.7,{-1+-.8}) to[out=180,in=0] (0,-.5);
		\draw [thick,blue] (1.5,{1.5-.8}) to[out=180,in=90] (1,.25) to[out=-90,in=180] (1.7,{-1+.8});
		\draw [black!25!white,densely dotted,thick] ({-.7*.5},0) to[out=0,in=90] (1,.25) to[out=-90,in=180] ({.7*.5},0);
		\end{scope}
    \end{tikzpicture}
  \caption{An example of constructing a cobordism by cutting and glueing slits in annuli. We start with the annulus on the left, cut along the blue lines to obtain the middle figure, and finally glue both the gray sides and the white sides of the cuts to get the cobordism on the right. In this simple example the pairing $\lambda$ and the successor permutation $\omega$ are uniquely determined.}
  \label{figradialglue}
\end{figure}

However, several problematic situations could occur. Firstly, if two slits $\zeta_i$ and $\zeta_j$ lie on the same \emph{radial segment}, by definition a subset of the annulus $\smash{\bA^{(j)}_{R_j}}$ of the form 
\[\{z \in \bA^{(j)}_{R_j} \,|\,\arg(z) = \theta\} \qquad \text{for some $\theta$,}\] then our cutting and glueing procedure is not well-defined: we need to keep track of whether $\zeta_i$ lies clockwise or counterclockwise from $\zeta_j$. To do this we include the data of a successor permutation 
\[\omega \colon \{1,\ldots,2h\} \to \{1,\ldots,2h\}.\]
This has $n$ cycles, corresponding to the $n$ annuli, and we should demand that each cycle contains the numbers of the slits in one of the annuli and is compatible with the weak cyclic ordering on these coming from the argument of the slits. The successor permutation keeps track of the fact that when two slits coincide, one lies actually ``infinitesimally counterclockwise'' from the other. See Figure \ref{figsuccessor}.

\begin{figure}[h]
  \centering
    \begin{tikzpicture}[scale=.9]
      	\node at (0,0) {$1$};
  \draw (90:1.5cm) -- (90:1.6cm);
  \node at (90:1.6cm) [above] {$1$};

  \draw [red,thick] (-30:1cm) -- (-30:1.5cm);
  \node at (-30:1cm) [left] {\footnotesize $\zeta_4$};
  \draw [blue,thick] (-100:1.2cm) -- (-100:1.5cm);
  \node at (-100:1.2cm) [right] {\footnotesize $\zeta_3$};
  \draw [red,thick] (150:1cm) -- (150:1.5cm);
  \node at (150:1cm) [right] {\footnotesize $\zeta_2$}; 
  \draw [blue,thick] (150:1.2cm) -- (150:1.5cm);
  \node at (150:1.2cm) [below] {\footnotesize $\zeta_1$}; 
  
  \draw[decoration={markings, mark=at position 0 with {\arrow{<}}}, 	postaction={decorate}] (0,0) circle (1.5cm);
  \draw[decoration={markings, mark=at position 0 with {\arrow{<}}},	postaction={decorate}]  (0,0) circle (.5cm);
  
  \draw [->] (25:1.8) -- (25:2.6);
  \node at (35:2.8cm) [fill=white,above left] {\footnotesize $\omega = (1\,2\,3\,4)$};
  \draw [->] (-25:1.8) -- (-25:2.6);
  \node at (-35:2.8cm) [fill=white,below left] {\footnotesize $\omega = (2\,1\,3\,4)$};
  
  \begin{scope}[xshift=4cm,yshift=1.9cm]
    	\node at (0,0) {$1$};
  	  \draw (90:1.5cm) -- (90:1.6cm);
  	  \node at (90:1.6cm) [above] {$1$};

  	  \draw [red,thick] (-30:1cm) -- (-30:1.5cm);
  	  \draw [blue,thick] (-100:1.2cm) -- (-100:1.5cm);
  	  \draw [red,thick] (150:1cm) -- (150:1.5cm);
  	  \draw [blue,thick,yshift=.04cm,xshift=.02cm] (150:1.2cm) -- (150:1.5cm);
  	  
  	  \draw[decoration={markings, mark=at position 0 with {\arrow{<}}}, 	postaction={decorate}] (0,0) circle (1.5cm);
  	  \draw[decoration={markings, mark=at position 0 with {\arrow{<}}},	postaction={decorate}]  (0,0) circle (.5cm);
  \end{scope} 
  
    \begin{scope}[xshift=4cm,yshift=-1.9cm]
  \node at (0,0) {$1$};
  \draw (90:1.5cm) -- (90:1.6cm);
  \node at (90:1.6cm) [above] {$1$};

  \draw [red,thick] (-30:1cm) -- (-30:1.5cm);
  \draw [blue,thick] (-100:1.2cm) -- (-100:1.5cm);
  \draw [red,thick] (150:1cm) -- (150:1.5cm);
  \draw [blue,thick,yshift=-.04cm,xshift=-.02cm] (150:1.2cm) -- (150:1.5cm);
  
  \draw[decoration={markings, mark=at position 0 with {\arrow{<}}}, 	postaction={decorate}] (0,0) circle (1.5cm);
  \draw[decoration={markings, mark=at position 0 with {\arrow{<}}},	postaction={decorate}]  (0,0) circle (.5cm);
  \end{scope} 
    \end{tikzpicture}
  \caption{An example of a radial slit preconfiguration with two slits on the same radial segment; $\zeta_1$ is the shorter blue slit and $\zeta_2$ is the longer red slit. The successor permutation $\omega$ allows us to think of $\zeta_1$ as either infinitesimally clockwise or counterclockwise from $\zeta_2$.}
  \label{figsuccessor}
\end{figure}

This is not enough, because if all slits on an annulus lie on the same radial segment we can only deduce the ordering of the slits up to a cyclic permutation. To amend this, we add additional data; the angular distance $r_i \in [0,2\pi]$ in counterclockwise direction from $\zeta_i$ to $\zeta_{\omega(i)}$. In almost all cases one can deduce this from the locations of the $\zeta_i$ and $\omega$, but in the case where all slits on an annulus lie on the same radial segment, one of them will have to be $r_i = 2\pi$, while the others will have to be $r_j = 0$. This allows one to determine the ordering of the slits, since the slit $\zeta_i$ with $r_i = 2\pi$ should be first in clockwise direction from the angular gap between the slits.

We have almost described enough data to construct a cobordism. We can build a possibly degenerate surface, which has among its boundary components the inner boundaries of the annuli. Since we wanted $m$ outgoing boundary components, we restrict to the subset of data that gives us $m$ boundary components in addition to these inner boundaries of annuli. The inner boundaries of the annuli come with a canonical parametrization, but the outer ones do not come with such a parametrization. Because they already have a canonical orientation coming from the orientation of the outer boundary of the annuli, it suffices to add one point $P_i$ in each of them, $m$ in total. Thus, we need to include these new parametrization points in $\omega$ and the $r_i$'s. To do this, we write $\xi_i = \zeta_i$ for $1 \leq i \leq 2h$ and $\xi_{2h+i} = P_i$ for $1 \leq i \leq m$, and expand our definition of $\omega$ to a permutation $\overline{\omega} \in \mathfrak{S}_{2h+m}$ and add additional $r_{2h+i} \in [0,2\pi]$ for $1 \leq i \leq m$. It is also convenient to extend the definition of $\lambda$ to a permutation $\overline{\lambda} \in \mathfrak{S}_{2h+m}$ by setting $\overline{\lambda}(2h+i) = 2h+i$ for $1 \leq i \leq m$.

Now we can state the definition of a radial slit configuration by collecting all the above data, identifying those configurations yielding the same conformal surface, and discarding those configurations yielding degenerate surfaces.  Actually, it is only necessary to consider configurations with a fixed outer radius; we will say more on this towards the end of the section.  Therefore, from now on we take $\vec{R}=(R,R,\ldots,R)$ and $R=\frac{1}{2\pi}+\frac{1}{2}$ unless stated otherwise. This choice of outer radius is arbitrary, but it makes the connection with metric fat graphs cleanest.

\begin{definition}\label{defpreconf} The space of \emph{possibly degenerate radial slit preconfigurations} $\bprad_h(n,m)$ is the subspace of 
	\[L = (\vec{\xi},\overline{\lambda},\overline{\omega},\vec{r}) \in (\sqcup_{j=1}^n \bC)^{2h+m} \times \mathfrak S_{2h+m} \times \mathfrak S_{2h+m} \times [0,2\pi]^{2h+m}\]
with the following properties. For notation, let $\zeta_i \coloneqq \xi_i$ for $1\leq i\leq 2h$ and $P_i \coloneqq \zeta_{2h+i}$ for $1\leq i\leq m$. Then we have:	
\begin{itemize}
	\item $\vec{\zeta}\in (\bigsqcup_{j=1}^n \bC)^{2h}$ are the endpoints of the \emph{slits},
	\item $\vec{P}\in (\bigsqcup_{j=1}^n \bC)^{m}$ are the \emph{parametrization points},
	\item $\overline{\lambda}\in \mathfrak S_{2h}$  is the \emph{extended slit pairing},
	\item $\overline{\omega}\in \mathfrak S_{2h+m}$ is the \emph{extended successor permutation},
	\item $\vec{r}\in [0,2\pi]^{2h+m}$ are the \emph{angular distances}.
\end{itemize} 
These are subject to six conditions:
	\begin{enumerate}[(i)]
		\item Each slit $\zeta_i$ lies in $\bigsqcup_{j=1}^n \bA^{(j)}_{R} \subset \bigsqcup_{j=1}^n \bC$ and each parametrization point $P_i$ lies in $\bigsqcup_{j=1}^n \partial_{out}\bA^{(j)}_{R} $.
		\item The extended slit pairing $\overline{\lambda}$ consists of $h$ 2-cycles and $m$ 1-cycles. The latter are given by $2h+i$ for $1 \leq i \leq m$. We demand for all $1 \leq i \leq 2h$ we have that $|\zeta_i|=|\zeta_{\overline{\lambda}(i)}|$.
		\item The successor permutation $\overline{\omega}$ consists of a disjoint union of $n$ cycles and these cycles consist exactly of the indices of the $\xi_i$ lying on each of the annuli. We demand that the permutation action of $\overline{\omega}$ on these $\xi_i$ preserves the weakly cyclic ordering which comes from the argument (as usual taken in counterclockwise direction).
		\item The \emph{boundary component permutation} $\overline{\lambda} \circ \overline{\omega}$ consists of $m$ cycles. We will see its cycles correspond to the outgoing boundary components.
		\item We demand that $P_i$ lies in the subset $O_i$ of $\bigsqcup_{j=1}^n \partial_\mr{out} \bA^{(j)}_R$ which we will now define. The $m$ cycles of $\overline{\lambda} \circ \overline{\omega}$ allow one to write the outer boundaries of the annuli as a union of $m$ subsets, overlapping only in isolated points. We demand that each of these contains exactly one $P_i$ and denote that subset by $O_i$. To be precise, each $O_i$ is the union of the parts in the outer boundary between the radial segments $\xi_j$ and $\xi_{\overline{\omega}(j)}$ in counter-clockwise direction, for all $j$ in a cycle of $\overline{\lambda} \circ \overline{\omega}$.
		\item The angular distances $r_i$ must be compatible with the location of the $\xi_i$ and the successor permutation $\overline{\omega}$ in the following sense. If $\xi_i$ does not lie on an annulus with all slits and parametrization points coinciding, then $r_i$ is equal to the angular distance in counterclockwise direction from $\xi_i$ to $\xi_{\overline{\omega}(i)}$. If $\xi_i$ lies on an annulus with all slits and parametrization points coinciding, then $r_i$ is equal to either $0$ or $2\pi$ and exactly one $\xi_j$ on that annulus has $r_j = 2\pi$.
	\end{enumerate}
\end{definition}

In terms of the previous notation, $\omega$ and $\lambda$ are obtained from $\overline{\omega}$ and $\overline{\lambda}$ by deleting the elements $2h+i$ for $1\leq i \leq m$ from the cycles.

\begin{figure}[h]
  \centering
      \begin{tikzpicture}[scale=.9]
  \node at (0,0) {$1$};
  \draw (90:1.5cm) -- (90:1.6cm);
  \node at (90:1.6cm) [above] {$1$};
  \draw (-90:1.5cm) -- (-90:1.6cm);
  \node at (-90:1.6cm) [below] {$2$};
  
  \draw [blue,thick] (0:1cm) -- (0:1.5cm);
  \node at (0:1cm) [below] {\footnotesize $\zeta_1$};
  \draw [blue,thick] (180:1cm) -- (180:1.5cm);
  \node at (180:1cm) [below] {\footnotesize $\zeta_2$}; 
  
  \draw [line width=1.5mm,red!40!white](1.5,0) arc (0:180:1.5cm);
  \draw [line width=1.5mm,blue!40!white,dashed](1.5,0) arc (0:-180:1.5cm);
  \draw[decoration={markings, mark=at position 0 with {\arrow{<}}}, 	postaction={decorate}] (0,0) circle (1.5cm);
  \draw[decoration={markings, mark=at position 0 with {\arrow{<}}},	postaction={decorate}]  (0,0) circle (.5cm);

  \node at (-1,1) [left] {\parbox{5cm}{\footnotesize successor permutation $\omega = (1\,2)$ \\
  angular distances $r_1=r_2=\pi$}};
  \node at (2,0) [right] {\parbox{4cm}{\footnotesize parametrization points in each outgoing boundary component}};
  \draw [->] (3,.5) to[out=90,in=0] (.2,1.8);
  \draw [->] (3,-.5) to[out=-90,in=0] (.2,-1.8);
  
  \node at (0,-2.5) [below] {\footnotesize labeled incoming boundary};
  \draw [->,line width=1mm,white] (1,-2.5) to[out=90,in=-60] (-60:.6);
  \draw [->] (1,-2.5) to[out=90,in=-60] (-60:.6);
  \node at (-2,-1) [left] {\parbox{4cm}{\footnotesize outer boundary of annulus divided into two outgoing boundary components (here solid and dashed)}};
  \node at (0,2.5) [above] {\footnotesize radial slits with pairing $(1\,2)$};
  \draw [->,line width=1mm,white] (.2,2.5) to[out=-90,in=80] (5:1.03);
  \draw [->,line width=1mm,white] (-.2,2.5) to[out=-90,in=100] (175:1.03);
  \draw [->] (.2,2.5) to[out=-90,in=80] (5:1.1);
  \draw [->] (-.2,2.5) to[out=-90,in=100] (175:1.1);
  \end{tikzpicture}
  \caption{The configuration of Figure \ref{figradialglue} with all its data pointed out.}
  \label{figradialglossary}
\end{figure}

We now give a construction of a possibly degenerate cobordism $S(L)$ for a preconfiguration $L$. To do so, we first define the sector space $\overline{\Sigma}(L)$, the pieces used in the glueing construction. We slightly depart from our informal discussion by making cuts from the outer boundary to the inner boundary of the annuli and reglueing these later. See Figure \ref{figradialsectors} for examples of the different types of sectors.

\begin{definition}Let $l$ be the number of annuli containing no elements of $\vec{\xi}$. Then $\overline{\Sigma}(L)$ will have $2h+m+l$ components $F_i$ for $1 \leq i \leq 2h+m+l$. These come in four types:
	\begin{description}
		\item[Ordinary sectors] If $\arg(\xi_i) \neq \arg(\xi_{\overline{\omega}(i)})$ and $\xi_i$ lies on the $j$th annulus $\bA^{(j)}_{R}$, then we set 
		\[F_i = \{z \in \bA^{(j)}_{R}\,|\,\arg(\xi_i) \leq \arg(z) \leq \arg(\xi_{\overline{\omega}(i)})\}.\]
		\item[Thin sectors] If $\arg(\xi_i) = \arg(\xi_{\overline{\omega}(i)})$, $r_i = 0$ and $\xi_i$ lies on the $j$th annulus $\bA^{(j)}_{R}$, then we set 
		\[F_i = \{z \in \bA^{(j)}_{R}\,|\,\arg(\xi_i) = \arg(z)\}.\]
		\item[Full sectors] If $\arg(\xi_i) = \arg(\xi_{\overline{\omega}(i)})$, $r_i = 2\pi$ and $\xi_i$ lies on the $j$th annulus $\smash{\bA^{(j)}_{R}}$, then we set $F_i$ to be the annulus $\bA^{(j)}_{R}$ cut open along the segment $\arg(z) = \arg(\xi_i)$, with that segment doubled so that it is homeomorphic to a closed rectangle.
		\item[Entire sectors] If the $j$th annulus $\bA^{(j)}_{R}$ does not contain any elements of $\vec{\xi}$ and is $j'$th in the induced ordering on the $r$ annuli that do not contain any slits, we set $F_{2h+m+j'} = \bA^{(j)}_{R}$.
	\end{description}
\end{definition}

\begin{figure}[h]
  \centering
    \begin{tikzpicture}
    	\draw (0,0) ++ (40:1cm) arc (40:140:1cm);
    	\draw (0,0) ++ (40:3cm) arc (40:140:3cm);
    	\draw (40:1cm) -- (40:3cm);
    	\draw (140:1cm) -- (140:3cm);
    	\draw [blue,line width=.5mm] (40:1cm) -- (40:2.2cm);
    	\draw [blue,line width=.5mm,dashed] (40:2.2cm) -- (40:3cm);
    	\draw [red,line width=.5mm] (140:1cm) -- (140:1.6cm);
		\draw [red,line width=.5mm,dashed] (140:1.6cm) -- (140:3cm);
    	\draw (0,0) ++ (36:2.2cm) arc (36:44:2.2cm);
    	\draw (0,0) ++ (144:1.6cm) arc (144:136:1.6cm);
    	\node [blue,xshift=-.2cm,yshift=.2cm] at (40:1.6cm) {$\alpha^-$};
    	\node [blue,xshift=-.2cm,yshift=.2cm] at (40:2.6cm) {$\beta^-$};
    	\node [red,xshift=-.2cm,yshift=-.2cm] at (140:1.4cm) {$\alpha^+$};
    	\node [red,xshift=-.2cm,yshift=-.2cm] at (140:2.3cm) {$\beta^+$};
    	\node at (0,0) {ordinary};
    	
    	\begin{scope}[xshift=6cm]
    		\draw (140:1cm) -- (140:3cm);
    		\draw [xshift=.1cm,yshift=.1cm] (140:1cm) -- (140:3cm);
    		\draw (140:1cm) -- ++(.1cm,.1cm);
    		\draw (140:3cm) -- ++(.1cm,.1cm);
    		\draw (0,0) ++ (144:1.6cm) arc (144:136:1.6cm);
     		\draw [xshift=.1cm,yshift=.1cm] (0,0) ++ (144:2.2cm) arc (144:136:2.2cm);
    		\draw [red,line width=.5mm] (140:1cm) -- (140:1.6cm);
			\draw [red,line width=.5mm,dashed] (140:1.6cm) -- (140:3cm);
    		\node [red,xshift=-.2cm,yshift=-.2cm] at (140:1.4cm) {$\alpha^+$};
    		\node [red,xshift=-.2cm,yshift=-.2cm] at (140:2.3cm) {$\beta^+$};
      		\draw [blue,line width=.5mm,xshift=.1cm,yshift=.1cm] (140:1cm) -- (140:2.2cm);
 			\draw [blue,line width=.5mm,dashed,xshift=.1cm,yshift=.1cm] (140:2.2cm) -- (140:3cm);
    		\node [blue,xshift=.4cm,yshift=.4cm] at (140:1.6cm) {$\alpha^-$};
			\node [blue,xshift=.4cm,yshift=.4cm] at (140:2.6cm) {$\beta^-$};
    		\node at (-1.5,0) {thin};
    	\end{scope}
    
    	\begin{scope}[xshift=4.5cm,yshift=-2cm]
    		\draw [yscale=.7] (0,0) circle (.5cm);    		
    		\draw [yscale=.7] (0,0) circle (2cm);
    		\node at (0,-2) {entire};
    	\end{scope}
    	
    	\begin{scope}[yshift=-2cm,yscale=.35,xscale=.5]
    	\draw (0,0) ++ (140:1cm) arc (140:490:1cm);
    	\draw (0,0) ++ (140:4cm) arc (140:490:4cm);
    	\draw (490:1cm) -- (490:4cm);
    	\draw (140:1cm) -- (140:4cm);
    	\draw [red,line width=.5mm] (490:1cm) -- (490:3.2cm);
    	\draw [red,line width=.5mm,dashed] (490:3.2cm) -- (490:4cm);
    	\draw [blue,line width=.5mm] (140:1cm) -- (140:1.6cm);
    	\draw [blue,line width=.5mm,dashed] (140:1.6cm) -- (140:4cm);
    	\draw (0,0) ++ (486:3.2cm) arc (486:494:3.2cm);
    	\draw (0,0) ++ (144:1.6cm) arc (144:136:1.6cm);
    	\node [red,xshift=.25cm,yshift=.25cm] at (490:2cm) {$\alpha^-$};
    	\node [red,xshift=.3cm,yshift=.35cm,fill=white] at (490:3.2cm) {$\beta^-$};
    	\node [blue,xshift=-.2cm,yshift=-.2cm] at (140:1.4cm) {$\alpha^+$};
    	\node [blue,xshift=-.2cm,yshift=-.2cm] at (140:2.8cm) {$\beta^+$};
    	\node at (0,-{2/.35}) {full};
    	\end{scope}
    \end{tikzpicture}
  \caption{Examples of the different types of radial sectors with subsets $\alpha^\pm$ and $\beta^\pm$.}
  \label{figradialsectors}
\end{figure}

The surface $\Sigma(L)$ underlying the cobordism $S(L)$ will be obtained as a quotient space of the sector space by an equivalence relation that makes identifications on the boundary of the sectors. We next define the subsets involved in those identifications.

\begin{definition}\label{def_alphabeta} If $F_i$ is an ordinary or thin sector corresponding to the element $\xi_i$ on the $j$th annulus $\bA^{(j)}_{R}$, then we define the following subspaces of $F_i$:
\begin{align*}\alpha^+_i &\coloneqq \{z \in \bA^{(j)}_{R}\,|\,\arg(z) = \arg(\xi_{\overline{\omega}(i)}) \text{ and } |z| \leq |\xi_{\overline{\omega}(i)}|\}, \\
\alpha^-_i &\coloneqq \{z \in \bA^{(j)}_{R}\,|\,\arg(z) = \arg(\xi_{i}) \text{ and } |z| \leq |\xi_{i}|\},\\
\beta^+_i &\coloneqq \{z \in \bA^{(j)}_{R}\,|\,\arg(z) = \arg(\xi_{\overline{\omega}(i)}) \text{ and } |z| \geq |\xi_{\overline{\omega}(i)}|\}, \\
\beta^-_i &\coloneqq \{z \in \bA^{(j)}_{R}\,|\,\arg(z) = \arg(\xi_{i}) \text{ and } |z| \geq |\xi_{i}|\}.\end{align*}
If $F_i$ is a full sector then our definitions are different, because the two radial segments in the boundary have the same argument. Let $S^+_i$ be the radial segment bounding $F_i$ in counterclockwise direction and $S^-_i$ be the radial segment bounding it in clockwise direction, then we define the following subspaces of $F_i$:
\begin{align*}\alpha^+_i &\coloneqq \{z \in S^+_i\,|\,|z| \leq |\xi_{\overline{\omega}(i)}|\}, \qquad
\alpha^-_i \coloneqq \{z \in S^-_i\,|\,|z| \leq |\xi_{i}|\},\\
\beta^+_i &\coloneqq \{z \in S^+_i\,|\, |z| \geq |\xi_{\overline{\omega}(i)}|\}, \qquad
\beta^-_i \coloneqq \{z \in S^-_i\,|\, |z| \geq |\xi_{\overline{\omega}{i}}|\}.\end{align*}
These subspaces are empty for entire sectors.
\end{definition}


\begin{definition}\label{def_approxl}The equivalence relation $\approx_L$ on $\overline{\Sigma}(L)$ is the one generated by:
\begin{enumerate}[(i)]
\item We identify $z \in \alpha^+_i$ with $z \in \alpha^-_{\overline{\omega}(i)}$.
\item We identify $z \in \beta^+_i$ with $z \in \beta^-_{\overline{\lambda}(i)}$.
\end{enumerate}
We define the surface $\Sigma(L)$ to be $\overline{\Sigma}(L)/{\approx_L}$.
\end{definition}

\begin{definition}\label{defconstrsl} The cobordism $S(L)$ has underlying surface $\Sigma(L)$. It has a map from each inner boundary $\partial_\mr{in} \bA^{(j)}_{R}$
\[\iota^\mr{in}_j \colon S^1 \cong \partial_\mr{in} \bA^{(j)}_{R} \lra \Sigma(L),\] 
and these are inclusions of subspaces if none of the slits lie on the inner boundary of an annulus. One can define the outgoing boundary components as a subspace of $\Sigma(L)$ by considering the intersection of the outer boundary of the annuli with the sectors. For each cycle in $\lambda \circ \omega$ these intersections form a circle with canonical orientation and starting point $P_k$. This yields for the cycle $\lambda \circ \omega$ corresponding to $P_k$ a map
\[\iota^\mr{out}_k \colon S^1 \lra \Sigma(L),\] 
and these are inclusions of subspaces if none of the slits lie on the outer boundary of an annulus.\end{definition}

As mentioned before, this definition may result in a degenerate cobordism for some $L$.  Moreover, two different pre-configurations might give the same conformal classes of cobordism. In fact, each conformal class of cobordisms occurs at least $(2h)!$ times, because the labeling on the slits does not matter. To see that degenerate surfaces can occur, consider the example in Figure \ref{figdegexample}.  Now we explain how to resolve both issues.

\begin{figure}[h]
  \centering
    \begin{tikzpicture}
    \node at (0,0) {$1$};
    \draw (90:1.5cm) -- (90:1.6cm);
    \node at (90:1.6cm) [above] {$1$};

    \draw [red,thick,yshift=.07cm] (0:.8cm) -- (0:1.5cm);
    \draw [blue,thick] (180:1.2cm) -- (180:1.5cm);   
    \draw [blue,thick] (0:1.2cm) -- (0:1.5cm);
    \draw [red,thick,yshift=-.07cm] (0:.8cm) -- (0:1.5cm);
    
    \draw[decoration={markings, mark=at position 0 with {\arrow{<}}}, 	postaction={decorate}] (0,0) circle (1.5cm);
    \draw[decoration={markings, mark=at position 0 with {\arrow{<}}},	postaction={decorate}]  (0,0) circle (.5cm);
    
    \begin{scope}[xshift=4cm]

    \draw [blue,thick] (180:1.2cm) -- (180:1.5cm);   
    \draw [red,thick] (0:.8cm) -- (0:1.5cm);
    \node at (180:1.2cm) {$\bullet$};
    \node at (0:.8cm) {$\bullet$};
    
    \draw[decoration={markings, mark=at position 0 with {\arrow{<}}}, 	postaction={decorate},yscale=.7] (0,0) circle (1.5cm);
    \draw[decoration={markings, mark=at position 0 with {\arrow{<}}},	postaction={decorate},yscale=.7]  (0,0) circle (.5cm);
    
    \draw [line width=2mm,white] (180:1.2cm) to[out=90,in=90,looseness=2.5] (0:.8cm);
    \draw (180:1.2cm) to[out=90,in=90,looseness=2.5] (0:.8cm);
    \end{scope}
    \end{tikzpicture}
  \caption{An example of a radial slit preconfiguration leading to a degenerate surface. The black arc connecting two points on the surface on the right was the line segment between the two red slits.}
  \label{figdegexample}
\end{figure}

We have already explained that one should identify configurations obtained by permuting the labels on the slits. We only need to make two additional identifications. For the first additional identification, instead of doing all the cutting and gluing simultaneously, do it in order of increasing modulus of the slits. This results in the same cobordism but doing so makes clear it that if $\zeta_i$ lies on the same radial segment as $\zeta_j$ and satisfies $|\zeta_i| \geq |\zeta_j|$, it might as well be on the other side of $\zeta_{\lambda(j)}$. That is, it might as well have ``jumped'' over the slit $\zeta_j$ to $\zeta_{\lambda(j)}$. For the second additional identification, note that if a parametrization point similarly ``jumps'' over a slit, this does not change the parametrization of the outgoing boundary. These will turn out to be all required identifications, and we now use them to define equivalence relations on $\mathrm{PRad}_h(n,m)$.

\begin{definition}Let $\equiv'$ be the equivalence relation on $\bprad_h(n,m)$ generated by
\begin{description}
	\item[Relabeling of the slits] We identify two preconfigurations if they can be obtained from each other by relabeling the slits. More precisely for every permutation $\sigma \in \mathfrak S_{2h}$, extended by the identity to a permutation $\overline{\sigma} \in \mathfrak{S}_{2h+m}$, and $L = (\vec{\xi},\overline{\lambda},\overline{\omega},\vec{r}) \in \mathrm{PRad}_h(n,m)$ we say that $L \equiv' \sigma(L)$, with 
	\[\sigma(L)=\big((\vec{\xi})^{\overline{\sigma}},(\overline{\lambda})^{\overline{\sigma}},(\overline{\omega})^{\overline{\sigma}},(\vec{r})^{\overline{\sigma}}\big),\]
	whose components defined as follows:
	\begin{itemize}
		\item $(\vec{\xi})^{\overline{\sigma}}$ is given by $(\xi)^{\overline{\sigma}}_i = \xi_{\overline{\sigma}(i)}$,
		\item $(\overline{\lambda})^{\overline{\sigma}}=\overline{\sigma}\circ\overline{\lambda}\circ\overline{\sigma}^{-1}$,
		\item $(\overline{\omega})^{\overline{\sigma}}=\overline{\sigma}\circ\overline{\omega}\circ\overline{\sigma}^{-1}$,
		\item $(\vec{r})^{\overline{\sigma}}$ is given by $(r)^{\overline{\sigma}}_i = r_{\overline{\sigma}(i)}$.
	\end{itemize}
\end{description}
Let $\equiv$ be the equivalence relation on $\bprad_h(n,m)$ generated by relabeling of the slits (as above) and the following two identifications:
\begin{description}
	\item[Slit jumps] We say $L \equiv L'$ if $L'$ can be obtained from $L$ by a slit jump, see Figure \ref{figslitjump}. More precisely, if we are given a preconfiguration $L$ and two indices $i$ and $j$ such that $j = \omega(i)$, $r_i = 0$ and $|\zeta_i| \geq |\zeta_j|$, then we can obtain a new preconfiguration $L'$ as follows. We replace $\zeta_i$ by the point $\zeta'_i = \frac{|\zeta_i|}{|\zeta_{\lambda(j)}|}{\zeta_{\lambda(j)}}$ and keep all the other slits the same. We then put $i$ after of $\lambda(j)$ in $\overline{\omega}$ to obtain $\overline{\omega}'$ and set $r'_i=r_{\lambda(j)}$ and $r'_{\lambda(j)}=0$.  The rest of the data remains the same.	\item[Parametrization point jumps] We say $L \equiv L'$ if $L'$ can be obtained from $L$ by a jump of a parametrization point, see Figure \ref{figparamjump}. More precisely, if we are given a preconfiguration $L$ in which there is a $P_i$ such that $j = \overline{\omega}(i+2h)$ for some $j$ and $r_{i+2h} = 0$, then we can obtain a new preconfiguration $L'$ by keeping all the data the same except replacing $P_i$ with $P'_i$ lying at the radial segment through $\zeta_{\lambda(j)}$ and setting $r'_{i+2h}=r_{\lambda(j)}$ and $r'_{\lambda(j)}=0$.
\end{description}

\end{definition}

\begin{figure}[t]
  \centering
    \begin{tikzpicture}[scale=.9]
    \node at (0,0) {$1$};
    \draw (90:1.5cm) -- (90:1.7cm);
    \node at (90:1.7cm) [above] {$1$};

    \draw [red,thick] (100:.8cm) -- (100:1.5cm);
    \draw [blue,thick] (-110:1.2cm) -- (-110:1.5cm);   
    \draw [blue,thick,xshift=-.5mm,yshift=-.5mm] (-45:1.2cm) -- (-45:1.5cm);
    \draw [red,thick] (-45:.8cm) -- (-45:1.5cm);
    
    \draw[decoration={markings, mark=at position 0 with {\arrow{<}}}, 	postaction={decorate}] (0,0) circle (1.5cm);
    \draw[decoration={markings, mark=at position 0 with {\arrow{<}}},	postaction={decorate}]  (0,0) circle (.5cm);
    
    \draw [<->] (2,0) -- (3,0);
    \node at (2.5,0) [above] {$\equiv$};
    
    \begin{scope}[xshift=5cm]
    \node at (0,0) {$1$};
    \draw (90:1.5cm) -- (90:1.7cm);
    \node at (90:1.7cm) [above] {$1$};

    \draw [red,thick] (100:.8cm) -- (100:1.5cm);
    \draw [blue,thick] (-110:1.2cm) -- (-110:1.5cm);   
    \draw [blue,thick,xshift=-.7mm,yshift=-.1mm] (100:1.2cm) -- (100:1.5cm);
    \draw [red,thick] (-45:.8cm) -- (-45:1.5cm);
    
    \draw[decoration={markings, mark=at position 0 with {\arrow{<}}}, 	postaction={decorate}] (0,0) circle (1.5cm);
    \draw[decoration={markings, mark=at position 0 with {\arrow{<}}},	postaction={decorate}]  (0,0) circle (.5cm);
   	\end{scope}
    \end{tikzpicture}
  \caption{A jump of a slit. The pairing $\lambda$ is given by the colors, but is uniquely determined by the configuration.}
  \label{figslitjump}
\end{figure}

\begin{figure}[t]
  \centering
    \begin{tikzpicture}[scale=.9]
    \node at (0,0) {$1$};
    \draw (90:1.5cm) -- (90:1.7cm);
    \node at (90:1.7cm) [above] {$1$};
    \draw (200:1.5cm) -- (200:1.7cm);
    \node at (200:1.7cm) [left] {$2$};    
    \draw (-90:1.5cm) -- (-90:1.7cm);
    \node at (-90:1.7cm) [below] {$3$};
    
    \draw [red,thick] (90:.8cm) -- (90:1.5cm);
    \draw [blue,thick] (-110:1.2cm) -- (-110:1.5cm);   
    \draw [blue,thick] (170:1.2cm) -- (170:1.5cm);
    \draw [red,thick] (-45:.8cm) -- (-45:1.5cm);
    
    \draw[decoration={markings, mark=at position 0 with {\arrow{<}}}, 	postaction={decorate}] (0,0) circle (1.5cm);
    \draw[decoration={markings, mark=at position 0 with {\arrow{<}}},	postaction={decorate}]  (0,0) circle (.5cm);
    
    \draw [<->] (2,0) -- (3,0);
    \node at (2.5,0) [above] {$\equiv$};
    
    \begin{scope}[xshift=5.25cm]
    \node at (0,0) {$1$};
	\draw (-45:1.5cm) -- (-45:1.7cm);
	\node at (-45:1.7cm) [right] {$1$};
	\draw (200:1.5cm) -- (200:1.7cm);
	\node at (200:1.7cm) [left] {$2$};    
	\draw (-90:1.5cm) -- (-90:1.7cm);
	\node at (-90:1.7cm) [below] {$3$};
	
	\draw [red,thick] (90:.8cm) -- (90:1.5cm);
	\draw [blue,thick] (-110:1.2cm) -- (-110:1.5cm);   
	\draw [blue,thick] (170:1.2cm) -- (170:1.5cm);
	\draw [red,thick] (-45:.8cm) -- (-45:1.5cm);

	\draw[decoration={markings, mark=at position 0 with {\arrow{<}}}, 	postaction={decorate}] (0,0) circle (1.5cm);
	\draw[decoration={markings, mark=at position 0 with {\arrow{<}}},	postaction={decorate}]  (0,0) circle (.5cm);
    \end{scope}
    \end{tikzpicture}
  \caption{A jump of a parametrization point.}
  \label{figparamjump}
\end{figure}

\begin{definition} We now define certain quotient spaces using these equivalence relations. \label{defbradslit}
\begin{itemize}
\item The space $\bqrad_h(n,m)$ of \emph{unlabeled possibly degenerate radial slit configurations} is the quotient of $\bprad_h(n,m)$ by $\equiv '$. 
\item The space $\brad_h(n,m)$ of \emph{possibly degenerate radial slit configurations} is the quotient of $\bprad_h(n,m)$ by $\equiv$.
\end{itemize}
\end{definition}

We will denote by $[L]$ the radial slit configuration represented by a preconfiguration $L$. We are left to deal with the problem that certain preconfigurations give cobordisms whose underlying surface is degenerate. We call such preconfigurations \emph{degenerate}. In \cite{bodigheimer}, B\"odigheimer gave a necessary and sufficient criterion for a (pre)configuration to lead to a degenerate surface:

\begin{proposition}\label{propdegcriterion}
The surface underlying the cobordism $\Sigma(L)$ constructed out of a preconfiguration $L$ is degenerate if and only if it is equivalent under $\equiv$ to a preconfiguration satisfying at least one of the following three conditions:
\begin{description}
\item[Slit hitting inner boundary] There is a slit $\zeta_i$ with $|\zeta_i| = \frac{1}{2\pi}$.
\item[Slit hitting outer boundary] There is a slit $\zeta_i$ on an annulus $\bA^{(j)}_{R}$ with $|\zeta_i| = R_j$.
\item[Slits are ``squeezed''] There is a pair $i,j$ such that $j= \lambda(i)$, $\zeta_i$ and $\zeta_j$ lie on the same annulus, $\zeta_i = \zeta_j$ and such that for all $k$ between $i$ and $j$ in the cyclic ordering coming from $\omega$, we have that $|\zeta_k| \geq |\zeta_i| = |\zeta_j|$ (see Figure \ref{figdegexample} for an example). If all slits on the annulus containing $\zeta_i$ and $\zeta_j$ lie at the same point, we additionally require that $r_k =0 $ for all of the $k$ between $i$ and $j$.
\end{description}
\end{proposition}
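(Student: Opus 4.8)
One option is to refer to \cite{bodigheimer}; otherwise, the plan is to read degeneracy off the natural cell structure of $\Sigma(L)$. Its $0$-cells are the slit tips $\zeta_i$, the ``corner points'' where cut radial segments meet the inner or outer boundary circles of the annuli, and the parametrization points; its $1$-cells are the arcs $\alpha^\pm_i$, $\beta^\pm_i$ together with the arcs of the boundary circles they cut out; and its $2$-cells are the sectors $F_i$. Since there are $2h = 2(2g-2+n+m)$ slits and $\lambda\circ\omega$ has $m$ cycles, an Euler characteristic count shows that whenever this complex happens to be a compact surface with boundary whose incoming and outgoing boundaries are parametrized by the maps $\iota^{\mr{in}}_j$, $\iota^{\mr{out}}_k$ of Definition~\ref{defconstrsl}, it is automatically of genus $g$ with $n+m$ boundary components. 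Thus $\Sigma(L)$ being degenerate means precisely that either $\Sigma(L)$ fails to be a topological $2$-manifold with boundary, or one of the boundary maps fails to be a parametrizing embedding, or some $2$-cell has been collapsed onto a lower-dimensional set. A preliminary step, verified on the generators of $\equiv$ (relabellings are trivial, and a slit jump or parametrization-point jump slides a radial segment past a neighbouring paired slit and visibly gives a homeomorphism of the constructed cobordism), is that $\equiv$ changes neither $\Sigma(L)$ nor its boundary parametrization, so that degeneracy is a property of the $\equiv$-class.

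For the ``if'' direction I would exhibit the degeneration in each case. If some $|\zeta_i| = \frac{1}{2\pi}$, then, since $|\zeta_{\lambda(i)}| = \frac{1}{2\pi}$ as well, both ends of the paired slits sit on the incoming boundary; the radial cuts there split half-disk neighbourhoods into quarter-disks and glue slit tips to other points of $\Sigma(L)$, and a short analysis of the resulting local and global pictures shows that $\iota^{\mr{in}}_j$ cannot be a parametrizing embedding (typically $\Sigma(L)$ is not even a manifold at such a point). The case $|\zeta_i| = R_j$ is the mirror statement on the outer boundary, with the relevant $\beta$-segments now collapsed to points. In the squeezing case the sectors $F_k$ with $k$ between $i$ and $j = \lambda(i)$ in the $\omega$-cyclic order get folded, through their outer boundaries and the $\beta$-identifications, onto the single radial arc through $\zeta_i = \zeta_j$, collapsing that chain of $2$-cells exactly as in Figure~\ref{figdegexample}; the extra hypothesis on the $r_k$ handles the sub-case in which all slits on that annulus coincide.

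For the ``only if'' direction I would assume that $L$ is not $\equiv$-equivalent to a preconfiguration of any of the three kinds and show that $\Sigma(L)$ is a non-degenerate cobordism. Since a slit jump preserves the moduli $|\zeta_i|$, the hypothesis forces in particular that no slit of $L$ itself has modulus $\frac{1}{2\pi}$ or $R$, so every slit tip lies in the open annulus; a combinatorial check---using that $\lambda$ is a product of $2$-cycles and that $\tilde\omega$ preserves the weakly cyclic order of the arguments---then shows that the link of each such $0$-cell is a single circle and that the corner points have arc links, so that $\Sigma(L)$ is a $2$-manifold with boundary, its incoming boundary being the inner circles and its outgoing boundary the $m$ cycles of $\lambda\circ\omega$. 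It remains to rule out collapsed $2$-cells. After normalising $L$ by slit jumps, any collapsed chain of sectors would produce, in the normalised representative, a pair $i$, $j = \lambda(i)$ on a common annulus with $\zeta_i = \zeta_j$ and all intermediate slit tips weakly farther out---a squeezing configuration---contradicting the hypothesis. Hence no $2$-cell collapses, and the Euler count of the first paragraph pins the genus at $g$.

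I expect the last step to be the main obstacle: proving that a squeezing configuration is the \emph{only} obstruction to the sectors being honest $2$-cells, and controlling this uniformly across all sequences of slit jumps. The delicate sub-case is the one in which all slits on an annulus coincide; there the positions of the slits no longer detect the pinch, and one must recognise it through the angular coordinates $r_i$---which is precisely why Definition~\ref{defpreconf} is made to carry that otherwise redundant-looking data. A cleaner but more analytic alternative would be to use B\"odigheimer's description of the slit construction via a harmonic function on $\Sigma(L)$ whose critical points are the slit tips, and to read off the three degeneracy types as a critical point reaching the inner boundary, reaching the outer boundary, or colliding with another critical point so as to pinch off a disc.
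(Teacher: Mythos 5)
The paper does not prove this proposition at all: immediately before stating it, the authors write ``In \cite{bodigheimer}, B\"odigheimer gave a necessary and sufficient criterion for a (pre)configuration to lead to a degenerate surface. We state his result now,'' and no proof follows. So your first option---refer to \cite{bodigheimer}---is exactly what the paper does, and in that sense your proposal matches the paper.

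Your ``otherwise'' sketch is therefore a genuinely different route from the one the paper takes (since the paper takes none). The overall architecture is reasonable: $\equiv$-invariance of degeneracy is correct and easy (relabellings and jumps produce canonically homeomorphic cobordisms, as the paper itself asserts a bit later when building the universal bundle over $\rad$); the cell structure on $\Sigma(L)$ and the Euler-count normalisation of the genus are sensible; and the ``if'' direction is a matter of exhibiting the three local degenerations, which is fine. The genuine gap is the one you yourself flag. In the ``only if'' direction you assert that, after normalising by jumps, any bad vertex link or collapsed chain of sectors produces a squeezing configuration, but this is precisely the content of the proposition and not a consequence of anything you have set up. Two sub-issues in particular are not handled: (a) you need to show that the link of an interior $0$-cell can only fail to be a circle when a paired slit tip meets its partner with all intermediate slits weakly farther out, and this requires an explicit analysis of how the $\alpha$- and $\beta$-identifications assemble the sectors around that vertex, using the structure of $\lambda$ as a fixed-point-free involution and of $\omega$ as the successor; and (b) the all-slits-coincident annulus, where modulus and argument data no longer discriminate and the $r_i$'s carry the pinch, needs its own case, which you name but do not resolve. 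Your closing suggestion---to reformulate degeneracy in terms of B\"odigheimer's harmonic function on $\Sigma(L)$ whose critical points are the slit tips, and to identify the three conditions with a critical point reaching the inner boundary, reaching the outer boundary, or colliding so as to pinch off a disc---is in fact closer to how the cited source argues, and would likely be cleaner than the cell-by-cell bookkeeping if one insisted on a self-contained proof.
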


\begin{definition}A radial slit preconfiguration is said to be \emph{generic} if it is not equivalent to any other by slit or parametrization point jumps, i.e. all the slits are disjoint.\end{definition}

\begin{definition}\label{def_rad} We define the following spaces:
\begin{itemize}
\item The space $\prad_h(n,m)$ of \emph{unlabeled radial slit configurations} is the subspace of $\bprad_h(n,m)$ consisting of non-degenerate preconfigurations. 
\item The space $\qrad_h(n,m)$ of \emph{labeled radial slit configurations} is the subspace of $\bqrad_h(n,m)$ consisting of equivalence classes with non-degenerate representatives. 
\item The space $\rad_h(n,m)$ of \emph{radial slit configurations} is the subspace of $\brad_h(n,m)$ consisting of equivalence classes with non-degenerate representatives. 
\end{itemize}
\end{definition}

\subsubsection{Cell complexes of radial slit configurations} Next we give CW complexes $\bRad$ and $\Rad$ homeomorphic to  the spaces of radial slit configurations given before. On $\bRad$ this is the CW structure given in Section 8.2 of \cite{bodigheimer} and on the subspace $\Rad$ it coincides with the radial analogue of \cite{bodighpreprint}. The cells will be indexed by so-called combinatorial types, which we define first.

\begin{definition}
Fix an $L$ in $\prad_h(n,m)$. 
\begin{itemize}
\item The radial segments of the slits, the parametrization points and the positive real lines, divide the annuli of the preconfiguration $L$ radially into different pieces, which we will call \emph{radial chambers} (see Figure \ref{chambers_radial}).
\item Each slit $\zeta_i$ in $L$ defines a circle of radius $|\zeta_i|$ on all of the $n$ annuli.  These circles divide the $n$ annuli into different pieces, which we will call \emph{annular chambers} (see Figure \ref{chambers_radial}). 
\end{itemize}
\end{definition}

\begin{remark}
The orientation of the complex plane endows the radial chambers on each annulus with a natural ordering, and similarly the modulus endows the annular chambers with a natural ordering (see Figure \ref{chambers_radial}).
\end{remark}

\begin{figure}[h!]
  \centering
   \begin{tikzpicture}[scale=1.05]
   	\begin{scope}
   	\node at (0,0) {\footnotesize $1$};
   	\draw (45:1.2cm) -- (45:1.4cm);
   	\node at (45:1.4cm) [right] {\footnotesize $1$};
   	\draw (135:1.2cm) -- (135:1.4cm);
   	\node at (135:1.3cm) [left] {\footnotesize $4$};
   	\draw (-135:1.2cm) -- (-135:1.4cm);
   	\node at (-135:1.4cm) [left] {\footnotesize $3$};

   	\draw[dotted] (0,0) circle (.5cm);   	
   	\draw[dotted] (0,0) circle (.8cm);
   	\draw[dotted] (0,0) circle (1cm);
   	\draw[dashed] (0:.3) -- (0:1.2);
   	\draw[dotted] (45:.3) -- (45:1.2);
   	\draw[dotted] (90:.3) -- (90:1.2);
   	\draw[dotted] (135:.3) -- (135:1.2);
	\draw[dotted] (180:.3) -- (180:1.2);
   	\draw[dotted] (-135:.3) -- (-135:1.2);
   	\node[red] at (-90:.4) {\tiny $0$};
   	\node[red] at (-90:.65) {\tiny $1$};
   	\node[red] at (-90:.9) {\tiny $2$};
   	\node[red] at (-90:1.1) {\tiny $3$};
   	\node[blue] at (22.5:1.3) {\tiny $0$};
   	\node[blue] at (67.5:1.3) {\tiny $1$};
   	\node[blue] at (112.5:1.3) {\tiny $2$};
   	\node[blue] at (157.5:1.3) {\tiny $3$};
   	\node[blue] at (-157.5:1.3) {\tiny $4$};
   	\node[blue] at (-{135/2}:1.3) {\tiny $5$};
   	
   	\draw [red,thick] (180:.8cm) -- (180:1.2cm);
   	\draw [blue,thick] (90:.5cm) -- (90:1.2cm);   
   	\draw [red,thick,yshift=-.5mm] (0:.8cm) -- (0:1.2cm);
   	\draw [blue,thick] (0:.5cm) -- (0:1.2cm);
   	
   	\draw (0,0) circle (1.2cm);
   	\draw[decoration={markings, mark=at position 0.5 with {\arrow{<}}},	postaction={decorate}]  (0,0) circle (.3cm);
   	
   	\node at (1.5,-1.5) {(a)};
   	\node at (1.5,1.5) {$L$};
   	
   	\begin{scope}[xshift=3cm]
   	\node at (0,0) {\footnotesize $2$};
   	\draw (135:1.2cm) -- (135:1.4cm);
   	\node at (135:1.4cm) [left] {\footnotesize $5$};
   	\draw (-135:1.2cm) -- (-135:1.4cm);
   	\node at (-135:1.4cm) [below] {\footnotesize $2$};
   	
   	\draw[dotted] (0,0) circle (.5cm);   	
   	\draw[dotted] (0,0) circle (.8cm);
   	\draw[dotted] (0,0) circle (1cm);
   	\draw[dashed] (0:.3) -- (0:1.2);
   	\draw[dotted] (135:.3) -- (135:1.2);
   	\draw[dotted] (-135:.3) -- (-135:1.2);
   	\draw[dotted] (-45:.3) -- (-45:1.2);
   	\node[red] at (-90:.4) {\tiny $0$};
   	\node[red] at (-90:.65) {\tiny $1$};
   	\node[red] at (-90:.9) {\tiny $2$};
   	\node[red] at (-90:1.1) {\tiny $3$};
   	\node[blue] at ({135/2}:1.3) {\tiny $0$};
   	\node[blue] at (-180:1.3) {\tiny $1$};
   	\node[blue] at (-22.5:1.3) {\tiny $3$};
   	\node[blue] at (-90:1.3) {\tiny $2$};
   	
   	\draw [PineGreen,thick] (-135:1cm) -- (-135:1.2cm);
   	\draw [PineGreen,thick] (-45:1cm) -- (-45:1.2cm);   
   	
   	\draw (0,0) circle (1.2cm);
   	\draw[decoration={markings, mark=at position 0.5 with {\arrow{<}}},	postaction={decorate}]  (0,0) circle (.3cm);
   	\end{scope}
   	\end{scope}
   	
   	\begin{scope}[xshift=7cm]
   	\fill[yellow!20!white] (0,0) ++ (135:.3) arc (135:180:.3) -- (180:1.2) arc (180:135:1.2) -- cycle;
   	\node at (0,0) {\footnotesize $1$};
   	\draw (45:1.2cm) -- (45:1.4cm);
   	\node at (45:1.4cm) [right] {\footnotesize $1$};
   	\draw (180:1.2cm) -- (180:1.4cm);
   	\node at (180:1.4cm) [left] {\footnotesize $4$};
   	\draw (-135:1.2cm) -- (-135:1.4cm);
   	\node at (-135:1.4cm) [left] {\footnotesize $3$};
   	
   	\draw[dotted] (0,0) circle (.5cm);   	
   	\draw[dotted] (0,0) circle (.8cm);
   	\draw[dotted] (0,0) circle (1cm);
   	\draw[dashed] (0:.3) -- (0:1.2);
   	\draw[dotted] (45:.3) -- (45:1.2);
   	\draw[dotted] (90:.3) -- (90:1.2);
   	\draw[dotted] (180:.3) -- (180:1.2);
   	\draw[dotted] (-135:.3) -- (-135:1.2);
   	\node[red] at (-90:.4) {\tiny $0$};
   	\node[red] at (-90:.65) {\tiny $1$};
   	\node[red] at (-90:.9) {\tiny $2$};
   	\node[red] at (-90:1.1) {\tiny $3$};
   	\node[blue] at (22.5:1.3) {\tiny $0$};
   	\node[blue] at (67.5:1.3) {\tiny $1$};
   	\node[blue] at (135:1.3) {\tiny $2$};
   	\node[blue] at (-157.5:1.3) {\tiny $3$};
   	\node[blue] at (-{135/2}:1.3) {\tiny $4$};
   	
   	\draw [red,thick] (180:.8cm) -- (180:1.2cm);
   	\draw [blue,thick] (90:.5cm) -- (90:1.2cm);   
   	\draw [red,thick,yshift=-.5mm] (0:.8cm) -- (0:1.2cm);
   	\draw [blue,thick] (0:.5cm) -- (0:1.2cm);
   	
   	\draw (0,0) circle (1.2cm);
   	\draw[decoration={markings, mark=at position 0.5 with {\arrow{<}}},	postaction={decorate}]  (0,0) circle (.3cm);
   	
   	\node at (1.5,-1.5) {(b)};
   	\node at (1.5,1.5) {$\partial^1_3(L)$};
   	
   	\begin{scope}[xshift=3cm]
	\node at (0,0) {\footnotesize $2$};
	\draw (135:1.2cm) -- (135:1.4cm);
	\node at (135:1.4cm) [left] {\footnotesize $5$};
	\draw (-135:1.2cm) -- (-135:1.4cm);
	\node at (-135:1.4cm) [below] {\footnotesize $2$};

	\draw[dotted] (0,0) circle (.5cm);   	
	\draw[dotted] (0,0) circle (.8cm);
	\draw[dotted] (0,0) circle (1cm);
	\draw[dashed] (0:.3) -- (0:1.2);
	\draw[dotted] (135:.3) -- (135:1.2);
	\draw[dotted] (-135:.3) -- (-135:1.2);
	\draw[dotted] (-45:.3) -- (-45:1.2);
	\node[red] at (-90:.4) {\tiny $0$};
	\node[red] at (-90:.65) {\tiny $1$};
	\node[red] at (-90:.9) {\tiny $2$};
	\node[red] at (-90:1.1) {\tiny $3$};
	\node[blue] at ({135/2}:1.3) {\tiny $0$};
	\node[blue] at (-180:1.3) {\tiny $1$};
	\node[blue] at (-22.5:1.3) {\tiny $3$};
	\node[blue] at (-90:1.3) {\tiny $2$};

	\draw [PineGreen,thick] (-135:1cm) -- (-135:1.2cm);
	\draw [PineGreen,thick] (-45:1cm) -- (-45:1.2cm);   

	\draw (0,0) circle (1.2cm);
	\draw[decoration={markings, mark=at position 0.5 with {\arrow{<}}},	postaction={decorate}]  (0,0) circle (.3cm);
	\end{scope}
   	\end{scope}
   	
   	\begin{scope}[yshift=-4.5cm]
   	\begin{scope}
   	\node at (0,0) {\footnotesize $1$};
   	\draw (45:1.2cm) -- (45:1.4cm);
   	\node at (45:1.4cm) [right] {\footnotesize $1$};
   	\draw (135:1.2cm) -- (135:1.4cm);
   	\node at (135:1.3cm) [left] {\footnotesize $4$};
   	\draw (-135:1.2cm) -- (-135:1.4cm);
   	\node at (-135:1.4cm) [left] {\footnotesize $3$};
   	
   	\draw[dotted] (0,0) circle (.5cm);   	
   	\draw[dotted] (0,0) circle (.8cm);
   	\draw[dotted] (0,0) circle (1cm);
   	\draw[dashed] (0:.3) -- (0:1.2);
   	\draw[dotted] (45:.3) -- (45:1.2);
   	\draw[dotted] (90:.3) -- (90:1.2);
   	\draw[dotted] (135:.3) -- (135:1.2);
   	\draw[dotted] (180:.3) -- (180:1.2);
   	\draw[dotted] (-135:.3) -- (-135:1.2);
   	\node[red] at (-90:.4) {\tiny $0$};
   	\node[red] at (-90:.65) {\tiny $1$};
   	\node[red] at (-90:.9) {\tiny $2$};
   	\node[red] at (-90:1.1) {\tiny $3$};
   	\node[blue] at (22.5:1.3) {\tiny $0$};
   	\node[blue] at (67.5:1.3) {\tiny $1$};
   	\node[blue] at (112.5:1.3) {\tiny $2$};
   	\node[blue] at (157.5:1.3) {\tiny $3$};
   	\node[blue] at (-157.5:1.3) {\tiny $4$};
   	\node[blue] at (-{135/2}:1.3) {\tiny $5$};
   	
   	\draw [red,thick] (180:.8cm) -- (180:1.2cm);
   	\draw [blue,thick] (90:.5cm) -- (90:1.2cm);   
   	\draw [red,thick,yshift=-.5mm] (0:.8cm) -- (0:1.2cm);
   	\draw [blue,thick] (0:.5cm) -- (0:1.2cm);
   	
   	\draw (0,0) circle (1.2cm);
   	\draw[decoration={markings, mark=at position 0.5 with {\arrow{<}}},	postaction={decorate}]  (0,0) circle (.3cm);
   	
   	\node at (1.5,-1.5) {(c)};
   	\node at (1.5,1.5) {$\partial^2_3(L)$};
   	
   	\begin{scope}[xshift=3cm]
   	\fill[yellow!20!white] (0,0) ++ (0:.3) arc (0:-45:.3) -- (-45:1.2) arc (-45:0:1.2) -- cycle;
   	\node at (0,0) {\footnotesize $2$};
   	\draw (135:1.2cm) -- (135:1.4cm);
   	\node at (135:1.4cm) [left] {\footnotesize $5$};
   	\draw (-135:1.2cm) -- (-135:1.4cm);
   	\node at (-135:1.4cm) [below] {\footnotesize $2$};
   	
   	\draw[dotted] (0,0) circle (.5cm);   	
   	\draw[dotted] (0,0) circle (.8cm);
   	\draw[dotted] (0,0) circle (1cm);
   	\draw[dashed] (0:.3) -- (0:1.2);
   	\draw[dotted] (135:.3) -- (135:1.2);
   	\draw[dotted] (-135:.3) -- (-135:1.2);
   	\node[red] at (-90:.4) {\tiny $0$};
   	\node[red] at (-90:.65) {\tiny $1$};
   	\node[red] at (-90:.9) {\tiny $2$};
   	\node[red] at (-90:1.1) {\tiny $3$};
   	\node[blue] at ({135/2}:1.3) {\tiny $0$};
   	\node[blue] at (180:1.3) {\tiny $1$};
   	\node[blue] at (-45.5:1.3) {\tiny $2$};
   	
   	\draw [PineGreen,thick] (-135:1cm) -- (-135:1.2cm);
   	\draw [PineGreen,thick] (0:1cm) -- (0:1.2cm);   
   	
   	\draw (0,0) circle (1.2cm);
   	\draw[decoration={markings, mark=at position 0.5 with {\arrow{<}}},	postaction={decorate}]  (0,0) circle (.3cm);
   	\end{scope}
   	\end{scope}
   	
   	\begin{scope}[xshift=7cm]
   	\fill [yellow!20!white] (0,0) circle (.8cm);
   	\fill [white] (0,0) circle (.5cm);
   	\node at (0,0) {\footnotesize $1$};
   	\draw (45:1.2cm) -- (45:1.4cm);
   	\node at (45:1.4cm) [right] {\footnotesize $1$};
   	\draw (180:1.2cm) -- (180:1.4cm);
   	\node at (180:1.4cm) [left] {\footnotesize $4$};
   	\draw (-135:1.2cm) -- (-135:1.4cm);
   	\node at (-135:1.4cm) [left] {\footnotesize $3$};
   	
   	\draw[dotted] (0,0) circle (.5cm);   	
   	\draw[dotted] (0,0) circle (1cm);
   	\draw[dashed] (0:.3) -- (0:1.2);
   	\draw[dotted] (45:.3) -- (45:1.2);
   	\draw[dotted] (90:.3) -- (90:1.2);
   	\draw[dotted] (180:.3) -- (180:1.2);
   	\draw[dotted] (-135:.3) -- (-135:1.2);
   	\node[red] at (-90:.4) {\tiny $0$};
   	\node[red] at (-90:.75) {\tiny $1$};
   	\node[red] at (-90:1.1) {\tiny $2$};
   	\node[blue] at (22.5:1.3) {\tiny $0$};
   	\node[blue] at (67.5:1.3) {\tiny $1$};
   	\node[blue] at (135:1.3) {\tiny $2$};
   	\node[blue] at (-157.5:1.3) {\tiny $3$};
   	\node[blue] at (-{135/2}:1.3) {\tiny $4$};
   	
   	\draw [red,thick] (180:.5cm) -- (180:1.2cm);
   	\draw [blue,thick] (90:.5cm) -- (90:1.2cm);   
   	\draw [red,thick,yshift=-.5mm] (0:.5cm) -- (0:1.2cm);
   	\draw [blue,thick] (0:.5cm) -- (0:1.2cm);
   	
   	\draw (0,0) circle (1.2cm);
   	\draw[decoration={markings, mark=at position 0.5 with {\arrow{<}}},	postaction={decorate}]  (0,0) circle (.3cm);
   	
   	\node at (1.5,-1.5) {(d)};
   	\node at (1.5,1.5) {$\partial^3_1(L)$};
   	
   	\begin{scope}[xshift=3cm]
  	\fill [yellow!20!white] (0,0) circle (.8cm);
   	\fill [white] (0,0) circle (.5cm);
   	\node at (0,0) {\footnotesize $2$};
   	\draw (135:1.2cm) -- (135:1.4cm);
   	\node at (135:1.4cm) [left] {\footnotesize $5$};
   	\draw (-135:1.2cm) -- (-135:1.4cm);
   	\node at (-135:1.4cm) [below] {\footnotesize $2$};
   	
   	\draw[dotted] (0,0) circle (.5cm);   	
   	\draw[dotted] (0,0) circle (1cm);
   	\draw[dashed] (0:.3) -- (0:1.2);
   	\draw[dotted] (135:.3) -- (135:1.2);
   	\draw[dotted] (-135:.3) -- (-135:1.2);
   	\draw[dotted] (-45:.3) -- (-45:1.2);
   	\node[red] at (-90:.4) {\tiny $0$};
   	\node[red] at (-90:.75) {\tiny $1$};
   	\node[red] at (-90:1.1) {\tiny $2$};
   	\node[blue] at ({135/2}:1.3) {\tiny $0$};
   	\node[blue] at (-180:1.3) {\tiny $1$};
   	\node[blue] at (-90:1.3) {\tiny $2$};
   	\node[blue] at (-22.5:1.3) {\tiny $2$};
   	
   	\draw [PineGreen,thick] (-135:1cm) -- (-135:1.2cm);
   	\draw [PineGreen,thick] (-45:1cm) -- (-45:1.2cm);   
   	
   	\draw (0,0) circle (1.2cm);
   	\draw[decoration={markings, mark=at position 0.5 with {\arrow{<}}},	postaction={decorate}]  (0,0) circle (.3cm);
   	\end{scope}
   	\end{scope}
   	\end{scope}
   \end{tikzpicture}
   \caption{(a) A configuration $L$ and its radial and annular chambers divided by dotted lines. The radial chambers are numbered in blue (there are 6 radial chambers on the left annulus and 4 on the right annulus) and the annular chambers are numbered in red (there are 3 annular chambers consisting of a pair small annuli, one on each of the annuli). This combinatorial type gives an $11$-cell in $\bRad$ given by a $\Delta^5\times\Delta^3 \times \Delta^3$.  (b), (c), (d) show part of the boundary of $L$ and their chambers. The modified parts are marked in light yellow.}
  \label{chambers_radial}
\end{figure} 

Each of the annular chambers is homeomorphic to a disjoint union of $n$ annuli, while each of the radial chambers is homeomorphic to a rectangle.

\begin{definition}Two preconfigurations $L$ and $L'$ in $\prad_h(n,m)$ are said to \emph{have the same combinatorial data} if $L'$ can be obtained from $L$ by continuously moving the slits and parametrization points in each complex plane without collapsing any chamber.  This defines an equivalence relation on $\prad_h(n,m)$.

A \emph{combinatorial type of preconfigurations} $\cL$ is an equivalence class of preconfigurations under this relation. Informally, a combinatorial type is the data carried over by the picture of a preconfiguration without remembering the precise placement of the slits. Notice that this equivalence relation is also well defined on the sets of radial slit configurations $[L]$.  Thus one can similarly define a \emph{combinatorial type of configurations} $[\cL]$ to be an equivalence class of configurations under this relation.  Similarly for the case of unlabeled radial slit configurations. 

We will use $\Upsilon$ for the \emph{set of all combinatorial types of configurations}.
\end{definition}

\begin{remark}Notice that if $L$ is a degenerate (respectively non-degenerate) preconfiguration then so is any preconfiguration of the same combinatorial type.  Thus, we can talk about a degenerate or non-degenerate combinatorial type.  
\end{remark}

Now we give definitions of cell complexes of (pre)configurations and their compactifications. Note that the meaning of $p$ and $q$ is different from \cite{bodigheimer}. 

\begin{definition}
\label{def_radcw}
The \emph{multi-degree of a combinatorial type} $[\cL]$ on $n$ annuli is the $(n+1)$-tuple of integers $(q_1,\ldots,q_n,p)$ where $q_i+1$ is the number of radial chambers in the $i$th annulus and $p+1$ is the number of annular chambers.  For $0 \leq j \leq q_i$ and $0 \leq i \leq n$, we denote by $d^i_j([\cL])$, the combinatorial type obtained by collapsing the $j$th radial chamber on the $i$th annulus, see Figure \ref{chambers_radial}.  For $0 \leq j \leq p$, we denote by $d^{n+1}_j([\cL])$, the combinatorial type obtained by collapsing the $j$th annular chamber, see Figure \ref{chambers_radial}.

The \emph{cell complex of possibly degenerate radial slit configurations} $\bRad_h(n,m)$ is the realization of the multisimplicial set with:

\begin{itemize}
\item  $(q_1,\ldots,q_n,p)$-simplices given by \[\left\{ e_{[\cL]}\, \middle|\,[\cL] \text{ combinatorial type of multi-degree } (q_1,\ldots,q_n,p)\right\},\]
\item the faces of $e_{[\cL]}$ given by $d^i_j(\sigma_{[\cL]})\coloneqq \sigma_{d^i_j([\cL])}$.
\end{itemize}

That is, $\bRad_h(n,m)$ is a CW-complex with cells indexed by combinatorial types of radial slits configurations as follows.  Let $e_{[\cL]} \coloneqq \Delta^{q_1} \times \ldots \times \Delta^{q_n} \times \Delta^{p}$, then: 
\[\bRad_h(n,m)\coloneqq \frac{\bigsqcup_{[\cL] \in \Upsilon} e_{[\cL]}}{\sim}\]
where the equivalence relation is generated by
\[(e_{[\cL]},(\vec{t}_{1},\ldots,\delta^j(\vec{t}_i),\ldots,\vec{t}_{n+1}))\sim
(e_{d^i_j([\cL])},(\vec{t}_1,\ldots,\vec{t}_i,\ldots,\vec{t}_{n+1}))\]
where $\delta^j$ is the map $\Delta^{q_i-1} \to \Delta^{q_i}$ including $0$ as the $(j+1)$st coordinate, and $\Upsilon$ is the set of combinatorial types of radial slit configurations.

The cell complexes of possibly degenerate radial slit preconfigurations $\bPRad_h(n,m)$ and unlabeled configurations $\bQRad_h(n,m)$ are defined in similar ways.\end{definition}

\begin{figure}[h!]
	\centering
    \begin{tikzpicture}[scale=1.15]
    \node at (0,0) {\footnotesize $1$};
    \draw (180:1cm) -- (180:1.2cm);
    \node at (180:1.2cm) [left] {\footnotesize $1$};
    
    \draw[dotted] (0,0) circle (.65cm);   	
    \draw[dashed] (0:.3) -- (0:1);
    \draw[dotted] (180:.3) -- (180:1);
    \node[red] at (-90:{.3/2+.65/2}) {\tiny $0$};
    \node[red] at (-90:{1/2+.65/2}) {\tiny $1$};
    \node[blue] at (90:1.1) {\tiny $0$};
    \node[blue] at (-90:1.1) {\tiny $1$};

    \draw [red,thick] (0:.65cm) -- (0:1cm);
    
    \draw (0,0) circle (1cm);
    \draw[decoration={markings, mark=at position 0.5 with {\arrow{<}}},	postaction={decorate}]  (0,0) circle (.3cm);
    
    \node at (1.25,1) {$L$};
    
    \begin{scope}[xshift=2.5cm]
    \node at (0,0) {\footnotesize $2$};

	\draw[dotted] (0,0) circle (.65cm);   	
	\draw[dashed] (0:.3) -- (0:1);
	\draw[dotted] ({360/5}:.3) -- ({360/5}:1);
	\draw[dotted] ({2*360/5}:.3) -- ({2*360/5}:1);
	\draw[dotted] ({3*360/5}:.3) -- ({3*360/5}:1);
	\draw[dotted] ({4*360/5}:.3) -- ({4*360/5}:1);
	\node[red] at (-90:{.3/2+.65/2}) {\tiny $0$};
	\node[red] at (-90:{1/2+.65/2}) {\tiny $1$};
	\node[blue] at ({360/5-180/5}:1.1) {\tiny $0$};
	\node[blue] at ({2*360/5-180/5}:1.1) {\tiny $1$};
	\node[blue] at ({3*360/5-180/5}:1.1) {\tiny $2$};
	\node[blue] at ({4*360/5-180/5}:1.1) {\tiny $3$};
	\node[blue] at ({5*360/5-180/5}:1.1) {\tiny $4$};

	\draw [red,thick] ({360/5}:.65cm) -- ({360/5}:1cm);
	\draw [PineGreen,thick] ({2*360/5}:.65cm) -- ({2*360/5}:1cm);
	\draw [yellow!50!orange,thick] ({3*360/5}:.65cm) -- ({3*360/5}:1cm);
	\draw [PineGreen,thick,xshift=.5mm,yshift=-.5mm] ({3*360/5}:.65cm) -- ({3*360/5}:1cm);
	\draw [yellow!50!orange,thick] ({4*360/5}:.65cm) -- ({4*360/5}:1cm);

	\draw (0,0) circle (1cm);
	\draw[decoration={markings, mark=at position 0.5 with {\arrow{<}}},	postaction={decorate}]  (0,0) circle (.3cm);    
    \end{scope}
    
    \begin{scope}[xshift=-3cm,yshift=-3cm]
    \node at (0,0) {\footnotesize $1$};
    \draw (180:1cm) -- (180:1.2cm);
    \node at (180:1.2cm) [left] {\footnotesize $1$};
    
    \draw[dotted] (0,0) circle (.65cm);   	
    \draw[dashed] (0:.3) -- (0:1);
    \draw[dotted] (180:.3) -- (180:1);
    \node[red] at (-90:{.3/2+.65/2}) {\tiny $0$};
    \node[red] at (-90:{1/2+.65/2}) {\tiny $1$};
    \node[blue] at (90:1.1) {\tiny $0$};
    \node[blue] at (-90:1.1) {\tiny $1$};
    
    \draw [red,thick] (0:.65cm) -- (0:1cm);
    
    \draw (0,0) circle (1cm);
    \draw[decoration={markings, mark=at position 0.5 with {\arrow{<}}},	postaction={decorate}]  (0,0) circle (.3cm);
    
    \node at (1.25,1.1) {$\partial^2_0(L)$};
    
    \begin{scope}[xshift=2.5cm]
    \node at (0,0) {\footnotesize $2$};
    
    \draw[dotted] (0,0) circle (.65cm);   	
    \draw[dashed] (0:.3) -- (0:1);
    \draw[dotted] ({360/4}:.3) -- ({360/4}:1);
    \draw[dotted] ({2*360/4}:.3) -- ({2*360/4}:1);
    \draw[dotted] ({3*360/4}:.3) -- ({3*360/4}:1);
    \node[red] at (-90:{.3/2+.65/2}) {\tiny $0$};
    \node[blue] at ({360/4-180/4}:1.1) {\tiny $0$};
    \node[blue] at ({2*360/4-180/4}:1.1) {\tiny $1$};
    \node[blue] at ({3*360/4-180/4}:1.1) {\tiny $2$};
    \node[blue] at ({4*360/4-180/4}:1.1) {\tiny $3$};
    
    \draw [red,thick] (0:.65cm) -- (0:1cm);
    \draw [PineGreen,thick] ({360/4}:.65cm) -- ({360/4}:1cm);
    \draw [yellow!50!orange,thick] ({2*360/4}:.65cm) -- ({2*360/4}:1cm);
    \draw [PineGreen,thick,yshift=-.7mm] ({2*360/4}:.65cm) -- ({2*360/4}:1cm);
    \draw [yellow!50!orange,thick] ({3*360/4}:.65cm) -- ({3*360/4}:1cm);
    
    \draw (0,0) circle (1cm);
    \draw[decoration={markings, mark=at position 0.5 with {\arrow{<}}},	postaction={decorate}]  (0,0) circle (.3cm);    
    \end{scope}
    \end{scope}
    
    \begin{scope}[xshift=3cm,yshift=-3cm]
    \node at (0,0) {\footnotesize $1$};
    \draw (180:1cm) -- (180:1.2cm);
    \node at (180:1.2cm) [left] {\footnotesize $1$};
    
    \draw[dotted] (0,0) circle (.65cm);   	
    \draw[dashed] (0:.3) -- (0:1);
    \draw[dotted] (180:.3) -- (180:1);
    \node[red] at (-90:{.3/2+.65/2}) {\tiny $0$};
    \node[red] at (-90:{1/2+.65/2}) {\tiny $1$};
    \node[blue] at (90:1.1) {\tiny $0$};
    \node[blue] at (-90:1.1) {\tiny $1$};
    
    \draw [red,thick] (0:.65cm) -- (0:1cm);
    
    \draw (0,0) circle (1cm);
    \draw[decoration={markings, mark=at position 0.5 with {\arrow{<}}},	postaction={decorate}]  (0,0) circle (.3cm);
    
    \node at (1.25,1.1) {$\partial^2_3(L)$};
    
    \begin{scope}[xshift=2.5cm]
    \node at (0,0) {\footnotesize $2$};
    
    \draw[dotted] (0,0) circle (.65cm);   	
    \draw[dashed] (0:.3) -- (0:1);
    \draw[dotted] ({360/4}:.3) -- ({360/4}:1);
    \draw[dotted] ({2*360/4}:.3) -- ({2*360/4}:1);
    \draw[dotted] ({3*360/4}:.3) -- ({3*360/4}:1);
    \node[red] at (-90:{.3/2+.65/2}) {\tiny $0$};
    \node[blue] at ({360/4-180/4}:1.1) {\tiny $0$};
    \node[blue] at ({2*360/4-180/4}:1.1) {\tiny $1$};
    \node[blue] at ({3*360/4-180/4}:1.1) {\tiny $2$};
    \node[blue] at ({4*360/4-180/4}:1.1) {\tiny $3$};
    
    \draw [red,thick] ({360/4}:.65cm) -- ({360/4}:1cm);
    \draw [PineGreen,thick] ({2*360/4}:.65cm) -- ({2*360/4}:1cm);
    \draw [yellow!50!orange,thick,xshift=-.7mm] ({3*360/4}:.65cm) -- ({3*360/4}:1cm);
    \draw [PineGreen,thick] ({3*360/4}:.65cm) -- ({3*360/4}:1cm);
    \draw [yellow!50!orange,thick,xshift=.7mm] ({3*360/4}:.65cm) -- ({3*360/4}:1cm);
    
    \draw (0,0) circle (1cm);
    \draw[decoration={markings, mark=at position 0.5 with {\arrow{<}}},	postaction={decorate}]  (0,0) circle (.3cm);    
    \end{scope}
    \end{scope}
    \end{tikzpicture}
    \caption{A second example of a cell and parts of its boundary. Here all slits have the same length.}
    \label{sUbRad_example}
\end{figure}

\begin{definition}\label{deffrrad} If a combinatorial type $[\cL]$ is degenerate, then $d^i_j([\cL])$ is also degenerate.  Thus, we define the \emph{cell complex of degenerate radial slit configurations} as the subcomplex $\bRad_h(n,m)'\subset \bRad_h(n,m)$ obtained as the realization of the degenerate simplices.  Finally, the $\Rad_h(n,m)$ is the complement.  That is 
\[\Rad_h(n,m)\coloneqq \bRad_h(n,m) \setminus \bRad_h(n,m)'\]
The spaces $\PRad_h(n,m)$ and $\QRad_h(n,m)$ are defined in a similar way.\end{definition}

We introduce notation for the image of $e_{[\cL]}$ in $\Rad$.

\begin{definition}\label{defcombtypeclosedcell} 
Let $[\cL]$ be a combinatorial type, we define the subspace $\Rad_{[\cL]}$ as image of the interior of $e_{[\cL]}$. We also let $\bRad_{[\cL]}$ be the closure of $\Rad_{[\cL]}$ in $\bRad$ and define $\partial \bRad_{[\cL]} = \Rad \cap (\bRad_{[\cL]} \setminus \Rad_{[\cL]})$.
\end{definition}

\subsubsection{Relationships} Our final goal for this section is to explain the relationship between the spaces and cell complexes of radial slit configurations, and the moduli space of cobordisms. The first relationship is straightforward, as there are obvious continuous bijections 
\begin{align*}\rad_h(n,m) \lra \Rad_h(n,m), \qquad & \qquad \brad_h(n,m) \lra \bRad_h(n,m), \\
\qrad_h(n,m) \lra \QRad_h(n,m), \qquad & \qquad \bqrad_h(n,m) \lra \bQRad_h(n,m), \\
\prad_h(n,m) \lra \PRad_h(n,m), \qquad & \qquad \bprad_h(n,m) \lra \bPRad_h(n,m),
\end{align*} 
compatible with the quotient maps and inclusions. These are given by sending a point to its combinatorial type and the simplicial coordinates obtained by rescaling the angles of the slits (for the first $n$ coordinates) and their radii (for the last coordinate).  The following Lemma follows from \cite{bodigheimer} and we sketch a proof below. 

\begin{lemma}These maps are homeomorphisms.\end{lemma}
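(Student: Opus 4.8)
The strategy is to exhibit explicit continuous inverses to each of the six comparison maps and check the pairs are mutually inverse; since the maps on the left column are obtained from those on the right by passing to quotients, and the maps in the lower rows are subspace restrictions, it suffices to construct an inverse to $\bprad_h(n,m) \to \bPRad_h(n,m)$ and argue that it descends through all the quotients and restricts to the relevant subspaces. So first I would recall the actual construction of the forward map: a preconfiguration $L$ is sent to its combinatorial type $[\cL]$ together with the point of $e_{[\cL]} = \Delta^{q_1}\times\cdots\times\Delta^{q_n}\times\Delta^p$ whose barycentric coordinates in the $i$-th factor record the normalized angular widths of the $q_i+1$ radial chambers on the $i$-th annulus, and whose coordinates in the last factor record the normalized radial widths of the $p+1$ annular chambers (the widths being measured with the $\log$ of the modulus so that they add up correctly, or linearly in the modulus --- either convention works and I would fix the one matching Definition \ref{def_radcw}).

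\textbf{Key steps.} (1) \emph{Continuity of the forward map.} On each combinatorial stratum the chamber widths depend continuously on the positions of the slits and parametrization points, and one checks that as $L$ degenerates to the boundary of its stratum (a chamber collapsing), the image converges to the corresponding face inclusion $\delta^j$ in $\bPRad_h(n,m)$; this is exactly the compatibility built into the equivalence relation $\sim$ defining the realization, so the map is continuous on all of $\bprad_h(n,m)$. (2) \emph{Construction of the inverse.} Given a combinatorial type $[\cL]$ and a point of $e_{[\cL]}$, one reconstructs a canonical representative preconfiguration: place the first slit of each annulus on the positive real axis (using the data of $[\cL]$, including the successor permutation and which $r_i$ equal $0$ or $2\pi$, to know the weak cyclic order and the radial-segment identifications), then read off the angular positions from the simplicial coordinates in the first $n$ factors and the moduli from the coordinates in the last factor, and place the parametrization points as dictated by $[\cL]$. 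This is manifestly continuous on each closed cell and agrees on faces, hence descends to a continuous map $\bPRad_h(n,m) \to \bprad_h(n,m)$. (3) \emph{Mutual inverseness.} Composing forward-then-back returns a preconfiguration in the same combinatorial type with the same normalized chamber widths; since a preconfiguration with fixed outer radius $R$ is determined by its combinatorial type together with these normalized widths (the widths sum to a fixed total on each annulus, so normalization loses nothing), this composite is the identity on $\bprad_h(n,m)$. Composing back-then-forward is the identity on $\bPRad_h(n,m)$ by construction. (4) \emph{Passage to quotients and subspaces.} The relations $\equiv'$ and $\equiv$ (relabeling, slit jumps, parametrization point jumps) preserve combinatorial types --- indeed the cell complexes $\bRad$, $\bQRad$ were \emph{defined} using combinatorial types of configurations rather than preconfigurations --- so the homeomorphism descends to $\bqrad \cong \bQRad$ and $\brad \cong \bRad$; and since degeneracy is a property of the combinatorial type (Proposition \ref{propdegcriterion} and the remark following it), it restricts to homeomorphisms $\prad \cong \PRad$, $\qrad \cong \QRad$, $\rad \cong \Rad$.

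\textbf{Main obstacle.} The routine parts are continuity and the bookkeeping of coordinates; the genuinely delicate point is step (2)--(3) in the \emph{degenerate angular situation} --- when all slits and parametrization points on an annulus coincide on a single radial segment. There the positions alone do not determine the cyclic order of the slits, which is why Definition \ref{defpreconf} carries the extra data of the $r_i \in \{0,2\pi\}$ and the successor permutation; I need to check carefully that the combinatorial type $[\cL]$ together with the simplicial coordinates faithfully records exactly this data and nothing more, so that reconstruction is well-defined and single-valued, and that a collapsing chamber in such a configuration maps to the correct face. This is precisely the subtlety B\"odigheimer handles in Section 8.2 of \cite{bodigheimer}, and I would cite that for the detailed verification while indicating the key compatibility checks here.
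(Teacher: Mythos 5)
Your proposal is correct, but it takes a genuinely different and more laborious route than the paper's. The paper dispatches the top-row homeomorphism $\bprad_h(n,m)\to\bPRad_h(n,m)$ with a one-line topological argument: both spaces are compact Hausdorff (the former as a closed subset of a product of compact spaces, the latter as a finite CW-complex), and a continuous bijection between compact Hausdorff spaces is automatically a homeomorphism. This means the paper needs only continuity of the forward map plus bijectivity, and never has to construct an inverse or verify its continuity across faces. The descent to $\bqrad\to\bQRad$, $\brad\to\bRad$ is then immediate because the quotient maps and their inverses are both induced on quotients, and the left column follows by restriction to open subspaces. Your approach instead builds an explicit continuous inverse by reconstructing a canonical representative preconfiguration from a combinatorial type and simplicial coordinates, then verifies mutual inverseness directly, which forces you to wrestle with the delicate bookkeeping in the degenerate angular case (all slits on one radial segment, where the $r_i\in\{0,2\pi\}$ and the successor permutation carry the data). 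That care is real mathematics and your flagging of it is appropriate, but the compact-Hausdorff shortcut renders all of step (2) and the continuity-of-inverse part of step (3) unnecessary. What your approach buys in exchange is explicitness: it makes the inverse homeomorphism concrete, which could be useful if one later needed to reason about it directly; the paper's argument is purely existential. Both routes are valid, and the quotient/restriction bookkeeping in your step (4) matches the paper's.
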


\begin{proof}We start by noting that $\bprad_h(n,m)$ and $\bPRad_h(n,m)$ are both compact Hausdorff spaces; the former is a closed subset of a compact Hausdorff space and the latter is a finite CW-complex. A continuous bijection between compact Hausdorff spaces is a homeomorphism. Next note that the maps $\brad_h(n,m) \to \bRad_h(n,m)$ and $\bqrad_h(n,m) \to \bQRad_h(n,m)$ are induced by passing to quotients, as are their inverses, so they are also homeomorphisms.

Thus the right maps are homeomorphisms and the left maps are obtained by restricting these homeomorphisms to open subsets and replacing their codomain with their image. Hence they are also homeomorphisms.\end{proof}

The relationship to moduli space is less straightforward. In Section 9 of \cite{bodigheimer}, B\"{o}digheimer defined a space $\bRAD_h(n,m)$ of all radial slit configurations with varying inner radii, but fixed outer radii and a subspace $\RAD_h(n,m)$ of all non-degenerate radial slit configurations. He also proved a version of the previous lemma.

\begin{lemma}There are homotopy equivalences
\begin{align*}\RAD_h(n,m) \simeq \rad_h(n,m) \qquad  & \qquad \bRAD_h(n,m) \simeq \brad_h(n,m)
\end{align*}
\end{lemma}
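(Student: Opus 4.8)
The plan is to produce explicit deformation retractions that radially rescale the inner radii of the annuli to the fixed value $R$. Recall that a point of $\bRAD_h(n,m)$ records the same combinatorial gluing data as in Definition \ref{defpreconf}, but with the inner radii $\rho_j \in (\frac{1}{2\pi}, R)$ allowed to vary from annulus to annulus, while the outer radii are all equal to $R$. The key observation is that scaling the $j$th annulus by the factor $R/(R - \rho_j)$ in the radial direction — more precisely, applying the piecewise-linear-in-$\log|z|$ homeomorphism $\bA^{(j)}_{[\rho_j,R]} \to \bA^{(j)}_{[\frac{1}{2\pi},R]}$ fixing the outer boundary — is a conformal-up-to-reparametrization operation that carries slits to slits, preserves the pairing $\lambda$ and successor permutation $\tilde\omega$, and preserves the ``same length'' condition in condition (ii) since the lengths $R - |\zeta_i|$ are measured from the common outer radius. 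This gives a map $\Phi\colon \bRAD_h(n,m) \to \brad_h(n,m)$, and restricting to non-degenerate configurations a map $\RAD_h(n,m) \to \rad_h(n,m)$; it descends through the equivalence relations $\equiv$ and $\equiv'$ because slit jumps and parametrization-point jumps are defined in terms of ratios $|\zeta_i|/|\zeta_{\lambda(j)}|$ and radial segments, both of which are respected by the radial rescaling.

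\smallskip

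First I would make precise the definition of $\bRAD_h(n,m)$ and $\RAD_h(n,m)$ from Section 9 of \cite{bodigheimer}, in particular spelling out that the natural map $\iota\colon \brad_h(n,m) \to \bRAD_h(n,m)$ including the configurations with all inner radii equal to $\frac{1}{2\pi}$ is a closed embedding. Second I would check that $\Phi$ as above is well-defined and continuous: continuity follows because the rescaling depends continuously on the tuple $(\rho_1,\ldots,\rho_n)$, and well-definedness modulo $\equiv$ and $\equiv'$ is a direct check on the generators of these relations (relabeling of slits commutes with $\Phi$ on the nose, while slit jumps and parametrization point jumps are sent to slit jumps and parametrization point jumps). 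Third, I would verify $\Phi \circ \iota = \mathrm{id}$ and construct a straight-line homotopy $H_t$, $t \in [0,1]$, interpolating between the identity on $\bRAD_h(n,m)$ and $\iota \circ \Phi$: at time $t$, replace each inner radius $\rho_j$ by the convex combination $(1-t)\rho_j + t\cdot\frac{1}{2\pi}$ and apply the corresponding radial rescaling to all the slits and parametrization points simultaneously. One checks that $H_t$ takes non-degenerate configurations to non-degenerate configurations — no slit hits the inner or outer boundary under rescaling, and the ``squeezed'' condition of Proposition \ref{propdegcriterion} is invariant — so the homotopy restricts to $\RAD_h(n,m)$, giving the two claimed homotopy equivalences simultaneously.

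\smallskip

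The main obstacle I anticipate is \emph{not} the homotopy itself but the bookkeeping needed to see that $\Phi$ descends cleanly through the quotients and the CW structure: when several slits and a parametrization point lie on the same radial segment, the successor permutation $\tilde\omega$ and the angular-distance data $\vec r$ carry subtle information, and one must confirm that radial rescaling — which leaves arguments $\arg(\zeta_i)$ untouched — genuinely preserves conditions (iii), (v) and (vi) of Definition \ref{defpreconf} as well as the degeneracy criterion. A secondary subtlety is that $\bRAD_h(n,m)$ is the more delicate of the two spaces to topologize (it is not compact, unlike $\brad_h(n,m) \cong \bRad_h(n,m)$), so the continuity of $\Phi$ and of the homotopy $H_t$ should be argued directly from B\"odigheimer's description of the topology on $\bRAD_h(n,m)$ rather than by a compactness shortcut. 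Once these checks are in place, the deformation retraction is the whole proof.
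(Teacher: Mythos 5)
The core idea — retract by rescaling the inner radii to the fixed value $\frac{1}{2\pi}$ — is the right one and is what the paper does. However, your specific rescaling map $\Phi$ has a genuine gap. You propose to rescale each annulus \emph{separately}: for annulus $j$ with inner radius $\rho_j$, apply the homeomorphism $\bA^{(j)}_{[\rho_j,R]} \to \bA^{(j)}_{[\frac{1}{2\pi},R]}$ fixing the outer boundary. But the slit-pairing condition (ii) requires that if $\zeta_i$ lies on annulus $j$ and $\zeta_{\lambda(i)}$ on annulus $j'$ then $R - |\zeta_i| = R - |\zeta_{\lambda(i)}|$, i.e.\ $|\zeta_i| = |\zeta_{\lambda(i)}|$. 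When $\rho_j \neq \rho_{j'}$ your two rescalings have different scaling factors near the outer boundary, so a pair of slits with $|\zeta_i| = |\zeta_{\lambda(i)}|$ will generally be sent to slits with $|\Phi(\zeta_i)| \neq |\Phi(\zeta_{\lambda(i)})|$; the image is not a valid preconfiguration. The remark that $\Phi$ ``preserves the same-length condition since the lengths $R - |\zeta_i|$ are measured from the common outer radius'' is exactly where this fails — a rescaling fixing $|z| = R$ and moving $|z| = \rho_j$ does not preserve distance from $|z| = R$, and it does so by a $j$-dependent amount.

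The paper avoids this by splitting the retraction into two stages: first decrease all inner radii to the common value $\min_j(\rho_j)$ while keeping the slits fixed (this is continuous and preserves all conditions, since slits only move \emph{farther} from the inner boundary), and only then apply a \emph{single} affine transformation in the radial coordinate, the same one on every annulus, sending $\min_j(\rho_j)$ to $\frac{1}{2\pi}$ and fixing the outer radius. Because the second step is uniform across all annuli, it preserves the pairing condition (ii). You would also want to record the second discrepancy between the two models that the paper points out: B\"odigheimer's $\bRAD$ uses an exceptional set $\Omega$ to resolve the cyclic-ordering ambiguity when all slits on an annulus lie on at most two radial segments, whereas $\brad$ uses the angular-distance vector $\vec r$; one should observe that these encode equivalent data so that the identification is well defined before the radius-rescaling homotopy is even set up.
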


\begin{proof}[Sketch of proof]To explain the existence of these homotopy equivalences, we note that B\"odigheimer's $\bRAD$ and $\RAD$ differ from $\brad$ and $\rad$ only in the following two ways:
\begin{enumerate}[(i)]
	\item In $\bRAD$ and $\RAD$, the inner radii are allowed to vary in $(0,R_0)$ for some choice of $R_0 > 0$, while in $\brad$ and $\rad$ they are fixed to $\frac{1}{2\pi}$.
	\item In $\bRAD$ and $\RAD$, an exceptional set $\Omega$ is used to remove ambiguity when all slits on an annulus lie on two segments, while in $\brad$ and $\rad$ this role is played by the angular distances $\vec{r}$.
\end{enumerate}
The second of these encodes equivalent data; given the rest of the data of a radial slit configuration, $\Omega$ can be reconstructed from $\vec{r}$ and vice versa. The first says that the difference between the two spaces is in the choices of radii. More precisely, there is an inclusion $\brad \hookrightarrow \bRAD$ with homotopy inverse given by decreasing all radii to $\min(R_i)$ and changing the radial coordinates of all the data by an affine transformation that sends $\min(R_i)$ to $\frac{1}{2\pi}$ and fixes 1. This homotopy equivalence restricts to one between $\RAD$ and $\rad$.
\end{proof}

B\"odigheimer proved in Section 7.5 of \cite{bodigheimer}, with additional details in \cite{ebertradial}, that a version of $\RAD_h(n,m)$ without parametrization points on the outgoing boundary, is a model for the moduli space of cobordisms without parametrization of the outgoing boundary. This uses that $\Sigma(L)$ comes with a canonical conformal structure, being obtained by gluing subsets of $\bC$. Adding in the parametrizations for the outer boundary, this result implies:

\begin{theorem}[B\"odigheimer] \label{thm_bodig} The map that assigns to each $[L] \in \RAD_h(n,m)$ the conformal class of the cobordism $S(L)$ gives a homeomorphism
\[\RAD_h(n,m) \cong \bigsqcup \cM_g(n,m),\]
where the disjoint union is over triples $(g,n,m)$ satisfying $h = 2g-2+n+m$.
\end{theorem}

By the remarks above we have
\[\Rad_h(n,m) \simeq \bigsqcup_{[\Sigma]} B\mr{Diff}(\Sigma,\partial \Sigma),\]
where the disjoint union is over two-dimensional cobordisms with $n \geq 1$ incoming boundary components, $m \geq 1$ outgoing boundary components, and total genus $g \geq 0$.

B\"odigheimer proved Theorem \ref{thm_bodig} for connected cobordisms with no parametrization of the outgoing boundary, but this version of the theorem is an easy consequence of his. His proof amounts to checking that $\RAD_h(n,m)$ is a manifold of dimension $3h+m+n$ (see also \cite{ebertfriedrich} for remarks on the real-analytic structure). It sits as a dense open subset in $\bRAD_h(n,m)$. In this way we can think of $\bRAD_h(n,m)$ as a ``compactification'' of $\RAD_h(n,m)$. Informally, it is the compactification where handles or boundary components can degenerate to radius zero, as long as there is always a path from each incoming boundary component to an outgoing boundary component that does not pass through any degenerate handles or boundary components. Colloquially, ``the water must always be able to leave the tap.'' B\"{o}digheimer calls this the \emph{harmonic compactification of moduli space}. We now describe a deformation retract of it:

\begin{definition} \label{def_ubrad} The \emph{unilevel harmonic compactification} $\UbRad_h(n,m)$ is the subspace of $\bRad_h(n,m)$ given by cells corresponding to configurations satisfying $|\zeta_i| = R$ for all $i \in \{1,\ldots,2h\}$, i.e. all slits lie on the outer radius.\end{definition}

In addition to the inclusion $\iota \colon \UbRad_h(n,m) \hookrightarrow \bRad_h(n,m)$, there is also a projection $p \colon \bRad_h(n,m) \to \UbRad_h(n,m)$ which makes all slits have modulus $R$.

\begin{lemma}\label{lem_bradubrad} The maps $\iota$ and $p$ are mutually inverse up to homotopy.\end{lemma}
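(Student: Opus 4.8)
The plan is to show that the composite $p \circ \iota$ is the identity on $\UbRad_h(n,m)$ and that $\iota \circ p$ is homotopic to the identity on $\bRad_h(n,m)$ through an explicit ``slit-shrinking'' homotopy. The first assertion is immediate: the projection $p$ leaves a configuration unchanged precisely when all its slits already have modulus $R$, so restricting $p$ to the subspace $\UbRad_h(n,m)$ gives the identity; hence $p$ is a genuine retraction.

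For the more substantial direction, I would construct a deformation retraction $H \colon \bRad_h(n,m) \times [0,1] \to \bRad_h(n,m)$ that at time $t$ rescales the radial coordinates of all slits and parametrization points toward the outer radius $R$. Concretely, on a preconfiguration $L = (\vec{\zeta},\lambda,\tilde{\omega},\vec{r},\vec{P})$ one sets $H(L,t)$ to be the preconfiguration with $\zeta_i$ replaced by the point of modulus $|\zeta_i| \cdot (1-t) + R \cdot t$ on the same radial segment, leaving $\lambda$, $\tilde{\omega}$, $\vec{r}$, $\vec{P}$ unchanged. One must check (i) that this is well defined, i.e. the conditions (i)--(vi) of Definition \ref{defpreconf} are preserved — condition (ii) holds because $|\zeta_i| = |\zeta_{\lambda(i)}|$ is preserved by applying the same affine rescaling, and the others only involve arguments, which are untouched; (ii) that $H$ descends through the equivalence relation $\equiv$ of slit and parametrization-point jumps — the jump relations only ever move a slit or point onto another radial segment at a comparable modulus, and the rescaling is compatible with the inequalities $|\zeta_i| \ge |\zeta_j|$ that govern when a jump is allowed, so the relation is respected; and (iii) that $H$ is continuous, which in the cell-complex picture $\bRad$ of Definition \ref{def_radcw} amounts to seeing that $H$ is affine in the annular simplicial coordinates $\vec{t}_{n+1}$ (it collapses them all to the vertex corresponding to all slits at radius $R$) and constant in the radial coordinates, hence cellular and continuous. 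At $t=0$ this is the identity, at $t=1$ it lands in $\UbRad_h(n,m)$ and agrees with $\iota \circ p$, and throughout the homotopy it restricts to the identity on $\UbRad_h(n,m)$ (since those configurations have all $|\zeta_i| = R$ already), so $H$ exhibits $\UbRad_h(n,m)$ as a deformation retract and in particular $\iota \circ p \simeq \mathrm{id}$.

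The main obstacle will be point (ii): verifying that the homotopy $H$ is well defined on the quotient $\bRad_h(n,m)$ rather than just on the preconfiguration space $\bprad_h(n,m)$. A slit jump is permitted only when $r_i = 0$ and $|\zeta_i| \ge |\zeta_j|$ for $j = \omega(i)$, and it relocates $\zeta_i$ to modulus $|\zeta_i|$ on the radial segment through $\zeta_{\lambda(j)}$; after applying the rescaling at time $t$ one needs the rescaled moduli still to satisfy the corresponding inequality, which holds because $x \mapsto (1-t)x + tR$ is monotone and fixes the relation $|\zeta_i| \ge |\zeta_j|$, and because the rescaled relocated slit sits at modulus $(1-t)|\zeta_i| + tR$ on the segment of $\zeta_{\lambda(j)}$, exactly as prescribed by jumping the rescaled configuration. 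One must run through the parametrization-point jump case analogously, and also check that degeneracy is preserved in both directions so that $H$ restricts to the open subset $\Rad$ correspondingly (though for this lemma only the behaviour on $\bRad$ is needed). A careful but routine check using B\"odigheimer's degeneracy criterion (Proposition \ref{propdegcriterion}) — noting that rescaling toward $R$ can only move slits away from the inner boundary and keeps ``squeezed'' pairs squeezed — completes this.
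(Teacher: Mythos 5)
Your proposal is correct and follows exactly the approach of the paper's proof: $p\circ\iota=\mathrm{id}$ on the nose, and a linear ``slit-shrinking'' homotopy $|\zeta_i|\mapsto (1-t)|\zeta_i|+tR$ (i.e.\ $\zeta_i\mapsto \frac{(1-t)|\zeta_i|+Rt}{|\zeta_i|}\zeta_i$) exhibits $\iota\circ p\simeq\mathrm{id}$ on $\bRad$. The paper states this in one line without spelling out the verification that the homotopy descends through the relation $\equiv$; your extra checks are correct and in the right spirit, with one small inaccuracy not needed for the lemma: degeneracy is \emph{not} preserved by the homotopy (a slit on the inner boundary moves off it for $t>0$), though non-degeneracy is, so the homotopy does restrict to $\Rad$ as you say.
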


\begin{proof}The map $p \circ \iota$ is equal to the identity on $\UbRad$. For $\iota \circ p$, a homotopy from the identity on $\bRad$ to $\iota \circ p$ is given at time $t \in [0,1]$ sending each slit $\zeta_i$ to $\frac{(1-t)|\zeta_i| + Rt}{|\zeta_i|}\zeta_i$ under the homeomorphism with $\rad$.
\end{proof}

The spaces constructed in this section fit together in the following diagram
\[\begin{tikzcd}
	\PRad_h(n,m) \arrow{rrr}{\text{compactification}} \dar & & & \bPRad_h(n,m) \dar &  \\[-5pt]
	\QRad_h(n,m) \arrow{rrr}{\text{compactification}} \dar & & & \bQRad_h(n,m) \dar &  \\[-5pt]
	\Rad_h(n,m) \arrow{rrr}{\text{compactification}} & & & \bRad_h(n,m) \ar[shift left=.5 ex]{r}{\simeq} & \UbRad_h(n,m), \lar[shift left=.5ex] \end{tikzcd}\]
where all the horizontal maps are inclusions.

\begin{remark}One can make sense of glueing of cobordisms on the level of radial slits, see \cite{bodigheimer}. This construction gives $\RAD_h(n,m)$ the structure of a prop in topological spaces. One of the advantages of the radial slit configurations over fat graphs is the ease with which one can describe the prop structure.\end{remark}

\subsection{The universal surface bundle} In the previous section, we motivated radial slit configurations by explaining that a preconfiguration consists of data to construct a cobordism $S(L)$. The topology on the collection of radial slit configurations was guided by the idea that this construction produces a conformal family of cobordisms. In this section we make this precise by defining a universal surface bundle over $\Rad$ via its homeomorphism with $\rad$.

The equivalence relation $\equiv$ on $\mr{PRad}_h(n,m)$ is such that there is a canonical isomorphism of cobordisms with conformal structure between $S(L)$ and $S(L')$ if $L \equiv L'$. Thus we can make sense of the cobordism $S([L])$ for an equivalence class $[L]$. The idea for constructing the universal surface bundle over $\rad_h(n,m)$, is to make the construction of $S([L])$ continuous in $[L]$. The result is a space over $\rad_h(n,m)$, and we check it is a universal bundle by comparing it to the definition of the universal bundle in the conformal construction of moduli space.

We first make sense of the radial sectors $\overline{\Sigma}(L)$ as a space over $\bprad_h(n,m)$. This seems obvious; we think of the sectors as a subspace of a disjoint union of annuli for each $L$, so one is tempted to just state that $\tilde{\Sigma}(L)$ is the relevant subspace of 
\[\bprad_h(n,m) \times \left(\bigsqcup_{j=1}^n \bA^{(j)}_{R}\right).\]
Two minor problems arise: (i) the full sectors are not actually subspaces of annuli and (ii) the number of entire sectors is not constant over $\bprad_h(n,m)$. 

Both problems are relatively harmless: problem (ii) is solved by noting that the number of entire sectors is locally constant, so one can work separately over each of the subspaces of components with a fixed number of entire sectors. Problem (i) is solved by considering a version of $\bprad_h(n,m)$ where the preconfigurations $L$ are endowed with lifts of the slits to elements of $\bigsqcup_{i=1}^n \tilde{\bA}_R$, the disjoint union of the universal covers of the annuli, under the condition that the distances between them are still equal to the angular distances. Over this version one has a space with fibers given by $\bigsqcup_{i=1}^n \tilde{\bA}_R$, which does contain the full sectors. One then notes that there is a canonical homeomorphism between the sectors over the same configurations with different choices of lifts. In the end, we conclude there exists a space $\tilde{\bA}$ over $\bprad_h(n,m)$ whose fibers consist of a disjoint union of annuli, and there is a subspace $\overline{\mr{PS}}_h(n,m) \subset \tilde{\bA}$ whose fiber over $L$ can be canonically identified with the sector space $\tilde{\Sigma}(L)$.

Recall that $\approx_L$ is the equivalence relation on $\overline{\Sigma}(L)$ used when glueing the sectors together to obtain a surface. Using it fiberwise defines an equivalence relation $\sim$:

\begin{definition}Let $\sim$ be the equivalence relation on $\overline{\mr{PS}}_h(n,m)$ generated by $(L,z) \sim (L',z')$, where $L,L' \in \bprad_h(n,m)$, $z \in \overline{\Sigma}(L) \subset \overline{\mr{PS}}_h(n,m)$ and $z' \in \overline{\Sigma}(L') \subset \overline{\mr{PS}}_h(n,m)$, if $L = L'$ and $z \approx_L z'$.\end{definition}

As mentioned before, there is a canonical isomorphism $\phi_{L,L'}$ between $\Sigma(L)$ and $\Sigma(L')$ if $L \equiv L'$. Using this we can define a version of $\equiv$ for $\overline{\mr{PS}}_h(n,m)$.

\begin{definition}Let $\cong$ be the equivalence relation on $\overline{\mr{PS}}_h(n,m)$ generated by $\sim$ and by saying that $(L,z)$ and $(L',z')$ are equivalent if $L \equiv L'$ and $z' = \phi_{L,L'}(z)$.\end{definition}

We can now define the surface bundle.

\begin{definition}We define $\mr{PS}_h(n,m)$ to be the restriction of $\overline{\mr{PS}}_h(n,m)$ to $\rad_h(n,m)$. We then define $\mr{S}_h(n,m)$ as $\mr{PS}_h(n,m)/{\cong}$, which is a space over $\rad_h(n,m)$.\end{definition}

A priori this is a space over $\rad_h(n,m)$ with fibers having the structure of cobordisms, but it is in fact a universal surface bundle. This is implicit in \cite{bodigheimer} but not explicitly stated there. We explain the reasoning below:

\begin{proposition}The space $\mr{S}_h(n,m)$ over $\rad_h(n,m)$ is a universal surface bundle.\end{proposition}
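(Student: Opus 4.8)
The plan is to verify the two defining properties of a universal surface bundle: (1) that $\mr{S}_h(n,m) \to \rad_h(n,m)$ is a fiber bundle whose fibers are cobordisms of the prescribed topological type, with the correct boundary parametrizations, and (2) that it is classifying, i.e.\ that pulling back $\mr{S}_h(n,m)$ along a map $B \to \rad_h(n,m)$ gives every bundle of such cobordisms over a paracompact base, with the classifying map unique up to homotopy. The key observation making this tractable is Theorem \ref{thm_bodig}: we already know $\rad_h(n,m) \simeq \bigsqcup_{[\Sigma]} B\mr{Diff}(\Sigma,\partial\Sigma)$ via the conformal construction, and since all path components of $\mr{Diff}(\Sigma,\partial\Sigma)$ are contractible, the universal bundle is $\bigsqcup_{[\Sigma]} E\mr{Diff}(\Sigma,\partial\Sigma) \times_{\mr{Diff}(\Sigma,\partial\Sigma)} \Sigma$. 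So it suffices to identify $\mr{S}_h(n,m)$ with this standard model under the homeomorphism $\RAD_h(n,m) \cong \bigsqcup \cM_g(n,m)$.

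First I would address local triviality. Working over a component of $\bprad_h(n,m)$ with a fixed number of entire sectors, one has the space $\tilde{\bA}$ with fibers a disjoint union of annuli and the subspace $\overline{\mr{PS}}_h(n,m)$ whose fiber over $L$ is $\tilde{\Sigma}(L)$; the combinatorial type is locally constant on the non-degenerate locus, so over each open cell $\Rad_{[\cL]}$ the sectors vary by an honest isotopy and $\overline{\mr{PS}}$ restricted there is a trivial bundle of sector-spaces. The quotient by $\cong$ is then a fiberwise gluing by isotopies of the identification data $\alpha^\pm, \beta^\pm$, which produces a locally trivial family; on overlaps of cells one checks the combinatorial collapse maps $d^i_j$ are compatible with the gluing, so the local trivializations patch. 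This is essentially the content of Section 8--9 of \cite{bodigheimer}, and I would cite it rather than redo it, remarking only that the $\equiv$-quotient is harmless because $\phi_{L,L'}$ is a canonical isomorphism. The incoming and outgoing boundary parametrizations $\iota^\mr{in}_j, \iota^\mr{out}_k$ from Definition \ref{defconstrsl} vary continuously in $L$, giving the fiberwise boundary structure.

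Next I would show the bundle is classifying. Since $\Sigma(L)$ carries a canonical conformal structure (being glued from subsets of $\bC$), the total space $\mr{S}_h(n,m)$ maps to the total space of the conformal universal bundle over $\bigsqcup \cM_g(n,m)$ described at the end of Section \ref{subsec_confmodel} --- namely pairs $([g],x)$ modulo conformal diffeomorphism --- and this map covers the homeomorphism of Theorem \ref{thm_bodig}. On fibers it is the tautological identification of $\Sigma(L)$ with the conformal surface it represents, hence a homeomorphism on each fiber; a continuous fiberwise homeomorphism of fiber bundles over the same base is a bundle isomorphism. Since the conformal bundle is universal --- every cobordism bundle over a paracompact base with structure group $\mr{Diff}(\Sigma,\partial\Sigma)$ is pulled back from it, uniquely up to homotopy, by the Teichm\"uller-space argument of Section \ref{subsec_confmodel} (contractible total space, free proper action, contractible components) --- so is $\mr{S}_h(n,m)$.

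The main obstacle I anticipate is the local triviality at points of $\rad_h(n,m)$ lying on the boundaries between cells, where the combinatorial type jumps and slits coincide: there the sector space $\tilde{\Sigma}(L)$ changes by collapsing ordinary sectors to thin sectors (or full sectors appearing), and one must check that the fiberwise gluing still assembles to a \emph{locally} trivial family rather than merely a family of surfaces of the right homeomorphism type. The honest fix is the one B\"odigheimer uses: pass to the smooth/real-analytic manifold structure on $\RAD_h(n,m)$ (see \cite{ebertradial}, \cite{ebertfriedrich}), where one produces explicit local trivializations by flowing along vector fields that move the slits, and the degenerate-sector transitions are absorbed because the glued surface depends continuously --- indeed smoothly --- on the slit positions even across the walls. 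I would state this, invoke those references for the technical verification, and keep the argument here at the level of identifying the bundle with the known conformal model.
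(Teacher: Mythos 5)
Your proposal is correct and takes essentially the same route as the paper's (very brief) sketch: use Theorem \ref{thm_bodig} to identify the base with $\bigsqcup \cM_g(n,m)$ and observe that $\mr{S}_h(n,m)$ is the pullback of the conformal universal bundle from Subsection \ref{subsec_confmodel} under that identification. The one small point the paper makes explicit that you only treat implicitly is that $\mr{S}_h(n,m)$ lives over $\rad_h(n,m)$ while the homeomorphism of Theorem \ref{thm_bodig} is stated for $\RAD_h(n,m)$, so one first extends the bundle to $\RAD_h(n,m)$ by allowing the inner radii to vary before invoking the theorem; your elaborations on local triviality and the tautological fiberwise identification are a faithful unpacking of the paper's single sentence ``Pulling back the universal bundle over $\cM_g(n,m)$ \ldots exactly gives $\mr{S}_h(n,m)$.''
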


\begin{proof}[Sketch of proof] Varying radii allows one to extend $\mr{S}_h(n,m)$ to $\RAD_h(n,m)$. Theorem \ref{thm_bodig} tells us that the assignment $[L] \mapsto [S([L])]$ gives a homeomorphism $\RAD_h(n,m) \to \cM_g(n,m)$. Pulling back the universal bundle over $\cM_ g(n,m)$ defined at the end of Subsection \ref{subsec_confmodel} exactly gives $\mr{S}_h(n,m)$.\end{proof}

There is a universal $\mr{Mod}(S_{g,n+m})$-bundle over $\rad_h(n,m)$ given by the bundle with fiber over $[L]$ the isotopy classes diffeomorphisms of $\Sigma(L)$ fixing the boundary. We give an alternative explicit construction of this bundle in Definition \ref{deferad}.

\section{Admissible fat graphs and string diagrams}
\label{sec_fatgraphs}

\subsection{The definition}
Following the ideas of Strebel \cite{strebel}, Penner, Bowditch and Epstein  gave a triangulation of Teichm\"{u}ller space of surfaces with decorations, which is equivariant under the action of its corresponding mapping class group \cite{penner,bowditch_epstein}.  In this triangulation, simplices correspond to equivalence classes of marked fat graphs and the quotient of this triangulation gives a combinatorial model of the moduli space of surfaces with decorations. These ideas were studied by Harer for surfaces with punctures and boundary components \cite{Harer_arc} and used by Igusa to construct a category of fat graphs that models the mapping class groups of punctured surfaces \cite{igusahrt}. Godin extended Igusa's construction to surfaces with boundary and open-closed cobordisms \cite{godinunstable, godin}. 

In this section we define a category of fat graphs, as well as specific subcategories of it, in the spirit of Godin.  We also define the space of metric fat graphs in the spirit of Harer and Penner, as well as specific subspaces of these spaces, and show that these are the classifying spaces of these categories. Finally, we define the space of Sullivan diagrams as a quotient of a certain subspace of the space of metric fat graphs. It plays the role of a compactification.

\subsubsection{Fat graphs}We start with precise definitions of graphs and fat graphs.

\begin{definition}
A \emph{combinatorial graph} $G$ is a tuple $G=(V,H,s,i)$, with a finite set of \emph{vertices} $V$, a finite set of \emph{half edges} $H$, a \emph{source map} $s:H\to V$ and an \emph{edge pairing} involution $i:H\to H$ without fixed points.
\end{definition}

The source map $s$ ties each half edge to its source vertex, and the edge pairing involution $i$ attaches half edges together.  The set $E$ of \emph{edges} of the graph is the set of orbits of $i$. The \emph{valence} of a vertex $v\in V$ is the cardinality of the set $s^{-1}(v)$. A \emph{leaf} of a graph is a univalent vertex and an \emph{inner vertex} is a vertex that is not a leaf.  The \emph{geometric realization} of a combinatorial graph $G$ is the CW-complex $|G|$ with one 0-cell for each vertex, one 1-cell for each edge and attaching maps given by $s$ and $s \circ i$. A \emph{tree} is a graph whose geometric realization is a contractible space and a \emph{forest} is a disjoint union of trees.

\begin{definition}
A \emph{fat graph} $\Gamma=(G,\sigma)$ is a combinatorial graph together with a cyclic ordering $\sigma_v$ of the half edges incident at each vertex $v$. The \emph{fat structure} of the graph is given by the data $\sigma=(\sigma_v)$ which is a permutation of the half edges. 
\end{definition}

\begin{figure}[h!]
  \centering
    \begin{tikzpicture}
    \node at (0,1) {$\circlearrowleft$};
   	\draw (-2,0) circle (1cm);
   	\draw (-2,-1) -- (-2,1);
   	
   	\draw (2,1) arc (90:270:1);
   	\draw (2,1) to[out=0,in=160,looseness=2.5] (2,-1);
   	\draw [line width=1mm,white] (2,-.95) -- (2,.95);
   	\draw (2,-1) -- (2,1);
    \end{tikzpicture}
  \caption{Two different fat graphs  -- where the fat structure is given by the orientation of the plane, here denoted by the circular arrow -- with the same underlying combinatorial graph.}
  \label{Fat_example}
\end{figure}

From a fat graph $\Gamma=(G,\sigma)$ one can construct a surface with boundary $\Sigma_{\Gamma}$ by thickening the edges and the vertices. More explicitly, one can construct this surface by replacing each edge with a strip and glueing these strips to a disk at each vertex according to the fat structure. The cyclic ordering exactly gives the data required to do this. Notice that there is a strong deformation retraction of $\Sigma_{\Gamma}$ onto $|G|$ so one can think of $|G|$ as the skeleton of the surface.  

\begin{definition}
The \emph{boundary cycles} of a fat graph are the cycles of the permutation of half edges given by $\omega=\sigma\circ i$. Each cycle $\tau$ of $\omega$ gives a list of edges of the graph $\Gamma$ and thus determines a subgraph $\Gamma_{\tau}\subset \Gamma$, which we call the \emph{boundary graph} corresponding to $\tau$.
\end{definition}

\begin{remark}
Note that the fat structure of $\Gamma$ is completely determined by $\omega$. Moreover, one can show that the boundary cycles of a fat graph $\Gamma=(G,\omega)$ correspond to the boundary components of $\Sigma_{\Gamma}$ (cf. \cite{godinunstable}).  Therefore, the surface $\Sigma_{\Gamma}$ is completely determined up to topological type by the combinatorial graph and its fat structure.
\end{remark}

A fat graph gives one a surface, but not yet a cobordism. The difference is that it does not distinguish between incoming and outgoing boundary components, nor do these come with canonical parametrizations. Note that after deciding whether a boundary component is incoming or outgoing, a parametrization is uniquely determined once we pick a marked point and edge-lengths. Thus it suffices to add to each boundary component a leaf labeled either ``incoming'' or ``outgoing.''

\begin{definition}
A \emph{closed fat graph} $\Gamma=(\Gamma,L_\mr{in},L_\mr{out})$ is a fat graph with an ordered set of leaves and a partition of this set of leaves into two sets $L_\mr{in}$ and $L_\mr{out}$, such that:
\begin{enumerate}[(i)]
\item all inner vertices are at least trivalent,
\item there is exactly one leaf on each boundary cycle.  Given a leaf $l_i$ we denote its corresponding boundary graph by $\Gamma_{l_i}\subset \Gamma$. 
\end{enumerate}
Leafs in $L_\mr{in}$ or in $L_\mr{out}$, are called \emph{incoming} or \emph{outgoing} respectively.
\end{definition}

Note that the previous definition also removed unnecessary bivalent and univalent vertices. It turns out that one can consider an even more restricted type of fat graph, which reflects that (like in radial slits) we can decide to arrange the incoming boundary in a special way.
 
\begin{figure}
  \centering
    \begin{tikzpicture}
    	\clip (-1.5,-1.35) rectangle (8,1.35);
    	\draw (0,0) circle (1 cm);
    	\draw (2,0) circle (1 cm);
    	\draw [decoration={markings, mark=at position 0.5 with {\arrow{>}}},	postaction={decorate},blue] (135:.75) -- (135:1);
    	\node [blue] at (135:.5) {$3$};
    	\draw [decoration={markings, mark=at position 0.5 with {\arrow{<}}},	postaction={decorate},xshift=2cm] (-135:.75) -- (-135:1);
    	\node [xshift=2cm] at (-135:.5) {$4$};
    	\node at (1,0) {$\bullet$};
    	\draw (3,0) to[out=65,in=-115,looseness=2.5] (5,0);
    	\draw [line width = 2mm,white] (3.5,0) -- (4.5,0);
    	\draw (3,0) -- (6,0);
    	\node at (5,0) {$\bullet$};
    	\draw [xshift=7cm] (0,0) circle (1cm);
      	\draw [decoration={markings, mark=at position 0.5 with {\arrow{>}}},	postaction={decorate},xshift=7cm,blue] (180:.75) -- (180:1);  
      	\node [xshift=7cm,blue] at (180:.5) {$5$};	
    	\draw [decoration={markings, mark=at position 0.5 with {\arrow{>}}},	postaction={decorate},blue] (180:1.25) -- (180:1);
    	\node [blue] at (180:1.5) {$1$};
    	\draw [decoration={markings, mark=at position 0.75 with {\arrow{>}}},	postaction={decorate}] (5,0) -- (5,.3);
    	\node at (5,.5) {$2$};
    \end{tikzpicture}
  \caption{An example of a closed fat graph which is not admissible.  The incoming and outgoing leaves are marked by incoming or outgoing arrows.}
  \label{openclosed_example}
\end{figure}

\begin{definition}
Let $\Gamma$ be a closed fat graph.  Let $l_i$ denote a leaf of $\Gamma$ and $\Gamma_{l_i}\subset \Gamma$ be its corresponding boundary graph.  $\Gamma$ is called \emph{admissible} if the subgraphs $\Gamma_{l_i}-l_i$ for all incoming leaves $l_i$ are disjoint embedded circles in $\Gamma$. We refer to these boundary cycles as \emph{admissible cycles} (see Figure \ref{Admissible_example}).
\end{definition}

\begin{figure}[h!]
  \centering
    \begin{tikzpicture}[scale=0.95]
    	\clip (-1,-3.25) rectangle (11,3);
    	\draw (0,0) circle (1cm);
    	\draw (0,-1) -- (0,1);
    	\draw [decoration={markings, mark=at position 0.75 with {\arrow{>}}},	postaction={decorate}] (0:1) -- (0:.7);
    	\node at (0:.5) {$2$};
    	\draw [decoration={markings, mark=at position 0.75 with {\arrow{>}}},	postaction={decorate},blue] (0:1.3) -- (0:1);
		\node at (0:1.5) [blue] {$1$};    	
		\draw [decoration={markings, mark=at position 0.75 with {\arrow{>}}},	postaction={decorate}] (0,0) -- (-.3,0);
		\node at (-.5,0) {$3$};
		
		\begin{scope}[yshift=-1cm,xshift=4.5cm]
		
		\draw (0,0) circle (1cm);
		\draw (2.5,0) circle (1cm);
		\draw (5,0) circle (1cm);
	 	\draw [decoration={markings, mark=at position 0.75 with {\arrow{<}}},	postaction={decorate},blue] (0:1) -- (0:.7);
		\node at (0:.5) [blue] {$1$};
	 	\draw [decoration={markings, mark=at position 0.75 with {\arrow{<}}},	postaction={decorate},blue,xshift=2.5cm] (0:1) -- (0:.7);
		\node at (0:.5) [blue,xshift=2.5cm] {$3$};
		\draw [decoration={markings, mark=at position 0.75 with {\arrow{<}}},	postaction={decorate},blue,xshift=5cm] (0:1) -- (0:.7);
		\node at (0:.5) [blue,xshift=4.7cm] {$2$};
		\draw [decoration={markings, mark=at position 0.75 with {\arrow{>}}},	postaction={decorate},xshift=2.5cm] (-75:1) -- (-75:1.3);
		\node at (-75:1.5) [xshift=2.5cm] {$3$};
		
		\draw (1,0) to[out=-45,in=180] (2.5,-2) to[out=0,in=-135] ($(5,0) + (-135:1)$);
		\draw (-1,0) to[out=135,in=180] (-.2,1.5) to[out=0,in=165] ($(2.5,0) + (135:1)$);
		\draw ($(2.5,0) + (135:1)$) to[out=125,in=180] ($(2.5,0) + (100:2)$) to[out=0,in=45] ($(2.5,0) + (45:1)$);
		\draw [decoration={markings, mark=at position 0.75 with {\arrow{>}}},	postaction={decorate}] ($(2.5,0) + (100:2)$) -- ($(2.5,0) + (100:1.7)$);
		\node at ($(2.5,0) + (100:1.5)$) {$5$};
		\draw [xshift=2cm,yshift=3cm] (100:.5) to[out=70,in=45,looseness=3] (-135:.5);
		\draw [line width = 2mm,white] (2,3) ++ (90:.5) arc (90:45:.5);
		\draw (2,3) circle (.5cm);
		\draw ($(2.5,0) + (100:2)$) -- (2,2.5);

		\end{scope}
    \end{tikzpicture}
  \caption{Two examples of admissible fat graphs.  The first graph has the topological type of the pair of pants, and second graph that of a surface of genus $1$ with $5$ boundary components.}
  \label{Admissible_example}
\end{figure}

We organize fat graphs into a category. The idea is that when we use fat graphs to construct surfaces, we should be able to pick different lengths for the edges to obtain different conformal classes. Furthermore, if the length of an edge goes to zero, we expect the two disks corresponding to the vertices to be glued together. This makes sense as long as the edge is not a loop. The morphisms in the category of fat graphs encode this relationship between graphs. Recall that a tree is a graph whose geometric realization is contractible and a forest is a disjoint union of trees.

\begin{definition}\label{def_fatfatad}We define two categories:\begin{itemize}
\item The \emph{category of closed fat graphs $\Fat$} is the category with objects isomorphism classes of closed fat graphs and morphisms $[\Gamma]\to[\Gamma/F]$ given by collapsing to a point in each tree in a subforest of $\Gamma$ that does not contain any leaves.
\item The \emph{category of admissible fat graphs $\Fatad$} is the full subcategory of $\Fat$ with objects isomorphism classes of admissible fat graphs.
\end{itemize}
\end{definition}

The composition in $\Fat$ and $\Fatad$ and hence the categories themselves, are well defined. The category $\Fat$ was introduced by Godin in \cite{godinunstable} and $\Fatad$ is a slight variation of it introduced by the same author in \cite{godin}.

Note that the collapse of a subforest which does not contain any leaves induces a surjective homotopy equivalence upon geometric realizations and does not change the number of boundary components. Therefore, if there is a morphism $\varphi \colon [\Gamma]\to[\tilde{\Gamma}]$ between isomorphism classes of fat graphs, then the surfaces $\Sigma_{[\Gamma]}$ and $\Sigma_{[\tilde{\Gamma}]}$ are homeomorphic.

From a closed fat graph we can construct a two-dimensional cobordism.  The underlying surface of the cobordism is the oriented surface $\Sigma_{\Gamma}$.  This gives an orientation of the incoming and outgoing boundary component, so its enough to give a labeled marked point in each boundary component.  Note that each of the boundary components corresponds to exactly one leaf in the graph, which gives a marked point in the boundary component. We label this according to the labeling of its leaf. This gives a cobordism, well-defined up to isomorphism.

\subsubsection{Metric fat graphs}
We motivated the morphisms in the category of fat graphs by thinking about lengths of edges. This is made more concrete in the space of metric fat graphs, which we describe now. This space has a deformation retraction onto the classifying space of the category of fat graphs, but we feel metric fat graphs are more intuitive and hence discuss them first. Several equivalent versions of this space and its dual concept (using weighted arc systems instead of fat graphs) have been studied by Harer, Penner, Igusa and Godin \cite{Harer_cohomology_moduli, penner, igusahrt, godin_thesis}. 

The idea is simple: a metric fat graph is a fat graph with lengths assigned to its edges. We need a bit more care to make this interact well with the additional data and properties of admissible fat graphs.

\begin{definition}
A \emph{metric admissible fat graph} is a pair $(\Gamma,\lambda)$ where $\Gamma$ is an admissible fat graph and $\lambda$ is a \emph{length function}, i.e. a function $\lambda \colon E_{\Gamma}\to [0,1]$ where $E_{\Gamma}$ is the set of edges of $\Gamma$ and $\lambda$ satisfies:
\begin{enumerate}[(i)]
\item $\lambda(e)=1$ if $e$ is a leaf,
\item $\lambda^{-1}(0)$ is a forest in $\Gamma$ and $\Gamma/\lambda^{-1}(0)$ is admissible,
\item for any admissible cycle $C$ in $\Gamma$ we have $\sum_{e\in C}\lambda(e) = 1$.
\end{enumerate}
\end{definition}

We will call the value of $\lambda$ on $e$ the \emph{length} of the edge $e$ in $\Gamma$.

\begin{definition}
Suppose $\Gamma$ is an admissible fat graph with $p$ admissible cycles. Let $(n_1,n_2,\ldots,n_p)$ be the number of edges on each admissible cycle and set $n \coloneqq \sum_i n_i$.  The \emph{space of length functions} on $\Gamma$ is given as a set by
\[\mathpzc{M}(\Gamma) \coloneqq \lbrace\lambda \colon E_{\Gamma}\to[0,1]\,\vert\,\lambda \text{ is a length function}\rbrace\]
There is a natural inclusion
\[\mathpzc{M}(\Gamma)\cof \Delta^{n_1-1}\times\Delta^{n_2-1}\times\cdots \times \Delta^{n_p-1}\times([0,1])^{\# E_{\Gamma}-n}\]
we give $\mathpzc{M}(\Gamma)$ the subspace topology via this inclusion.
\end{definition}

\begin{definition}
Two metric admissible fat graphs $(\Gamma,\lambda)$ and $(\tilde{\Gamma}, \tilde{\lambda})$ are called \emph{isomorphic} if there is an isomorphism of admissible fat graphs $\varphi:\Gamma\to\tilde{\Gamma}$ such that $\lambda=\tilde{\lambda}\circ\varphi_*$, where $\varphi_*$ is the map induced by $\varphi$ on $E_{\Gamma}$.
\end{definition}

\begin{definition}\label{def_mfatad}
The space of \emph{metric admissible fat graphs} is defined as
\[\MFatad \coloneqq \frac{\bigsqcup_{\Gamma}\mathpzc{M}(\Gamma)}{\sim}\]
where $\Gamma$ runs over all admissible fat graphs and the equivalence relation $\sim$ is given by
\[(\Gamma,\lambda)\sim(\tilde{\Gamma},\tilde{\lambda})\Longleftrightarrow
(\Gamma/{\lambda^{-1}(0)},\lambda\vert_{E_{\Gamma}-\lambda^{-1}(0)})
\cong
(\tilde{\Gamma}/{\tilde{\lambda}^{-1}(0)},\tilde{\lambda}\vert_{E_{\tilde{\Gamma}}-\tilde{\lambda}^{-1}(0)})\]
\end{definition}

In other words, (i) we identify isomorphic admissible fat graphs with the same metric and (ii) we identify a metric admissible fat graph with some edges of length $0$ with the metric fat graph in which these edges are collapsed and all other edge lengths remain unchanged.

\begin{lemma}
\label{metric_and_nerve}
There is a deformation retraction of the space of metric admissible fat graphs $\MFatad$ onto the geometric realization of the nerve of $\Fatad$.
\end{lemma}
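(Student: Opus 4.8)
The plan is to exhibit $|N_\bullet\Fatad|$ as a subspace of $\MFatad$ in a natural way and then construct an explicit fiberwise-linear deformation retraction. The standard way to realize the nerve of a poset-like category of collapses inside a space of metrics is via barycentric-type coordinates. First I would record the combinatorial description of simplices in $N_\bullet\Fatad$: a $k$-simplex is a chain of composable collapses $[\Gamma_0]\to[\Gamma_1]\to\cdots\to[\Gamma_k]$, where each map collapses a leaf-free subforest. Such a chain is equivalent to the data of the top graph $[\Gamma_0]$ together with a nested sequence of subforests $F_1\subseteq F_2\subseteq\cdots\subseteq F_k$ of $\Gamma_0$ (none containing a leaf), and $\Gamma_i = \Gamma_0/F_i$. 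Non-degenerate simplices correspond to strict chains.

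Next I would define the embedding $|N_\bullet\Fatad|\hookrightarrow\MFatad$. Given a non-degenerate $k$-simplex as above and barycentric coordinates $(t_0,\dots,t_k)$ with $\sum t_i = 1$, one assigns to the top graph $\Gamma_0$ the length function that gives each edge $e$ of $\Gamma_0$ the length (suitably normalized on each admissible cycle) determined by $\sum_{i : e \notin F_i} t_i$ — i.e.\ an edge is "as long as the total weight of the stages at which it has not yet been collapsed." The normalization on admissible cycles is forced by condition (iii) in the definition of length function, and one checks conditions (i) and (ii) hold: leaves are never in any $F_i$ so get length proportional to $1$, and the zero-set of this length function is exactly the largest $F_i$ with $t_i,\dots,t_k$ on the appropriate side, which is a leaf-free forest whose collapse is admissible since each $\Gamma_i$ is. The face and degeneracy identifications of the nerve match the gluing relation $\sim$ of Definition \ref{def_mfatad} precisely because setting $t_i = 0$ either forgets a stage (inner face/degeneracy) or pushes to a quotient graph (outer faces), so this assembles to a well-defined continuous injection on geometric realizations; that it is a homeomorphism onto its image follows as both sides are built from the same simplices with the same identifications.

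It then remains to deformation retract $\MFatad$ onto this image. A point of $\MFatad$ is represented by $(\Gamma,\lambda)$ with $\lambda^{-1}(0)$ collapsed; the issue is that $\lambda$ need not arise from a chain of subforests — the sublevel sets $\{\lambda \le s\}$ of a general length function need not be forests or admissible, and need not be linearly ordered in a way that gives a simplex. The retraction should push $\lambda$ toward a length function whose sublevel sets \emph{are} a nested family of collapsible subforests: concretely, order the edge-lengths and at time $u$ interpolate linearly from $\lambda$ toward the length function obtained by "rounding" $\lambda$ to the piecewise-constant profile recording only the order in which edges would be collapsed. One must do this compatibly with the normalization on admissible cycles — rescaling each admissible cycle to keep total length $1$ throughout — and one must check the homotopy is continuous across the strata of $\MFatad$ where edges get collapsed (i.e.\ respects $\sim$). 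I expect this last point — producing a single continuous homotopy that simultaneously (a) is fiberwise affine, (b) preserves admissibility and the cycle-length constraints at every time, and (c) is well-defined on the quotient, as edges pass through length $0$ — to be the main obstacle; the cleanest route is probably to build it first on each $\mathpzc{M}(\Gamma)$ before quotienting, arranging by naturality in collapses $\Gamma \to \Gamma/F$ that the homotopies glue. Alternatively one can invoke Quillen's Theorem A or a nerve lemma: cover $\MFatad$ by the (closed) subspaces $\{\lambda \le \lambda(e)\text{ is a forest for the relevant threshold}\}$ indexed by simplices and check the combinatorics, but the explicit straight-line homotopy is more in keeping with the rest of the paper.
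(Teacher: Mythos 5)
Your overall plan --- realize $|\Fatad|$ as a subspace of $\MFatad$ by a natural length-function assignment and then construct an explicit affine deformation retraction --- matches the paper's. The difference is in the choice of embedding, and that difference is exactly why your retraction step remains open while the paper's is a one-liner. You use the ``partial-sum'' assignment $\lambda(e)=\sum_{i:\,e\notin F_i}t_i$ (the formula familiar from outer-space-type models), normalized along admissible cycles; the image of that map inside each $\mathpzc{M}(\Gamma)$ is the union of barycentric cells cut out by the requirement that the sublevel forests of $\lambda$ be nested, leaf-free and collapsible. That is a polyhedral complex, not an affine slice, and ``rounding $\lambda$ to a piecewise-constant profile recording only the order in which edges would be collapsed'' is not obviously a continuous operation across the strata where ties appear, disappear or occur along non-forests --- which you yourself flag as the main obstacle. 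This is a genuine gap, not merely a deferred computation.

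The paper instead takes, for a chain $[\Gamma_0]\to\cdots\to[\Gamma_k]$ with weights $(s_0,\ldots,s_k)$, the convex combination $\sum_i s_i\lambda_i$ of the \emph{uniform} length functions $\lambda_i$ on $\Gamma_0$ associated to the successive quotients $\Gamma_i$: length $0$ on edges collapsed in $\Gamma_i$, length $1/n^i_j$ on each edge over the $j$th admissible cycle of $\Gamma_i$, and length $1/m^i$ on each of the remaining $m^i$ edges off the admissible cycles. With this choice every $\lambda_i$ has off-cycle lengths summing to a fixed constant, and hence so does every convex combination, so the image lies in a linear slice of $\mathpzc{M}(\Gamma)$ rather than a polyhedral subcomplex. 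The retraction is then the straight-line homotopy $r\bigl((\Gamma,\lambda),t\bigr)=\bigl(\Gamma,(1-t)\lambda+t\lambda_g\bigr)$, where $\lambda_g$ leaves the admissible-cycle lengths unchanged and divides the remaining lengths by their total $g(\Gamma,\lambda)=\sum_{e\in\tilde E_\Gamma}\lambda(e)$. Being a fiberwise affine rescaling, this is manifestly continuous, preserves the admissible-cycle normalization, and descends along the collapsing relation $\sim$. In short: the averaged embedding puts the target on an affine slice so the retraction is just a rescaling; with your partial-sum embedding you would need a substantially more delicate argument, and it is not clear that a fiberwise straight-line homotopy onto your image exists at all. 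If you want to keep your embedding, a cleaner route is the second alternative you sketch (a nerve/Theorem A argument) rather than trying to force an explicit affine homotopy.
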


\begin{proof}
We will first give a continuous map $\iota \colon \vert\Fatad\vert\to\MFatad$. A point $x\in \vert\Fatad\vert$ is represented by $x=([\Gamma_0]\to[\Gamma_1]\to\ldots\to[\Gamma_k],s_0,s_1,\ldots s_k)\in N_k\Fatad\times\Delta^k$, where $N_k$ denotes the set of $k$-simplices of the nerve.  Choose representatives $\Gamma_i$ for $0\leq i\leq k$ and for each $i$, let $C^i_j$ denote the $j$th admissible cycle of $\Gamma_i$, $n^i_j$ denote the number of edges in $C^i_j$ and $m^i$ denote the number of edges that do not belong to the admissible cycles. Each graph $\Gamma_i$ naturally defines a metric admissible fat graph $(\Gamma_0,\lambda_i)$ where $\lambda_i$ is given as follows:
\begin{align*}
	\lambda_i \colon E_{\Gamma_0} & \longrightarrow [0,1]\\
	e&\longmapsto 
	\begin{cases}
		0 & \text{if } e \text{ is collapsed in }\Gamma_i,\\
		{1}/{n^i_j} & \text{if } e\in C^i_j, \\
		{1}/{m^i}& \text{otherwise.}
	\end{cases}\end{align*}
Then define $\iota(x) \coloneqq (\Gamma_0,\sum_{i=0}^k s_i\lambda_i)$.  It is easy to show that this assignment is well defined and respects the simplicial relations of the geometric realization and thus defines a continuous map.  Moreover, it is injective map between Hausdorff spaces with compact image, so is a homeomorphism onto its image. Note that the image of $\iota$ is the subspace of metric graphs where the sum of the lengths of the edges that do not belong to the admissible cycles is 1. 

We now construct a continuous map $r \colon \MFatad\times [0,1]\to \MFatad$ which is a strong deformation retraction of $\MFatad$ onto the image of $\iota$, by rescaling.  Since all the graphs we are considering are finite, we can define a continuous function $g$ as follows:
\begin{align*}
	g \colon \MFatad & \longrightarrow \R^{> 0}\\
	(\Gamma,\lambda )&\longmapsto 
	\sum_{e\in \tilde{E}_{\Gamma}}
	\lambda (e),
\end{align*}
where $\tilde{E}_{\Gamma}$ is the set of edges that do not belong to the admissible cycles.  We then define $r$ by linear interpolation as $r((\Gamma,\lambda),t)\coloneqq\left(\Gamma,(1-t)\lambda+t\lambda_g\right)$, where $\lambda_g$ is the rescaled length function given by:
\begin{align*}
	\lambda_g \colon E_{\Gamma} & \longrightarrow \R^{\geq 0}\\
	e&\longmapsto 
	\begin{cases}
		\lambda(e) & \text{if } e \text{ belongs to an admissible cycle,}\\
		\frac{\lambda(e)}{g(\Gamma,\lambda)} & \text{if } e\text{ does not belong to an admissible cycle.}\end{cases}\qedhere\end{align*}
\end{proof}

\begin{remark}
The space $\MFatad$  and the category $\Fatad$ split into components indexed by the topological type of the graphs as two-dimensional cobordisms. That is, we have
\[\MFatad \cong \bigsqcup_{g,n,m} \MFatadg, \qquad \text{and} \qquad \Fatad \cong \bigsqcup_{g,n,m} \Fatadg,\]
where $\MFatadg$ and $\Fatadg$ are the connected components corresponding to admissible fat graphs with $n$ admissible cycles which are homotopy equivalent to a surface of total genus $g$ and $n+m$ boundary components.
\end{remark}

\subsubsection{Sullivan diagrams} We now define a quotient space $\SD$ of $\MFatad$, which we will see in section \ref{sec_sdharmonic} is the analogue of the harmonic compactification for admissible fat graphs. To define it, we first describe an equivalence relation $\sim_{\SD}$ on metric admissible fat graphs.  

\begin{definition}
We say $\Gamma_1 \sim_{\SD} \Gamma_2$ if $\Gamma_2$ can be obtained from $\Gamma_1$ by:
\begin{description}
\item[Slides] Sliding vertices along edges that do not belong to the admissible cycles.
\item[Forgetting lengths of non-admissible edge] Changing the lengths of the edges that do not belong to the admissible cycles. 
\end{description}
\end{definition}

\begin{figure}
  \centering
    \begin{tikzpicture}[scale=0.95]
    	\clip (-2,-7) rectangle (8,2.5);
    	\draw (0,0) circle (1cm);
    	\draw [decoration={markings, mark=at position 0.75 with {\arrow{>}}},	postaction={decorate},blue] (.7,0) -- (1,0);
    	\node at (.5,0) [blue] {\footnotesize $1$};
    	\draw [decoration={markings, mark=at position 0.75 with {\arrow{>}}},	postaction={decorate}] (1,0) -- (1.3,0);
    	\node at (1.5,0)  {\footnotesize $3$};
   		\draw [decoration={markings, mark=at position 0.75 with {\arrow{>}}},	postaction={decorate}] (-135:1) -- (-135:1.3);
    	\node at (-135:1.5)  {\footnotesize $2$};
     	\draw ({.15*360}:1) to[out=100,in=140,looseness=3] node[pos=.5] (nodemid) {} (180:1);	
     	\draw [decoration={markings, mark=at position 0.75 with {\arrow{>}}},	postaction={decorate}] (nodemid.center) -- ($(nodemid.center)+(-45:.3cm)$);
     	\node at ($(nodemid.center)+(-45:.5cm)$) {$1$};
     	\node at (-135:1.5)  {\footnotesize $2$};
     	\draw (-{.2*360}:1) to[out=-100,in=-160,looseness=3] (180:1);
     	\node at ({.15*360/2}:.9) [text opacity=1,opacity=.7,fill=white] {\tiny $0.15$};      	
     	\node at ({.35*360/2+.15*360}:.9) [text opacity=1,opacity=.7,fill=white] {\tiny $0.35$};      	
      	\node at ({.15*360/2+.5*360}:.9) [text opacity=1,opacity=.7,fill=white] {\tiny $0.15$};           	
      	\node at ({.15*360/2+.65*360}:.9) [text opacity=1,opacity=.7,fill=white] {\tiny $0.15$};    
      	\node at ({.2*360/2+.8*360}:.9) [text opacity=1,opacity=.7,fill=white] {\tiny $0.2$}; 
      	
      	\draw[<->] (2,0) -- node[pos=.5,above] {$\sim_{SD}$} (3,0);
      	
      	\begin{scope}[xshift=6cm]
      	\draw (0,0) circle (1cm);
      	\draw [decoration={markings, mark=at position 0.75 with {\arrow{>}}},	postaction={decorate},blue] (.7,0) -- (1,0);
      	\node at (.5,0) [blue] {\footnotesize $1$};
      	\draw [decoration={markings, mark=at position 0.75 with {\arrow{>}}},	postaction={decorate}] (1,0) -- (1.3,0);
      	\node at (1.5,0)  {\footnotesize $3$};
      	\draw [decoration={markings, mark=at position 0.75 with {\arrow{>}}},	postaction={decorate}] (-135:1) -- (-135:1.3);
      	\node at (-135:1.5)  {\footnotesize $2$};
      	\draw ({.15*360}:1) to[out=100,in=140,looseness=3] node[pos=.5] (nodemid) {} (180:1);	
      	\draw [decoration={markings, mark=at position 0.85 with {\arrow{>}}},	postaction={decorate}] ({-.2*360}:1) -- ($({-.2*360}:1)+(-135:.4)$);
      	\node at ($({-.2*360}:1)+(-135:.6)$) {$1$};
      	\node at (-135:1.5)  {\footnotesize $2$};
      	\draw (-{.2*360}:1) to[out=-100,in=-135,looseness=1.5] (-2,2) to[out=45,in=80] ({.15*360}:1);
      	\end{scope}
      	
      	\draw[<->] (1,-2) -- node[pos=.5,below left] {$\sim_{SD}$} (2,-2.5);
      	\draw[<->] (5,-2) -- node[pos=.5,below right] {$\sim_{SD}$} (4,-2.5);
      	
      	 \begin{scope}[xshift=4cm,yshift=-5cm]
      	\draw (0,0) circle (1cm);
      	\draw [decoration={markings, mark=at position 0.75 with {\arrow{>}}},	postaction={decorate},blue] (.7,0) -- (1,0);
      	\node at (.5,0) [blue] {\footnotesize $1$};
      	\draw [decoration={markings, mark=at position 0.75 with {\arrow{>}}},	postaction={decorate}] (1,0) -- (1.3,0);
      	\node at (1.5,0)  {\footnotesize $3$};
      	\draw [decoration={markings, mark=at position 0.75 with {\arrow{>}}},	postaction={decorate}] (-135:1) -- (-135:1.3);
      	\node at (-135:1.5)  {\footnotesize $2$};
      	\draw ({-.2*360}:1) to[out=-100,in=-160,looseness=2] node[pos=.5] (nodemid) {} (180:1);	
      	\draw [decoration={markings, mark=at position 0.85 with {\arrow{>}}},	postaction={decorate}] ({.15*360}:1) -- ($({.15*360}:1)+(115:.4)$);
      	\node at ($({.15*360}:1)+(115:.6)$) {$1$};
      	\node at (-135:1.5) [fill=white,opacity=.7,text opacity=1] {\footnotesize $2$};
      	\draw (-{.2*360}:1) to[out=-90,in=-135,looseness=1.7] (-2,2) to[out=45,in=80] ({.15*360}:1);
      	\end{scope}
    \end{tikzpicture}
  \caption{Three equivalent metric admissible fat graphs. On the last two graphs the lengths of the edges of the admissible cycle have been left out; they equal those of the first graph.}
  \label{Sullivan_example}
\end{figure}

\begin{definition}
A \emph{metric Sullivan diagram} is an equivalence class of metric admissible fat graphs under the relation $\sim_{\SD}$.  
\end{definition}

We can informally think of a Sullivan diagram as an admissible fat graph where the edges not belonging to the admissible cycles are of length zero.

\begin{definition}\label{def_sd}
The space of \emph{Sullivan diagrams} $\SD$ is the quotient space $\SD=\MFatad/{\sim_{\SD}}$. 
\end{definition}

\begin{remark}
A path in $\SD$ is given by continuously moving the vertices on the admissible cycles. This space splits into connected components given by topological type.
\end{remark}

\begin{remark}In Section \ref{sec_sdharmonic} we show $\SD$ has canonical CW-complex structure.  Its cellular chain complex is the complex of (cyclic) Sullivan chord diagrams introduced by Tradler and Zeinalian. It was used by them and later by Wahl and Westerland, to construct operations on the Hochschild chains of symmetric Frobenius algebras \cite{TradlerZeinalian, wahlwesterland}.
\end{remark}

\subsection{The universal mapping class group bundle}

In this section we describe the universal mapping class group bundles over $\Fatad$ and $\MFatad$. Recall that from an admissible fat graph we can construct a cobordism which contains the graph as a deformation retract, though this depends on some choices. The idea for the construction of the universal mapping class group bundle, is that its fiber over an admissible fat graph $\Gamma$ consists all ways that $\Gamma$ can sit in a fixed standard cobordism.

For each topological type of cobordism fix a representative surface $\Sg$ of total genus $g$ with $n$ incoming boundary components and $m$ outgoing boundary components. Fix a marked point $x_k$ in the $k$th incoming boundary for $1\leq k\leq n$ and a marked point $x_{k+n}$ in the $k$th outgoing boundary $1\leq k\leq m$. 

\begin{definition}
Suppose $\Gamma$ is an admissible fat graph of topological type $\Sg$. Let $v_{\mr{in},k}$ denote the $k$th incoming leaf and $v_{\mr{out},k}$ denote the $k$th outgoing leaf.  A \emph{marking} of $\Gamma$ is an isotopy class of embeddings $H \colon |\Gamma|\cof \Sg$ such that $H(v_{\mr{in},k})=x_k$, $H(v_{\mr{out},k})=x_{k+n}$ and the fat structure of $\Gamma$ coincides with the one induced by the orientation of the surface. We will call a pair $(\Gamma,[H])$ a \emph{marked fat graph} and denote by $\Mark(\Gamma)$ the \emph{set of markings of $\Gamma$}.
\end{definition}

\begin{lemma} \label{marking_complement} Any marking $H \colon |\Gamma|\cof \Sg$ is a homotopy equivalence, and the map on $\pi_1$ induced by $H$ sends the $i$th boundary cycle of $\Gamma$ to the $i$th boundary component of $\Sg$.\end{lemma}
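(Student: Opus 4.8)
The statement has two parts, and both follow quickly once we unpack the definition of a marking. First, to see that $H\colon|\Gamma|\cof\Sg$ is a homotopy equivalence: the surface $\Sg$ strong deformation retracts onto the image $H(|\Gamma|)$. The plan is to observe that $\Gamma$, being an admissible fat graph, has a thickening $\Sigma_\Gamma$ onto which $|\Gamma|$ is a strong deformation retract, and that the embedding $H$ extends to an embedding of this thickening into $\Sg$ whose complement is a disjoint union of half-open collars of the boundary components. Concretely, because $H$ respects the fat structure (the cyclic orderings at vertices agree with those induced by the orientation of $\Sg$), a regular neighborhood of $H(|\Gamma|)$ in $\Sg$ is precisely a copy of the thickened surface $\Sigma_\Gamma$, and since $\Gamma$ and $\Sg$ have the same topological type this regular neighborhood is all of $\Sg$ minus open collars of $\partial\Sg$. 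Deformation retracting those collars and then retracting $\Sigma_\Gamma$ onto $|\Gamma|$ exhibits the homotopy inverse, so $H$ is a homotopy equivalence.

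For the second part — that the induced map on $\pi_1$ carries the $i$th boundary cycle of $\Gamma$ to (the conjugacy class of) the $i$th boundary component of $\Sg$ — the key point is that the boundary cycles of a fat graph correspond exactly to the boundary components of its thickening $\Sigma_\Gamma$, as recorded in the remark following the definition of boundary cycles. Under the identification of $\Sigma_\Gamma$ with a regular neighborhood of $H(|\Gamma|)$ from the first part, the boundary circle of $\Sigma_\Gamma$ corresponding to the cycle $\tau_i$ is freely homotopic in $\Sg$ to the $i$th boundary component of $\Sg$ (they cobound the collar). The one thing to pin down is that the \emph{ordering} matches: this is exactly what the conditions $H(v_{\mr{in},k})=x_k$ and $H(v_{\mr{out},k})=x_{k+n}$ in the definition of a marking guarantee, since the $i$th leaf of $\Gamma$ sits on the boundary graph $\Gamma_{\tau_i}$ and is sent to the marked point $x_i$ on the $i$th boundary component of $\Sg$. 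Tracing the leaf through the collar identifies $\tau_i$ with the $i$th boundary component as required.

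The main obstacle, such as it is, is purely a matter of making the ``regular neighborhood equals thickening'' claim precise: one must check that an isotopy class of embeddings $|\Gamma|\cof\Sg$ respecting the fat structure has a regular neighborhood canonically homeomorphic to $\Sigma_\Gamma$, rel the combinatorial boundary data. This is standard surface topology (it is essentially the statement that a ribbon graph structure determines the thickening), and once it is in hand both assertions are immediate. I would therefore spend most of the written proof setting up this neighborhood identification carefully and then deduce the homotopy equivalence and the boundary-cycle correspondence as corollaries, citing the remark after the definition of boundary cycles (cf. \cite{godinunstable}) for the dictionary between boundary cycles and boundary components.
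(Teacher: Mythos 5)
Your proof is essentially the paper's: both arguments thicken $H(|\Gamma|)$ inside $\Sg$ to a ribbon subsurface homeomorphic to $\Sigma_\Gamma$ using the compatibility of fat structure with orientation, observe that the complement consists of boundary collars, and obtain a deformation retraction of $\Sg$ onto $H(|\Gamma|)$. The only organizational difference is that the paper immediately invokes the condition $H(v_{\mr{in},k})=x_k$, $H(v_{\mr{out},k})=x_{k+n}$ to conclude that each boundary component of the thickened subsurface meets a boundary component of $\Sg$ (which is what forces the complement to be collars), whereas you first appeal to ``same topological type'' for the collar claim and only bring in the leaf-to-marked-point condition when verifying the ordering of boundary cycles; since the topological-type argument alone requires an Euler characteristic and connectedness computation, the paper's phrasing is marginally more direct, but your version is correct and in fact spells out the boundary-cycle correspondence more carefully than the paper does.
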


\begin{proof} Since the fat structure of $\Gamma$ coincides with the one induced by the orientation of the surface we can thicken $\Gamma$ inside $\Sg$ to a subsurface $S_{\Gamma}$ of the same topological type as $\Sg$.  Moreover, by the definition of a marking each boundary component of $S_{\Gamma}$ meets a boundary component of $\Sg$.  Thus, there is a deformation retraction of $\Sg$ onto this subsurface and onto $\Gamma$.\end{proof}

\begin{lemma}
\label{markings} Let $\Gamma$ be an admissible fat graph, $F$ be a forest in $\Gamma$,  which does not contain any leaves of $\Gamma$. Then there is a bijection $\Mark(\Gamma)\to \Mark(\Gamma/ F)$ denoted by $[H] \mapsto [H_F]$. 

This identification depends on the map connecting both graphs i.e. given $[H]$ a marking of $\Gamma$, if $\tilde{\Gamma}=\Gamma/ F_1=\Gamma/ F_2$ then  $[H_{F_1}]$ and $[H_{F_2}]$ can be different markings of $\tilde{\Gamma}$.  Figure \ref{Dehn_twist_example} gives an example of this in the case of the cylinder.
\end{lemma}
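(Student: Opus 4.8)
The plan is to construct the bijection $\Mark(\Gamma) \to \Mark(\Gamma/F)$ directly, using the collapse map $c_F \colon |\Gamma| \to |\Gamma/F|$, and then exhibit an inverse. First I would observe that since $F$ is a forest, the collapse map $c_F$ is a homotopy equivalence, and in fact it has a homotopy section: because each component of $F$ is a contractible tree, there is a continuous map $j_F \colon |\Gamma/F| \to |\Gamma|$ (not unique, but unique up to homotopy rel the images of the leaves) which picks out, for each vertex of $\Gamma/F$ coming from a collapsed tree, a point in that tree, and is the identity away from $F$; then $c_F \circ j_F \simeq \mathrm{id}$ and $j_F \circ c_F \simeq \mathrm{id}$, both homotopies fixing the leaves. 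Given a marking $[H \colon |\Gamma| \cof \Sg]$, I would want to define $[H_F]$ to be the isotopy class of an embedding $|\Gamma/F| \cof \Sg$ homotopic to $H \circ j_F$. The subtlety is that $H \circ j_F$ need not be an embedding, so one must actually produce an embedding in its homotopy class: here I would use that collapsing an edge of an embedded graph inside a surface can be realized by an ambient isotopy supported in a neighborhood of that edge (shrink the edge to a point while dragging its endpoints together), and iterate over the edges of $F$. This realizes $[H_F]$ concretely and shows it is well-defined on isotopy classes.

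Next I would construct the inverse. Given a marking $[\bar H \colon |\Gamma/F| \cof \Sg]$, I would thicken $\bar H(|\Gamma/F|)$ to a subsurface as in Lemma \ref{marking_complement}; inside this subsurface (or a regular neighborhood), the forest $F$ of $\Gamma$ can be "un-collapsed" — each collapsed vertex is expanded back into an embedded copy of the corresponding tree, with the edges emanating from it distributed according to the fat structure of $\Gamma$, which is possible precisely because the fat structures are compatible with the collapse and both are compatible with the surface orientation. This expansion can be done within a contractible neighborhood, so the resulting embedding $|\Gamma| \cof \Sg$ is well-defined up to isotopy, giving a map $\Mark(\Gamma/F) \to \Mark(\Gamma)$. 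Then I would check that the two constructions are mutually inverse: collapsing after expanding, or expanding after collapsing, changes the embedding only by an isotopy supported in a neighborhood of $F$ (respectively its image), since both operations are local and the intermediate choices live in contractible spaces.

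For the second assertion — that $[H_{F_1}]$ and $[H_{F_2}]$ may differ when $\Gamma/F_1 = \Gamma/F_2$ as fat graphs — I would simply appeal to the example of Figure \ref{Dehn_twist_example}: two different forests in a fat graph modeling the cylinder can collapse to the same fat graph, but the composites $j_{F_1}$ and $j_{F_2}$ differ by a loop that maps to a Dehn twist generator of $\pi_1$ of the mapping class group bundle, so the induced markings differ by that Dehn twist. It suffices to note that the construction of $[H_F]$ genuinely used the map $c_F$ (equivalently $j_F$), not just the abstract isomorphism type of $\Gamma/F$, so there is no reason for the two to agree, and the figure exhibits a case where they do not.

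The main obstacle I expect is the well-definedness on isotopy classes and the verification that the naive homotopy-theoretic recipe (compose with $j_F$) can always be upgraded to an honest embedding without introducing extra ambiguity — i.e. carefully checking that "collapse an embedded forest inside a surface" and its inverse "expand a vertex into an embedded tree respecting the fat structure" are each canonical up to isotopy. This is where the compatibility of the fat structures with the surface orientation (built into the definition of a marking) does the real work, ruling out the possibility of introducing a twist that depends on more than the forest $F$ itself.
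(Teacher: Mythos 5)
Your construction is the same as the paper's: given $H$, note that the embedded forest $H(|F|)$ sits in disks (away from the boundary), collapse within those disks to get $H_F$, and invert by re-expanding. The paper's proof hinges, however, on one nontrivial technical claim which you identify as "the main obstacle" but do not actually resolve: \emph{a fat tree has a unique embedding, up to isotopy, into a disk with prescribed boundary behaviour compatible with the fat structure}. Your proposal asserts this follows because "the expansion can be done within a contractible neighborhood, so the resulting embedding is well-defined up to isotopy," and again that "the compatibility of the fat structures \ldots does the real work." Neither of these is an argument. Contractibility of the ambient disk does not imply connectedness of the relevant space of embeddings — for arcs and trees in a disk with endpoints constrained to the boundary, connectedness of the embedding space is a genuine theorem, not a formal consequence of contractibility of the target. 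This is precisely the step the paper proves, by a two-stage argument: for a single arc, homotopy implies isotopy by Feustel's theorem \cite{feustel}; and the inductive step, adding one edge at a time, uses Gramain's result \cite{gramain} that the inclusion $\mathrm{Emb}^{a,b}(I, D\backslash\alpha)\longrightarrow\mathrm{Emb}^{a,b}(I, D)$ is injective on $\pi_0$. Without some substitute for those inputs, the bijectivity of $[H]\mapsto [H_F]$ is not established; in particular your inverse map $\Mark(\Gamma/F)\to\Mark(\Gamma)$ could a priori be multivalued.

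Two smaller points. First, your first paragraph's recipe "compose with $j_F$ and then perturb to an embedding" is more homotopy-theoretic than what's needed and harder to control; the paper instead collapses $H(|F|)$ \emph{inside the surface}, which is a more rigid operation and sidesteps the perturbation issue entirely. Second, your appeal to Figure~\ref{Dehn_twist_example} for the second assertion is exactly what the paper does, so that part is fine: one only needs to observe that the construction uses the morphism $\Gamma\to\Gamma/F$ (equivalently, which forest is collapsed), not merely the isomorphism type of the target, and then exhibit the cylinder example.
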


\begin{proof}
Let $H$ be a representative of a marking $[H]$ of $\Gamma$.  The image of $H|_F$ (the restriction of $H$ to $|F|$) is contained in a disjoint union of disks away from the boundary.  Therefore, the marking $H$ induces a marking $H_F \colon |\Gamma/ F|\cof \Sg$ given by collapsing each of the trees of $F$ to a point of the disk in which their image is contained.  Note that $H_F$ is well defined up to isotopy and it makes the following diagram commute up to homotopy
\[\begin{tikzcd}\vert\Gamma\vert \arrow{rd}[swap]{H} \rar[two heads] & \vert\Gamma/ F\vert \dar[hookrightarrow]{H_F} \\
	& \Sg. \end{tikzcd}\]
 
In fact, up to isotopy, there is a unique embedding of a tree with a fat structure into a disk, in which the fat structure of the tree coincides with the one induced by the orientation of the disk and the endpoints are fixed points on the boundary. This can proven by induction.  Start with the case where $F$ is a single edge.  Up to homotopy, there is a unique embedding of an arc in a disk where the endpoints of the arc are fixed points on the boundary.  Then by \cite{ feustel}, there is also a unique embedding up to isotopy.  For the induction step, let $\alpha$ be an arc embedded in the disk with its endpoints at the boundary and let $a$ and $b$ be fixed points in the boundary of a connected component of $D\setminus \alpha$. Then we have a map 
\[\mathrm{Emb}^{a,b}(I, D\setminus \alpha)\longrightarrow\mathrm{Emb}^{a,b}(I, D),\]
where $\mathrm{Emb}^{a,b}(I, D \setminus \alpha)$ is the space of embeddings of a path in $D \setminus \alpha$ which start at $a$ and end at $b$, with the $\C^\infty$-topology, and similarly for $\mathrm{Emb}^{a,b}(I, D)$.  By \cite{gramain}, this map induces injective maps in all homotopy groups, in particular in $\pi_0$, which gives the induction step.

It then follows that, given $[H_F]$ a marking of $\Gamma/ F$ there is a unique marking $[H]$ of $\Gamma$  such that the above diagram commutes up to homotopy.  \end{proof}

\begin{figure}[h!]
\centering
\begin{tikzpicture}
	\draw (0,0) circle (.5cm);
	\draw [decoration={markings, mark=at position 0.75 with {\arrow{>}}},	postaction={decorate},blue] (-.5,0) -- (-.2,0);
	\node at (-.1,0) [blue] {\footnotesize $2$};
	\draw [decoration={markings, mark=at position 0.75 with {\arrow{>}}},	postaction={decorate}] (.5,0) -- (.8,0);
	\node at (.9,0)  {\footnotesize $1$};
	\node at (90:.7) {$e_1$};
	\node at (-90:.7) {$e_2$};
	\node at (135:1) [left] {$\Gamma$};
	\node at (.5,0) {\tiny $\bullet$};
	\node at (-.5,0) {\tiny $\bullet$};
	
	\draw [->>] (1.2,.3) -- node[pos=.5,above] {\footnotesize collapse $e_1$} (3.2,.3);
	\draw [->>] (1.2,-.3) -- node[pos=.5,below] {\footnotesize collapse $e_2$} (3.2,-.3);
	
	\begin{scope}[xshift=4cm]
	\draw (0,0) circle (.5cm);
	\node at (.5,0) {\tiny $\bullet$};
	\draw [decoration={markings, mark=at position 0.75 with {\arrow{>}}},	postaction={decorate},blue] (.5,0) -- (.2,0);
	\node at (.1,0) [blue] {\footnotesize $2$};
	\draw [decoration={markings, mark=at position 0.75 with {\arrow{>}}},	postaction={decorate}] (.5,0) -- (.8,0);
	\node at (.9,0)  {\footnotesize $1$};
	\node at (45:1) [right] {$\tilde{\Gamma} = \Gamma/e_1 = \Gamma/e_2$};
	\end{scope}
	
	\begin{scope}[yshift=-3cm,xshift=2cm]
	\draw (-2,.5) -- (2,.5);
	\draw (-2,-.5) -- (2,-.5);
	\draw [xscale=.7] ({-2/.7},0) circle (.5cm);
	\draw [xscale=.7] ({2/.7},0) circle (.5cm);
	\draw [xscale=.7,thick] (0,.5) arc (90:-90:.5);	
	\draw [xscale=.7,densely dotted,thick] (0,.5) arc (90:270:.5);
	\draw [thick] (.35,0) node {\tiny $\bullet$} -- (2.35,0) node  {\tiny $\bullet$} node[right] {\footnotesize $1$};
	\draw [densely dotted,thick,blue] (-.35,0) node {\tiny $\bullet$} to[out=180,in=0] (-1,.5);
	\draw [thick,blue] (-1,.5) to[out=180,in=0] (-1.65,0) node {\tiny $\bullet$} node [left] {\footnotesize $2$};	
	\node at (0,1) {$H$};
	
	\draw [->] (-.5,-1) -- (-2,-2);
	\draw [->] (.5,-1) -- (2,-2);
	
	\begin{scope}[xshift=-3cm,yshift=-3cm]
	\node at (0,1) {$H_{e_1}$};
	\draw (-2,.5) -- (2,.5);
	\draw (-2,-.5) -- (2,-.5);
	\draw [xscale=.7] ({-2/.7},0) circle (.5cm);
	\draw [xscale=.7] ({2/.7},0) circle (.5cm);
	\draw [xscale=.7,thick] (0,.5) arc (90:-90:.5);	
	\draw [xscale=.7,densely dotted,thick] (0,.5) arc (90:270:.5);
	\draw [thick] (.35,0) node {\tiny $\bullet$} -- (2.35,0) node  {\tiny $\bullet$} node[right] {\footnotesize $1$};
	\draw [thick,blue] (.35,0) node {\tiny $\bullet$} -- (-1.65,0) node {\tiny $\bullet$} node [left] {\footnotesize $2$};	
	\end{scope}
	
	\begin{scope}[xshift=3cm,yshift=-3cm]
	\node at (0,1) {$H_{e_2}$};
	\draw (-2,.5) -- (2,.5);
	\draw (-2,-.5) -- (2,-.5);
	\draw [xscale=.7] ({-2/.7},0) circle (.5cm);
	\draw [xscale=.7] ({2/.7},0) circle (.5cm);
	\draw [xscale=.7,thick] (0,.5) arc (90:-90:.5);	
	\draw [xscale=.7,densely dotted,thick] (0,.5) arc (90:270:.5);
	\draw [thick] (.35,0) node {\tiny $\bullet$} -- (2.35,0) node  {\tiny $\bullet$} node[right] {\footnotesize $1$};
	\draw [thick,blue] (.35,0) node {\tiny $\bullet$} to[out=180,in=0] (-.6,-.5);
	\draw [thick,densely dotted, blue] (-.6,-.5) to[out=180,in=0] (-1.3,.5);	
	\draw [thick,blue] (-1.3,.5) to[out=180,in=0] (-1.65,0) node {\tiny $\bullet$} node [left] {\footnotesize $2$};		
	\end{scope}
	\end{scope}
\end{tikzpicture}
\caption{Two different embeddings of $\tilde{\Gamma}$ in the cylinder differing by a Dehn twist and corresponding to the same marking of $\Gamma$.}
\label{Dehn_twist_example}
\end{figure}


\begin{definition}
Define the category $\Ead$ to be the category with objects isomorphism classes of marked admissible fat graphs $([\Gamma],[H])$ (where two marked admissible fat graphs are isomorphic if their underlying fat graphs are isomorphic and they have the same marking) and morphisms given by morphisms in $\Fatad$ where the map acts on the marking as stated in the previous lemma.  We denote by $\Eadg$, the full subcategory with objects marked admissible fat graphs whose thickening give a cobordism of topological type $\Sg$. 
\end{definition}

\begin{definition}\label{deffatuniversal}
The space of \emph{marked metric admissible fat graphs} $\EMad$ is defined to be
\[\EMad \coloneqq \frac{\bigsqcup_{\Gamma}\mathpzc{M}(\Gamma)\times \Mark(\Gamma)}{\sim_E}\]
where $\Gamma$ runs over all admissible fat graphs and the equivalence relation is given by 
\[(\Gamma,\lambda,[H])\sim_E(\tilde{\Gamma},\tilde{\lambda},[\tilde{H}])\Longleftrightarrow
(\Gamma,\lambda)\cong(\tilde{\Gamma},\tilde{\lambda})
\text{ and }
[H_{\lambda}]=[\tilde{H}_{\tilde{\lambda}}],\]
where $\cong$ denotes isomorphism of metric fat graphs,  $H_\lambda$ is the induced marking $H_\lambda:\vert \Gamma/ F_\lambda\vert \cof S_{g,n+m}$ where $F_\lambda$ is the subforest of $\Gamma$ of edges of length zero i.e., $F_\lambda=\lambda^{-1}(0)$ and $H_{\tilde{\lambda}}$ is defined analogously.
\end{definition}

The following result is proven in \cite{egas}, in fact in more generality for a category modeling open closed cobordism and not only closed cobordisms.

\begin{theorem}
\label{ad_universal}
The projection $\vert\Eadg\vert\to\vert\Fatadg\vert$ is a universal $\Modgpq$-bundle.
\end{theorem}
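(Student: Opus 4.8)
I would establish the theorem by verifying the two properties that, for the discrete group $\Modgpq$, characterize a universal bundle: that the forgetful projection $\pi\colon\vert\Eadg\vert\to\vert\Fatadg\vert$ is a principal $\Modgpq$-bundle (equivalently, a normal covering space with deck group $\Modgpq$), and that its total space $\vert\Eadg\vert$ is contractible.

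\emph{The bundle property.} The group $\Modgpq$ acts on the category $\Eadg$ by post-composition on markings, $\varphi\cdot(\Gamma,[H])=(\Gamma,[\varphi\circ H])$, and this is functorial: the change-of-marking bijections $\Mark(\Gamma)\to\Mark(\Gamma/F)$ of Lemma \ref{markings} are built by collapsing forests sitting inside embedded disks, which commutes with post-composing a diffeomorphism, so $\varphi$ also acts on morphisms. I would then check three points. (a) The action on markings is \emph{free}: if $\varphi\circ H$ is isotopic to $H$ rel the marked points then, since $H$ is a homotopy equivalence by Lemma \ref{marking_complement}, $\varphi$ is homotopic to the identity rel the marked points, hence isotopic to the identity, because for a surface with marked points on its boundary the group of homotopy classes of self-homotopy equivalences fixing those points is the mapping class group (Dehn--Nielsen--Baer). (b) The action on $\Mark(\Gamma)$ is \emph{transitive}: for markings $H_1,H_2$ the composite of a homotopy inverse of $H_1$ with $H_2$ is an orientation-preserving self-homotopy equivalence of $\Sg$ fixing the marked points, hence realized by a unique class in $\Modgpq$ carrying $[H_1]$ to $[H_2]$. (c) An admissible fat graph, with its ordered labeled leaves, has \emph{trivial automorphism group}: an automorphism fixes every leaf, hence --- commuting with $\omega=\sigma\circ i$ --- fixes the boundary cycle through each leaf pointwise, and therefore fixes every half-edge. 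Together, (b) and (c) identify $\Fatadg$ with the quotient category $\Eadg/\Modgpq$, the fibre over $[\Gamma]$ being the $\Modgpq$-torsor $\Mark(\Gamma)$, while Lemma \ref{markings} says the quotient functor is a covering functor (unique lifting of morphisms in either direction). Since the geometric realization of a covering functor carrying a free categorical $G$-action with quotient the base is a principal $G$-bundle over the realization of the base, this makes $\pi$ a principal $\Modgpq$-bundle.

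\emph{Contractibility of the total space.} The marked analogue of Lemma \ref{metric_and_nerve} --- carried out with $\EMadg$ and markings throughout --- gives a deformation retraction of the space $\EMadg$ of marked metric admissible fat graphs of topological type $\Sg$ onto $\vert\Eadg\vert$, so it is enough to prove $\EMadg$ contractible. For this I would appeal to Strebel--Penner theory: by the cell decomposition of Bowditch--Epstein, Penner and Harer, in Godin's admissible formulation \cite{strebel,bowditch_epstein,penner,godin}, a marked metric admissible fat graph is the same datum as a marked Riemann surface carrying a Jenkins--Strebel differential with the prescribed residues, hence a point of the Teichm\"uller space $\cT(\Sg)$; this assignment is a homeomorphism $\EMadg\cong\cT(\Sg)$, which is contractible. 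Alternatively, one expects to deduce contractibility from Godin's homotopy equivalence $\vert\Fatad\vert\simeq\vert\Fat\vert$ and the asphericity of $\vert\Fat\vert$, by identifying the monodromy of the principal bundle $\pi$ with the canonical isomorphism $\pi_1(\vert\Fatadg\vert)\cong\Modgpq$, so that $\vert\Eadg\vert$ is the universal cover of an aspherical space.

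\textbf{Expected main obstacle.} Everything in the bundle property is bookkeeping --- freeness and transitivity of the $\Modgpq$-action on markings, the absence of graph automorphisms, and the ``nerve of a covering functor'' formalism --- with the only real care needed in matching the $\Modgpq$-action to the simplicial structure of the nerves. The substance is the contractibility step, namely that admissible fat graphs already exhaust Teichm\"uller space; this rests on Strebel's theorem (equivalently, on Godin's comparison of $\Fatad$ with $\Fat$) and cannot be obtained by soft means. It is the heart of the proof and is where the argument of \cite{egas} does its real work.
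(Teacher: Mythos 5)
Your overall structure---first establish that $\pi\colon\vert\Eadg\vert\to\vert\Fatadg\vert$ is a principal $\Modgpq$-bundle, then show the total space is contractible---matches the paper's sketch, and your verification of the bundle property (freeness and transitivity of the action on markings, triviality of automorphisms of a leaf-labeled admissible fat graph, nerve of a covering functor) is a sound and in fact more detailed account of what the paper leaves to \cite{egas}. The argument that an automorphism of an admissible fat graph is trivial because it fixes every leaf half-edge and commutes with $\omega=\sigma\circ i$ is correct, since every half-edge lies on the boundary cycle through some leaf.

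The place where you diverge from the paper is the contractibility step, and your primary route there overreaches. You assert a homeomorphism $\EMadg\cong\cT(\Sg)$ via Jenkins--Strebel differentials, citing Bowditch--Epstein, Penner, Harer, and Godin. But the classical Strebel correspondence identifies Teichm\"uller space (with decorations or prescribed residues) with the space of \emph{all} metric fat graphs of the given type, i.e.\ $\MFat$, not the subspace $\MFatad$ of admissible ones; the admissibility condition (each incoming boundary cycle is an embedded circle) is a genuine restriction on the critical graph of the differential and does not follow automatically from the Strebel construction. Indeed, $\Fatad$ is a proper full subcategory of $\Fat$, and Godin's comparison $\vert\Fatad\vert\simeq\vert\Fat\vert$ is a separate theorem proved by categorical methods, not a consequence of Strebel theory with ``prescribed residues.'' The paper instead deduces contractibility of $\vert\Eadg\vert$ from contractibility of the arc complex \cite{hatcher_arc}, which is what \cite{egas} uses and what avoids this gap. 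Your \emph{alternative} route---Godin's equivalence $\vert\Fatad\vert\simeq\vert\Fat\vert$ together with asphericity of $\vert\Fat\vert$ (Igusa), so that $\vert\Eadg\vert$ is the universal cover of an aspherical space---is sound and would substitute cleanly. So: keep the bundle-property discussion, replace the Strebel claim by either the arc-complex argument or by your Godin-plus-asphericity argument, and the proof goes through.
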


The proof follows the original ideas of Igusa \cite{igusahrt} and Godin \cite{godinunstable}. Since all spaces involved are CW-complexes, one firstly shows that $\vert\Eadg\vert$ is contractible, which follows from contractibility of the arc complex \cite{hatcher_arc}. Secondly, one proves that the action of the mapping class group $\Modgpq$ on $\Eadg$ is free and transitive. That is, for any two markings $[H_1]$ and $[H_2]$, there is a unique $[\varphi]\in\Modgpq$ such that $[\varphi\circ H_1]=[H_2]$. This proof in particular gives rise to an abstract homotopy equivalence $\cM \simeq \Fatad$.

By Lemma \ref{markings}, as a set $\EMad$ is given by $\lbrace ([\Gamma,\lambda],[H])\vert [\Gamma,\lambda]\in\MFatad, [H]\in \Mark ([\Gamma])\rbrace$. As before, let $\EMadg$ denote the subspace of marked metric admissible fat graphs whose thickening give an open closed cobordism of topological type $\Sg$.  Then $\Modgpq$ acts on $\EMadg$ by composition with the marking and it follows that:

\begin{corollary}
\label{metricad_universal}
The projection $\EMadg\to\MFatadg$ is a universal $\Modgpq$-bundle.
\end{corollary}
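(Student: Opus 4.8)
The plan is to deduce the corollary from Theorem~\ref{ad_universal} by transporting the universal bundle $|\Eadg|\to|\Fatadg|$ along the deformation retraction of Lemma~\ref{metric_and_nerve}.

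The first step is to lift that deformation retraction to markings. The map $r$ of Lemma~\ref{metric_and_nerve} only rescales the lengths of edges that do not lie on admissible cycles, and it never turns a positive length into a zero length or conversely; in particular it neither collapses nor un-collapses any edge, so along the whole homotopy the underlying admissible fat graph is unchanged. Hence one may simply carry the marking along, defining $\tilde r\bigl((\Gamma,\lambda,[H]),t\bigr):=\bigl(\Gamma,(1-t)\lambda+t\lambda_g,[H]\bigr)$. I would check that $\tilde r$ is well defined with respect to $\sim_E$ (the only collapsing it must commute with is that of zero-length edges, which $\tilde r$ fixes) and continuous, and that it is a strong deformation retraction of $\EMad$ onto the image of the evident embedding $|\Ead|\cof\EMad$, defined exactly as the map $\iota$ in Lemma~\ref{metric_and_nerve} but also recording a marking. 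Over the component of topological type $\Sg$ this exhibits $\EMadg$ as deformation retracting onto $|\Eadg|$, which is contractible by Theorem~\ref{ad_universal}; hence $\EMadg$ is contractible.

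The second step is to identify $\EMadg\to\MFatadg$ with the pullback of $|\Eadg|\to|\Fatadg|$ along the time-$1$ map $r\colon\MFatadg\to|\Fatadg|$ of Lemma~\ref{metric_and_nerve}. Since $r$ fixes the underlying fat graph and only alters the metric, and since (by Lemma~\ref{markings} and the set-level description of $\EMad$ recorded after Theorem~\ref{ad_universal}) the fiber of $\EMadg\to\MFatadg$ over $[\Gamma,\lambda]$ is canonically $\Mark(\Gamma)$, which is exactly the fiber of $|\Eadg|\to|\Fatadg|$ over $r(\Gamma,\lambda)=(\Gamma,\lambda_g)$, the assignment $([\Gamma,\lambda],[H])\mapsto\bigl([\Gamma,\lambda],(\Gamma,\lambda_g,[H])\bigr)$ is a continuous bijection from $\EMadg$ onto the pullback $\MFatadg\times_{|\Fatadg|}|\Eadg|$ over $\MFatadg$, with inverse the forgetting of the redundant middle coordinate; I would check it is a homeomorphism. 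Since a pullback of a principal $\Modgpq$-bundle is again a principal $\Modgpq$-bundle, it follows that $\EMadg\to\MFatadg$ is a principal $\Modgpq$-bundle, with $\Modgpq$ acting by post-composition on the marking (this action is free because it is the same action on the same fibers as the one in Theorem~\ref{ad_universal}).

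Combining the two steps, $\EMadg\to\MFatadg$ is a principal $\Modgpq$-bundle with contractible total space, hence a universal $\Modgpq$-bundle; in particular $\MFatadg\simeq B\Modgpq$, consistently with Lemma~\ref{metric_and_nerve}. I expect the lifting of the homotopy and the contractibility to be essentially formal, and the one genuine point to be in the second step: carefully pinning down the marked analogue $|\Ead|\cof\EMad$ of the embedding from Lemma~\ref{metric_and_nerve} and verifying that the natural map to the pullback is a homeomorphism, i.e.\ that the quotient topologies on $\EMad$ and on the pullback coincide.
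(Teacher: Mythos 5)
Your proof is correct and follows essentially the same route as the paper, which simply exhibits $\EMadg\to\MFatadg$ as the pullback of $\vert\Eadg\vert\to\vert\Fatadg\vert$ along the homotopy equivalence $r(-,1)$ from Lemma~\ref{metric_and_nerve} and declares the conclusion ``clear.'' Your Step~2 is exactly this pullback claim, spelled out at the point-set level; your Step~1 (lifting the deformation retraction to markings and deducing contractibility of $\EMadg$ directly) is strictly speaking redundant once Step~2 is in place --- a pullback of a universal principal $G$-bundle along a homotopy equivalence automatically has contractible total space --- but it is a harmless and somewhat more self-contained check.
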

\begin{proof}
This is clear since we have a pullback diagram
\[\begin{tikzcd}\EMadg \arrow{rr}{\simeq}[swap]{r(-,1) \times \mr{id}} \dar[two heads] & & \vert\Eadg\vert \dar[two heads]  \\
	\MFatadg  \arrow{rr}{\simeq}[swap]{r(-,1)}  & & \vert\Fatadg\vert,\end{tikzcd}\]
where the horizontal maps are the homotopy equivalences given by $r$, the map constructed in Lemma \ref{metric_and_nerve}.\end{proof}

\section{The critical graph equivalence} 
\label{sec_critmap}
In this section we construct the space $\Radt$ as well as the maps \eqref{cor_pi1homeq} and \eqref{cor_pi2homeq}, and prove these are homotopy equivalences.

\subsection{Lacher's theorem} The idea for proving that certain maps $f \colon X \to Y$ are homotopy equivalences, will be it is a nice enough map between nice enough spaces with contractible fibers. This is made precise by the Theorem on page 510 of \cite{lacher77}.

\begin{definition}	\
	\begin{enumerate}[(i)]
		\item A subspace $X$ of a space $Y$ is a \emph{neighborhood retract} if there exists an open subset $U$ of $Y$ containing $X$ and a retraction $r \colon U \to X$. 
		\item A space $X$ is an \emph{ANR} if, whenever $X$ is a closed subspace of a metric space $Y$, $X$ is a neighborhood retract of $Y$.
\end{enumerate}\end{definition}

\begin{definition}\
	\begin{enumerate}[(i)]
		\item A subset $A$ of a manifold $M$ is \emph{cellular} if it is the intersection $\bigcap_n E_n$ of a nested sequence $E_1 \supset E_2 \supset \ldots$ of $n$-cells $E_i$ in $M$, i.e., subsets homeomorphic to $D^n$.
		\item A space $X$ is \emph{cell-like} if there is an embedding (i.e. continuous map that is an homomorphism onto its image) $\phi \colon X \to M$ of $X$ into a manifold $M$, such that $\phi(X)$ is cellular.
		\item A map $f \colon X \to Y$ is \emph{cell-like} if for all $y \in Y$ the point inverse $f^{-1}(\{y\})$ is cell-like.
\end{enumerate}\end{definition}

\begin{theorem}[Lacher] \label{thm_lacher} A proper map $f \colon X \to Y$ between locally compact ANR's is cell-like if and only if for all opens $U \subset Y$ the restriction $f|_{f^{-1}(U)} \colon f^{-1}(U) \to U$ is a proper homotopy equivalence.\end{theorem}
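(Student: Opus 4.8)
The plan is to handle the two implications separately, in each case first reducing to the case $U=Y$. If $U \subseteq Y$ is open then $f^{-1}(U)$ and $U$ are again locally compact ANRs, since open subsets of ANRs are ANRs; the restriction $f|_{f^{-1}(U)}$ is again proper, because if $K \subseteq U$ is compact then $f^{-1}(K)$ is compact; and each point-preimage of $f|_{f^{-1}(U)}$ is a point-preimage of $f$, hence cell-like whenever $f$ is. So it suffices to prove two things: (a) a proper cell-like map $f\colon X \to Y$ of locally compact ANRs is a proper homotopy equivalence; and (b) if a proper $f\colon X\to Y$ of locally compact ANRs restricts to a proper homotopy equivalence over every open subset, then each $f^{-1}(y)$ is cell-like.

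For (b), fix $y$ and set $C = f^{-1}(y)$, a compactum in the ANR $X$; I would verify that $C$ has property UV$^\infty$, which for a compactum in an ANR is equivalent to being cell-like. Let $V \supseteq C$ be open in $X$. A proper map to a locally compact Hausdorff space is closed, so $f(X\setminus V)$ is closed and avoids $y$; put $U = Y\setminus f(X\setminus V)$, an open neighbourhood of $y$ with $f^{-1}(U)\subseteq V$. Since $Y$ is an ANR it is locally contractible, so there is an open $U_0$ with $y\in U_0\subseteq U$ and $U_0\hookrightarrow U$ null-homotopic. Let $W = f^{-1}(U_0)$, an open neighbourhood of $C$ contained in $V$. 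The restriction $f|_{f^{-1}(U)}\colon f^{-1}(U)\to U$ is a homotopy equivalence with some inverse $g$, so $g\circ f|_{f^{-1}(U)}\simeq\mathrm{id}$; composing the commuting square
\[
\xymatrix{
f^{-1}(U_0) \ar@{^{(}->}[r] \ar[d]_{f} & f^{-1}(U) \ar[d]^{f} \\
U_0 \ar@{^{(}->}[r] & U
}
\]
with $g$ shows that $W\hookrightarrow f^{-1}(U)$ is homotopic to $g\circ(U_0\hookrightarrow U)\circ f|_{W}$, which is null-homotopic; hence $W\hookrightarrow V$ is null-homotopic. As $V$ was arbitrary, $C$ has property UV$^\infty$ and is cell-like.

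For (a), the key input is the classical fact that a proper cell-like map of locally compact ANRs is a \emph{fine homotopy equivalence}: for each open cover $\mathcal U$ of $Y$ there is $g_{\mathcal U}\colon Y\to X$ with $fg_{\mathcal U}$ $\mathcal U$-homotopic to $\mathrm{id}_Y$ and $g_{\mathcal U}f$ $f^{-1}\mathcal U$-homotopic to $\mathrm{id}_X$. I would prove this by a controlled obstruction argument: $Y$ is dominated by a locally finite polyhedron $P$, and one builds an approximate section $Y\to P\to X$ by induction over the skeleta of $P$, the extension problem over each simplex being governed by the shape of the point-preimage $f^{-1}(y)$, whose triviality (property UV$^\infty$, as in (b)) kills all obstructions; the two homotopies are produced the same way. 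Finally one passes from a fine homotopy equivalence to a genuine proper homotopy equivalence: choosing a cofinal sequence of star-finite open covers $\mathcal U_1\succ\mathcal U_2\succ\cdots$ of $Y$ whose members have compact closure, one concatenates the corresponding controlled homotopies, using local compactness to keep supports locally finite, obtaining proper homotopies $fg\simeq\mathrm{id}_Y$ and $gf\simeq\mathrm{id}_X$ for a suitable $g$. The main obstacle is exactly this construction of the fine homotopy inverse in (a): it is the only genuinely non-formal ingredient, resting on the equivalence of cell-likeness with UV$^\infty$, on domination of ANRs by polyhedra, and on a cover-by-cover obstruction theory in which cell-like fibers guarantee vanishing obstructions. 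Everything else — the reduction to $U=Y$, closedness of proper maps, local contractibility of ANRs, and the upgrade of fine to proper homotopy equivalence — is routine point-set and controlled-topology bookkeeping. (The original argument is in \cite{lacher77}.)
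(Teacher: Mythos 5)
The paper does not prove this statement; it is quoted directly from Lacher's survey article (the Theorem on page~510 of \cite{lacher77}) and used as a black box. So there is no in-paper proof to compare your attempt against.

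Your sketch is a reasonable reconstruction of the standard route in the literature. The decomposition into (a) ``proper cell-like implies proper homotopy equivalence over every open'' and (b) ``proper homotopy equivalence over every open implies point-inverses are cell-like'' is the right one, and the (b) argument via the UV$^\infty$ property of compacta in ANRs, using closedness of proper maps and local contractibility of $Y$, is essentially how Lacher argues. Two cautions. First, the reduction ``to $U=Y$'' makes sense for (a) (where one needs to verify the conclusion over every open and the hypotheses restrict) but not for (b) (where the hypothesis quantifies over all opens and you must keep it); your actual argument for (b) uses general $U$ as it should, so the opening sentence is a slight misstatement rather than a real error. Second, the paper's Definition~4.6 of ``cell-like'' is the nested-cells-in-a-manifold one, which forces finite dimensionality; the equivalence with UV$^\infty$ (equivalently trivial shape) that you invoke requires the compactum to be finite-dimensional, and your argument doesn't address why $f^{-1}(y)$ should be. In the application in this paper that is harmless since the spaces involved are finite-dimensional, but in the stated generality it is a gap, and in fact Lacher's own conventions use the shape-theoretic definition of ``cell-like'' rather than the manifold-embedding one. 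The heavy lifting in (a)---Haver's theorem that proper cell-like maps of locally compact ANRs are fine homotopy equivalences, and the upgrade from fine to proper homotopy equivalence---is exactly where the real content lies, and you correctly flag it as the non-formal step while only sketching it at the level of ``controlled obstruction theory''; that is acceptable for a citation-level proof outline but is not a self-contained argument.
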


The conditions in the above definitions are difficult to verify, so we will provide criteria which imply them. Our main reference for ANR's is \cite{vanmill}, for polyhedra is Chapter 3 of \cite{fritschp}, and for cell-like spaces is \cite{lacher77}. 

\begin{proposition}\label{prop_anrprops}The following are properties of ANR's:
	\begin{enumerate}[(i)]
		\item For all $n\geq 0$, the closed $n$-disc $D^n$ is an ANR.
		\item An open subset of an ANR is an ANR.
		\item If $X$ is a space with an open cover by ANR's, then $X$ is an ANR.
		\item If $X$ and $Y$ are compact ANR's, $A \subset X$ is a compact ANR and $f \colon A \to Y$ is continuous, then $X \cup_f Y$ is an ANR.
		\item Any locally finite CW-complex is an ANR.
		\item Any locally finite polyhedron is an ANR.
		\item A product of finitely many ANR's is an ANR.
		\item A compact ANR is cell-like if and only if it is contractible.
	\end{enumerate}
\end{proposition}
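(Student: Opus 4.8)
The plan is to recognize that all eight statements are part of the standard toolkit of the theory of ANR's and of cell-like spaces, so that the argument consists mostly of assembling the appropriate references -- \cite{vanmill} for the ANR facts, \cite{fritschp} for the polyhedral input, and \cite{lacher77} for the cell-like facts -- together with a few short reductions. I would dispatch items (i)--(vii) first, since each is either a direct citation or an immediate consequence of earlier items, and then treat (viii), which is the only one with genuine content.

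For (i) I would note that $D^n$ is a compact convex subset of $\bR^n$, and convex subsets of normed spaces are absolute retracts, in particular ANR's. Items (ii) and (iii) -- open subsets of ANR's are ANR's, and a separable metrizable space with an open cover by ANR's is an ANR (Hanner's theorem, proved by pasting local retractions with a partition of unity) -- are both recorded in \cite{vanmill}; here one should observe that every space to which (iii) is later applied is separable metrizable. Item (iv), the adjunction-space theorem, I would also cite from \cite{vanmill}, using only the compact case. Item (v) then follows: a locally finite CW-complex is locally compact and metrizable and admits an open cover by finite subcomplexes, each of which is built from a point by finitely many applications of (i) and (iv) and is therefore an ANR, so (iii) applies. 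Item (vi) is the analogous statement for locally finite polyhedra, which carry locally finite CW-structures so that (v) applies, and is in any case contained in Chapter 3 of \cite{fritschp}. Item (vii) reduces by induction to the binary case, where the product of the defining embeddings and the product of the retractions exhibit $X\times Y$ as an ANR.

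The main work is (viii), and this is where I expect the only obstacle -- not a serious one, but one requiring some care with shape theory. The key external inputs are: a compact metric space is cell-like if and only if it has the shape of a point, and this property is intrinsic, independent of the chosen embedding into a manifold (both in \cite{lacher77}); an ANR has shape type equal to its homotopy type; and a compact ANR has the homotopy type of a CW-complex (Borsuk--West). Granting these, a compact ANR $X$ is cell-like iff it has trivial shape iff it has the homotopy type of a point iff it is contractible, where the implication ``contractible $\Rightarrow$ trivial shape'' is immediate. The only point demanding attention is to invoke these equivalences for genuine compact ANR's, so that replacing shape by homotopy type is legitimate and so that ``cell-like'' may be read as an intrinsic condition on $X$; with that observed, there is no further difficulty.
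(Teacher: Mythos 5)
Your treatment of items (i)--(vii) is essentially the same as the paper's: a sequence of citations to van Mill (and Fritsch--Piccinini for (vi)), with (v) obtained by building finite CW-complexes from disks via (i) and (iv) and then passing to locally finite ones via (ii)/(iii). One small slip in (v): a locally finite CW-complex does not have an \emph{open} cover by finite subcomplexes (finite subcomplexes are closed); you want to cover by open sets each contained in a finite subcomplex, and apply (ii) to see that such open sets are ANR's before invoking (iii). The paper phrases this as ``by (ii) and (iii) one can reduce to the case of finite CW-complex,'' which is the same reduction stated more carefully.

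The genuine divergence is in (viii). You prove it through shape theory: cell-like $\Leftrightarrow$ trivial shape for compacta, the coincidence of shape type and homotopy type for ANR's, and (optionally) Borsuk--West. The paper instead obtains (viii) as an immediate specialization of its Theorem~\ref{thm_lacher}: apply that characterization to the unique map $X \to \ast$, so that ``$f$ is cell-like'' literally says ``$X$ is cell-like'' and the right-hand condition collapses to ``$X \to \ast$ is a (proper) homotopy equivalence,'' i.e.\ $X$ is contractible, properness being automatic since $X$ is compact. The paper's route is shorter and self-contained given that Theorem~\ref{thm_lacher} has already been stated, whereas yours imports two further nontrivial facts from shape theory not otherwise used in the paper; on the other hand, your route makes transparent \emph{why} cell-like means ``homotopically a point,'' and also shows that the auxiliary fact ``ANR $\Rightarrow$ shape = homotopy type'' suffices without needing the full strength of Lacher's map-level theorem. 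Both are correct; the paper's is the more economical choice here because Theorem~\ref{thm_lacher} is already on hand.
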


\begin{proof}Property (i) follows from Corollary 5.4.6 of \cite{vanmill}, property (ii) is Theorem 5.4.1, property (iii) is Theorem 5.4.5, property (iv) is Theorem 5.6.1. Together these can combined to prove property (v), by noting that by (ii) and (iii) one can reduce to the case of finite CW-complex and since by definition these can be obtained by glueing closed $n$-disks together, (i) and (iv) prove that finite CW-complexes are ANR's. Property (vi) follows from property (v), but is also Theorem 3.6.11 of \cite{vanmill}. Property (vii) is Proposition 1.5.7. Finally, property (viii) follows from Theorem \ref{thm_lacher} by considering the map to a point.
\end{proof}

\subsection{The fattening of the radial slit configurations and the critical graph map} \label{subsec_blowup}

There is a natural admissible metric fat graph associated to a radial slit configuration; the unstable critical graph obtained by taking the inner boundaries of the annuli and the complements of the slit segments and gluing these together according to the combinatorial data. The inner boundaries of the annuli give the admissible cycles of the graph and the incoming leaves are placed at the positive real line of each annulus. The outgoing leaves are obtained from marked points on the outgoing boundary components. This graph gets a canonical fat graph structure as a subspace of the surface $S(L)$.

We now make this definition precise. Because we fixed the outer radii of the annuli, we shorten $\smash{\bA^{(i)}_{R_i}}$ to $\bA_i$. Recall the subsets $\alpha^\pm_i$ and $\beta^\pm_i$ in the sector $F_i$, defined in Definition \ref{def_alphabeta}. These lie in a pair of distinct radial segments of $F_i$, unless it is a thin sector in which case they lie in a single radial segment. To a radial slit configuration $L\in\QRad$ we associate a space $E_L$ defined as follows:

\begin{definition}The space $\overline{E}_L$ is given by
	\[\overline{E}_L \coloneqq \left(\bigsqcup_ {1\leq j\leq n}\partial_\mr{in}\bA_j\right) \sqcup \left(\bigsqcup_{1\leq j\leq 2h+m}E_j\right) \sqcup \left(\bigsqcup_ {1\leq j\leq n}I_j\right)\]
	where each of the terms is defined as follows:
	\begin{description}
		\item[Admissible boundaries] For each annulus $\bA_j$ we take the inner boundary $\partial_\mr{in} \bA_j$.
		\item[Radial segments for slits and outgoing leaves] For $1\leq j\leq 2h+m$ with $\xi_j\in\bA_k$ we take $E_j=\lbrace z\in\bA_k\vert \arg(z)=\arg(\xi_j) \text{ or } \arg(z) = \arg(\xi_{\overline{\omega}(j)})\rbrace$.
		\item[Incoming leaves] For each annulus $\bA_j$ we take $I_j=\lbrace z\in\bC_j\vert \arg(z)=0, 0\leq |z| \leq \frac{1}{2\pi}\rbrace$.
\end{description}\end{definition}
\noindent The equivalence relation $\sim_L$ on $\overline{E}_L$ is that generated by:
\begin{description}
	\item[Attaching incoming leaves] We set $(\frac{1}{2\pi}\in I_j)\sim_L (\frac{1}{2\pi}\in\partial_\mr{in}\bA_j)$ for $j=1,2,\ldots, n$.
	\item[Attaching radial segments] For $r\in\partial_\mr{in}\bA_k$ and $e\in E_j$, we set $r\sim_L e$ if $r=e$.
	\item[Identifying coinciding segments] Defining subsets $\alpha^\pm_i$ and $\beta^\pm_i$ of $E_i$ as in Definition \ref{def_alphabeta}, we let $\sim_L$ identify $z \in \alpha^+_i$ with $z \in \alpha^-_{\overline{\omega}(i)}$ and $z \in \beta^+_i$ with $z \in \beta^-_{\overline{\lambda}(i)}$.	
\end{description}

Note that each of the terms in $\overline{E}_L$ can be considered as subspace of $\overline{\Sigma}(L)$; recalling Definition \ref{def_approxl}, one observes that $\sim_L$ simply identifies those points on $\overline{E}_L$ that are identified by $\approx_L$ on $\overline{\Sigma}(L)$. As a consequence, the quotient space $\overline{E}_L/{\sim_L}$ is invariant under the slit jump relation. Thus for a configuration $[L]\in\Rad$ we obtain a well-defined graph $\Gamma_{[L]}$ if we demand it has no bivalent vertices. Some of its leaves are labeled by the incoming or outgoing boundary components; the remaining ones we will remove.

\begin{definition}
	For $L\in\QRad$ the corresponding \emph{critical graph} $\Gamma_{L}$ is the graph obtained from $\overline{E}_L/{\sim_L}$ by removing those leaves that do not correspond to incoming or outgoing boundary cycles (see Figure \ref{criticalgraph_example}).
\end{definition}

\begin{figure}[h!]
	\centering
	\begin{tikzpicture}
		\node at (0,0) {\footnotesize $1$};
		\draw (90:1.2cm) -- (90:1.5cm);
		\node at (90:1.6cm) {\footnotesize $1$};    	
		\draw (45:1.2cm) -- (45:1.5cm);
		\node at (45:1.6cm) {\footnotesize $2$};
		
		\draw [dashed] (.5,0) -- (1.2,0);
		\draw [decorate,decoration={brace,amplitude=2pt,mirror,raise=0pt},yshift=0pt] (90:.5cm) -- (90:.9cm);
		\node at (90:.7cm) [right] {\tiny $t$};
		\draw [red,thick] (90:.9cm) -- (90:1.2cm);
		\draw [red,thick] (-90:.9cm) -- (-90:1.2cm);
		
		\draw (0,0) circle (1.2cm);
		\draw[decoration={markings, mark=at position 0.5 with {\arrow{<}}},	postaction={decorate}]  (0,0) circle (.5cm);
		
		\draw [dashed,->] (0,-1.5) -- (0,-2.2);
		
		\begin{scope}[yshift=-3.7cm,xshift=.5cm]
			\draw (0,0) circle (.5cm);
			\draw [decoration={markings, mark=at position 0.75 with {\arrow{<}}},	postaction={decorate},blue] (.5,0) -- (.2,0) node [left] {\tiny $1$};
			\draw [decoration={markings, mark=at position 0.75 with {\arrow{>}}},	postaction={decorate}] (45:.5) -- (45:.7);
			\node at (45:.8) {\tiny $2$};
			\draw (0,.5) to[out=90,in=90,looseness=1.5] node[pos=.5,text opacity=1,opacity=.7,fill=white] {$t$} (-1.5,0) node {$\cdot$} to[out=-90,in=-90,looseness=1.5] node[pos=.5,text opacity=1,opacity=.7,fill=white] {$t$}   (0,-.5);
			\draw [decoration={markings, mark=at position 0.75 with {\arrow{>}}},	postaction={decorate}] (-1.5,0) -- (-1.2,0);
			\node at (-1.1,0) {\tiny $1$};
		\end{scope}
		
		\begin{scope}[xshift=4cm]
			\node at (0,0) {\footnotesize $1$};
			\draw (90:1.2cm) -- (90:1.5cm);
			\node at (90:1.6cm) {\footnotesize $1$};    	
			
			\draw [dashed] (.5,0) -- (1.2,0);
			\draw [decorate,decoration={brace,amplitude=2pt,raise=0pt},yshift=0pt] (-90:.5cm) -- (-90:.8cm);
			\node at (-90:.65cm) [right] {\tiny $t$};
			\draw [red,thick] (0:.8cm) -- (0:1.2cm);
			\draw [red,thick] (-90:.8cm) -- (-90:1.2cm);
			\draw [blue,thick] (180:1cm) -- (180:1.2cm);
			\draw [blue,thick,yshift=-.7mm] (0:1cm) -- (0:1.2cm);
			\draw [decorate,decoration={brace,amplitude=2pt,mirror,raise=0pt},yshift=0pt] (180:.5cm) -- (180:1cm);
			\node at (180:.75cm) [above] {\tiny $s$};
			
			\draw (0,0) circle (1.2cm);
			\draw[decoration={markings, mark=at position 0.25 with {\arrow{<}}},	postaction={decorate}]  (0,0) circle (.5cm);
			
			\draw [dashed,->] (0,-1.5) -- (0,-2.2);
			
			\begin{scope}[yshift=-3.45cm,xshift=0cm]
				\draw (0,0) circle (.5cm);
				\draw [decoration={markings, mark=at position 0.75 with {\arrow{<}}},	postaction={decorate},blue] (.5,0) -- (.2,0) node [left] {\tiny $1$};
				\draw [decoration={markings, mark=at position 0.75 with {\arrow{>}}},	postaction={decorate}] (90:.5) -- (90:.7);
				\node at (90:.8) {\tiny $1$};	
				\draw (1,-1) to[out=135,in=0] (-.25,-.8) to[out=180,in=180] node[pos=.3,text opacity=1,opacity=.7,fill=white,left] {$2s-t$} (-.5,0);
				\fill [white] (-.05,-.9) rectangle (.1,-.6);
				\draw (0,-.5) to[out=-90,in=-135,looseness=1.5] node[pos=.5,text opacity=1,opacity=.7,fill=white] {$t$} (1,-1) node {$\cdot$} to[out=55,in=0,looseness=1.5] node[pos=.5,text opacity=1,opacity=.7,fill=white] {$t$} (.5,0);
				
			\end{scope}
		\end{scope}
	\end{tikzpicture}
	\caption{Critical graphs for different configurations. Edge lengths of the critical graphs are \emph{not} to scale.}
	\label{criticalgraph_example}
\end{figure}

By construction, this graph comes embedded in the surface $\Sigma_{[L]}$ and thus inherits a fat structure. Moreover, it inherits a metric $\lambda_{[L]}$ from the standard metric in $\bC$. In it, the incoming leaves have fixed length $\frac{1}{2\pi}$ and the outgoing leaves have strictly positive length. Because for our purposes the lengths of the outgoing leaves are superfluous information, we set $\lambda_{[L]}(e)$ to be given by the standard metric in $\bC$ if $e$ is not a leaf and $\lambda_{[L]}(e)=1$ if $e$ is a leaf. This makes $(\Gamma_{[L]},\lambda_{[L]})$ a metric admissible fat graph. 

\begin{notation}We will just write $\Gamma_{L}$, when it is clear from that context that we consider it as a metric admissible fat graph.\end{notation}

The construction of the critical graph gives a function
\begin{align*} \Rad&\lra \MFatad \\
	[L]&\longmapsto(\Gamma_{[L]},\lambda_{[L]}).\end{align*}
However, this function is \emph{not} continuous at non-generic configurations. For an example, consider the path in $\Rad$ given by continuously varying the argument of a slit as in Figure \ref{badpath}; when the moving slit reaches a neighboring one the associated metric graph jumps.  

\begin{figure}[h!]
	\centering
	\begin{tikzpicture}
		\draw (90:.9cm) -- (90:1.1cm);
		\node at (90:1.2cm) {\footnotesize $1$};    	
		\draw (-135:.9cm) -- (-135:1.1cm);
		\node at (-135:1.2cm) {\footnotesize $2$};
		\draw (-80:.9cm) -- (-80:1.1cm);
		\node at (-80:1.2cm) {\footnotesize $3$};
		
		\draw [dashed] (.2,0) -- (.9,0);
		\draw [red,thick] (0:.5cm) -- (0:.9cm);
		\draw [red,thick] (180:.5cm) -- (180:.9cm);
		\draw [blue,thick] (-110:.7cm) -- (-110:.9cm);
		\draw [blue,thick] (-70:.7cm) -- (-70:.9cm);
		
		\draw (0,0) circle (.9cm);
		\draw[decoration={markings, mark=at position 0.5 with {\arrow{<}}},	postaction={decorate}]  (0,0) circle (.2cm);
		
		\draw [->] (1.05,0) -- (1.45,0);
		\draw [->] (0.75,-3) -- (1.75,-3);
		\draw [->] (3.25,-3) -- (4.75,-3);
		\draw [->] (7.25,-3) -- (8.75,-3);
		
		\draw [->,dashed] (0,-1.4) -- (0,-2.1);
		
		\begin{scope}[yshift=-3cm]
			\draw (0,0) circle (.2cm);
			\draw (0,.2) -- (0,.3);    	
			\draw (0,-.2) -- (0,-.3);	
			\node at (0,-.4) {\tiny $3$};
			\draw [blue] (.05,0) -- (.2,0);
			\node at (0,.4) {\tiny $1$};    	
			\draw (-135:.2) -- (-135:.3);	
			\node at (-135:.4) {\tiny $2$};
			\draw (.2,0) to[out=45,in=0,looseness=1.5] (0,.6) to[out=180,in=135,looseness=1.5] (-.2,0);
			\draw (-70:.2) to[out=-45,in=0,looseness=1.5] (-90:1) to[out=180,in=-135,looseness=1.5] (-110:.2);
		\end{scope}
		
		\begin{scope}[xshift=2.5cm]
			\draw (90:.9cm) -- (90:1.1cm);
			\node at (90:1.2cm) {\footnotesize $1$};    	
			\draw (-135:.9cm) -- (-135:1.1cm);
			\node at (-135:1.2cm) {\footnotesize $2$};
			\draw (-80:.9cm) -- (-80:1.1cm);
			\node at (-80:1.2cm) {\footnotesize $3$};
			
			\draw [dashed] (.2,0) -- (.9,0);
			\draw [red,thick] (0:.5cm) -- (0:.9cm);
			\draw [red,thick] (180:.5cm) -- (180:.9cm);
			\draw [blue,thick] (-110:.7cm) -- (-110:.9cm);
			\draw [blue,thick] (-20:.7cm) -- (-20:.9cm);
			
			\draw (0,0) circle (.9cm);
			\draw[decoration={markings, mark=at position 0.5 with {\arrow{<}}},	postaction={decorate}]  (0,0) circle (.2cm);
			
			\draw [->] (1.05,0) -- (1.45,0);
			
			\draw [->,dashed] (0,-1.4) -- (0,-2.1);
			
			\begin{scope}[yshift=-3cm]
				\draw (0,0) circle (.2cm);
				\draw (0,.2) -- (0,.3);    	
				\draw (0,-.2) -- (0,-.3);	
				\node at (0,-.4) {\tiny $3$};
				\draw [blue] (.05,0) -- (.2,0);
				\node at (0,.4) {\tiny $1$};    	
				\draw (-135:.2) -- (-135:.3);	
				\node at (-135:.4) {\tiny $2$};
				\draw (.2,0) to[out=45,in=0,looseness=1.5] (0,.6) to[out=180,in=135,looseness=1.5] (-.2,0);
				\draw (-20:.2) to[out=-25,in=30,looseness=1.5] (-70:1) to[out=-150,in=-135,looseness=1.5] (-110:.2);
			\end{scope}
		\end{scope}
		
		\begin{scope}[xshift=5cm]
			\draw (90:.9cm) -- (90:1.1cm);
			\node at (90:1.2cm) {\footnotesize $1$};    	
			\draw (-135:.9cm) -- (-135:1.1cm);
			\node at (-135:1.2cm) {\footnotesize $2$};
			\draw (-80:.9cm) -- (-80:1.1cm);
			\node at (-80:1.2cm) {\footnotesize $3$};
			
			\draw [dashed] (.2,0) -- (.9,0);
			\draw [red,thick] (0:.5cm) -- (0:.9cm);
			\draw [red,thick] (180:.5cm) -- (180:.9cm);
			\draw [blue,thick] (-110:.7cm) -- (-110:.9cm);
			\draw [blue,thick,yshift=-.7mm] (0:.7cm) -- (0:.9cm);
			
			\draw (0,0) circle (.9cm);
			\draw[decoration={markings, mark=at position 0.5 with {\arrow{<}}},	postaction={decorate}]  (0,0) circle (.2cm);
			
			\draw [<->] (1.05,0) --node[pos=.5,above] {$=$} (1.45,0);
		\end{scope}
		
		\draw [->,dashed] (5,-1.4) -- (5.5,-2.1);    	
		\draw [->,dashed] (7.5,-1.4) -- (7,-2.1);
		
		\begin{scope}[xshift=6.25cm,yshift=-3cm]
			\draw (0,0) circle (.2cm);
			\draw (0,.2) -- (0,.3);    	
			\draw (0,-.2) -- (0,-.3);	
			\node at (0,-.4) {\tiny $3$};
			\draw [blue] (.05,0) -- (.2,0);
			\node at (0,.4) {\tiny $1$};    	
			\draw (-135:.2) -- (-135:.3);	
			\node at (-135:.4) {\tiny $2$};
			\draw (.2,0) to[out=45,in=0,looseness=1.5] (0,.6) to[out=180,in=135,looseness=1.5] (-.2,0);
			\draw (-110:.2) to[out=-110,in=-110,looseness=1.5] (160:1) to[out=70,in=90,looseness=1.5] (0,.6);
		\end{scope}
		
		\begin{scope}[xshift=7.5cm]
			\draw (90:.9cm) -- (90:1.1cm);
			\node at (90:1.2cm) {\footnotesize $1$};    	
			\draw (-135:.9cm) -- (-135:1.1cm);
			\node at (-135:1.2cm) {\footnotesize $2$};
			\draw (-80:.9cm) -- (-80:1.1cm);
			\node at (-80:1.2cm) {\footnotesize $3$};
			
			\draw [dashed] (.2,0) -- (.9,0);
			\draw [red,thick] (0:.5cm) -- (0:.9cm);
			\draw [red,thick] (180:.5cm) -- (180:.9cm);
			\draw [blue,thick] (-110:.7cm) -- (-110:.9cm);
			\draw [blue,thick,yshift=-.7mm] (180:.7cm) -- (180:.9cm);
			
			\draw (0,0) circle (.9cm);
			\draw[decoration={markings, mark=at position 0.5 with {\arrow{<}}},	postaction={decorate}]  (0,0) circle (.2cm);
			
			\draw [->] (1.05,0) -- (1.45,0);
		\end{scope}
		
		\begin{scope}[xshift=10cm]
			\draw (90:.9cm) -- (90:1.1cm);
			\node at (90:1.2cm) {\footnotesize $1$};    	
			\draw (-135:.9cm) -- (-135:1.1cm);
			\node at (-135:1.2cm) {\footnotesize $2$};
			\draw (-80:.9cm) -- (-80:1.1cm);
			\node at (-80:1.2cm) {\footnotesize $3$};
			
			\draw [dashed] (.2,0) -- (.9,0);
			\draw [red,thick] (0:.5cm) -- (0:.9cm);
			\draw [red,thick] (180:.5cm) -- (180:.9cm);
			\draw [blue,thick] (-110:.7cm) -- (-110:.9cm);
			\draw [blue,thick] (-150:.7cm) -- (-150:.9cm);
			
			\draw (0,0) circle (.9cm);
			\draw[decoration={markings, mark=at position 0.5 with {\arrow{<}}},	postaction={decorate}]  (0,0) circle (.2cm);
			
			\draw [->,dashed] (0,-1.4) -- (0,-2.1);
			
			\begin{scope}[yshift=-3cm]
				\draw (0,0) circle (.2cm);
				\draw (0,.2) -- (0,.3);    	
				\draw (0,-.2) -- (0,-.3);	
				\node at (0,-.4) {\tiny $3$};
				\draw [blue] (.05,0) -- (.2,0);
				\node at (0,.4) {\tiny $1$};    	
				\draw (-135:.2) -- (-135:.3);	
				\node at (-135:.4) {\tiny $2$};
				\draw (.2,0) to[out=45,in=0,looseness=1.5] (0,.6) to[out=180,in=135,looseness=1.5] (-.2,0);
				\draw (-110:.2) to[out=-90,in=-50,looseness=1.5] (-140:1) to[out=140,in=180,looseness=1.5] (-150:.2);
			\end{scope}
		\end{scope}
	\end{tikzpicture}
	\caption{An example of a path in $\Rad$ which leads to a path in $\MFatad$ that is not continuous. Labelings have been left out for the sake of clarity.}
	\label{badpath}
\end{figure}

To solve this problem, we enlarge $\Rad$ at non-generic configurations by a contractible space, by ``opening up'' the edges $E_L$. To do this, we first need to introduce some notation. We can think of the thin sector
\[F_i = \{z \in \bA_j \,|\,\arg(\xi_i) = \arg(z)\}\]
as being obtained by identifying two copies of $F_i$, which we will denote $E_i^+,E_i^-$, along the equivalence relation that identifies $z \in E_i^+$ with $z \in E_i^-$. Let us extend this notation to ordinary and full sectors: if $F_i$ is ordinary then
\[E_i^+  \coloneqq \{z \in F_i \mid \arg(z) = \arg(\xi_{\overline{\omega}(i)})\}, \quad \text{and} \quad 
E_i^-  \coloneqq \{z \in F_i \mid \arg(z) = \arg(\xi_{\omega(i)})\},\]
and if $F_i$ is full then $E_i^+ = S^+_i$ and $E_i^- = S_i^-$. Let us also generalize Definition \ref{def_alphabeta} to this section by taking $\alpha^+_i,\beta^+_i \subset E_i^+$ and $\alpha^-_i,\beta^-_i \subset E_i^+$. Then we can also write $\overline{E}_L/\sim_L$ as $\overline{E}'_L/\sim'_L$ with
\[\overline{E}'_L \coloneqq \left(\bigsqcup_ {1\leq j\leq n}\partial_\mr{in}\bA_j\right) \sqcup \left(\bigsqcup_{1\leq j\leq 2h+m}E_j^+ \sqcup E_j^-\right) \sqcup \left(\bigsqcup_ {1\leq j\leq n}I_j\right)\]
and $\sim'_L$ the equivalence relation on $\overline{E}'_L$ generated by replacing $E_j$ by $E_j^\pm$ in the three operations generating $\sim_L$ and adding a fourth one:
\begin{description}
	\item[Identifying thin sectors] If $F_i$ is thin, we let $\sim'_L$ identify $z \in E^+_i$ with $z \in E^-_i$.
\end{description}

The idea is now to vary the extent to which we identify $E^+_i$ with $E^-_i$ in the last of these:

\begin{definition}\label{defdil2} Let $\mr{thin}(L)$ be the set of thin sectors of $L$ and let $t \colon \mr{thin}(L) \to [0,1]$ be a function. The equivalence relation $\sim'_{t}$ on the space \[\overline{E}'_L=\left(\bigsqcup_{1\leq j\leq n}\partial_\mr{in}\bA_j\right) \sqcup \left(\bigsqcup_{1\leq j\leq 2h+m}E^+_j \sqcup E^-_j\right) \sqcup \left(\bigsqcup_ {1\leq j\leq n}I_j\right)\]
	is the one generated by:
	\begin{description}
		\item[Attaching incoming leaves] We set $(\frac{1}{2\pi}\in I_j)\sim'_t (\frac{1}{2\pi}\in\partial_\mr{in}\bA_j)$ for $j=1,2,\ldots, n$.
		\item[Attaching radial segments] For $r\in\partial_\mr{in}\bA_k$ and $e\in E^\pm_j$, we set $r\sim'_t e$ if $r=e$.
		\item[Identifying coinciding segments] With $\alpha^\pm_i$ and $\beta^\pm_i$ of the $E^\pm_j$ as above, we let $\sim'_t$ identify $z \in \alpha^+_i$ with $\alpha^-_{\overline{\omega}(i)}$ and $z \in \beta^+_i$ with $z \in \beta^-_{\overline{\lambda}(i)}$.	
		\item[Partially identifying thin sectors] If $F_i$ is thin, we let $\sim'_t$ identify $z \in E^+_i$ with $z \in E^-_i$ as along as $|z| \leq t(F_i)+\tfrac{1}{2\pi}$.
	\end{description}
\end{definition}

\begin{definition}
	We define $\Gamma_{L,t}$ to be obtained from $\overline{E}'_L/{\sim'_{t}}$ by removing those leaves that do not correspond to incoming or outgoing boundary cycles.\end{definition}

\begin{example}When $t$ is a constant function equal to $1$, $\Gamma_{L,t}$ is the critical graph $\Gamma_{L}$, which is invariant under slit and parametrization points jumps.  However, for most other $t$, the graph $\Gamma_{L,t}$ is \emph{not} invariant under slit jumps.\end{example}

\begin{notation}If $t$ is constant equal to $0$, we will call this the \emph{unfolded graph of $L$} and denote it $\Gamma_{L,0}$ (see Figure \ref{partiallyunfolded}).
\end{notation}

Just like the critical graph, the graph $\Gamma_{L,t}$ has a natural metric making $(\Gamma_{L,t},\lambda_{L,t})$ an admissible metric fat graph. Figure \ref{partiallyunfolded} shows examples of unfolded and partially unfolded metric admissible fat graphs. 

\begin{figure}[h!]
	\centering
	\begin{tikzpicture}
		\clip (-5,-9.7) rectangle (5.5,1.5);
		\node at (0,0) {\footnotesize $1$};
		\draw (45:1.2cm) -- (45:1.5cm);
		\node at (45:1.6cm) {\footnotesize $1$};    
		\draw (-45:1.2cm) -- (-45:1.5cm);
		\node at (-45:1.6cm) {\footnotesize $2$};  
		\draw (-135:1.2cm) -- (-135:1.5cm);
		\node at (-135:1.6cm) {\footnotesize $3$};   	
		\draw (135:1.2cm) -- (135:1.5cm);
		\node at (135:1.6cm) {\footnotesize $4$};
		
		\draw [dashed] (.5,0) -- (1.2,0);
		\draw [decorate,decoration={brace,amplitude=2pt,mirror,raise=0pt},yshift=0pt] (90:.5cm) -- (90:.7cm);
		\node at (90:.6cm) [right] {\tiny $l$};
		\draw [orange,thick,yshift=.6mm] (0:.7cm) -- (0:1.2cm);
		\draw [orange,thick] (90:.7cm) -- (90:1.2cm);
		\draw [blue,thick] (180:.9cm) -- (180:1.2cm);
		\draw [blue,thick] (0:.9cm) -- (0:1.2cm);
		\draw [PineGreen,thick] (-90:1.1cm) -- (-90:1.2cm);
		\draw [PineGreen,thick,yshift=-.6mm] (0:1.1cm) -- (0:1.2cm);
		\draw [decorate,decoration={brace,amplitude=2pt,mirror,raise=0pt},yshift=0pt] (180:.5cm) -- (180:.9cm);
		\node at (180:.7cm) [above] {\tiny $s$};    	
		\draw [decorate,decoration={brace,amplitude=2pt,mirror,raise=0pt},yshift=0pt] (-90:.5cm) -- (-90:1.1cm);
		\node at (-90:.8cm) [left] {\tiny $r$};
		
		\draw (0,0) circle (1.2cm);
		\draw  (0,0) circle (.5cm);
		
		\begin{scope}[xshift=-3cm,yshift=-3.3cm,scale=.8]
			\draw  (0,0) circle (1cm);
			\draw (.7,0) -- (1,0);
			\draw (90:1) edge[bend left=90,looseness=2] node[pos=.5,text opacity=1,opacity=.7,fill=white] {\footnotesize $2l$} (0:1);
			\draw (180:1) edge[bend left=140,looseness=5] node[pos=.5,above] {\footnotesize $2s$} (0:1);
			\draw (-90:1) edge[bend right=90,looseness=2] node[pos=.5,text opacity=1,opacity=.7,fill=white] {\footnotesize $2r$}(0:1);   	
			\node at (0,-2) {$t=(0,0)$};
		\end{scope}
		
		\begin{scope}[xshift=3cm,yshift=-3.3cm,scale=.8]
			\draw  (0,0) circle (1cm);
			\draw (.7,0) -- (1,0);
			\draw (90:1) edge[bend left=90,looseness=2] node (firstmid) [pos=.5] {} node [pos=.25,text opacity=1,opacity=.7,fill=white] {\footnotesize $l$} node [pos=.75,text opacity=1,opacity=.7,fill=white] {\footnotesize $l$} (0:1);
			\draw (180:1) edge[bend left=140,looseness=3] node[pos=.5,text opacity=1,opacity=.7,fill=white,above] {\footnotesize $s$} node (secondmid) [pos=.75] {} node [pos=.9,text opacity=1,opacity=.7,fill=white] {\footnotesize $s-l$} (firstmid.center);
			\draw (-90:1) edge[bend right=110,looseness=1.5] node[pos=.3,text opacity=1,opacity=.7,fill=white,right] {\footnotesize $2r-s$} (secondmid.center);   	
			\node at (0,-2) {\parbox{3cm}{\centering $t=(a,b)$ \\ $a \geq l$, $b \geq s$}};
		\end{scope}
		
		\begin{scope}[xshift=-3cm,yshift=-7.3cm,scale=.8]
			\draw  (0,0) circle (1cm);
			\draw (.7,0) -- (1,0);
			\draw (90:1) edge[bend left=90,looseness=2] node (firstmid) [pos=.6] {} node (secondmid) [pos=.8] {} node [pos=.25,text opacity=1,opacity=.7,fill=white] {\footnotesize $2l-a$} node [pos=.9,text opacity=1,opacity=.7,fill=white] {\footnotesize $b$} node [pos=.7,text opacity=1,opacity=.7,fill=white] {\footnotesize $a-b$} (0:1);
			\draw (180:1) edge[bend left=140,looseness=3] node[pos=.5,text opacity=1,opacity=.7,fill=white,above] {\footnotesize $2s-a$}   (firstmid.center);
			\draw (-90:1) edge[bend right=110,looseness=1.5] node[pos=.3,text opacity=1,opacity=.7,fill=white,right] {\footnotesize $2r-b$} (secondmid.center);   	
			\node at (0,-2.2) {\parbox{3cm}{\centering $t=(a,b)$ \\ $a \geq b$ \\  $a \leq l$, $b \leq s$}};
		\end{scope}
		
		\begin{scope}[xshift=3cm,yshift=-7.3cm,scale=.8]
			\draw  (0,0) circle (1cm);
			\draw (.7,0) -- (1,0);
			\draw (90:1) edge[bend left=90,looseness=2] node (firstmid) [pos=.7] {} node [pos=.25,text opacity=1,opacity=.7,fill=white] {\footnotesize $2l-a$} node [pos=.85,text opacity=1,opacity=.7,fill=white] {\footnotesize $a$} (0:1);
			\draw (180:1) edge[bend left=140,looseness=3] node[pos=.5,text opacity=1,opacity=.7,fill=white,above] {\footnotesize $2s-b$} node (secondmid) [pos=.75] {} node [pos=.9,text opacity=1,opacity=.7,fill=white] {\footnotesize $b-a$} (firstmid.center);
			\draw (-90:1) edge[bend right=110,looseness=1.5] node[pos=.3,text opacity=1,opacity=.7,fill=white,right] {\footnotesize $2r-b$} (secondmid.center);   	
			\node at (0,-2.2) {\parbox{3cm}{\centering $t=(a,b)$ \\ $b \geq a$ \\ $a \leq l$, $b \leq s$}};
		\end{scope}
	\end{tikzpicture}
	\caption{A configuration $[L]$ on the top, and several graphs obtained from it using different functions $t \colon \mr{thin}([L]) \to [0,1]$, here written as a pair of real numbers. The leaves have been omitted to make the graphs more readable, but they are all located along the admissible cycles according to the positions of the marked points in $[L]$. The edges are not to scale.}
	\label{partiallyunfolded}
\end{figure}

\begin{remark}Two preconfigurations with the same combinatorial type have the same underlying admissible fat graphs but with different metric. Thus it makes sense to talk about $\Gamma_{\cL,t}$ which is an admissible fat graph. Similarly, it makes sense to talk about the critical graph of a combinatorial type, which we denote $\Gamma_{[\cL]}$.\end{remark}

\begin{definition}
	\label{graph_blowup}
	Let $[L]\in\Rad$, we define a subspace of $\MFatad$
	\[\cG([L]) \coloneqq \lbrace [\Gamma_{L_i,t},\lambda_{L_i,t}]\,\vert\, [L]=[L_i], t \colon \mr{thin}(L_i) \to [0,1]\rbrace.\]
	We define the \emph{fattening of $\Rad$} to be the space
	\[\Radt=\lbrace ([L],[\Gamma,\lambda])\in \Rad\times\MFatad\,\vert\, [\Gamma,\lambda]\in\cG([L])\rbrace.\]
	For simplicity, we will just write $\Gamma_{L_i,t}$ or $\Gamma$ when it is clear from the context that we are talking about metric graphs.
\end{definition}

We will see that $\Radt$ is constructed by replacing the point $[L] \in \Rad$ by a contractible space $\cG([L])$, which is a space of graphs which interpolate between the critical graph of $[L]$ and the unfolded graphs of the different representatives  $L_1,L_2, \ldots, L_k$ of $[L]$ in $\QRad$. 

The fattening of $\Rad$ splits into connected components given by the topological type of the cobordism they describe:
\[\Radt \coloneqq \bigsqcup_{h,n,m}\Radt_h(n,m).\]
Moreover, it comes with two natural maps 
\[\begin{tikzcd}\Rad & \lar[two heads,swap]{\pi_1} \Radt \rar{\pi_2} & \MFatad.\end{tikzcd}\]
We call $\pi_1$ the \emph{projection map} and  $\pi_2$ the \emph{critical graph map}. The goal of the remaining subsections is to prove that these are homotopy equivalences. The next section is the main input for proving $\pi_1$ is a homotopy equivalence.

\subsection{The space $\cG([L])$ is contractible}
\begin{proposition}
	\label{GL_contractible}
	$\cG([L])$ is contractible for any radial slit configuration $[L]$. 
\end{proposition}

We prove this inductively, by removing parametrization points or slits. In particular, we allow radial slit configuration \emph{without parametrization points}; all relevant definitions may be extended to this case in a straightforward manner.

\begin{notation}
	For a radial slit configuration $L^1$, we denote by $L$ the radial slit configuration obtained from $L^1$ by removing all parametrization points.
	
	If $L$ is not empty, then it has $m\geq 1$ shortest pairs of slits of $L$. That is, $L$ has pairs of slits $(\zeta_{i_j}$, $\zeta_{\lambda(i_j)})$ for $1\leq j \leq m$, which are all of the same length and are the shortest in the following sense:
	\begin{itemize}
		\item $\vert\zeta_{i_j}\vert=\vert \zeta_{\lambda(i_j)}\vert=\vert\zeta_{i_l}\vert=\vert \zeta_{\lambda(i_l)}\vert$, for all  $1\leq j,l\leq m $ and,
		\item $\vert\zeta_{i_j}\vert > \vert \zeta_s\vert$, for any $s\notin \{i_j,\lambda(i_j)| 1\leq j \leq m\}$.
	\end{itemize}
	We denote by $\overline{L}$ the configuration obtained from $L$ by forgetting the shortest slit pair(s).
\end{notation}

Note that if $L^1$ is not degenerate, then $L$ and $\overline{L}$ are also not degenerate. The induction step in the proof of Proposition \ref{GL_contractible} is provided by:

\begin{lemma}
	\label{forget_slits_homeq}
	There are homotopy equivalences
	\[\pi^1_L \colon \cG([L^1])\lra \cG([{L}]) \qquad \text{and} \qquad \pi_L \colon \cG([L])\lra\cG([\overline{L}]).\]   
\end{lemma}


Informally, the map $\pi^1_L$ removes the leaves of $\Gamma^1 \in \cG([L^1])$ corresponding to the outgoing boundary components. Similarly, the map $\pi_L$ removes the edges of $\Gamma \in \cG([L])$ corresponding to the shortest pair(s) of slits in $[L]$. Assuming Lemma \ref{forget_slits_homeq}, we now prove Proposition \ref{GL_contractible}.

\begin{proof}[Proof of Proposition \ref{GL_contractible}]
	By the first part of Lemma \ref{forget_slits_homeq}, it is enough to show that $\cG([L])$ is contractible, where $[L]$ a radial slit configuration without parametrization points. We will prove this by induction on $h$, the number of pairs of slits of $[L]$. When $h=0$, then $\cG([L])$ is a point and therefore contractible. Assume that $\cG([L])$ is contractible when $h<k$ for some fixed $k$.  Now, let $h=k$ and consider the map
	\[\pi_L \colon \cG([L])\lra \cG([\overline{L}]).\]  
	Given that $[\overline{L}]$ has $\overline{h} < k$ pairs of slits, it is contractible by the induction hypothesis. Thus by the second part of Lemma \ref{forget_slits_homeq}, $\cG([L])$ is also contractible.
\end{proof}

\subsubsection{Proof of Lemma \ref{forget_slits_homeq}}

To prove Lemma \ref{forget_slits_homeq} we will show that the spaces involved are compact ANRs and the maps involved are cell-like, and invoke Theorem \ref{thm_lacher}. We start by considering the domain and target of the maps.

\begin{lemma}\label{lemcglpolyhedron} For all configurations $[L]$, with or without parametrization points, the space $\cG([L])$ is a compact polyhedron and thus a compact ANR.
\end{lemma}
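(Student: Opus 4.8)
The plan is to realize $\cG([L])$ as the quotient of a finite disjoint union of convex polytopes, glued along faces by affine isomorphisms, and then to invoke Proposition~\ref{prop_anrprops}(vi).

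First I would make some reductions. Fix a representative $L$ of $[L]$; as noted, there are only finitely many representatives $L_1,\dots,L_k$ of $[L]$ in $\QRad$, differing by slit and parametrization-point jumps within a fixed combinatorial type. For each $i$ the assignment $t\mapsto(\Gamma_{L_i,t},\lambda_{L_i,t})$ of Definitions~\ref{defdil1}--\ref{defdil2} defines a map $f_i\colon[0,1]^{d(L_i)}\to\MFatad$, and by Definition~\ref{graph_blowup} we have $\cG([L])=\bigcup_{i=1}^k\operatorname{im}(f_i)$. So it suffices to put compatible polyhedral structures on the images of the $f_i$.

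Second, I would analyze a single $f_i$. The edge lengths of $\Gamma_{L_i,t}$ are explicit piecewise-affine functions of $t$: along a radial segment $S_\ell$ the consecutive pair $(\xi_{i_j},\xi_{i_{j+1}})$ contributes a shared edge of length $\min\{t_{d_{i_j}}+\tfrac1{2\pi},|\xi_{i_j}|,|\xi_{i_{j+1}}|\}-\tfrac1{2\pi}$ together with remaining pieces of the two half-edges of the complementary lengths, all other edge lengths being constant in $t$. Hence there is a subdivision of the cube $[0,1]^{d(L_i)}$ into finitely many closed convex polytopes $P_{i,\alpha}$ on each of which the combinatorial type $\Gamma_{i,\alpha}$ of $\Gamma_{L_i,t}$ is constant and $t\mapsto\lambda_{L_i,t}$ is the restriction of an affine map; refining further, we may also assume $\lambda_{L_i,t}^{-1}(0)$ is constant on each $P_{i,\alpha}$. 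Then $f_i|_{P_{i,\alpha}}$ factors as an affine map $P_{i,\alpha}\to\overline{\mathpzc{M}}(\Gamma_{i,\alpha})$ into the (closed) metric polytope, followed by the well-defined continuous passage to $\MFatad$ (no illegal collapses occur, since each $(\Gamma_{L_i,t},\lambda_{L_i,t})$ is an honest admissible metric fat graph, so $\lambda_{L_i,t}^{-1}(0)$ is a forest with admissible quotient); after the refinement this composite is injective. Thus each $f_i|_{P_{i,\alpha}}$ is a homeomorphism onto its image, these agree along common faces, so $f_i$ is continuous; since $[0,1]^{d(L_i)}$ is compact and $\MFatad$ is Hausdorff, $\operatorname{im}(f_i)$ is compact, and therefore so is $\cG([L])$.

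Third, I would assemble the global polyhedron. Put $K=\bigl(\bigsqcup_{i,\alpha}P_{i,\alpha}\bigr)/{\approx}$, where $x\approx y$ iff $f(x)=f(y)$ for $f=\bigsqcup_i f_i$. The content is to check that $\approx$ glues the $P_{i,\alpha}$ along subpolyhedra by affine isomorphisms. For fixed $i$, two points of $\bigsqcup_\alpha P_{i,\alpha}$ have the same image exactly when they lie on a common face where some ``remaining-piece'' edge degenerates to length $0$; this is precisely the zero-length-edge identification built into $\MFatad$, and is a face identification by construction of the subdivision. For $i\neq j$, the metric graphs $\Gamma_{L_i,t}$ and $\Gamma_{L_j,t'}$ coincide exactly under the relation already unwound in the proof of Proposition~\ref{GL_contractible}: the coordinates of $t$ and $t'$ agree except on the slits or parametrization points involved in the jumps relating $L_i$ to $L_j$, where for each such pair the two coordinates are either equal or both at least $\min\{|\zeta_{i_j}|,|\zeta_{i_{j+1}}|\}$ (both yielding the fully merged metric); this too is an affine identification of faces. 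Hence $K$ is a finite, in particular compact, polyhedron, and the induced continuous bijection $K\to\cG([L])$ from a compact space to a Hausdorff space is a homeomorphism. By Proposition~\ref{prop_anrprops}(vi) a locally finite polyhedron is an ANR, so $\cG([L])\cong K$ is a compact ANR.

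The main obstacle is the gluing bookkeeping in the third step: verifying that every identification produced by $f$ is an identification of faces of the $P_{i,\alpha}$ by affine isomorphisms, i.e.\ reconciling the zero-length-edge collapses of $\MFatad$ with the slit- and parametrization-point-jump equivalences between the different representatives $L_i$ into one coherent polyhedral gluing pattern. The piecewise-affine description of the edge-length functions, and hence the existence of the subdivision and the continuity of each $f_i$, is comparatively routine and reads off directly from Definition~\ref{defdil2}.
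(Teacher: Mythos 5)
Your approach shares the two key reductions the paper makes (writing $\cG([L])$ as a finite union of images of the maps $f_i\colon[0,1]^{d(L_i)}\to\MFatad$, and observing that these are piecewise linear), but from there you and the paper part ways. The paper's route is \emph{extrinsic} and shorter: it notes that $\MFatadg$ sits inside a fixed compact polyhedron $P_{g,n+m}$ (a quotient of finitely many products of simplices and cubes, one per object of $\Fatadg$), then simply invokes two standard facts from PL topology --- the image of a PL map of compact polyhedra is a compact subpolyhedron (make it simplicial with respect to some triangulation, then take the image subcomplex), and a finite union of compact subpolyhedra of a polyhedron is again a polyhedron (Corollary 3.1.27 of the reference). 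That dispatches the whole gluing question that you correctly flagged as the main obstacle of your \emph{intrinsic} construction: no equivalence relation on $\bigsqcup_{i,\alpha}P_{i,\alpha}$ ever needs to be analyzed.

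Your intrinsic route is workable, but the step ``after the refinement this composite is injective'' is not quite right, and this makes your description of the gluing slightly off. Once the relevant coordinate $t_{d_{i_j}}$ exceeds $\min\{|\xi_{i_j}|,|\xi_{i_{j+1}}|\}-\tfrac1{2\pi}$, the graph no longer depends on it, so $f_i$ is genuinely constant in some directions on certain polytopes of the subdivision; no further subdivision restores injectivity there. Consequently the equivalence relation $\approx$ is not generated only by ``face identifications by affine isomorphisms'' --- it also includes affine \emph{collapses} of such polytopes onto lower-dimensional images. This is still a polyhedral quotient (one can first replace each $P_{i,\alpha}$ by the polytope $f_i(P_{i,\alpha})$ before gluing), so the gap is fixable, but it is exactly the kind of bookkeeping the paper sidesteps by working with images inside the ambient polyhedron $P_{g,n+m}$ rather than building the space as a gluing.
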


\begin{proof}We give the proof only when $[L]$ has parametrization points; the other case is similar.
	
	The space $\cG([L])$ is a subspace of $\MFatadg$. The latter is contained in the larger compact polyhedron given by
	\[P_{g,n+m} \coloneqq \frac{\bigsqcup_{\Gamma} \Delta^{n_1-1}\times\Delta^{n_2-1}\times\cdots \times \Delta^{n_p-1}\times([0,1])^{\# E_{\Gamma}-n}}{\sim}\]
	with $\Gamma$ indexed by the objects of $\Fatadg$ and the equivalence relation $\sim$ given by Definition \ref{def_fatfatad}. This is compact because $\smash{\Fatadg}$ has finitely many objects.
	
	The subspace $\cG([L])$ can be characterized as the union of the images of maps from the cubes $[0,1]^{\mr{thin}(L_i)}$ to $\MFatadg$ for all representatives $L_i$ of $[L]$. Each of these map is a piecewise linear map between polyhedra, which implies that their image is a subpolyhedron. This is true because a piecewise linear map by definition can be made simplicial with respect to some triangulation and the images of simplicial maps are clearly polyhedra. Note that there are only finitely many representatives for $[L]$, so that $\cG([L])$ is a union of finitely many compact polyhedra, which implies it is a polyhedron by Corollary 3.1.27 of \cite{fritschp}. The last claim then follows from property (vi) of Proposition \ref{prop_anrprops}.
\end{proof}

We now define the maps $\pi^1_L$ and $\pi_L$. We start with the former, which ``removes leaves corresponding to the parametrization points.''

\begin{definition}
	Let $[L^1]$ be a radial slit configuration and let $[L]$ be the configuration obtained from $[L^1]$ by removing the parametrization points.  We define the function
	\[\pi^1_L \colon \cG([L^1]) \lra \cG([L])\]
	by sending $\Gamma$ to the metric fat graph obtained from $\Gamma$ by the following procedure: 
	\begin{enumerate}
		\item Removing all leaves corresponding to outgoing boundary components.
		\item Removing all bivalent vertices, i.e., if there is a bivalent vertex we replace the two edges attached to it by a single edge whose length is the sum of the lengths of both.
	\end{enumerate}
\end{definition}

Let $[L]$ be a radial slit configuration without parametrization points and assume it is non-empty, i.e.~$[L]$ has at least one pair of slits. We now define the function $\pi_L$, which ``removes the edges corresponding to the longest slit pair(s) of $[L]$.''

\begin{definition}
	\label{def:piL}
	For any $\Gamma\in\cG([L])$ the continuous function $d_\mathrm{ad} \colon \Gamma \to \R_{\geq 0}$ is defined by sending a point $x$ in a leaf of $\Gamma$ to $0$ and any other point $x\in\Gamma$ to its path distance to the admissible cycles. By the extreme value theorem it attains a maximum $d_\mathrm{max}$. We denote by $\Gamma'$ the fat graph with unlabelled leaves obtained by removing from $\Gamma$ the preimage of $d_\mathrm{max}$. That is, we set $\Gamma' \coloneqq \Gamma-d_\mathrm{ad}^{-1}(d_\mathrm{max})\subset \Gamma$. We define the function
	\[\pi_L \colon \cG([L])\lra \cG([\overline{L}])\]
	by sending $\Gamma$ to the metric fat graph $\overline{\Gamma}$ obtained from $\Gamma'$ by the following recursive procedure:
	\begin{enumerate}
		\item Remove all unlabelled leaves of $\Gamma'$.
		\item Remove all bivalent vertices from to obtain a fat graph $\Gamma''$. 
		\item If $\Gamma''$ has unlabelled leaves repeat the procedure.
	\end{enumerate}
	Note that the only leaves of $\pi_L(\Gamma)$ are the ones corresponding to the admissible cycles.
\end{definition}

We will focus on $\pi_L$ first, leaving $\pi^1_L$ to the end of this subsection. We start with some properties of $\pi_L$:

\begin{lemma}\label{lem:pil-props} \
	\begin{enumerate}[(i)]
		\item $\pi_L$ is well-defined.
		\item $\pi_L$ is continuous.
		\item The fibers of $\pi_L$ are compact ANR's.
\end{enumerate}\end{lemma}

\begin{proof}Let $\Gamma\in\cG([L])$, so that there is a representative $L$  and function $t \colon \mathrm{thin}(L)\to [0,1]$ such that $\Gamma=\Gamma_{L,t}$. Let $\overline{L}$ be the configuration obtained from $L$ by removing the shortest pair(s) of slits. To prove that $\pi_L(\Gamma)$ is well-defined, we exhibit a function $\overline{t} \colon \overline{\mr{thin}}(\overline{L})\to [0,1]$ such that $\pi_L(\Gamma)=\pi_L(\Gamma_{L,t})=\Gamma_{\overline{L},\overline{t}}$. Note any thin sector $F$ of $\overline{L}$ is of one of two kinds:
	\begin{enumerate}
		\item The sector $F$ corresponds uniquely to a sector in $L$. In this case we define $\tilde{t}(F) \coloneqq t(F)$. 
		\item The sector $F$ corresponds to several thin sectors $F_1, F_2, \ldots F_s$ in $L$.  This happens when in between the slits defining the sector $F$ in $\overline{L}$ there are one or more slits in $L$ which have been removed.  In this case, we define 
		\[\overline{t}(F)\coloneqq \min\{t(F_1), t(F_2), \ldots, t(F_s)\}.\]
	\end{enumerate}
	Then we have that $\pi_L(\Gamma)=\Gamma_{\overline{L},\overline{t}}$. This completes the proof of (i).
	
	\vspace{.25em}
	
	For (ii), it suffices to prove $\pi_L$ is continuous on each of the finitely many closed subsets of the form $\{[\Gamma_{L_i,t},\lambda_{L_i,t}] \mid t \colon \mr{thin}(L_i) \to [0,1]\}$, that is, fixing the representative $L_i$ of $[L]$. This is clear from the construction of $\overline{t}$ and hence of $\Gamma_{\overline{L},\overline{t}}$.
	
	\vspace{.25em}
	
	As in the proof of Lemma \ref{lemcglpolyhedron}, for (iii) it suffices to prove the fibers are compact polyhedra, by proving each fiber is the union of the images of finitely many piecewise linear maps with compact domain. But this follows once more from the construction of $\overline{t}$ and hence of $\Gamma_{\overline{L},\overline{t}}$.	
\end{proof}

We now state the main ingredient for the proof of Lemma \ref{forget_slits_homeq}. 

\begin{lemma}\label{lem:preimage_contractible}
	For $\overline{\Gamma}\in\cG([\overline{L}])$ the preimage $\pi_L^{-1}(\overline{\Gamma})\subseteq \cG([L])$ is contractible. 
\end{lemma}

By construction, any $\Gamma \in \pi_L^{-1}(\overline{\Gamma})$ can be built from $\overline{\Gamma}$ by attaching to it a graph. We will show that the space of graphs that can be attached to $\Gamma$ is contractible, and that there is a contractible space of ways to attach each of these. Before doing so, we give two illustrative examples. 

\begin{example}[Single pair of shortest slits]
	Consider the configurations $L$ and $\overline{L}$ obtained by deleting the shortest pair of slits shown in on Figure \ref{fig:example_single_slit} (A) .
	The other representatives $L'$ of $[L]$ are given by letting the purple or green slit on the right jump; 
	for any such representative deleting its shortest pairs of slits also yields a representative of $\overline{L}$. 
	\begin{figure}[!h]
		\centering
		\includegraphics[width=5in]{./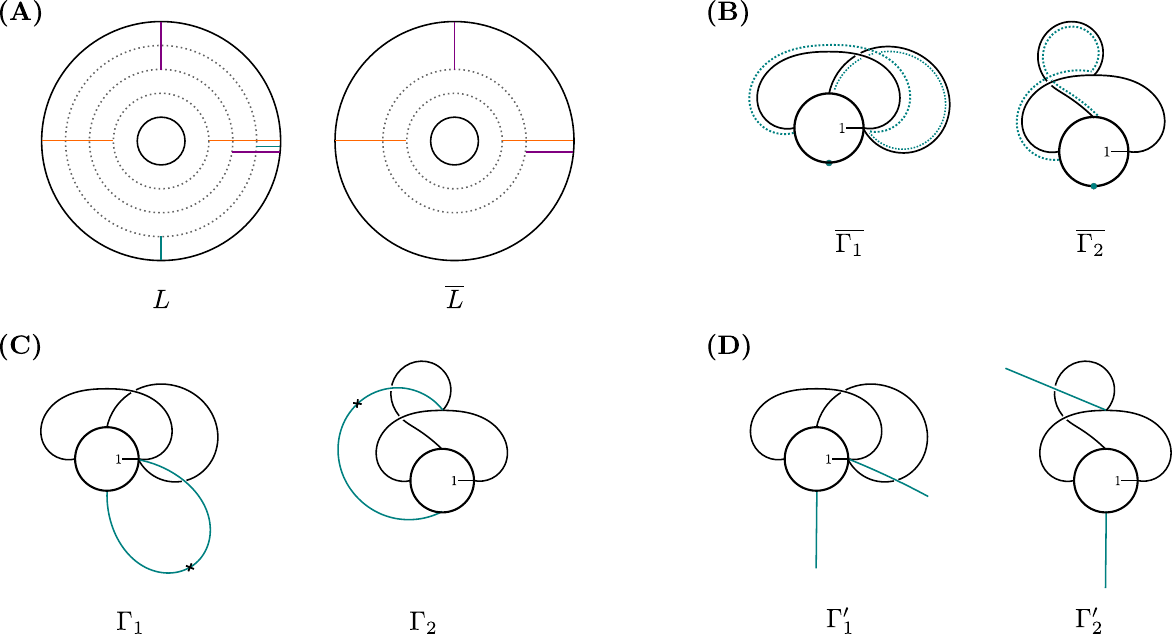}
		\caption{(A) A configuration $L$ and the configuration $\overline{L}$ obtained from it by deleting the shortest pair of slits (that is, those where $d_{\max}$ is attained). (B) Graphs in $\cG([\overline{L}])$; $\overline{\Gamma_1}$ is the unfolded graph of $\overline{L}$ and $\overline{\Gamma_2}$ is the critical graph of $\overline{L}$.  The green dotted lines trace the boundary interval defined by the open chord corresponding to the deleted green slit and thus describe the places where one endpoint of the new chord can be attached. (C) Graphs in $\cG([L])$ such that $\pi_L(\Gamma_i)=\overline{\Gamma_i}$. In both cases $\Gamma_i$ is the maximally unfolded graph of $L$ relative to $\overline{\Gamma_i}$. The points marked with an $\times$ denote the points in $\Gamma$ as which the maximum of $d_{\mr{ad}}$ is attained. (D) The open graphs of the maximally unfolded graphs relative to $\overline{\Gamma}$ given in part (C).}
		\label{fig:example_single_slit}
	\end{figure}
	
	Panel (C) in Figure \ref{fig:example_single_slit} shows two different graphs in $\cG([L])$: $\Gamma_1$ the unfolded graph of $L$ and $\Gamma_2$ a partially folded graphs of $L$. The map $\pi_L \colon \cG([L]) \to \cG([\overline{L}])$ is given by removing the point marked by an $\times$ in the green arc---which in the case of $\Gamma_1$ is the midpoint of the green arc---and deleting the resulting leaves. In particular, we have that $\pi_L(\Gamma_i)=\overline{\Gamma_i}$ for $i=1,2$, where the graphs $\overline{\Gamma_i}$ are shown in panel (B).  Note that $\overline{\Gamma_1}$ is the unfolded graph of $\overline{L}$ and $\overline{\Gamma_2}$ is the critical graph of $\overline{L}$.
	Therefore, we know that in either case $\pi_L^{-1}(\overline{\Gamma}_i)$ is not empty. 
	
	The entire preimage $\pi_L^{-1}(\overline{\Gamma}_i)$ is given by the locations for attaching a chord to $\overline{\Gamma_i}$. This may be done along the dashed green segments, for one end of the chord and the fixed point marked in green for the other, as marked in panel (B). Thus, the preimages are homeomorphic to intervals. In the either case, the endpoints of the interval correspond to the unfolded graphs of $L$ and the radial slit configuration obtained from $L$ by letting the shortest segment jump. In fact, the preimages $\pi_L^{-1}(\overline{\Gamma})$ are homeomorphic to an interval for all $\overline{\Gamma} \in \cG([\overline{L}])$.
	
	The reason why the second end of the chord could only be attached to a single point is because its corresponding slit is isolated, i.e., it is the only slit on its radial segment. If this were not the case, then the other end of this chord could also be attached to an interval.  The intervals at which both end points of the chord can be attached must be disjoint, otherwise  there would be a sequence of jumps that would bring both slits together and thus $L$ would be degenerate. So in this more generic case  $\pi_L^{-1}(\overline{\Gamma})$ is homeomorphic to a square. 
	Finally, there is another simple generalization of this case: if there are several pairs of shortest slits in $L$, but the intervals describing the endpoints where their corresponding chords can be attached are all disjoint.  In this case, the preimage is homeomorphic to a higher-dimensional cube.
\end{example}

In the previous example we considered the case where there is exactly one pair of slits which is the shortest pair, as well as some simple generalizations of this. On the other end of the spectrum there is the case where all slits are of equal size.

\begin{example}[All slits of equal size]\label{exam:cone} In the following radial slit configuration $L$, the configuration $\overline{L}$ obtained by deleting all shortest slit pairs is empty.
	\[\begin{tikzpicture}
		\clip (-2,-1.5) rectangle (2,1.5);
		
		\node at (-1.6,0) {$L=$};
		\node at (0,0) {\footnotesize $1$};
		\draw [dashed] (.5,0) -- (1.2,0);
		\draw [orange,thick,yshift=.6mm] (0:.8cm) -- (0:1.2cm);
		\draw [orange,thick] (90:.8cm) -- (90:1.2cm);
		\draw [blue,thick] (180:.8cm) -- (180:1.2cm);
		\draw [blue,thick] (0:.8cm) -- (0:1.2cm);
		\draw (0,0) circle (1.2cm);
		\draw (0,0) circle (.5cm);
		
	\end{tikzpicture}\]
	The configuration $[L]$ has three representatives, and $\cG([L])$ (which is the preimage over the unique point in $\cG[\overline{L}]$) is homeomorphic to the cone on three points. These three points are represented by the unfolded graphs of the three representatives and the cone point by the critical graph.
\end{example}

The general case is an amalgamation of these two cases.  More precisely, in the first case---where there is exactly one pair of slits which is the shortest---the preimage is homemorphic to an interval or to a cube arising from the choices of where to attach the endpoints of the attached chord.  In the second case---where all slits are of the same length---we have that the preimage is a cone on three points corresponding to the unfolded representatives.  In general, the preimage is homeomorphic to a product of ``cones on cubes.''  We will show this by going through an intermediary subspace of metric fat graphs corresponding to attaching trees on chords. 

\begin{definition}\label{def:maxim_unfolded}
	Let $\overline{\Gamma}\in \cG([\overline{L}])$. By definition, there is a representative $\overline{L}$ and a function $\overline{t}$ such that $\overline{\Gamma}=\Gamma_{\overline{L},\overline{t}}$. 
	
	Let $L_1, L_2, \ldots, L_r$ be all the radial slit configurations that can obtained from $\overline{L}$ by adding slits such that each $L_i$ is equivalent to $L$ by slit jumps. 
	For any $i$, there is at least one function $t \colon \mr{thin}(L_i) \to [0,1]$ such that $\pi_L(\Gamma_{L_i,t_i})=\overline{\Gamma}$. Let $t_i$ be the minimal one among such functions, i.e.,~the one that takes the smallest possible values for every element of $\mr{thin}(L_i)$. 
	\begin{itemize}
		\item The \emph{maximally unfolded graph of $L_i$ relative to $\overline{\Gamma}$}, is the fat graph $\Gamma_i \coloneqq \Gamma_{L_i,t_i}$. 
		\item The \emph{open graph of $L_i$ relative to $\overline{\Gamma}$}  is the fat graph with unlabeled leaves given by $\Gamma'_i \coloneqq \Gamma_i - d^{-1}_\mathrm{ad}(d_{\max})$ where $d_\mathrm{max}$ is the maximum of the distance from any point in $\Gamma_i$ to the admissible cycles.
	\end{itemize}
\end{definition}

Examples of maximally unfolded graphs relative to some graph can be seen in Figure \ref{fig:example_single_slit} panel (C). Their corresponding open graphs are given in panel (D).

\begin{remark}
	\label{rmk:attaching_forest}
	Any maximally unfolded graph relative to $\overline{\Gamma}$, say $\Gamma_i$, is obtained from $\overline{\Gamma}$ by attaching a chord for each pair of slits deleted in $L_i$. In particular, if $\overline{\Gamma}$ is an unfolded graph then each $\Gamma_i$ is an unfolded graph as well. 
	Furthermore, the preimage $d^{-1}_\mathrm{ad}(d_{\max})$ consists of exactly one point in each of these chords: that point at which the half edges corresponding to each slit pair are glued to each other. Therefore, each leaf in the open graph of $L_i$ relative to $\overline{\Gamma}$ corresponds precisely to a slit deleted from $L_i$.
	
	Moreover, for any graph $\Gamma\in \cG([L])$ there is at least one $L_i$ and a function $t \colon \mr{thin}(L_i) \to [0,1]$ such that $\Gamma=\Gamma_{L_i, t}$ and $t\geq t_i$.  Thus, any graph in $\cG([L])$ can be thought of as a ``folding'' of a maximally unfolded graph relative to $\overline{\Gamma}$, say $\Gamma_i$, where we only ``fold'' the chords that have been attached to $\overline{\Gamma}$ in the construction of $\Gamma_i$. In particular, this shows that any such $\Gamma$ can be obtained from $\overline{\Gamma}$ by attaching to it a forest along its leaves.
	
	A special example of this is the case of the critical graph $\Gamma_{\mr{crit}}\in \cG([L])$. It can be constructed from $\overline{\Gamma}$ by attaching corollas to $\overline{\Gamma}$.  This graph can be obtained by ``completely folding" any of the maximally unfolded graphs relative to $\overline{\Gamma}$. Furthermore, the preimage $d^{-1}_\mathrm{ad}(d_{\max})$ consist exactly of the central vertices of the corollas attached.
\end{remark}

Informally, one can think of $\pi^{-1}_L(\overline{\Gamma})$ as a space of graphs that interpolates the maximally unfolded graphs relative to $\overline{\Gamma}$ with the critical graph. At one extreme we attach chords, at the other we attach corollas, and in between we attach forests that arise as all possible foldings of these chords on their way to the corollas.  

We now show that these forests can be attached to boundary intervals (possibly of length $0$, so points) in the outgoing boundary of the metric fat graph $\overline{\Gamma}$. Those boundary intervals that are not points are described combinatorially as follows:

\begin{definition}
	Let $\Gamma$ be a metric (admissible) fat graph and let $\tau$ be a boundary cycle of $\Gamma$.
	We can think of $\tau$ as a set of half-edges of $\Gamma$ together with a cyclic order.
	A \emph{boundary interval in $\tau$}, denoted $\cB$, is a proper subset of the half-edges of $\tau$ which can be written as
	\[\cB=\{ h_1, h_2=\tau(h_1),h_3=\tau^2(h_1), \ldots,h_n=\tau^{n-1}(h_1)\}\]
	for some half-edge $h_1$  in  $\tau$. In particular, $\cB$ is an ordered set. 
\end{definition}

A boundary interval determines an ordered list of edges in $\Gamma$, in which an edge can appear at most twice. Consecutive edges in this list share a vertex and thus define a path in $\Gamma$ between 
$s(h_1)$ and $s(\iota(h_n))$, where $s$ and $\iota$ are the source and involution maps in the definition of the graph $\Gamma$. Up to scaling there is a canonical map from the unit interval to $\Gamma$ which traces this path and sends $0$ to $s(h_1)$ and $1$ to $s(\iota(h_n))$. By scaling the unit interval, we can construct a canonical map which is an isometry when restricted to the edges of the path. We do this below.

\begin{definition}
	Let $\cB$ be a boundary interval in a boundary cycle $\tau$. We denote by $I_\cB$ an oriented interval whose length is the length of the path in $\Gamma$ determined by $\cB$. More precisely, $I_\cB$ can be subdivided into consecutive subintervals $I_i$ for 
	$1\leq i \leq \vert\cB\vert$.  The length of the $i$-th subinterval $I_i$ is the length of the $i$-th edge 
	$e_i=\{h_i, \iota(h_i)\}$
	on the path determined by $\cB$.  
	We denote by $x_i^-, x_i^+$, the boundary points of $I_i$ using its orientation.

	The \emph{parametrization map of $\cB$} is the unique map 
	\[f_\cB \colon I_\cB \lra \Gamma\]
	which sends $x_1^-\mapsto s(h_1)$, $x_n^+\mapsto s(\iota(h_n))$ and such that for all $i$ it restricts to an isometry $f_\cB\vert \colon I_i \to e_i \coloneqq \{h_i, \iota(h_i)\}$ that sends $x_i^-$ to $s(h_i)$. 
\end{definition}

The map $f_\cB$ is a parametrization of an interval in the boundary component corresponding to $\tau$. Thus a point in $x\in I_\cB$ uniquely determines a way in which a leaf can be attached to $\Gamma$ such that the leaf is in the boundary interval defined by $\cB$. 

We now describe the boundary intervals that will arise given $\overline{\Gamma}\in \cG([\overline{L}])$.

\begin{definition}
	Let $\Gamma'_i$ denote the open graph of $L_i$ relative to $\overline{\Gamma}$ for $1\leq i \leq r$.
	Let $l$ be an unlabeled leaf of $\Gamma'_i$. This leaf defines a boundary cycle $\tau_l$ in $\Gamma_i'$.  
	We define $\cB_l$ to be the subset of the half-edges of $\tau_l$ given by:
	\[\cB_l \coloneqq \{ \tau_l^j (l)\vert 
	j\in\Z,\, j\neq 0,
	\text{ and }
	\tau_l^j (l) \text{ is not part of an edge in an admissible cycle} \}.\]
	Note in particular that $\cB_l$ could be empty and this indeed happens when $l$ is attached to a vertex $v$ which is essentially trivalent in the sense that it has valence four if it is also attached to an admissible leaf but trivalent otherwise.
\end{definition} 

An example of this construction can be seen in Figure \ref{fig:example_single_slit} panel (B), where the dotted lines in $\overline{\Gamma}_i$ for $i=1,2$ correspond precisely to the boundary intervals defined by the leaves of the open graph. The sets $\cB_l$ have the following properties:

\begin{lemma}\label{lem:boundary_intervals}
	For  $1\leq i \leq r$, let $\Gamma_i'$ denote the open graphs of $L_i$ relative to $\overline{\Gamma}\in \mr{Im}(\pi_L)$ as in Definition \ref{def:maxim_unfolded}.  Recall that each unlabelled leaf of $\Gamma_i$, say $l$, corresponds precisely to a shortest slit of $L_i$, and thus it has a ``pair" leaf which we denote by $\lambda(l)$. 
	Then the following hold:
	\begin{enumerate}[(i)]
		\item For any unlabelled leaf $l$ of $\Gamma_i'$, the set $\cB_l$ is either empty or it is a boundary interval in $\overline{\Gamma}$.
		\item For any unlabelled leaf $l$ of $\Gamma_i'$, the sets $\cB_l$ and $\cB_{\lambda(l)}$ are disjoint.
		\item For any pair of unlabeled leaves $l_1$ and $l_2$ in  $\Gamma'_i$ such that $\cB_{l_1}\neq \emptyset\neq \cB_{l_2}$ then either 
		\[\cB_{l_1}\cap\cB_{l_2} = \emptyset \quad\quad\text{or}\quad\quad \cB_{l_1}=\cB_{l_2}.\]
		\item For any open graphs relative to $\overline{\Gamma}$, say $\Gamma'_i$ and $\Gamma'_j$, the set of boundary intervals defined by their unlabelled leaves coincide. 
	\end{enumerate}
\end{lemma}

\begin{proof}
	We first show (i) holds. Let $\zeta_l$ denote the slit corresponding to the unlabelled leaf $l$ in $\Gamma_i'$. Then $\cB_l$ is the section of the outgoing boundary along which the leaf $l$ can move around, given by slit jumps of $\zeta_l$. In particular, if $\zeta_l$ is isolated, that is, it is the only slit on its radial segment, then this is a single point and $\cB_l$ is empty. If $\cB_l$ is not empty it is enough to show that $\cB_l$ is not the entire boundary cycle that corresponds to $l$. Assume by contradiction that $\cB_l$ is the entire boundary cycle.  Then there must be a set of slits in $L_i$, $\{ \zeta_1, \lambda(\zeta_1), \zeta_2, \lambda(\zeta_2), \ldots, \zeta_s, \lambda(\zeta_s)\}$ for some $s\geq 1$ such that the following hold:
	\begin{enumerate}
		\item The slit $\zeta_l$ lies between $\zeta_1$ and $\lambda(\zeta_s)$.  More precisely, $\lambda(\zeta_s), \zeta_l, \zeta_1$ all lie in the same radial segment and $\omega(\zeta_1)=\zeta_l, \omega(\zeta_l)=\lambda(\zeta_s)$.
		\item For each $1\leq i < s$, the slits $\lambda(\zeta_i)$ and $\zeta_{i+1}$ lie in the same radial segment and $\omega(\zeta_{i+1})=\lambda(\zeta_i)$.
	\end{enumerate}
	Let $\zeta_*$ be a slit in $\{\zeta_1, \ldots \zeta_s\}$ of largest modulus, i.e., a shortest slit in that set.  Then $\zeta_*$ and $\lambda(\zeta_*)$ can jump along the other slits.  In particular, $L_i$ is equivalent via slit jumps to a configuration $L_*$ where $\lambda(\zeta_*), \zeta_*$ and $\lambda_l$ lie in the same radial segment and 
	\[\omega(\zeta_*) = \zeta_l,\quad \omega(\zeta_l)=\lambda(\zeta_*)\quad \text{and}\quad \vert\zeta_l\vert\leq \vert\zeta_*\vert.\]
	So $L_*$ and also $L_i$ are degenerate configurations, which is not possible.
	
	Statement (ii) follows in a similar way.  More precisely, if $\cB_l$ and $\cB_{\lambda(l)}$ are not disjoint, then $L_i$ is equivalent via slit jumps to a configuration where $\zeta_l$ and $\lambda(\zeta_l)$ lie next to each other and thus $L_i$ is degenerate. 
	
	Statements (iii) and (iv) follow by construction.
\end{proof}

\begin{definition}[Attaching intervals]
	\label{def:attaching_int}
	Let $\overline{\Gamma}\in \cG(\overline{L})$. Let $I_{L,\overline{\Gamma}}$ be the set of oriented metric intervals (possibly of length zero) corresponding to the parametrization of the boundary intervals and isolated points in $\overline{\Gamma}$ along which a graph can be attached to obtain an element in its preimage.
	
	That is, $\smash{I_{L,\overline{\Gamma}}}$ is given by those $I_{\cB_l}$ such that $l$ is an unlabelled leaf of $\Gamma'$, an open graph relative to $\overline{\Gamma}$ as in Definition \ref{def:maxim_unfolded}.
	This interval is of length zero if its corresponding boundary interval is empty.  Recall that this happens precisely when there is a leaf in $\overline{\Gamma}$ corresponding to an isolated slit, i.e., a slit that is the only one in its radial segment. Note in particular that by Lemma \ref{lem:boundary_intervals} (iv) this definition does not depend on the choice of $\Gamma'$ but only on the class $[L]$ and the metric fat graph $\overline{\Gamma}$.
\end{definition}

Any point in the preimage can be obtained by attaching a forest to $\overline{\Gamma}$ along the parametrization intervals in $I_{L, \overline{\Gamma}}$. To make this precise we define certain spaces of forests attached to intervals, which will use the following combinatorial definition.

\begin{definition}
	Let $\cI \coloneqq I_1\sqcup I_2\sqcup \ldots  \sqcup I_{k}$ 
	denote a disjoint union of $k$ compact intervals of a given length.  We allow intervals to have length zero. Let $\cD$ denote a family of piecewise linear functions $\cD \coloneqq \{d_i \colon I_i\to \R_{>0}\vert 1\leq i\leq k\}$, whose derivative is $\pm 1$ outside a finite set and we define $\max \cD \coloneqq \max_{1 \leq i \leq k} \{\max_{x_i \in I_i} d_i(x_i)\}$.
\end{definition}

\begin{notation}[Configurations of chords]
	We will consider the set of all possible configurations of $k-1$ chords attached by their endpoints to the intervals in $\cI$ such that the resulting graph is: (i) connected, (ii) planar, and (iii) has no loops; we denote this set by $\Conf_{\cI}$.
	See Figure \ref{fig:chords} for examples of configuration of chords. 
	We will construct a space of metric planar forests attached to these intervals and we will use the configurations above to restrict which metrics are allowed.  For this, we will use the path distance function in a metric graph which we denote by $d_\mr{path}$. 
\end{notation}
\begin{figure}
	\begin{tikzpicture}[scale=1.2]
		\draw [thick,PineGreen] (-.6,-1) -- (-1,-.6);
		\draw [thick,PineGreen]  (1,-.6) -- (.6,-1);
		\draw [thick,PineGreen]  (.6,1) -- (1,.6);
		\draw [thick,PineGreen]  (-1,.6) -- (-.6,1);
		\node [PineGreen]  at (.8,.8) {$\bullet$};
		\node [PineGreen] at (-.8,.8) {$\bullet$};
		\node [PineGreen] at (.8,-.8) {$\bullet$};
		\node [PineGreen] at (-.8,-.8) {$\bullet$};
		\draw [Mahogany,thick] (.8,.8) -- (-.8,.8) -- (-.8,-.8) -- (.8,-.8);
		
		\begin{scope}[xshift=3cm]
			\draw [thick,PineGreen]  (-.6,-1) -- (-1,-.6);
			\draw [thick,PineGreen]  (1,-.6) -- (.6,-1);
			\draw [thick,PineGreen]  (.6,1) -- (1,.6);
			\draw [thick,PineGreen]  (-1,.6) -- (-.6,1);
			\node [PineGreen] at (.8,.8) {$\bullet$};
			\node [PineGreen] at (-.8,.8) {$\bullet$};
			\node [PineGreen] at (.8,-.8) {$\bullet$};
			\node [PineGreen] at (-.8,-.8) {$\bullet$};
			\draw [Mahogany,thick] (.8,.8) -- (-.8,.8) -- (.8,-.8) -- (-.8,-.8);
		\end{scope}
	\end{tikzpicture}
	\caption{Two of the 8 configurations of chords for $k=4$. The green line segments are the intervals, the vertices are the marked points in these intervals, and the red arcs are the chords.}
	\label{fig:chords}
\end{figure}

\begin{definition}Let $\cI$ and $\cD$ be as in the previous definition and $d \in \R_{>0}$ such that $2d>\max \cD$. Denote by $\cF_{\cI, \cD,d}$ those metric graphs obtained by attaching a metric forest $F$  with at most $2(k-1)$ leaves to the intervals $\cI$ such that: 
	\begin{itemize}
		\item The graph obtained, denoted by $G$, is planar, connected and has no loops.
		\item There is a configuration $C\in\Conf_{\cI}$ such that for any pair of intervals $I_i, I_j$ connected by a chord in $C$ the path distance in $G$ from $x_i$ to $x_j$ two attaching points of leaves of the forest $F$ is 
		\[d_\mathrm{path}(x_i, x_j)=2d-d_i(x_i)-d_j(x_j).\]
	\end{itemize}
	Note that $\cF_{\cI, \cD,d}$ is a subset of the space of metric fat graphs.  We consider it as a space using the subspace topology.
\end{definition}

\begin{lemma}\label{lem:f-contr} The topological space $\cF_{\cI, \cD,d}$ is contractible.\end{lemma}

\begin{proof}Fix a marked point $\ast_i \in I_i$ for all $1 \leq i \leq k$ such that $\ast_i$ is a local maximum for $d_i$. Let $\cF_{\cI, \cD,d,\ast} \subset \cF_{\cI,\cD,d}$ be the subspace where the forest is attached to the marked points in the intervals $\cI$. We will construct a deformation retraction onto a point in two steps.
	
	\vspace{.5em}
	
	\noindent \emph{Step 1: Deformation retraction onto $\cF_{\cI,\cD,d,\ast}$.} We will construct a deformation retraction of $\cF_{\cI,\cD,d}$ onto $\cF_{\cI, \cD,d,\ast}$. Intuitively, we slide the endpoints along $\cI$ towards the marked points but some care is require to make sure the conditions on the metric remain satisfied. By definition, each $I_i$ can be subdivided into finitely many intervals on which $d_i$ is linear. Let $N_i$ be the number of these in a uniquely minimal such subdivision. Our argument will be by induction over $N = N_1+\ldots+N_k$.
	
	In the initial case $N=0$ there is nothing to prove. For the induction step, let $I' \subset I_j$ be an interval in the aforementioned minimal subdivision such that $I_j = I' \cup I'_j$ with $I' \cap I'_j$ is a point and $\ast_j \in I'_j$. Let $\cI'$ be obtained from $\cI$ by replacing $I_j$ with $I'_j$ and let $\cD'$ be obtained by replacing $d_j$ by $d'_j \coloneqq d_j|_{I'_j}$. We will show that $\cF_{\cI,\cD,d}$ deformation retracts onto a space homeomorphic to $\cF_{\cI',\cD',d}$. There are two cases: 
	\begin{enumerate}[(A)]
		\item \emph{The point $I' \cap I'_j$ is a local minimum of $d_j$.} In this case we ``open'' along the edge $I'$ towards $I'_j$:
		\[\begin{tikzpicture}[scale=1.1]
			\draw [Mahogany,thick] (-1,1) -- (0,0);
			\draw [thick,PineGreen] (0,0) -- (.5,.5);
			\draw [dotted,thick,PineGreen] (.5,.5) -- (.7,.3);
			\node at (-1,.7) [Mahogany] {$I'$};
			\node at (.5,.1) [PineGreen] {$I'_j$};
			\draw (-1.2,2) -- (-1,1);
			\draw (-.8,2) -- (-1,1);
			\draw (-.6,2) -- (-.5,.8);
			\draw (-.4,2) -- (-.5,.8);
			\draw (-.5,.8) -- (-.5,.5);
			
			\begin{scope}[xshift=3cm]
				\draw [Mahogany,thick] (-1,1) -- (0,0);
				\draw [thick,PineGreen] (0,0) -- (.5,.5);
				\draw [dotted,thick,PineGreen] (.5,.5) -- (.7,.3);
				\node at (-1,.7) [Mahogany] {$I'$};
				\node at (.5,.1) [PineGreen] {$I'_j$};
				\draw (-1.2,2.2) -- (-1,1.2);
				\draw (-.8,2.2) -- (-1,1.2);
				\draw (-1,1.2) -- (-.5,.5);
				\draw (-.6,2) -- (-.5,.8);
				\draw (-.4,2) -- (-.5,.8);
				\draw (-.5,.8) -- (-.5,.5);
				
			\end{scope}
			
			\begin{scope}[xshift=6cm]
				\draw [Mahogany,thick] (-1,1) -- (0,0);
				\draw [thick,PineGreen] (0,0) -- (.5,.5);
				\draw [dotted,thick,PineGreen] (.5,.5) -- (.7,.3);
				\node at (-1,.7) [Mahogany] {$I'$};
				\node at (.5,.1) [PineGreen] {$I'_j$};
				\draw (-1.2,2.2) -- (-1,1.2);
				\draw (-.8,2.2) -- (-1,1.2);
				\draw (-.6,2.1) -- (-.5,.9);
				\draw (-.4,2.1) -- (-.5,.9);
				\draw (-1,1.2) -- (0,0);
				\draw (-.5,.9) -- (-.5,.6);
			\end{scope}
			
			\draw [->] (-1,-1) -- (6.5,-1);
			\node at (-1,-.7) {$t=0$};
			\node at (6.5,-.7) {$t=1$};
		\end{tikzpicture}\]
		The precise construction is as follows. If $I'$ has length $\ell$ we linearly identify the interval $I'$ by $[0,\ell]$, with $I' \cap I'_j$ corresponding to $\ell$. Suppose that $s \in [0,\ell]$ is the unique smallest value at which an edge is attached to $I' \cong [0,\ell]$. Then on a metric graphs $G$ the deformation retraction at time $t \in [0,1]$ is the identity for $t\ell<s$ and for $t\ell \geq s$ replaces $I' \cong [0,\ell]$ with $[0,\ell] \cup_{t \ell} [s \ell,t \ell]$; note we may identify $[t\ell,\ell] \cup_{t\ell} [s\ell,t\ell] \subset [0,\ell] \cup_{t\ell} [s\ell,t\ell]$ with $[s\ell,t\ell]$. We attach the edges originally attached to $[s\ell,t\ell] \subset I'$ to this new interval. The result has a canonical the metric.
		\item \emph{The point $I' \cap I'_j$ is a local maximum on $d_j$.} In this case we ``fold'' along the edge $I'$ towards $I'_j$:
		\[\begin{tikzpicture}[scale=1.1]
			\draw [Mahogany,thick] (-1,0) -- (0,1);
			\draw [thick,PineGreen] (0,1) -- (.5,.5);
			\draw [dotted,thick,PineGreen] (.5,.5) -- (.7,.3);
			\node at (-.8,-.1) [Mahogany] {$I'$};
			\node at (.5,.1) [PineGreen] {$I'_j$};
			\draw (-1.1,2) -- (-1,0);
			\draw (-.9,2) -- (-1,0);
			\draw (-.6,2) -- (-.5,1.2);
			\draw (-.4,2) -- (-.5,1.2);
			\draw (-.5,1.2) -- (-.5,.5);
			
			\begin{scope}[xshift=3cm]
				\draw [Mahogany,thick] (-1,0) -- (0,1);
				\draw [thick,PineGreen] (0,1) -- (.5,.5);
				\draw [dotted,thick,PineGreen] (.5,.5) -- (.7,.3);
				\node at (-.8,-.1) [Mahogany] {$I'$};
				\node at (.5,.1) [PineGreen] {$I'_j$};
				\draw (-1.1,2) -- (-.5,.5);
				\draw (-.9,2) -- (-.5,.5);
				\draw (-.6,2) -- (-.5,1.2);
				\draw (-.4,2) -- (-.5,1.2);
				\draw (-.5,1.2) -- (-.5,.5);
			\end{scope}
			
			\begin{scope}[xshift=6cm]
				\draw [Mahogany,thick] (-1,0) -- (0,1);
				\draw [thick,PineGreen] (0,1) -- (.5,.5);
				\draw [dotted,thick,PineGreen] (.5,.5) -- (.7,.3);
				\node at (-.8,-.1) [Mahogany] {$I'$};
				\node at (.5,.1) [PineGreen] {$I'_j$};
				\draw (-1.1,2) -- (0,1);
				\draw (-.9,2) -- (0,1);
				\draw (-.6,2) -- (-0.05,1.2);
				\draw (-.4,2) -- (-0.05,1.2);
				\draw (-0.05,1.2) -- (0,1);
			\end{scope}
			
			\draw [->] (-1,-1) -- (6.5,-1);
			\node at (-1,-.7) {$t=0$};
			\node at (6.5,-.7) {$t=1$};
		\end{tikzpicture}\]
		The precise construction is as follows. Let us linearly identify the interval $I'$ by $[0,\ell]$ as in case (A). Then the subtree of $G$ given by points that are distance $t\ell$ from $0 \in I' \in [0,\ell]$. We identify this subtree with the interval $[0,t\ell]$ by identifying all points with distance $s$ to $s \in [0,t\ell]$. The result has a canonical metric.
	\end{enumerate}
	
	\vspace{.5em}
	
	\noindent \emph{Step 2: $\cF_{\cI,\cD,d,\ast}$ is contractible.} We will prove that $\cF_{\cI, \cD,d,\ast}$ is contractible by a variation of the Alexander trick. To do so, we replace the metric tree $(T,d_T)$ attached to the marked points by $(T,(1-t)d_T)$ and add edges of length $t(d-d_i(\ast_i))$ connecting $\ast_i$ to the endpoint in this scaled tree originally attached to $\ast_i$. (The circles contain the rescaled graphs.)
	\[\begin{tikzpicture}[scale=.6]
		\draw [thick,PineGreen] (-1.2,-2) -- (-2,-1.2);
		\draw [thick,PineGreen] (2,-1.2) -- (1.2,-2);
		\draw [thick,PineGreen] (1.2,2) -- (2,1.2);
		\draw [thick,PineGreen] (-2,1.2) -- (-1.2,2);
		\node [PineGreen] at (1.6,1.6) {$\bullet$};
		\node [PineGreen] at (-1.6,1.6) {$\bullet$};
		\node [PineGreen] at (1.6,-1.6) {$\bullet$};
		\node [PineGreen] at (-1.6,-1.6) {$\bullet$};
		\draw (1.6,1.6) -- (1,0) -- (-1.6,1.6);
		\draw (1,0) -- (-1.6,-1.6) -- (1.6,-1.6);
		\draw [PineGreen,dotted] (0,0) circle (2.26 cm);
		
		\begin{scope}[xshift=6cm]
			\draw [thick,PineGreen] (-1.2,-2) -- (-2,-1.2);
			\draw [thick,PineGreen] (2,-1.2) -- (1.2,-2);
			\draw [thick,PineGreen] (1.2,2) -- (2,1.2);
			\draw [thick,PineGreen] (-2,1.2) -- (-1.2,2);
			\node [PineGreen] at (1.6,1.6) {$\bullet$};
			\node [PineGreen] at (-1.6,1.6) {$\bullet$};
			\node [PineGreen] at (1.6,-1.6) {$\bullet$};
			\node [PineGreen] at (-1.6,-1.6) {$\bullet$};
			\draw (1.6,1.6) -- (.8,.8) -- (.5,0) -- (-0.8,0.8) -- (-1.6,1.6);
			\draw (.5,0) -- (-.8,-.8) -- (.8,-.8) -- (1.6,-1.6);
			\draw (-1.6,-1.6) -- (-.8,-.8);
			\draw [PineGreen,dotted] (0,0) circle (1.13 cm);
		\end{scope}
		
		\begin{scope}[xshift=12cm]
			\draw [thick,PineGreen] (-1.2,-2) -- (-2,-1.2);
			\draw [thick,PineGreen] (2,-1.2) -- (1.2,-2);
			\draw [thick,PineGreen] (1.2,2) -- (2,1.2);
			\draw [thick,PineGreen] (-2,1.2) -- (-1.2,2);
			\node [PineGreen] at (1.6,1.6) {$\bullet$};
			\node [PineGreen] at (-1.6,1.6) {$\bullet$};
			\node [PineGreen] at (1.6,-1.6) {$\bullet$};
			\node [PineGreen] at (-1.6,-1.6) {$\bullet$};
			\draw (1.6,1.6) -- (0,0) -- (-1.6,1.6);
			\draw (-1.6,-1.6) -- (0,0) -- (1.6,-1.6);
		\end{scope}
		
		\draw [->] (-2,-3.5) -- (14,-3.5);
		\node at (-2,-3) {$t=0$};
		\node at (14,-3) {$t=1$};
	\end{tikzpicture}\]
	The resulting metric graphs are still planar, connected, without loops, and satisfy the metric condition. At $t=1$ we obtain the $k$-valent corolla attached to all intervals, with edge between the vertex of the corolla and $\ast_i$ given by $d-d_i(\ast_i)$. 
\end{proof}

\begin{lemma}\label{lem:f-is-preimage}
	Let $\overline{\Gamma}\in\cG([\overline{L}])$.  There is a positive real number $d\in \R_{>0}$ and a finite collection of sets of intervals $\cI$ and sets of functions $\cD$, such that there is a homeomorphism
	\begin{equation}
		\label{eq:homeo_forest_preim}
		\pi^{-1}_L(\overline{\Gamma})\cong \cF_{\cI, \cD,d} \times \cdots \times \cF_{\cI',\cD',d}.
	\end{equation}
\end{lemma}

The intuition behind this homeomorphism is as follows. In the simplest scenario, there is only one term in the product of the right hand side of (\ref{eq:homeo_forest_preim}). On the one hand, the critical graph corresponds to the unique point in $\cF_{\cI,\cD,d}$ given by a single corolla. On the other hand, the maximally unfolded graphs relative to $\overline{\Gamma}$ correspond to elements in $\Conf_{\cI}$, that is, to arrangements of $k-1$ cords attached to the intervals (where $k-1$ is the number of pairs of shortest slits of $L$). Finally, an arbitrary point in $\cF_{\cI, \cD,d}$ is a ``folding" of a configuration in  $\Conf_{\cI}$, and an arbitrary point in $\pi^{-1}_L(\overline{\Gamma})$ is a ``folding'' of a maximally unfolded graph relative to $\overline{\Gamma}$.

\begin{proof}
	Given $[L]$ and $\overline{\Gamma}$, the set of intervals will be $\cI=I_{L, \overline{\Gamma}}$; see Definition \ref{def:attaching_int}.  
	Recall that there is a map 
	\[f \colon I_{L,\overline{\Gamma}}\lra \overline{\Gamma},\]
	which is an isometry when restricted to edges of $\overline{\Gamma}$ that are in the image. Moreover, we have a canonical embedding $\overline{\Gamma}\cof \Gamma$ for which $\Gamma-\overline{\Gamma}=F$ is a forest
	and such that $\Gamma$ is obtained from $\overline{\Gamma}$ by attaching the leaves of $F$ to $I_{L,\overline{\Gamma}}$, see Remark \ref{rmk:attaching_forest}. 
	
	For a choice of $\Gamma$ in the preimage, we denote by $G_{\Gamma}$ the subgraph of $\Gamma$ that is given by the union of the forest $F$ and the boundary intervals in $I_{L,\overline{\Gamma}}$ along which $F$ is attached.  
	The number of components of $G_{\Gamma}$ is independent from the choice of $\Gamma$ in the preimage of $\overline{\Gamma}$ and it corresponds to the number of elements in the product of the right hand side of (\ref{eq:homeo_forest_preim}).  
	An intuitive way to think about this, is that the slits which are deleted from $L$ to obtain $\overline{L}$ come in \emph{clusters}, collections of slits which map to the same point in the glued surface $\Sigma([L])$, and each of these clusters contributes a single term in the product.
	
	We will assume for the sake of simplicity there is a single component in $G_{\Gamma}$ or a single \emph{cluster} of slits, thought the argument easily generalizes to the case of several components. 
	The functions $d_i \in \cD$ are induced by the modulus in $\bC$.  That is, they are determined by the path distance to the admissible cycles of $\overline{\Gamma}$. 
	More precisely, for any $x\in I_i\in I_{L,\Gamma}$ we set $d_i(x) = d_{\mr{ad}}(x)$.  This yields a well-defined piecewise-linear function on each $I_i$.
	The real number $d$ is the common modulus of all slits which are deleted from $L$ to obtain $\overline{L}$. Then there is a continuous map $\cF_{\cI,\cD,d} \to \pi^{-1}_L(\overline{L})$ given by gluing the forest $F$ into $\overline{\Gamma}$ according to the intervals $\cI_i$. This has an inverse given by the continuous map that sends  $\Gamma$ to $G_{\Gamma}$.\end{proof}

Putting together these results we prove that the preimages of $\pi_L$ are contractible. 
\begin{proof}[Proof of Lemma \ref{lem:preimage_contractible}]
	Let $\overline{\Gamma}\in \cG([\overline{L}])$.  By Lemma \ref{lem:f-is-preimage}, $\pi^{-1}_L(\overline{\Gamma})$ is homeomorphic to a product of spaces of forests attached at intervals.  These are contractible by Lemma \ref{lem:f-contr}.
\end{proof}

The proofs given above for $\pi_L$ can be adapted to the simpler case of $\pi^1_L$, and we will spare the reader the technical details. The result is:

\begin{lemma}\label{lem:pil1-props} \
	\begin{enumerate}[(i)]
		\item $\pi_L$ is well-defined.
		\item $\pi_L$ is continuous.
		\item The fibers of $\pi_L$ are compact, contractible ANR's.
\end{enumerate}\end{lemma}

We now finish the proof of Lemma \ref{forget_slits_homeq}, which said $\pi_L$ and $\pi^1_L$ are homotopy equivalences:

\begin{proof}[Proof of Lemma \ref{forget_slits_homeq}]
	We apply Theorem \ref{thm_lacher}. By Lemma \ref{lemcglpolyhedron} the domain and targets of the maps $\pi_L$ and $\pi^1_L$ are compact ANR's, so it suffices to prove the fibers of both maps are cell-like. This follows by combining Proposition \ref{prop_anrprops} (viii) with Lemma's \ref{lem:pil-props}, \ref{lem:preimage_contractible} and \ref{lem:pil1-props}.
\end{proof}

\subsection{The projection map is a homotopy equivalence}
\label{subsec_projrad}

Our next goal is to check that the spaces $\Rad$ and $\Radt$ are ANR's and that the map $\pi_1 \colon \Radt \to \Rad$ is proper and cell-like. For the remainder of this section we fix $g$, $n$ and $m$.

\begin{proposition}\label{prop_radanr} The space $\Rad$ is a locally compact ANR.\end{proposition}

\begin{proof}The space $\Rad$ is a smooth manifold, so it is locally compact and has an open cover by $\bR^n$'s. The latter are ANR's by property (v) of Proposition \ref{prop_anrprops}, so $\Rad$ is an ANR by property (iii) of Proposition \ref{prop_anrprops}. (Alternatively one can argue that $\Rad$ is an open subspace of the finite CW-complex $\bRad$ and use properties (ii) and (v) of Proposition \ref{prop_anrprops}.)\end{proof}

To prove that $\Radt$ is an ANR and that $\pi_1$ is a proper cell-like map, we will write $\Radt$ as an open subspace of a space $(\bRad)^\sim$ obtained by glueing together finitely many compact ANR's. By Definition \ref{deffrrad}, $\bRad \backslash \Rad = \Rad'$ is a CW-complex, and in fact a subcomplex of $\bRad$. Then $(\bRad)^\sim$ is defined by adding a boundary to the blowup $\Radt$ in the most naive way. In the proof of Lemma \ref{lemcglpolyhedron}, we saw that $\MFatad_{g,n+m}$ is a subspace of a compact polyhedron $P_{g,n+m}$, which we abbreviate to $P$ here.

\begin{definition}The space $(\bRad)^\sim$ is the subspace of $\bRad \times P$ consisting of all pairs $([L],\Gamma,\lambda)$ such that either 
	\begin{enumerate}[(i)]
		\item $[L] \in \Rad$ and $(\Gamma,\lambda) \in \cG(L)$, or 
		\item $[L] \in \bRad \backslash \Rad$ and $(\Gamma,\lambda) \in P$.
	\end{enumerate} 
\end{definition}

\begin{lemma}\label{lem_radtar} The topological space $(\bRad)^\sim$ is a compact ANR.\end{lemma}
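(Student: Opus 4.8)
\textit{Proof plan.}
The plan is to exhibit $(\bRad)^\sim$ as a finite union of compact ANR's glued along compact ANR's, and then conclude by repeated application of property (iv) of Proposition \ref{prop_anrprops}. Throughout we use that $\bRad=\bRad_h(n,m)$ is a finite CW-complex and that $P=P_{g,n+m}$, the compact polyhedron introduced in the proof of Lemma \ref{lemcglpolyhedron}, is compact, so that $\bRad\times P$ is compact Hausdorff; $(\bRad)^\sim$ is a subspace of it, and it will follow from the construction that it is closed, hence also compact.

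The first step is to build the pieces lying over the non-degenerate part. For a non-degenerate combinatorial type $[\cL]$ the closed cell $\overline e_{[\cL]}=\Delta^{q_1}\times\cdots\times\Delta^{q_n}\times\Delta^{p}$ is a compact polytope, and every configuration $[L]$ in its interior has the same, combinatorially determined set of representatives $L_1,\dots,L_k$ in $\QRad$, each obtained from a fixed representative by a fixed sequence of slit and parametrization-point jumps; the $j$-th representative has a combinatorial type $[\cL_j]$ and a dimension $d([\cL_j])$ which are constant on $\mathring e_{[\cL]}$. I claim the assignment $([L],t)\mapsto\bigl([L],[\Gamma_{L_j,t},\lambda_{L_j,t}]\bigr)$ extends to a piecewise-linear map
\[\overline\Phi_{[\cL],j}\colon\ \overline e_{[\cL]}\times[0,1]^{d([\cL_j])}\ \longrightarrow\ \bRad\times P,\]
because, unwinding Definitions \ref{defdil1} and \ref{defdil2}, the edge-lengths of the partially unfolded graphs are piecewise-linear in the barycentric (angular and radial) coordinates of $[L]$ and in $t$, while the object of $\Fatad$ one lands in, equivalently the closed face of $P$, is constant off a piecewise-linear subdivision of the domain. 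Since an image of a PL map between polyhedra is a polyhedron, $\mathrm{Im}(\overline\Phi_{[\cL],j})$ is a compact polyhedron, hence a compact ANR by property (vi) of Proposition \ref{prop_anrprops}.

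Next I would assemble $(\bRad)^\sim$ from these pieces together with the pieces over the degenerate locus. Since a face of a degenerate combinatorial type is degenerate, $\overline e_{[\cL]}\subseteq\bRad'=\bRad\setminus\Rad$ whenever $[\cL]$ is degenerate, so the part of $(\bRad)^\sim$ over such a cell is $\overline e_{[\cL]}\times P$, a product of a polytope with the compact polyhedron $P$, hence a compact ANR by property (vii). Define, by induction on $\dim e_{[\cL]}$, $W_{[\cL]}:=\overline e_{[\cL]}\times P$ if $[\cL]$ is degenerate and $W_{[\cL]}:=\bigcup_{j}\mathrm{Im}(\overline\Phi_{[\cL],j})\cup\bigcup_{[\cL']<[\cL]}W_{[\cL']}$ if $[\cL]$ is non-degenerate. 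Each $W_{[\cL]}$ is then a finite union of compact ANR's whose pairwise intersections are again unions of the lower $W_{[\cL'']}$, so repeated use of property (iv) shows each $W_{[\cL]}$ is a compact ANR. One checks $(\bRad)^\sim=\bigcup_{[\cL]}W_{[\cL]}$: the inclusion $\subseteq$ is immediate from $\cG([L])=\bigcup_{j}\Phi_{[\cL],j}\bigl(\{[L]\}\times[0,1]^{d([\cL_j])}\bigr)$, and the inclusion $\supseteq$ amounts to checking each $W_{[\cL]}\subseteq(\bRad)^\sim$, which over $\bRad'$ is automatic and over $\Rad$ reduces to the statement that $\overline\Phi_{[\cL],j}([L],t)\in\{[L]\}\times\cG([L])$ for all $[L]\in\Rad\cap\overline e_{[\cL]}$. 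A last application of property (iv), gluing the finitely many $W_{[\cL]}$ in order of increasing dimension, then gives that $(\bRad)^\sim$ is a compact ANR.

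The main obstacle is the geometric input in that last verification: that $\overline\Phi_{[\cL],j}([L],t)\in\{[L]\}\times\cG([L])$ for $[L]\in\Rad\cap\overline e_{[\cL]}$, which is exactly the assertion that $[L]\mapsto\cG([L])$ has closed graph over $\Rad$ --- if $[L_k]\to[L]$ in $\Rad$ and $\eta_k\in\cG([L_k])$ with $\eta_k\to\eta$ in $\MFatad$, then $\eta\in\cG([L])$. Because there are only finitely many combinatorial types one reduces to a subsequence with all $[L_k]$ of a fixed type $[\cL]$ and $\eta_k=[\Gamma_{L_{k,j},t_k}]$ for a fixed jump-class $j$ and $t_k\to t$, and the point is to show that the limit of these partially unfolded metric graphs is again a partially unfolded metric graph of the limit configuration $[L]$. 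This is proven by tracking, via the construction $E_L/{\sim_t}$, how the radial and annular chambers of $L_k$ collapse as $[L_k]\to[L]$ inside $\bRad$: a collapsing chamber forces the corresponding edge-length to tend to $0$, and one verifies that the resulting identifications are precisely those defining $\Gamma_{L,t'}$ for the appropriate $t'$ and representative of $[L]$. The same chamber-degeneration bookkeeping is what underlies the piecewise-linearity and continuity of the extensions $\overline\Phi_{[\cL],j}$ used above, so this analysis is really the single point requiring care.
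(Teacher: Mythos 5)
Your overall strategy coincides with the paper's: decompose $(\bRad)^\sim$ into finitely many compact ANR pieces indexed by combinatorial types, assemble them in order of increasing cell dimension, and conclude by iterating Proposition~\ref{prop_anrprops}(iv). The difference is in how the piece over a non-degenerate closed cell $\bRad_{[\cL]}$ is described. The paper takes it to be the product $\bRad_{[\cL]}\times\cG([\cL])$ with $\cG([\cL])$ the fiber over a fixed representative; but since the metric on the partially unfolded graphs $(\Gamma_{L_i,t},\lambda_{L_i,t})$ is read off from the slit positions of $L_i$, the subspace $\cG([L])\subset\MFatad$ genuinely varies with $[L]$ even within a fixed combinatorial type, so $\bRad_{[\cL]}\times\cG([\cL])$ is not literally a subspace of $(\bRad)^\sim$ and the paper's ``canonical homeomorphism'' conceals a fiberwise trivialization whose continuity is not checked. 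You avoid this by parametrizing the piece instead as a finite union of images of piecewise-linear maps $\overline\Phi_{[\cL],j}$ on $\overline e_{[\cL]}\times[0,1]^{d([\cL_j])}$, one per jump-class of $\QRad$-representatives, and applying Proposition~\ref{prop_anrprops}(vi) to PL images exactly as in the proof of Lemma~\ref{lemcglpolyhedron}. Both proofs defer the same residual verification, namely the boundary compatibility of the partially unfolded graphs as a configuration degenerates to a lower stratum; you isolate it correctly as the closed-graph statement for $[L]\mapsto\cG([L])$ and observe that the same chamber-degeneration bookkeeping is what makes the $\overline\Phi_{[\cL],j}$ continuous and PL in the first place. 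Net effect: same skeleton and the same ANR calculus, but your parametrization is more honest about the fiber variation and makes the deferred geometric step visible, whereas the paper's formulation is shorter because it absorbs that variation into an unstated product structure.
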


\begin{proof} Fix a representative $[L]$ for each combinatorial type $[\cL]$ and note that if $[L]$ and $[L']$ have the same combinatorial type, there is a canonical homeomorphism $\cG([L]) \cong \cG([L'])$. The space $\cG([\cL])$ is then by definition $\cG([L])$ for the representative $[L]$ of $[\cL]$. Remark that $(\bRad)^\sim$ is obtained by glueing together $\bRad \backslash \Rad \times P$ and $\bRad_{[\cL]} \times \cG([\cL])$ for all combinatorial types $[\cL]$ along $\partial\bRad_{[\cL]} \times \cG([\cL])$.
	
	Note that $\bRad \backslash \Rad \times P$ is the product of a subcomplex of the finite complex $\bRad$ with a compact polyhedron. Thus parts (v) and (vii) of Proposition \ref{prop_anrprops} say it is a compact ANR. Similarly, by Lemma \ref{lemcglpolyhedron} we have that $\bRad_{[\cL]} \times \cG([\cL])$ and $\partial \bRad_{[\cL]} \times \cG([\cL])$ are each a product of a finite CW-complex with a compact polyhedron, and thus compact ANR's by parts (v), (vi) and (vii) of Proposition \ref{prop_anrprops}. Attaching cells $\bRad_{[\cL]}$ one at a time in order of dimension and repeatedly applying property (iv) of Proposition \ref{prop_anrprops}, one proves inductively over $k$ that 
	\[\left(\bRad \backslash \Rad \times P\right) \cup \left(\bigcup_{\dim \bRad_{[\cL]} \leq k} \bRad_{[\cL]} \times \cG([\cL])\right)\]
	is a compact ANR. This uses that $\bRad$ has finitely many cells after fixing $g$, $n$ and $m$. In particular this process has to end at some $k \geq 0$ and hence $(\bRad)^\sim$ is also a compact ANR.
\end{proof}

\begin{proposition}\label{prop_radtanr} The topological space $\Radt$ is an ANR.\end{proposition}

\begin{proof}$\Radt$ is an open subspace of $(\bRad)^\sim$ and by property (ii) of Proposition \ref{prop_anrprops} we conclude it is an ANR.\end{proof}

\begin{proposition}\label{prop_pi1celllike} The map $\pi_1 \colon \Radt \to \Rad$ is proper and cell-like.\end{proposition}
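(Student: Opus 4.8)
The plan is to deduce both properties from the compact space $(\bRad)^\sim$ constructed above, together with the facts, already established, that each $\cG([L])$ is a compact contractible polyhedron. Properness will come from extending $\pi_1$ to a continuous map out of the compact space $(\bRad)^\sim$, and cell-likeness will come from contractibility of the fibers via the ANR toolkit of Proposition \ref{prop_anrprops}.

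For properness, I would first note that $\pi_1$ is the restriction to $\Radt$ of the map $\bar{\pi}_1 \colon (\bRad)^\sim \to \bRad$ obtained by restricting the projection $\bRad \times P \to \bRad$. By Lemma \ref{lem_radtar} the space $(\bRad)^\sim$ is compact, and $\bRad$ is a finite CW-complex and in particular Hausdorff, so $\bar{\pi}_1$ is a continuous map from a compact space to a Hausdorff space and hence proper (the preimage of a closed set is closed in a compact space, so compact). Directly from the definition of $(\bRad)^\sim$, a triple $([L],\Gamma,\lambda)$ lies over a point of $\Rad$ precisely when $[L] \in \Rad$ and $(\Gamma,\lambda) \in \cG(L)$; that is, $\bar{\pi}_1^{-1}(\Rad) = \Radt$ and $\pi_1 = \bar{\pi}_1|_{\Radt}$. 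Since the restriction of a proper map to the preimage of any subspace is again proper — if $K \subseteq \Rad$ is compact then $\pi_1^{-1}(K) = \bar{\pi}_1^{-1}(K)$ is compact, compactness being absolute — we conclude that $\pi_1$ is proper.

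For cell-likeness, I need each point inverse $\pi_1^{-1}([L])$ to be a cell-like space. But $\pi_1^{-1}([L]) = \{[L]\} \times \cG([L]) \cong \cG([L])$, which by Lemma \ref{lemcglpolyhedron} is a compact polyhedron, hence a compact ANR by part (vi) of Proposition \ref{prop_anrprops}, and which is contractible by Proposition \ref{GL_contractible}. Part (viii) of Proposition \ref{prop_anrprops} then gives that a contractible compact ANR is cell-like, so every fiber of $\pi_1$ is cell-like and $\pi_1$ is a cell-like map. There is no serious obstacle here, as the content has been front-loaded into Lemmas \ref{lemcglpolyhedron} and \ref{lem_radtar} and Proposition \ref{GL_contractible}; the only points needing care are the tautological identification $\bar{\pi}_1^{-1}(\Rad) = \Radt$ and the elementary stability of properness under restriction to preimages, neither of which is delicate.
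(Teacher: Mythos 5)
Your proof is correct and follows essentially the same approach as the paper: both arguments extend $\pi_1$ to the continuous map $\bar{\pi}_1$ on the compact space $(\bRad)^\sim$ to get properness, and both deduce cell-likeness from contractibility of the fibers $\cG([L])$ via Lemma \ref{lemcglpolyhedron}, Proposition \ref{GL_contractible}, and part (viii) of Proposition \ref{prop_anrprops}. Your phrasing of properness in terms of two general facts (compact-to-Hausdorff maps are proper; properness is stable under restriction to preimages) is a mild repackaging of the paper's direct compactness argument, but the substance is identical.
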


\begin{proof}Observe $\pi_1$ extends to a continuous map $\bar{\pi}_1 \colon  (\bRad)^\sim \to \bRad$. If $K \subset \Rad$ is compact, then it is also compact considered as a subset of $\bRad$ and thus closed. By continuity $\bar{\pi}_1^{-1}(K)$ is closed in $(\bRad)^\sim$ and since the latter is a compact space it must be compact. But $\bar{\pi}_1^{-1}(K) \subset \Radt$ and $\bar{\pi}_1^{-1}(K) \cap \Radt = \pi_1^{-1}(K)$, so that $\pi_1$ is proper. 
	
	That $\pi_1$ is cell-like is a consequence of Lemma's \ref{GL_contractible} and \ref{lemcglpolyhedron}, which say that the point inverses of $\pi_1$ are contractible compact polyhedra, and property (viii) in Proposition \ref{prop_anrprops}, which implies that contractible compact polyhedra are cell-like.
\end{proof}

\begin{corollary}\label{cor_pi1homeq} The projection $\pi_1 \colon \Radt \to \Rad$ is a homotopy equivalence.\end{corollary}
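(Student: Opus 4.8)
The plan is to deduce the corollary directly from Lacher's theorem (Theorem \ref{thm_lacher}), applied to $\pi_1$ with the open set $U$ taken to be all of $\Rad$. For this I need exactly three inputs, all of which have been established in the preceding results: that $\Rad$ and $\Radt$ are locally compact ANR's, and that $\pi_1$ is proper and cell-like.

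First I would record local compactness. By Proposition \ref{prop_radanr}, $\Rad$ is an ANR, and by Definition \ref{deffrrad} it is an open subspace of the finite (hence compact) CW-complex $\bRad$, so it is locally compact. Likewise, by Proposition \ref{prop_radtanr} the space $\Radt$ is an ANR, and by construction it is an open subspace of the compact ANR $(\bRad)^\sim$ of Lemma \ref{lem_radtar}, hence locally compact. So both $\Rad$ and $\Radt$ are locally compact ANR's.

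Next, Proposition \ref{prop_pi1celllike} gives that $\pi_1 \colon \Radt \to \Rad$ is proper and cell-like. Theorem \ref{thm_lacher} then applies verbatim: a proper cell-like map between locally compact ANR's restricts to a proper homotopy equivalence over every open subset of the target. Taking that open subset to be $\Rad$ itself, we conclude that $\pi_1$ is a proper homotopy equivalence, and in particular a homotopy equivalence, which is the assertion of Corollary \ref{cor_pi1homeq}.

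I do not expect any genuine obstacle at this stage: the real work lies in the hypotheses of Lacher's theorem, which were arranged in the lemmas above — contractibility of the fibers $\cG([L])$ (Proposition \ref{GL_contractible}), their compact polyhedrality (Lemma \ref{lemcglpolyhedron}), and the ANR property of $\Radt$ obtained by exhibiting it as an open subspace of the compact ANR $(\bRad)^\sim$ built by gluing finitely many compact ANR's. Granting those, the corollary is a purely formal consequence, and the only care needed is to note that openness inside a compact space supplies the local compactness required to invoke Theorem \ref{thm_lacher}.
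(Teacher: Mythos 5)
Your proof is correct and follows exactly the same route as the paper's: applying Lacher's Theorem \ref{thm_lacher} to Propositions \ref{prop_radanr}, \ref{prop_radtanr} and \ref{prop_pi1celllike}. The only minor omission is the paper's opening remark that one should first fix $g$, $n$, and $m$, which matters because the finiteness used in Lemma \ref{lem_radtar} and Proposition \ref{prop_radtanr} (and the local compactness you invoke) is established component-by-component.
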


\begin{proof}We may fix $g$, $n$ and $m$. Then we can simply apply Theorem \ref{thm_lacher} to Propositions \ref{prop_radanr}, \ref{prop_radtanr} and \ref{prop_pi1celllike}. The domain is locally compact by because it is an open subspace of a compact space by Lemma \ref{lem_radtar} and the target is locally compact by Proposition \ref{prop_radanr}.\end{proof}

\subsection{The critical graph map is a homotopy equivalence}
\label{subsec_critgraph}
We now show that the critical graph map $\Radt\to\MFatad$ is a homotopy equivalence using the relation between the universal bundles over $\Rad$ and $\MFatad$.  We start by recalling some well-known results regarding universal bundles.

\begin{proposition}
	Given a two-dimensional cobordism $\Sg$ and a paracompact base space $B$, there are bijections natural in $B$ between
	\begin{enumerate}[(i)]
		\item isomorphism classes of smooth $\Sg$-bundles over $B$, i.e. the transition functions lie in $\Diff(S_{g,n+m})$,
		\item isomorphism classes of principal $\Diff(S_{g,n+m})$-bundles over $B$, and
		\item isomorphism classes of principal $\Modgpq$-bundles over $B$.
	\end{enumerate}
\end{proposition}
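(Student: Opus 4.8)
The plan is to establish the two bijections (i)$\leftrightarrow$(ii) and (ii)$\leftrightarrow$(iii) separately, using only standard bundle theory; naturality in $B$ will come for free, since every construction involved commutes with pullback along maps $B'\to B$.

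For (i)$\leftrightarrow$(ii) I would invoke the classical dictionary between fibre bundles with fibre $F$ and \emph{effective} structure group $G$ on the one hand, and principal $G$-bundles on the other. A diffeomorphism of $S_{g,n+m}$ fixing the boundary which acts as the identity is the identity, so $\Diff(S_{g,n+m})$ acts faithfully on $S_{g,n+m}$; hence one passes from a smooth $\Sg$-bundle $E\to B$ to its frame bundle $\mathrm{Fr}(E)\to B$, whose fibre over $b\in B$ is the space of diffeomorphisms $S_{g,n+m}\to E_b$ respecting the boundary data, equipped with the $C^\infty$-topology, and conversely from a principal $\Diff(S_{g,n+m})$-bundle $P\to B$ to the associated bundle $P\times_{\Diff(S_{g,n+m})}S_{g,n+m}\to B$. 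These two assignments are inverse to each other up to canonical isomorphism, take isomorphic bundles to isomorphic bundles, and commute with pullback, so they descend to a bijection on isomorphism classes that is natural in $B$.

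For (ii)$\leftrightarrow$(iii) I would pass to classifying spaces. Over a paracompact base every locally trivial bundle is numerable, so for any topological group $G$ the isomorphism classes of principal $G$-bundles over $B$ are naturally identified with $[B,BG]$; I would apply this to $G=\Diff(S_{g,n+m})$ and to the discrete group $G=\Modgpq$. The essential input --- the same one used to obtain $\cM_g(n,m)\simeq B\mr{Diff}(S,\partial S)$ in Subsection~\ref{subsec_confmodel} --- is that each path component of $\mr{Diff}(S,\partial S)$ is contractible (Earle--Eells, Gramain; see \cite{gramain}). In particular the identity component $\Diff_0(S_{g,n+m})$ is contractible, so the quotient homomorphism $q\colon\Diff(S_{g,n+m})\twoheadrightarrow\pi_0\Diff(S_{g,n+m})=\Modgpq$ is a homotopy equivalence of topological groups, whence $Bq\colon B\Diff(S_{g,n+m})\to B\Modgpq$ is a homotopy equivalence. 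It follows that $q$ induces a natural bijection $[B,B\Diff(S_{g,n+m})]\cong[B,B\Modgpq]$, which combined with the classification theorem gives the bijection between (ii) and (iii); unwound, it sends a principal $\Diff(S_{g,n+m})$-bundle $P$ to the principal $\Modgpq$-bundle $P/\Diff_0(S_{g,n+m})$.

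The argument is essentially bookkeeping, so I do not expect a real obstacle: the only genuinely non-elementary ingredient is the contractibility of the components of $\mr{Diff}(S,\partial S)$, which is a deep theorem but a standard black box already invoked earlier in the paper. The points needing a little care are: using numerability of bundles over a paracompact base so that the classification theorem applies; ensuring $\Diff(S_{g,n+m})$ with the $C^\infty$-topology has the homotopy type of a CW complex, so that $B$ carries its homotopy equivalence with $\Modgpq$ to a homotopy equivalence of classifying spaces (alternatively, one works throughout with Milnor's join model of $B\Diff$); and checking naturality in $B$, which is immediate because frame bundles, associated bundles, the quotient by $\Diff_0$, and classifying maps all commute with pullback.
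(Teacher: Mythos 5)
Your proof is correct and follows essentially the same route as the paper's sketch: the bijection (i)$\leftrightarrow$(ii) via the frame-bundle/associated-bundle dictionary, and (ii)$\leftrightarrow$(iii) via the contractibility of the components of $\Diff(S,\partial S)$ making the quotient $\Diff\to\pi_0\Diff=\Modgpq$ a homotopy equivalence. You fill in more of the standard technicalities (numerability, passage through classifying spaces, CW homotopy type of $\Diff$) that the paper leaves implicit in its sketch, but the underlying ideas and decomposition are identical.
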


\begin{proof}[Sketch of proof]For the one direction of the first bijection, for a principal $\Diff(S_{g,n+m})$-bundle $p \colon W\to B$, its corresponding $\Sg$-bundle is given by taking $\Sg\times_{\Diff(S_{g,n+m})} W$. 
	
	For the other direction of the first bijection, suppose that $\pi \colon E\to B$ is a smooth $\Sg$-bundle.  Each fiber $E_b \coloneqq \pi^{-1}(b)$ is a Riemann surface with boundary with a marked point in each boundary component.  These marked points are ordered and labeled as incoming or outgoing.  Let $x^b_k$ denote the marked point in the $k$th incoming boundary component for $1\leq k\leq n$ and $x^b_{k+n}$ denote the marked point in the $k$th outgoing boundary $1\leq k\leq m$.  Its corresponding $\Diff(\Sg)$-bundle is given by taking fiberwise orientation-preserving diffeomorphisms i.e. it is the bundle $p \colon W\to B$ whose fibers are given by 
	\[W_b \coloneqq p^{-1}(b)=\lbrace \varphi \colon \Sg\to E_b\,\vert\,\varphi \text{ is a diffeomorphism, }\varphi(x_i)=x^b_i\rbrace\]
	These constructions are mutually inverse.  
	
	Because each connected component of $\Diff(S_{g,n+m})$ is contractible, taking $\pi_0$ gives a homotopy equivalence $\Diff(S_{g,n+m}) \to \Modgpq$. Thus there is a bijection between principal $\Diff(S_{g,n+m})$-bundles and principal $\Modgpq$-bundles, where one can obtain the $\Modgpq$-bundle corresponding to $p \colon W\to B$ by taking $\pi_0$ fiberwise.\end{proof}

We now construct a space $\ERad$ that maps to $\Rad$ and use the previous proposition to show that $\ERad\to\Rad$ is a universal $\Modgpq$-bundle.  To construct this space we use the ideas of the construction of $\EMad$ in Definition \ref{deffatuniversal}. That is, as a set we define
\[\ERad \coloneqq \lbrace ([L],[H])\,\vert\, [L]\in\Rad, [H] \text{ is a marking of } \Gamma_{[L]}\rbrace.\]
We will topologize $\ERad$ so that the map $\ERad \to \Rad$ is a covering map. Then a path in $\ERad$ will be given by a path $\gamma \colon t\to [L(t)]$ in $\Rad$ together with a marking $H_0 \colon \Gamma_{[L(0)]}\cof\Sg$. Hence we must describe how $H_{0}$ and the path $\gamma$ uniquely determine a sequence of markings $H_t \colon \Gamma_{[L(t)]} \hookrightarrow \Sg$. To make this precise, we will give a procedure to obtain a well defined marking of $\Gamma_{\tilde{[\cL]}}$ from a combinatorial type ${[\cL]}$, a marking of $\Gamma_{[\cL]}$ and a configuration $[\tilde{L}]\in\partial\overline{\Rad}_{[\cL]}$, where $\tilde{[\cL]}$ is the combinatorial type of $[\tilde{L}]$. To describe this procedure, notice that if $[\cL]$ and $\tilde{[\cL]}$ are related in this manner, then $\tilde{[\cL]}$ must be obtained from $[\cL]$ by collapsing radial and annular chambers. Hence, we will start by analyzing these cases separately.

\begin{definition}[Annular chamber collapse map]
	Let $[\cL]$ and $[\cL']$ be two non degenerate combinatorial types such that $[\cL']$ can be obtained from $[\cL]$ by collapsing the annular chambers $A_{i_1},A_{i_2},\ldots, A_{i_k}$ and let  $A:=\cup_i A_i$.  We will define a map in $\Fatad$
	\[\rho \colon \Gamma_{[\cL]}\to\Gamma_{[\cL']}\]
	which we will call the \emph{annular chamber collapse map} (see Figure \ref{radial_collapse}).
	
	Choose a representative $[L]$ of $[\cL]$.  Then following the construction of $\Gamma_{[L]}$ we can define a subgraph $F_{A}$ which is given by the intersection of $E_{L}$ and $A$.  The subgraph $F_A$ must be a forest inside $\Gamma_{[L]}$.  To see this, assume there is a loop in $F_A$. Then there must be a loop in $\Gamma_{[L]}$ and hence there are two paired slits $\zeta_i$, $\zeta_{\lambda(i)}$ which lie on the same radial segment. Since $[L]$ is non-degenerate there must be slits $\zeta_{i_1}, \zeta_{i_2}, \ldots, \zeta_{i_j}$ such that $i_j\geq 1$ and $\vert\zeta_{i_l}\vert<\vert\zeta_i\vert$ for all $i_l$.  Finally, since the loop is in $F_A$, $A$ must contain the radial segment between $\zeta_i$ and $\zeta_{i_l}$ for some $i_l$, but then collapsing $A$ will give a degenerate configuration and we assumed $[\cL']$ is non-degenerate. Therefore $F_A$ is a forest in $\Gamma_{[L]}$ and since $\Gamma_{[L]}=\Gamma_{[\cL]}$ this description gives a well defined subforest of $\Gamma_{[\cL]}$ giving with a well defined map on $\Fatad$.
\end{definition}

\begin{figure}[h!]
	\centering
	\begin{tikzpicture}
		\fill [yellow!20!white] (0,0) circle (.9cm);
		\fill [white] (0,0) circle (.7cm);
		\node at (0,0) {\footnotesize $1$};
		\draw (45:1.2cm) -- (45:1.5cm);
		\node at (45:1.6cm) {\footnotesize $1$};    
		\draw (-45:1.2cm) -- (-45:1.5cm);
		\node at (-45:1.6cm) {\footnotesize $2$};  
		\draw (-135:1.2cm) -- (-135:1.5cm);
		\node at (-135:1.6cm) {\footnotesize $3$};   	
		\draw (135:1.2cm) -- (135:1.5cm);
		\node at (135:1.6cm) {\footnotesize $4$};
		
		\draw [dashed] (.5,0) -- (1.2,0);
		\draw [dotted] (0,0) circle (.7cm);
		\draw [dotted] (0,0) circle (.9cm);
		\draw [dotted] (0,0) circle (1.1cm);
		
		\draw [orange,thick,yshift=.6mm] (0:.7cm) -- (0:1.2cm);
		\draw [orange,thick] (90:.7cm) -- (90:1.2cm);
		\draw [blue,thick] (180:.9cm) -- (180:1.2cm);
		\draw [blue,thick] (0:.9cm) -- (0:1.2cm);
		\draw [PineGreen,thick] (-90:1.1cm) -- (-90:1.2cm);
		\draw [PineGreen,thick,yshift=-.6mm] (0:1.1cm) -- (0:1.2cm);
		
		\draw (0,0) circle (1.2cm);
		\draw  (0,0) circle (.5cm);
		\node at (0,1.75) {$L$};
		
		\draw [->] (1.5,0) -- node [pos=.5,above] {\tiny annular chamber collapse} (4.5,0);
		\draw [dashed,->] (0,-1.75) -- (0,-2.5);
		
		\begin{scope}[xshift=6cm]
			\node at (0,0) {\footnotesize $1$};
			\draw (45:1.2cm) -- (45:1.5cm);
			\node at (45:1.6cm) {\footnotesize $1$};    
			\draw (-45:1.2cm) -- (-45:1.5cm);
			\node at (-45:1.6cm) {\footnotesize $2$};  
			\draw (-135:1.2cm) -- (-135:1.5cm);
			\node at (-135:1.6cm) {\footnotesize $3$};   	
			\draw (135:1.2cm) -- (135:1.5cm);
			\node at (135:1.6cm) {\footnotesize $4$};
			
			\draw [dashed] (.5,0) -- (1.2,0);
			\draw [dotted] (0,0) circle (.7cm);
			\draw [dotted] (0,0) circle (1.1cm);
			
			\draw [orange,thick,yshift=.6mm] (0:.7cm) -- (0:1.2cm);
			\draw [orange,thick] (90:.7cm) -- (90:1.2cm);
			\draw [blue,thick] (180:.7cm) -- (180:1.2cm);
			\draw [blue,thick] (0:.7cm) -- (0:1.2cm);
			\draw [PineGreen,thick] (-90:1.1cm) -- (-90:1.2cm);
			\draw [PineGreen,thick,yshift=-.6mm] (0:1.1cm) -- (0:1.2cm);
			
			\draw (0,0) circle (1.2cm);
			\draw  (0,0) circle (.5cm);
			\node at (0,1.75) {$L'$};
			\draw [dashed,->] (0,-1.75) -- (0,-2.5);
		\end{scope}
		
		\begin{scope}[yshift=-4.5cm,scale=.7]
			\draw  (0,0) circle (1cm);
			\draw (.7,0) -- (1,0);
			\draw (90:1) edge[bend left=90,looseness=2] node (firstmid) [pos=.5] {} (0:1);
			\draw (180:1) edge[bend left=140,looseness=3] node (secondmid) [pos=.75] {} (firstmid.center);
			\draw (-90:1) edge[bend right=110,looseness=1.5] (secondmid.center);   	
			\begin{scope}
				\clip (secondmid.center) rectangle ($(firstmid.center)+(1,0)$);
				\draw [thick,red] (180:1) edge[bend left=140,looseness=3] (firstmid.center);
			\end{scope}
			\node at (0,-1.75) {$\Gamma_L$};	
			\draw [->] (2,0) -- node [pos=.5,above] {\tiny edge collapse} ({4.5/.7},0);
		\end{scope}
		
		\begin{scope}[yshift=-4.5cm,xshift=6cm,scale=.7]
			\draw  (0,0) circle (1cm);
			\draw (.7,0) -- (1,0);
			\draw (90:1) edge[bend left=90,looseness=2] node (firstmid2) [pos=.5] {} (0:1);
			\draw (180:1) edge[bend left=140,looseness=3] node (secondmid2) [pos=.75] {} (firstmid2.center);
			\draw (-90:1) edge[bend right=110,looseness=1.5] (firstmid2.center); 
			\node at (0,-1.75) {$\Gamma_{L'}$};  
		\end{scope}
	\end{tikzpicture}
	\caption{An example of the annular chamber collapse map. The leaves have been omitted from the graphs to make them more readable. The annular chambers are marked with dotted lines. The yellow radial sector is collapsed in $L$ and the annular chamber collapse map is given by contracting the edge shown in red.}
	\label{radial_collapse}
\end{figure}

\begin{definition}[Radial chamber collapse zigzag]
	Let $[\cL]$ and $[\cL'']$ be two non degenerate combinatorial types such that $[\cL'']$ can be obtained from $[\cL]$ by collapsing radial chambers. We will define an admissible fat graph $\Gamma([\cL],[\cL''])$ together with a zigzag in $\Fatad$
	\[\Gamma_{[\cL]}\stackrel{\tau_1}{\longrightarrow}
	\Gamma([\cL],[\cL''])\stackrel{\tau_2}{\longleftarrow}
	\Gamma_{[\cL']},\]
	which we will call the \emph{radial chamber collapse zigzag} (see Figure \ref{chamber_collapse}).
	
	Choose a representative $L\in\QRad$ of combinatorial type $[\cL]$ and let $L''\in\QRad$ be the preconfiguration of combinatorial type $[\cL'']$ obtained by collapsing radial chambers.  We will call the radial segments onto which the radial chambers have been collapsed the \emph{special radial segments}.  Notice that $L''$ is well defined up to a choice of $L$, and slit jumps and parametrization point jumps away from the special radial segments.  Thus the idea is to define $\Gamma([\cL],[\cL''])$ as a partially unfolded graph of $L''$ which is unfolded at the special radial slit segments and folded everywhere else.  This gives a well-defined isomorphism class of admissible fat graphs. 
	
	To make this precise, let $S_{k_1}, S_{k_2}, \ldots, S_{k_r}$ denote the special radial segments of $L''$.  We define $\Gamma([\cL],[\cL''])=\Gamma_{L'',t}$ where $t\in [0,1]^{d(L'')}$ is defined as follows:
	\begin{equation*}
		t_\alpha \coloneqq \begin{cases}
			0 & \text{if } \alpha=k_i+j \text{ for } 1\leq i \leq r \text{ and } 1\leq j\leq s_{k_i}-1,\\
			1 & \text{else.}\end{cases}
	\end{equation*}
	This is a well-defined isomorphism class of admissible fat graphs, since the graph is folded in all radial segments in which jumps are allowed.  Let $F_L$ be the subgraph of $\Gamma_L$ obtained by the intersection of $E_L$ with the collapsing chambers.  Then $\tau_1 \colon \Gamma_{[\cL]}=\Gamma_L\to \Gamma_L/ F_L=\Gamma([\cL],[\cL''])$ is a well defined map in $\Fatad$.  Similarly let $F_{L''}$ be the subgraph of $\Gamma_{L''}$ obtained from the intersection of $E_{L''}$ and the special radial segments.  Then $\tau_2 \colon \Gamma_{[\cL'']}=\Gamma_{L''}\to \Gamma_{L''}/ F_{L''}=\Gamma([\cL],[\cL''])$ is a well-defined map in $\Fatad$.
\end{definition}

\begin{figure}[h!]
	\centering
	\begin{tikzpicture}
		\fill [yellow!20!white] (0,0) ++ (0:.5) arc (0:-60:.5) -- (-60:1.2) arc (-60:0:1.2) -- cycle;
		\node at (0,0) {\footnotesize $1$};
		\draw (90:1.2cm) -- (90:1.5cm);
		\node at (90:1.6cm) {\footnotesize $1$};    
		\draw (-150:1.2cm) -- (-150:1.5cm);
		\node at (-150:1.6cm) {\footnotesize $2$};  
		\draw (-90:1.2cm) -- (-90:1.5cm);
		\node at (-90:1.6cm) {\footnotesize $3$};   	
		
		\draw [dashed] (.5,0) -- (1.2,0);
		\draw [dotted] (-150:.5) -- (-150:1.2);
		\draw [dotted] (90:.5) -- (90:1.2);
		\draw [dotted] (-90:.5) -- (-90:1.2);
		\draw [dotted] (140:.5) -- (140:1.2);
		\draw [dotted] (-135:.5) -- (-135:1.2);
		\draw [dotted] (-60:.5) -- (-60:1.2);
		
		\draw [red,thick] (0:.7cm) -- (0:1.2cm);
		\draw [red,thick] (140:.7cm) -- (140:1.2cm);
		\draw [blue,thick] (-135:.9cm) -- (-135:1.2cm);
		\draw [blue,thick] (-60:.9cm) -- (-60:1.2cm);
		
		\draw (0,0) circle (1.2cm);
		\draw  (0,0) circle (.5cm);
		\node at (-.5,1.75) {$L$};
		
		\draw [dashed,->] (1.75,0) -- (2.75,0);
		
		\draw (-1.2,-1.2) edge[bend right = 45,->] node[fill=white,pos=.5] {\tiny radial chamber collapse} (-1.2,-2.8);
		
		\begin{scope}[xshift=4cm]
			\draw  (0,0) circle (.5cm);
			\draw (.2,0) -- (.5,0);
			\draw (140:.5) edge[bend left=100,looseness=2.5] (0:.5);
			\draw (-135:.5) edge[bend right=100,looseness=2.5] (-60:.5);
			\node at (-.5,1.25) {$\Gamma_L$};
			\begin{scope}
				\clip  (-60:.5) rectangle (0:.5);
				\draw [thick,orange] (0,0) circle (.5cm);
			\end{scope}
		\end{scope}
		
		\begin{scope}[yshift=-4cm]
			\node at (0,0) {\footnotesize $1$};
			\draw (90:1.2cm) -- (90:1.5cm);
			\node at (90:1.6cm) {\footnotesize $1$};    
			\draw (-150:1.2cm) -- (-150:1.5cm);
			\node at (-150:1.6cm) {\footnotesize $2$};  
			\draw (-90:1.2cm) -- (-90:1.5cm);
			\node at (-90:1.6cm) {\footnotesize $3$};   	
			
			\draw [dashed] (.5,0) -- (1.2,0);
			\draw [dotted] (-150:.5) -- (-150:1.2);
			\draw [dotted] (90:.5) -- (90:1.2);
			\draw [dotted] (140:.5) -- (140:1.2);
			\draw [dotted] (-135:.5) -- (-135:1.2);
			\draw [dotted] (-90:.5) -- (-90:1.2);
			
			\draw [red,thick] (0:.7cm) -- (0:1.2cm);
			\draw [red,thick] (140:.7cm) -- (140:1.2cm);
			\draw [blue,thick] (-135:.9cm) -- (-135:1.2cm);
			\draw [blue,thick,yshift=-.7mm] (0:.9cm) -- (0:1.2cm);
			
			\draw (0,0) circle (1.2cm);
			\draw  (0,0) circle (.5cm);
			\node at (-.5,-1.75) {$L''$};
			
			\draw [dashed,->] (1.75,0) -- (2.75,0);
			
			\begin{scope}[xshift=4cm]
				\draw  (0,0) circle (.5cm);
				\draw (.2,0) -- (.5,0);
				\draw (140:.5) edge[bend left=100,looseness=2.5] node (edgemid) [pos=.5] {} (0:.5);
				\draw (-135:.5) edge[bend right=100,looseness=2.5] (edgemid.center);
				\node at (-.5,1.25) {$\Gamma_{L''}$};
				\begin{scope}
					\clip (edgemid.center) rectangle (1,-1);
					\draw [thick,orange] (140:.5) edge[bend left=100,looseness=2.5]  (0:.5);
				\end{scope}
			\end{scope}
		\end{scope}
		
		\draw [->] (5,-.25) -- node [pos=.5,fill=white] {\tiny edge collapse} (6.25,-1.25);
		\draw [->] (5,-3.25) -- node [pos=.5,fill=white] {\tiny edge collapse} (6.25,-2.25);
		
		\begin{scope}[xshift=7cm,yshift=-2cm]
			\draw  (0,0) circle (.5cm);
			\draw (.2,0) -- (.5,0);
			\draw (140:.5) edge[bend left=100,looseness=2.5] (0:.5);
			\draw (-135:.5) edge[bend right=100,looseness=2.5] (0:.5);
			\node at (.5,1.25) {$\Gamma(L,L'')$};
		\end{scope}
	\end{tikzpicture}
	\caption{An example of the radial chamber collapse zigzag.  The radial chambers are marked with dotted lines. The yellow radial chamber is collapsed in $L$ and the radial chamber collapse zigzag is given by collapsing the edges shown in orange.}
	\label{chamber_collapse}
\end{figure}

For the general case consider any $[\tilde{L}]\in\partial\overline{\Rad_{[\cL]}}\cap\Rad_{\tilde{[\cL]}}$. Then $\tilde{[\cL]}$ is obtained from $[\cL]$ by collapsing chambers. If we let $[\cL']$ be the configuration obtained from collapsing only the annular chambers, then the previous construction gives a well-defined zigzag in $\Fatad$.
\begin{equation}
	\begin{tikzcd} \Gamma_{[\cL]} \rar{\rho} & \Gamma_{[\cL']} \rar{\tau_1} & \Gamma([\cL'],[\cL]) & \Gamma_{[\cL']} \lar[swap]{\tau_2}. \end{tikzcd}
	\label{collapse_zigzag}
\end{equation} 
Note that if $\tilde{[\cL]}$ is obtained by only collapsing annular chambers then $\tau_1=\mr{id}=\tau_2$ and if $\tilde{[\cL]}$ is obtained by only collapsing radial chambers then $\rho=\mr{id}$.

\begin{definition}\label{deferad}
	We define the space $\ERad$ as follows
	\[\ERad \coloneqq \frac{\bigsqcup_{[\cL]}\Rad_{[\cL]}\times \Mark(\Gamma_{[\cL]})}{\sim},\]
	where the disjoint union runs over all non degenerate combinatorial types $[\cL]$ and the equivalence relation $\sim$ is generated by saying that $([\tilde{L}],[H])\sim([\tilde{L}],[\tilde{H}])$ if given  $[\tilde{L}]\in\partial\overline{\Rad_{[\cL]}}\cap\Rad_{\tilde{[\cL]}}$, $[H]\in\Mark(\Gamma_{[\cL]})$, $[\tilde{H}]\in\Mark(\Gamma_{\tilde{[\cL]}})$ we have that 
	$\tilde{[H]}=(\tau_{2\ast})^{-1}\circ(\tau_{1\ast})\circ\rho_{\ast}([H])$. Here $\rho$, $\tau_1$ and $\tau_2$ are given as in \eqref{collapse_zigzag} and the induced maps are the ones constructed in Remark \ref{markings}.
\end{definition}

\begin{proposition}
	The projection $\ERad\to \Rad$ is a universal $\Modgpq$-bundle over $\Rad$.
\end{proposition}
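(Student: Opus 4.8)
The plan is to follow the Igusa--Godin template used for Theorem~\ref{ad_universal}: first exhibit $\ERad\to\Rad$ as a principal $\Modgpq$-bundle, and then show its total space is contractible by identifying it with the tautological universal bundle mentioned just before Definition~\ref{deferad}. Throughout I would fix one connected component of $\Rad$ and write $\Mod=\Modgpq$.

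For the bundle structure I would first put a $\Mod$-action on $\ERad$ by post-composition with markings, $[\varphi]\cdot([L],[H])=([L],[\varphi\circ H])$. This respects the gluing of Definition~\ref{deferad} because the zigzag maps $\rho,\tau_1,\tau_2$ of \eqref{collapse_zigzag} act on markings through the bijections of Lemma~\ref{markings}, which commute with post-composition by a homeomorphism of $\Sg$. That the action is free and transitive on each fibre $\Mark(\Gamma_{[L]})$ is the assertion that any two markings of the fixed admissible fat graph $\Gamma_{[L]}$ differ by a unique mapping class; this is proved by the same argument as in Theorem~\ref{ad_universal}, using Lemma~\ref{marking_complement} to see a marking is a homotopy equivalence and that homotopic surface homeomorphisms rel boundary are isotopic. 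So the fibres are $\Mod$-torsors.

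Next I would verify that $\ERad\to\Rad$ is a covering map, which for a discrete group is all that local triviality requires. Over an open cell $\Rad_{[\cL]}$ the construction gives the trivial covering $\Rad_{[\cL]}\times\Mark(\Gamma_{[\cL]})$; at a point $[\tilde L]\in\Rad_{\tilde{[\cL]}}$ lying in the closure of cells $\Rad_{[\cL_1]},\dots,\Rad_{[\cL_r]}$ one picks a small contractible neighbourhood $U$ meeting only those cells and uses the zigzag identifications of Definition~\ref{deferad} to glue their fibres into $U\times\Mark(\Gamma_{\tilde{[\cL]}})$. The compatibility of these identifications on multiple overlaps is a cocycle condition for the marking bijections of Lemma~\ref{markings} under collapses of nested forests. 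I expect this to be the main piece of bookkeeping in the proof, but it is entirely parallel to the corresponding verifications for $\vert\Fatadg\vert$ in \cite{igusahrt,godinunstable,egas}. Together with the action constructed above, this makes $\ERad\to\Rad$ a principal $\Mod$-bundle.

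For contractibility I would not argue directly but identify $\ERad$, as a principal $\Mod$-bundle over $\Rad$, with the tautological bundle $\mathcal E\to\Rad$ whose fibre over $[L]$ is the set of isotopy classes of marked-point-preserving diffeomorphisms $\Sigma(L)\to\Sg$. Sending $([L],[H])$, with $H\colon|\Gamma_{[L]}|\hookrightarrow\Sg$, to the isotopy class of the diffeomorphism $\Sigma(L)\to\Sg$ induced by $H$ --- available because $\Gamma_{[L]}$ is a spine of $\Sigma(L)$ and by Lemma~\ref{marking_complement} --- gives a $\Mod$-equivariant continuous map over $\Rad$ that is a bijection on fibres, hence a homeomorphism of covering spaces over $\Rad$. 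Finally $\mathcal E$ is universal: by Theorem~\ref{thm_bodig} (after varying the radii) its base is $\bigsqcup B\Diff(\Sg,\partial\Sg)$ and $\mathcal E$ is the $\pi_0$-quotient of the frame bundle of the universal surface bundle, i.e.\ $E\Diff(\Sg,\partial\Sg)/\Diff_0(\Sg,\partial\Sg)$, which is contractible because $\Diff_0(\Sg,\partial\Sg)$ is contractible and acts freely on the contractible space $E\Diff(\Sg,\partial\Sg)$. Hence $\ERad$ is contractible and $\ERad\to\Rad$ is a universal $\Mod$-bundle.
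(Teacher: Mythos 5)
Your proposal follows essentially the same route as the paper: both reduce the claim to a fiberwise identification of $\Mark(\Gamma_{[L]})$ with isotopy classes of marked-point-preserving diffeomorphisms between $\Sg$ and the fiber $S([L])$ of the universal surface bundle, and both invoke the standard dictionary between surface bundles, $\Diff$-bundles and $\Modgpq$-bundles from the proposition immediately preceding this one. Your additions — the explicit covering-space check over the cells of $\Rad$ and the contractibility of the total space as $E\Diff(\Sg,\partial\Sg)/\Diff_0(\Sg,\partial\Sg)$ — are valid and compatible with the paper's framework, though the paper absorbs the covering structure into the construction of $\ERad$ (Definition~\ref{deferad}) and the universality into the correspondence of bundle types, so it does not spell them out.

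Where your proposal does leave a genuine gap is the step you describe as ``sending $([L],[H])$ to the isotopy class of the diffeomorphism $\Sigma(L)\to\Sg$ induced by $H$ --- available because $\Gamma_{[L]}$ is a spine of $\Sigma(L)$ and by Lemma~\ref{marking_complement}.'' This is not a routine consequence of those facts, and it is exactly where the paper spends its effort (the map $\Psi$). Lemma~\ref{marking_complement} only gives you that $H$ is a homotopy equivalence sending boundary cycles to boundary components; to promote this to a well-defined isotopy class of diffeomorphism you must first pass to complements, identify them as disjoint unions of disks $D_i$ and $\tilde D_i$ with doubled marked points $x_{i,1},x_{i,2}$ and $x^L_{i,1},x^L_{i,2}$ on their boundaries, match them by an orientation- and marking-preserving boundary homeomorphism, extend uniquely up to homotopy to the interiors, glue the pieces consistently, and finally apply Nielsen's approximation theorem to replace the resulting homeomorphism by a diffeomorphism. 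Without this, neither the existence of the ``induced diffeomorphism'' nor its uniqueness up to isotopy (hence the well-definedness of your bijection on fibres) is established. Your intuition is correct, but the disk-decomposition and Nielsen-approximation argument is the non-trivial content of the proposition and needs to be carried out.
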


\begin{proof}
	It is enough to show that $\ERad\to \Rad$ is the $\Modgpq$-bundle corresponding to the universal surface bundle $p \colon \mr{S}_h(n,m)\to\rad \cong \Rad$.  Recall that the universal surface bundle has fibers $p_{[L]}=S([L])$, a surface with boundary with a marked point in each boundary component. These marked points are ordered and labeled as incoming or outgoing.  
	
	Let $x^L_k$ denote the marked point in the $k$th incoming boundary component for $1\leq k\leq n$ and $x^L_{k+n}$ denote the marked point in the $k$th outgoing boundary $1\leq k\leq m$.  Following the description in the beginning of this subsection, the $\Diff(S_{g,n+m})$-bundle $W\to\Rad$, corresponding to the universal surface bundle is given by taking fiberwise orientation preserving diffeomorphisms. That is, we have
	\[W_{[L]} \coloneqq \left\{ \varphi \colon \Sg\to S([L]) \,\middle|\, \text{\parbox{6cm}{$\varphi$ is an orientation-preserving diffeomorphism with $\varphi(x_i)=x^L_i$}}\right\}.\]
	Furthermore, its corresponding $\Modgpq$-bundle $Q\to\Rad$, has fibers $Q_{[L]} \coloneqq W_{[L]}/{\text{isotopy}}$. This amounts to passing to connected components of the group of diffeomorphisms.
	
	Note that $Q_{[L]}$ is discrete, and thus by the description of $\ERad$ it is enough to show that there is a bijection between $\Mark(\Gamma_{[L]})$ and  
	$Q_{[L]}$.  We define inverse maps
	\[\Phi \colon Q_{[L]}\adj\Mark(\Gamma_{[L]}) \colon \Psi\]
	By construction, there is a canonical embedding $H_{[L]} \colon \Gamma_{[L]}\cof S([L])$ and this embedding is a marking of $\Gamma_{[L]}$ in $S([L])$.  Given $[\varphi]\in Q_{[L]}$ we define $\Phi([\varphi]) \coloneqq [\varphi^{-1} \circ H_{[L]}]$, this is a well defined map.  
	
	To go back, let $[H]\in\Mark(\Gamma_{[L]})$ and choose a representative $H \colon \Gamma_{[L]}\cof \Sg$.  We will construct an orientation preserving homeomorphism $f \colon \Sg\to S([L])$ such that $[f \circ H] = [H_{[L]}]$, which we can approximate by a diffeomorphism $\varphi$ using Nielsen's approximation theorem \cite{nielsen}. To do so, we use that the complements of the markings are disks and construct the homeomorphism by first on markings and then extending it to the disks. 
	
	By Lemma \ref{marking_complement}, the complement $\Sg\backslash H(\Gamma\backslash \text{leaves of }\Gamma)$ is a disjoint union of $n+m$ cylinders. For all $1\leq i\leq n+m$, one of the boundary components of the $i$th cylinder consists of the $i$th boundary of $\Sg$. The other boundary component consists of the image of the $i$th boundary cycles of $\Gamma$ under $H$.  The leaf corresponding to the $i$th boundary component is embedded in the cylinder and connects both boundary components. We conclude that $\Sg\backslash H(\Gamma_{[L]}) \cong \bigsqcup_{i=1}^{n+m} D_i$ where each $D_i$ is a disk. 
	
	Let $x_i$ denote the marked point of the $i$th boundary component of $\Sg$.  The boundary of $D_i$ has two copies of $x_i$. Connecting these on one side is the $i$th boundary component of $\Sg$ and on the other side the embedded image of the $i$th boundary cycle of $\Gamma_{[L]}$. The orientation of the $i$th boundary component of $\Sg$ allows us to order the two copies of $x_i$ and label them as $x_{i,1}$ and $x_{i,2}$ respectively. Similarly, we have that $S([L])\backslash H_{[L]}(\Gamma_{[L]}) \cong \bigsqcup_{i=1}^{n+m} \tilde{D_i}$ where each $\tilde{D_i}$ is a disk. Let $x_{i,j}^L$ for $j=1,2$ denote the two copies of the marked point on the $i$th boundary component of $S([L])$, that lie on the boundary of $\tilde{D_i}$. Take $f_i\vert_{\partial D_i} \colon \partial D_i\to\partial\tilde{D_i}$ to be an orientation preserving homeomorphism satisfying $f(x_{i,j})=x_{i,j}^L$ for $j=1,2$. Let $f_i$ be an extension of $f_i\vert_{\partial D_i}$ to the entire disk. One can choose the maps $f_i\vert_{\partial D_i}$ consistently so that they glue together to a homeomorphism $f \colon \Sg\to S([L])$. Since the maps $f_i$ are unique up to homotopy, $f$ is also unique up to homotopy. 
	
	We define $\Psi([H])=[\varphi]$, where $\varphi$ is a diffeomorphism approximating $f$. The map $\Psi$ is well-defined and by construction it is inverse to $\Phi$.
\end{proof}

We now extend this to $\Radt$ by defining a fattening of $\ERad$ as follows:
\begin{definition} The \emph{fattening} $\ERad^\sim$ is defined as
	\[\ERad^{\sim} \coloneqq \lbrace(([L],[H]),[\Gamma,\lambda,\tilde{H}] )\,\vert\, [\Gamma,\lambda]\in\cG([L])\rbrace \subset \ERad\times\EMad\]
	where $\cG([L])$ is the space given in Definition \ref{graph_blowup}.
\end{definition}

Recall that $E\Rad$ consists of pairs $([L],[H])$ of a radial slit configuration and a marking, and that $\EMad$ consists of isomorphism classes of triples $[\Gamma,\lambda,H]$ of an admissible fat graph, a metric and a marking.

\begin{corollary}
	\label{ERad_universal}
	The projection $\ERad^{\sim}\to\Radt$ is a universal $\Modgpq$-bundle over $\Radt$
\end{corollary}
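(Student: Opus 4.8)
The plan is to exhibit $\ERad^{\sim}\to\Radt$ as the pullback of the universal $\Modgpq$-bundle $\ERad\to\Rad$ along the projection $\pi_1\colon\Radt\to\Rad$, and then to invoke the general fact that a pullback of a universal principal bundle along a homotopy equivalence is again universal. Throughout one fixes $g$, $n$ and $m$ and works over a single connected component.

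First I would set up the identification $\ERad^{\sim}\cong\pi_1^{*}\ERad=\Radt\times_{\Rad}\ERad$. For $[L]\in\Rad$ and $[\Gamma,\lambda]\in\cG([L])$ represented by some $\Gamma_{L_i,t}$, the critical graph $\Gamma_{[L]}$ is obtained from $\Gamma$ by carrying out the remaining identifications of coinciding radial segments, which amounts to collapsing a subforest of $\Gamma$ that contains no leaves; hence Lemma \ref{markings} supplies a canonical bijection $\Mark(\Gamma)\xrightarrow{\cong}\Mark(\Gamma_{[L]})$, say $[\tilde H]\mapsto[H]$. Sending $(([L],[\Gamma,\lambda]),([L],[H]))\in\Radt\times_{\Rad}\ERad$ to $(([L],[H]),[\Gamma,\lambda,\tilde H])\in\ERad^{\sim}$ is then a bijection over $\Radt$, equivariant for the $\Modgpq$-actions, since on both sides the action is post-composition with markings and the bijection of Lemma \ref{markings} commutes with this (the forest collapse acts on the graph side, the mapping class on the surface side). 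One then checks it is a homeomorphism: the topologies on $\ERad$, $\EMad$, $\Radt$ and $\ERad^{\sim}$ are all built by gluing products of (sub)complexes with discrete mark sets, and the marking transport of Lemma \ref{markings} is exactly the rule used to glue the boundary strata of $\ERad$ in Definition \ref{deferad} (the annular chamber collapse map and the radial chamber collapse zigzag), so the bijection and its inverse are continuous stratum by stratum.

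It then remains to conclude formally. The bundle projection $\ERad\to\Rad$ is a covering map, hence a fibration, and $\pi_1$ is a homotopy equivalence by Corollary \ref{cor_pi1homeq}. In the pullback square
\[\xymatrix{\ERad^{\sim}\ar[r]\ar@{->>}[d] & \ERad\ar@{->>}[d] \\ \Radt\ar[r]^{\pi_1} & \Rad}\]
the top horizontal map is the pullback of the homotopy equivalence $\pi_1$ along the fibration $\ERad\to\Rad$, hence a weak homotopy equivalence; since $\ERad$ is weakly contractible (being the total space of a universal $\Modgpq$-bundle), so is $\ERad^{\sim}$. Finally $\ERad^{\sim}\to\Radt$, being the pullback of the principal $\Modgpq$-bundle $\ERad\to\Rad$, is itself a principal $\Modgpq$-bundle with weakly contractible total space, and is therefore a universal $\Modgpq$-bundle over $\Radt$. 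One could instead identify $\ERad^{\sim}$ with $\pi_2^{*}\EMad$ and use Corollary \ref{metricad_universal}, but that route presupposes that $\pi_2$ is a homotopy equivalence, which is logically downstream of this statement.

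The only genuine work is the continuity claim in the first step: that the fiberwise marking-transport bijections of Lemma \ref{markings} assemble into a homeomorphism over all of $\Radt$, compatibly with the cellular gluings used to topologize $\ERad$ and $\ERad^{\sim}$, in particular across the boundary strata where the collapse zigzags of Definition \ref{deferad} enter. Everything else is formal bundle theory.
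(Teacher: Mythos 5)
Your overall architecture matches the paper's: establish that the square with $\ERad^\sim\to\ERad$ over $\pi_1:\Radt\to\Rad$ is a pullback, then invoke the formal fact that pulling back a universal principal bundle along a homotopy equivalence yields a universal bundle. You also rightly flag that going through $\pi_2^*\EMad$ would be circular. The formal endgame is fine.

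The gap is in the identification step. You claim that for $[\Gamma,\lambda]\in\cG([L])$, the critical graph $\Gamma_{[L]}$ is obtained from $\Gamma$ by \emph{collapsing a subforest}, so that Lemma \ref{markings} directly hands you a bijection $\Mark(\Gamma)\cong\Mark(\Gamma_{[L]})$. This is false: the passage from a partially unfolded graph $\Gamma_{L_i,t}$ to $\Gamma_{[L]}$ is a \emph{folding} (identifying initial segments of distinct half-edges emanating from a common vertex along a coinciding radial segment), not a forest collapse. Concretely, if two slits $\zeta_1,\zeta_2$ lie on the same radial segment with $|\zeta_1|=|\zeta_2|$, then in the unfolded graph the points $\zeta_1,\zeta_2$ are distinct vertices connected to a common boundary vertex $v_0$ by two separate edges, while in $\Gamma_{[L]}$ they are the \emph{same} vertex joined to $v_0$ by a single edge; no forest in the unfolded graph can be collapsed to produce this, since collapsing an edge incident to $v_0$ would merge a slit vertex with $v_0$, and there is no edge joining $\zeta_1$ to $\zeta_2$. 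So $\Gamma\to\Gamma_{[L]}$ is not even a morphism of $\Fatad$, and Lemma \ref{markings} does not apply as stated.

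The paper gets around exactly this point: it does not invoke a single forest collapse but instead takes the path in $\cG([L])$ from $\Gamma$ to $\Gamma_{[L]}$ given by the contracting homotopy of Proposition \ref{GL_contractible}, pushes it via $r(-,1)$ into the classifying space $|\Fatad|$, where it is represented by a zigzag of actual forest-collapse morphisms, and then applies Lemma \ref{markings} along that zigzag. The contractibility of $\cG([L])$ together with $r(-,1)$ being a homotopy equivalence guarantees the resulting bijection on markings is independent of the chosen path, which is what makes the pullback identification canonical. To repair your argument you would need to replace ``collapsing a subforest'' with this path-in-$|\Fatad|$ construction, or else prove independently that the folding quotient $\Gamma\to\Gamma_{[L]}$ (which is a homotopy equivalence of graphs but not a morphism of $\Fatad$) induces a well-defined bijection on markings compatible with the one from Lemma \ref{markings}; the latter is not automatic.

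Your concluding continuity discussion is plausible but inherits this problem: the stratum-by-stratum compatibility you assert is between the boundary gluings of Definition \ref{deferad}, which use the collapse maps $\rho,\tau_1,\tau_2$, and the marking transport you never actually constructed on the interior.
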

\begin{proof}
	Consider the diagram below, in which $\pi_1$ is a homotopy equivalence by Corollary \ref{cor_pi1homeq}:
	\[\begin{tikzcd} E\Radt \dar[two heads] \rar{\pi_1 \times \mr{id}} & E\Rad \dar[two heads] \\
		\Radt \rar{\simeq}[swap]{\pi_1} & \Rad.\end{tikzcd}\]
	It suffices to prove this is a pullback diagram. To do so, observe that the path from $[\Gamma,\lambda]\in\cG([L])$ to the critical graph $[\Gamma_{[L]}]$ described in Lemma \ref{GL_contractible} determines a zigzag in $|\Fatad|$ under the composite
	\[\cG([L])\stackrel{\iota}{\cof} \MFatad \xrightarrow{r(-,1)} {|\Fatad|}\]
	where $\iota$ is the inclusion and $r$ is the map give on Lemma \ref{metric_and_nerve}. Moreover, since $\cG([L])$ is contractible, $\iota$ is an inclusion and $r(-,1)$ is a homotopy equivalence there is a contractible choice of zig-zags representing paths from $[\Gamma,\lambda]$ to $[\Gamma_{[L]}]$ in $\cG([L])$.  Therefore, by Remark \ref{markings}, a marking of $[\Gamma_{[L]}]$, uniquely determines a marking of $[\Gamma]$ and vice versa.  Thus, for $[\Gamma,\lambda]\in \cG([L])$ giving a tuple $(([L],[H]),[\Gamma,\lambda,\tilde{H}] )\in \ERad\times\EMad$ is equivalent to giving either a triple $(([L],[H]),[\Gamma,\lambda] )$ or a triple $([L],[\Gamma,\lambda,\tilde{H}] )$.
\end{proof}

We now describe a general result on universal bundles, which we use to conclude that $\pi_2$ is a homotopy equivalence.

\begin{proposition}\label{prop:universal-bundles}
	Let $E\to B$ and $E'\to B'$ be universal principal $G$-bundles with $B$ and $B'$ paracompact spaces.  Let $f \colon B\to B'$ be a continuous map.  If $f^*(E')$ is isomorphic to $E$ as a bundle over $B$, then $f$ is a homotopy equivalence.
\end{proposition}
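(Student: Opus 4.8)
The plan is to exploit the one essential feature of a universal bundle: if $E\to B$ is a universal principal $G$-bundle, then for every paracompact space $X$ pullback defines a \emph{bijection}
\[
\Phi_B^X\colon [X,B]\longrightarrow \{\text{isomorphism classes of principal }G\text{-bundles over }X\},\qquad [\gamma]\longmapsto[\gamma^*E].
\]
Paracompactness of $X$ guarantees that every principal $G$-bundle over $X$ is numerable, so that the classification theorem (due to Dold) applies; when $G$ is discrete, as in our application where $G=\Modgpq$, this is just the classification of covering spaces. I will use that $\Phi_B^X$ is well defined — homotopic maps pull $E$ back to isomorphic bundles — and that pullback is functorial, $(\gamma\circ\gamma')^*E\cong\gamma'^*(\gamma^*E)$. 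The same statements hold for $E'\to B'$.

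First I would produce a candidate homotopy inverse $g\colon B'\to B$ for $f$. Since $E\to B$ is universal and $B'$ is paracompact, $\Phi_B^{B'}$ is a bijection, so there is a map $g\colon B'\to B$, unique up to homotopy, with $g^*E\cong E'$. Next I check that $g\circ f\simeq\mr{id}_B$: using functoriality of pullback and the hypothesis $f^*E'\cong E$,
\[
(g\circ f)^*E\;\cong\;f^*(g^*E)\;\cong\;f^*E'\;\cong\;E\;\cong\;\mr{id}_B^*E.
\]
Since $\Phi_B^B\colon[B,B]\to\{\text{bundles over }B\}$ is injective and sends both $[g\circ f]$ and $[\mr{id}_B]$ to $[E]$, we conclude $g\circ f\simeq\mr{id}_B$.

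Symmetrically, $(f\circ g)^*E'\cong g^*(f^*E')\cong g^*E\cong E'\cong\mr{id}_{B'}^*E'$, and since $\Phi_{B'}^{B'}$ is injective and sends both $[f\circ g]$ and $[\mr{id}_{B'}]$ to $[E']$, we obtain $f\circ g\simeq\mr{id}_{B'}$. Hence $f$ is a homotopy equivalence with homotopy inverse $g$. The only real input is the classification theorem in the numerable/paracompact setting, so the ``main obstacle'' is merely the bookkeeping of invoking the homotopy invariance and injectivity of $\Phi$ — which is precisely what the words ``universal'' and ``paracompact'' provide. In the case of interest $\Modgpq$ is discrete and all spaces involved are aspherical, so the statement reduces to the familiar fact that a map between aspherical spaces which is covered by an isomorphism of universal covers (equivalently, induces an isomorphism on $\pi_1$) is a homotopy equivalence; this is what we will then apply, via Corollary \ref{ERad_universal} and Corollary \ref{metricad_universal}, to deduce that $\pi_2\colon\Radt\to\MFatad$ is a homotopy equivalence.
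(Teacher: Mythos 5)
Your proof is correct and takes essentially the same approach as the paper: both invoke the classification bijection between homotopy classes of maps into the base of a universal $G$-bundle and isomorphism classes of principal $G$-bundles, produce a candidate inverse $g$ via surjectivity, and establish the two composite homotopies via injectivity (the paper phrases this as a Yoneda-style argument on the natural transformation $f\circ-$, while you check the two composites directly, but the content is identical).
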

\begin{proof}
	For any paracompact space $X$ there is a diagram
	\[\begin{tikzcd}{[X,B]} \dar[swap]{f \circ -} \rar{\cong} & \lbrace\text{principal $G$-bundles over } X\rbrace \\[-2pt]
		{[X,B'],} \arrow{ru}[swap]{\cong} & \end{tikzcd}\]
	which commutes since $f^*(E')\cong E$.  For $X=B'$ one finds there is a $[g]\in [B',B]$ such that $[f\circ g]=[\mr{id}_{B'}]$.  Then, $g^*(E)\cong g^*(f^*(E')) = E' $, so we can repeat the argument and obtain that there is an $h\in [B,B']$ such that $[g \circ h]=[\mr{id}_B]$.  Finally, since $[h]=[f\circ g \circ h]=[f]$, $f$ and $g$ are mutually inverse homotopy equivalences.
\end{proof}

\begin{corollary}\label{cor_pi2homeq}
	The projection $\pi_2 \colon \Radt\to\MFatad$ is a homotopy equivalence.
\end{corollary}

\begin{proof}
	This follows from Proposition \ref{prop:universal-bundles}, as there is a pullback diagram
	\[\begin{tikzcd} E\Radt \dar[two heads] \rar{\pi_2 \times \mr{id}} & \EMad \dar[two heads] \\
		\Radt \rar{\pi_2} & \MFatad.\end{tikzcd} \]\end{proof}

\section{Sullivan diagrams and the harmonic compactification}
\label{sec_sdharmonic}
We now compare the harmonic compactification of radial slit configurations $\bRad$ and the space of Sullivan diagrams $\SD$, as in Definitions \ref{def_radcw} and \ref{def_sd} respectively. To do this, we observe that the $\UbRad$ is the subcomplex of $\bRad$ consisting of cells indexed by the subset $\Upsilon_{\textfrak{U}}$ of $\Upsilon$ consisting of all combinatorial types of unilevel radial slit configurations. As a consequence, the projection $p \colon \bRad \to \UbRad$ is cellular.

\begin{proposition}\label{prop_sdubradhomeo}
The space $\SD$ is homotopy equivalent to $\bRad$. In fact, there is a cellular homeomorphism between $\UbRad$ and $\SD$.
\end{proposition}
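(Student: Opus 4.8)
The plan is to produce the cellular homeomorphism $\UbRad\cong\SD$ directly from the critical graph construction of Subsection \ref{subsec_blowup}, and then read off the homotopy equivalence $\bRad\simeq\SD$ from Lemma \ref{lem_bradubrad}. So the heart of the proof is an explicit dictionary between unilevel radial slit configurations and Sullivan diagrams.

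First I would analyze the critical graph of a unilevel configuration. If $[L]\in\UbRad$ then every slit and parametrization point lies on the outer radius, $|\zeta_i|=|P_j|=R$, so each inner half edge $E_j$ and each outgoing leaf $E_{2h+j}$ is a full radial segment, and two such segments that are to be identified in $E_L/{\sim_L}$ are identified completely (the bound $\min\{|\xi_j|,|\xi_{\tilde{\omega}(j)}|\}=R$ is never binding). Consequently every non-admissible, non-leaf edge of $\Gamma_{[L]}$ runs from one admissible cycle to another without passing through any other vertex, and with the standard metric all such edges have the constant length $2(R-1/(2\pi))=1$ (leaves have length $1$ by convention). Hence $(\Gamma_{[L]},\lambda_{[L]})$ represents a point of $\SD$: the only data it carries modulo $\sim_{\SD}$ is the collection of $n$ admissible cycles with their arc lengths --- the rescaled angular gaps between consecutive radial segments on each annulus, which sum to the circumference $1$ --- together with the combinatorial attaching pattern of the non-admissible edges, recorded up to slides. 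Conversely, from a metric Sullivan diagram one builds a unilevel configuration: on the $i$-th annulus, at the arguments prescribed by the arc lengths of the $i$-th admissible cycle, place the feet of the slits and of the outgoing leaves; let $\lambda$ be the pairing determined by the non-admissible edges and let $\tilde{\omega}$ and $\vec r$ encode the cyclic order of a cluster of non-admissible edges meeting at a common point of an admissible cycle. I would check that these two assignments are mutually inverse and that the critical graph map $[L]\mapsto[\Gamma_{[L]},\lambda_{[L]}]$ is a continuous bijection $\UbRad\to\SD$; the potential discontinuity of the critical graph map (which occurs exactly when two feet collide on an annulus) disappears after passing to $\SD$, because such a collision is absorbed by collapsing the corresponding zero-length arc of an admissible cycle together with a slide.

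Next I would check compatibility of the equivalence relations, which simultaneously sets up the CW structure on $\SD$. A slit jump (for which $|\zeta_i|=|\zeta_j|=R$ holds automatically in the unilevel case) and a parametrization point jump correspond under the dictionary to sliding the common vertex of a cluster along the edge coming from $\zeta_{\lambda(j)}$, i.e. to an instance of the slide relation in $\sim_{\SD}$; and ``forgetting the lengths of non-admissible edges'' is vacuous on the radial side, since those lengths are forced to equal $1$. This upgrades the bijection above to one between $\UbRad$ and $\SD$ compatible with the cell structures. Indeed, a cell of $\UbRad\subset\bRad$ is indexed by a combinatorial type of unilevel configuration; such a type has multidegree $(q_1,\dots,q_n,0)$, since the only annular chamber is the trivial one, so its cell is $\Delta^{q_1}\times\cdots\times\Delta^{q_n}$, and $q_i+1$ is exactly the number of arcs of the $i$-th admissible cycle of $\Gamma_{[L]}$. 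The matching Sullivan cell is the same product of simplices, recording the arc lengths of the admissible cycles, and the face map $d^i_j$ --- collapsing the $j$-th radial chamber on the $i$-th annulus --- is precisely the collapse of the $j$-th arc of the $i$-th admissible cycle, which is the corresponding face in the cell complex of Sullivan diagrams (equivalently, in the complex of cyclic Sullivan chord diagrams). Thus the dictionary is an isomorphism of the multisimplicial sets whose realizations are $\UbRad$ and $\SD$, and it realizes to a cellular homeomorphism $\UbRad\cong\SD$; alternatively, one checks it is a cellular continuous bijection between compact Hausdorff spaces, hence a homeomorphism.

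Finally, composing with the deformation retraction $p\colon\bRad\to\UbRad$ of Lemma \ref{lem_bradubrad} gives $\bRad\simeq\UbRad\cong\SD$, which proves the proposition. I expect the main obstacle to be the middle step: building a precise dictionary between the combinatorial data $(\lambda,\tilde{\omega},\vec r)$ of a unilevel configuration (including the jump relations) and that of a Sullivan diagram (including slides and forgotten lengths), and verifying that the a priori discontinuous critical graph map becomes a continuous bijection after passing to $\SD$. Establishing that $\MFatad/{\sim_{\SD}}$ is the realization of the multisimplicial set of combinatorial Sullivan diagrams --- which is what gives $\SD$ its CW structure --- is part of this same bookkeeping.
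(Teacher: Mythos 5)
Your overall strategy coincides with the paper's: reduce to a cellular homeomorphism $\UbRad\cong\SD$ via Lemma \ref{lem_bradubrad}, set up a combinatorial dictionary between unilevel configurations and Sullivan diagrams, and then match the cell structures $\Delta^{q_1}\times\cdots\times\Delta^{q_n}$ of multidegree $(q_1,\dots,q_n,0)$ on both sides.

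There is, however, one genuine imprecision that needs repair. You claim that in a unilevel configuration every non-admissible, non-leaf edge of the \emph{critical graph} $\Gamma_{[L]}$ is a chord running from one admissible cycle to another with no interior vertex, of length $2(R-\tfrac1{2\pi})=1$. This is false. Since $\UbRad$ lives on the boundary of $\bRad$, it contains configurations in which several slits coincide on a single radial segment; there the relation ``identifying coinciding segments'' merges the corresponding half-edges completely (the bound $\min\{|\xi_j|,|\xi_{\tilde\omega(j)}|\}=R$ being non-binding, as you note), producing an interior vertex of valence $\geq 3$ at the outer radius, with the edges incident to it each of length $\tfrac12$, not $1$. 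So the critical graph is generally \emph{not} a chord diagram. The paper sidesteps this exactly by using the \emph{unfolded graph} $\Gamma_{L,0}$ (the case $t=0$ in Definition \ref{defdil2}) rather than the critical graph; in the unfolded graph the coinciding half-edges are kept separate, the only extra vertices at the tips have valence $2$ and are smoothed away, and one really does get a chord diagram with chords $E_j\cup E_{\lambda(j)}$.

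Your conclusion survives because $\Gamma_{[L]}$ and $\Gamma_{L,0}$ become equivalent in $\SD$ after slides and forgetting lengths, so the image in $\SD$ is well defined either way. But the slip does affect your inverse direction: to read off ``the pairing $\lambda$ determined by the non-admissible edges'' you need a chord-diagram representative, which requires the normalization --- stated explicitly in the paper --- that every Sullivan diagram admits a representative with all vertices on the admissible cycles, and then the careful choice of a cyclic ordering of chords at each attaching point to recover $\tilde\omega$ and $\vec r$. You should either switch to the unfolded graph when setting up the dictionary, or insert this normalization step explicitly before defining $g\colon\Lambda\to\Upsilon_{\textfrak{U}}$.
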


\begin{proof}It is enough to show this for connected cobordisms.  Recall that the harmonic compactification of the space of radial slit configurations $\bRad$ is homotopy equivalent to the space of unilevel radial slit configurations $\UbRad$ by Lemma \ref{lem_bradubrad}, so it suffices to prove the second stronger statement.

Since in $\UbRad$ all annuli have the same outer and inner radius and all slits sit in the outer boundary, the annular chambers are superfluous information. Thus, the combinatorial type of a unilevel configuration is determined only by its radial chamber configuration. More precisely, two univalent configurations $[L]$ and $[L']$ have the same combinatorial type if and only if they differ from each other only by the size of the radial chambers. Finally, the orientation of the complex plane and the positive real line, induce a total ordering of the radial chambers on each annulus. 

Similarly, on a Sullivan diagram, the leaves of the boundary cycles and the fat structure at the vertices where they are attached give a total ordering of the edges on the admissible cycles.  We say two Sullivan diagrams $[\Gamma]$ and $[\Gamma ']$ have the same combinatorial data if they differ from each other only on the lengths of the edges on the admissible cycles.  A \emph{(non-metric) Sullivan diagram} $G$ is an equivalence class of Sullivan diagrams under this relation.  We will first show that a radial slit configuration and a Sullivan diagram are given by the same combinatorial data.  That is, that there is a bijection
\[\begin{tikzcd}\Upsilon_{\textfrak{U}} \coloneqq \lbrace\text{combinatorial types of unilevel radial slit configurations}\rbrace \dar[leftrightarrow]\\[-7pt]
	\Lambda \coloneqq \lbrace\text{non-metric Sullivan diagrams}\rbrace. \end{tikzcd}\]

We define a map $f \colon \Upsilon_{\textfrak{U}}\to\Lambda$ by  $[\cL]\mapsto G_{[\cL],0}$ where $G_{[\cL],0}$ is the underlying (non-metric) Sullivan diagram of a unfolded graph of $[\cL]$.  This map is well defined, since a slit or a parametrization point jumping along another slit corresponds to a slide of a vertex along an edge not belonging to the admissible cycle. For example the configurations in Figure \ref{sUbRad_example} are mapped to the graphs in Figure \ref{ssd_example}.

\begin{figure}[h!]
  \centering
    \begin{tikzpicture}
    	\clip (-4.5,-5.1) rectangle (7,1.3);
    	\draw ($(3,0)+({3*360/5}:.5)$)  edge[bend right=150,looseness=7]  ($(3,0)+({4*360/5}:.5)$);
    	\draw [line width=1mm,white]($(3,0)+({2*360/5}:.58)$)  edge[bend right=150,looseness=7]  ($(3,0)+({3*360/5}:.58)$);
    	\draw (0,0) circle (.5cm);
    	\draw (3,0) circle (.5cm);
    	\draw [blue] (.2,0) node [left] {\footnotesize 1} -- (.5,0);
    	\draw [blue] (3.2,0) node [left] {\footnotesize 2} -- (3.5,0);
    	\draw (-.8,0) node [left] {\footnotesize 1} -- (-.5,0);
    	\draw (.5,0) edge[bend left=65] ($(3,0)+({360/5}:.5)$);
    	\draw ($(3,0)+({2*360/5}:.5)$)  edge[bend right=150,looseness=8]  ($(3,0)+({3*360/5}:.5)$);
    	\node [white!50!black] at ($(3,0)+({360/5-360/10}:.6)$) {\tiny 0};
    	\node [white!50!black] at ($(3,0)+({2*360/5-360/10}:.6)$) {\tiny 1};
    	\node [white!50!black] at ($(3,0)+({3*360/5-360/10}:.6)$) {\tiny 2};
    	\node [white!50!black] at ($(3,0)+({4*360/5-360/10}:.6)$) {\tiny 3};
    	\node [white!50!black] at ($(3,0)+({5*360/5-360/10}:.6)$) {\tiny 4};    	
    	\node [white!50!black] at (90:.6) {\tiny 0};
    	\node [white!50!black] at (-90:.6) {\tiny 1};
    	\node at (1.5,-1) {$[G]$};
    	
    	\begin{scope}[yshift=-3.7cm,xshift=-3cm];
    	\draw ($(3,0)+({2*360/4}:.5)$)  edge[bend right=150,looseness=7]  ($(3,0)+({3*360/4}:.5)$);
    	\draw [line width=1mm,white]($(3,0)+({1*360/4}:.58)$)  edge[bend right=150,looseness=7]  ($(3,0)+({2*360/4}:.58)$);
    	\draw (0,0) circle (.5cm);
    	\draw (3,0) circle (.5cm);
    	\draw [blue] (.2,0) node [left] {\footnotesize 1} -- (.5,0);
    	\draw [blue] (3.2,0) node [left] {\footnotesize 2} -- (3.5,0);
    	\draw (-.8,0) node [left] {\footnotesize 1} -- (-.5,0);
    	\draw (.5,0) edge[bend left=100,looseness=2.5] ($(3,0)+(0:.5)$);
    	\draw ($(3,0)+({1*360/4}:.5)$)  edge[bend right=150,looseness=8]  ($(3,0)+({2*360/4}:.5)$);
    	\node [white!50!black] at ($(3,0)+({360/4-360/8}:.6)$) {\tiny 0};
    	\node [white!50!black] at ($(3,0)+({2*360/4-360/8}:.6)$) {\tiny 1};
    	\node [white!50!black] at ($(3,0)+({3*360/4-360/8}:.6)$) {\tiny 2};
    	\node [white!50!black] at ($(3,0)+({4*360/4-360/8}:.6)$) {\tiny 3}; 	
    	\node [white!50!black] at (90:.6) {\tiny 0};
    	\node [white!50!black] at (-90:.6) {\tiny 1};
    	\node at (1.5,-1) {$d^0_1[G]$};
    	\end{scope}
    	
    	\begin{scope}[yshift=-3.7cm,xshift=3cm];
    	\draw ($(3,0)+({3*360/4}:0.8)$)  circle (.3cm);
    	\draw [line width=1mm,white]($(3,0)+({2*360/4}:.58)$)  edge[bend right=150,looseness=7]  ($(3,0)+({3*360/4}:.68)$);
    	\draw (0,0) circle (.5cm);
    	\draw (3,0) circle (.5cm);
    	\draw [blue] (.2,0) node [left] {\footnotesize 1} -- (.5,0);
    	\draw [blue] (3.2,0) node [left] {\footnotesize 2} -- (3.5,0);
    	\draw (-.8,0) node [left] {\footnotesize 1} -- (-.5,0);
    	\draw (.5,0) edge[bend left=65] ($(3,0)+({360/4}:.5)$);
    	\draw ($(3,0)+({2*360/4}:.5)$)  edge[bend right=150,looseness=8]  ($(3,0)+({3*360/4}:.5)$);
    	\node [white!50!black] at ($(3,0)+({360/4-360/8}:.6)$) {\tiny 0};
    	\node [white!50!black] at ($(3,0)+({2*360/4-360/8}:.6)$) {\tiny 1};
    	\node [white!50!black] at ($(3,0)+({3*360/4-360/8}:.6)$) {\tiny 2};
    	\node [white!50!black] at ($(3,0)+({4*360/4-360/8}:.6)$) {\tiny 3}; 	
    	\node [white!50!black] at (90:.6) {\tiny 0};
    	\node [white!50!black] at (-90:.6) {\tiny 1};
    	\node at (1.5,-1) {$d^2_3[G]$};
    	\end{scope}
    \end{tikzpicture}
  \caption{The top depicts a $5$-cell which is a product of $\Delta^1\times\Delta^4$-simplices in $\SD$, and the bottom two parts of its boundary.  The edges are numbered in grey.}
  \label{ssd_example}
\end{figure} 

We next construct the inverse map $g \colon \Lambda \to \Upsilon_{\textfrak{U}}$. Notice that any non-metric Sullivan diagram has a canonically associated metric Sullivan diagram by assigning all the edges in an admissible cycle the same length.  Moreover any Sullivan diagram has a fat graph representative with all its vertices on the admissible cycles.  A representative of a metric Sullivan diagram with all its vertices on the admissible cycles is given by the following data:
\begin{enumerate}[(i)]
\item A set of $n$ parametrized circles $C_1, C_2,\ldots,C_n$ which are disjoint, ordered, and of length 1.
\item A finite number of chords $l_1, l_2,\ldots,l_s$ where a chord is a graph which consist of two vertices connected by an edge. Let $V$ denote the set of vertices of such chords.
\item A subset $\widetilde{V}\subset V$ such that,  $\widetilde{V}$ contains at least one vertex of each chord and $\vert V \backslash \widetilde{V}\vert=m$.
\item An assignment $\alpha \colon \widetilde{V} \to \sqcup_i C_i$ which will indicate how to attach the chords onto the $n$ circles.  Two or more chords may be attached on the same circle and even on the same point.  The assignment $\alpha$ should attach at least one chord on each circle.
\item For each $x$ in the image of $\alpha$, an ordering of the subset of chords attached to $x$, that is, an ordering of the set $\alpha^{-1}(x)$.
\end{enumerate}

From this data one can construct a metric fat graph with inner vertices of valence greater or equal to $3$. The chords are attached onto the $n$ circles using $\alpha$. This gives the circles the structure of a graph by considering the attaching points as vertices and the intervals between them as edges.  It just remains to give a fat structure at the attaching points.  To do this let $x$ be in the image of $\alpha$.  The parametrization of the circles gives a notion of incoming and outgoing half edges on $x$ say $e_x^-$ and $e_x^+$ respectively. Moreover there is an ordering of the chords attached on $x$ say $(l_{x,1}, l_{x,2}, \ldots ,l_{x,s})$.  The cyclic ordering at $x$ is given by $(e_x^-, l_{x,1}, l_{x,2}, \ldots, l_{x,s}, e_x^+)$ as it is shown in Figure \ref{cyclic_ordering}.  Informally, this is to say all chords are attached on the outside of the circles according to the order given by the data.  The chords that are attached only at one vertex give the leaves of the Sullivan diagram.

\begin{figure}[h]
  \centering
    \begin{tikzpicture}
    \draw [decoration={markings, mark=at position 0.75 with {\arrow{>}}},	postaction={decorate}] (0,-1.5) arc (-90:0:1.5);
    \draw [decoration={markings, mark=at position 0.25 with {\arrow{>}}},	postaction={decorate}] (1.5,0) arc (0:90:1.5);
    \node at (1.5,0) {$\bullet$};
    \node at (1.5,0) [left] {$x$};
    \node at (45:1.8) {$e_x^+$};
    \node at (-45:1.8) {$e_x^-$};
    \draw (1.5,0) -- ($(1.5,0)+(-45:0.75)$);
    \node at ($(1.5,0)+(-45:1.1)$) {$l_{x,1}$};
    \draw (1.5,0) -- ($(1.5,0)+(-25:0.75)$);
    \node at ($(1.5,0)+(-25:1.1)$) {$l_{x,2}$};
    \draw (1.5,0) -- ($(1.5,0)+(-5:0.75)$);
    \node at ($(1.5,0)+(-5:1.1)$) {$l_{x,3}$};
    \draw (1.5,0) -- ($(1.5,0)+(45:0.75)$);
    \node at ($(1.5,0)+(45:1.1)$) {$l_{x,s}$};
    \draw [dotted,thick] ($(1.5,0)+(0:0.6)$) arc (0:45:.6);
    
    \end{tikzpicture}
  \caption{The fat structure induced at vertex $x$ where the cyclic ordering is given by the orientation on the plane.}
  \label{cyclic_ordering}
\end{figure}

From this it is clear what the inverse map $g$ should be.  Given a Sullivan diagram $G$, its associated metric Sullivan diagram gives the data (i) to (v) listed above.  Then, $g(G)=(\zeta,\lambda,\tilde{\omega},\vec{r},\vec{P})$ where $\zeta$ is given by $\alpha$ on the chords attached at both ends, $\lambda$ is given by those chords (i.e. $\lambda(i)=k$ if and only if there is a chord attached on both ends connecting $i$ and $k$), $\vec{P}$ is given by $\alpha$ on the chords attached only at one vertex, and $\tilde{\omega}$ and $\vec{r}$ are completely determined by the ordering of the chords at each attaching point.  This map is well defined since slides along chords correspond to jumps along slit, and it is an inverse to $f$.  

We will show that $\UbRad$ and $\SD$ have homeomorphic CW structures, where the cells are indexed by $\Upsilon_{\textfrak{U}} \cong \Lambda$, by giving cellular homeomorphisms
\[\begin{tikzcd}\UbRad & \lar[swap]{\varphi}
	\displaystyle\frac{\bigsqcup_{[\cL]\in\Upsilon_{\textfrak{U}}} e_{[\cL]}}{\sim}
	\rar{\psi} &\SD.\end{tikzcd}\]

We already saw the map $\varphi$ in Definition \ref{def_radcw}. To construct the map $\psi$ one first observes that any Sullivan diagram $[\Gamma]$ in $\SD$ is uniquely determined by its non-metric underlying Sullivan diagram $G$ and a tuple $(\vec{t}_1,\ldots,\vec{t}_{n_p})$ where $t_{ij}$ is the length of the $j$th edge of the $i$th admissible cycle. Using this we can define \[\psi(e_{[\cL]},(\vec{t}_1,\ldots,\vec{t}_{n_p}))=[\Gamma]=(f([\cL]),(\vec{t}_1,\ldots,\vec{t}_{n_p})).\]
It is easy to show that the map $\psi$ is continuous and by construction the homeomorphism $\varphi \circ \psi^{-1}$ is cellular with respect to the CW structures on $\UbRad$ and $\SD$.\end{proof}



\bibliographystyle{amsalpha}
\bibliography{stringtopology}
\end{document}